\newcommand{\R}{\mathbb{R}} 
\newcommand{\E}{\mathbb{E}}
\newcommand{\Vol}{\mathrm{Vol}}
\newcommand{\calM}{\mathcal{M}}
\newcommand{\calL}{\mathcal{L}}
\newcommand{\calE}{\mathcal{E}}
\newcommand{\calS}{\mathcal{S}}
\newtheorem{theorem}{Theorem}[section]
\newtheorem{proposition}[theorem]{Proposition}
\newtheorem{assumption}[theorem]{Assumption}
\newtheorem{lemma}{Lemma}[section]
\newtheorem{definition}[theorem]{Definition}
\theoremstyle{remark}
\newtheorem{remark}{Remark}[section]
\newtheorem{conjecture*}{Conjecture}
\theoremstyle{plain}
\newcommand{\rev}[1]{\textcolor{black}{#1}}
\begin{document}

\title{Convergence of Graph Laplacian with kNN Self-tuned Kernels}

\author{
Xiuyuan Cheng\thanks{Department of Mathematics, Duke University. Email: xiuyuan.cheng@duke.edu}
~~~~~~~~~~~~~~~~
Hau-Tieng Wu\thanks{Department of Mathematics and Department of Statistical Science, Duke University. Email: hauwu@math.duke.edu }
}

\date{}

\maketitle

\begin{abstract}

\noindent
Kernelized Gram matrix $W$ constructed from data points $\{x_i\}_{i=1}^N$ as $W_{ij}= k_0(  \frac{ \| x_i - x_j \|^2} {\sigma^2} ) $
is widely used in graph-based geometric data analysis
and unsupervised learning. 
An important question is how to choose the kernel bandwidth $\sigma$,
and a common practice called self-tuned kernel adaptively sets a $\sigma_i$ at each point $x_i$ by the $k$-nearest neighbor (kNN) distance.
When $x_i$'s are sampled from a $d$-dimensional manifold embedded in a possibly high-dimensional space,
unlike with fixed-bandwidth kernels,
theoretical results of graph Laplacian convergence with self-tuned kernels
have been incomplete. 
This paper proves the convergence of graph Laplacian operator $L_N$
to manifold (weighted-)Laplacian for
a new family of kNN self-tuned kernels $W^{(\alpha)}_{ij}
 = k_0( \frac{  \| x_i - x_j \|^2}{ \epsilon \hat{\rho}(x_i) \hat{\rho}(x_j)})/\hat{\rho}(x_i)^\alpha \hat{\rho}(x_j)^\alpha$, 
where $\hat{\rho}$ is the estimated bandwidth function {by kNN},
and the limiting operator is also parametrized by $\alpha$.
When $\alpha = 1$, the limiting operator is the weighted manifold Laplacian $\Delta_p$.
Specifically, we prove the  point-wise  convergence of $L_N f $ and convergence of the graph Dirichlet form with rates.
Our analysis is based on first establishing 
a $C^0$ consistency for 
$\hat{\rho}$
which bounds the relative estimation error  $|\hat{\rho} - \bar{\rho}|/\bar{\rho}$ uniformly 
with high probability, 
where $\bar{\rho} = p^{-1/d}$, and $p$ is the data density function. 
Our theoretical results reveal the advantage of self-tuned kernel over fixed-bandwidth kernel 
via smaller variance error in low-density regions.
In the algorithm, no prior knowledge of $d$ or data density is needed. 
The theoretical results are supported by numerical experiments  on simulated  data and hand-written digit image data.
\end{abstract}

\section{Introduction}

Kernelized Gram matrix computed from data vectors $ \{ x_i \}_{i=1}^N$ in $\R^D$ has been a 
\rev{pivotal}
tool in kernel methods \cite{scholkopf2018learning}, graph-based manifold learning and geometric data analysis 
\cite{balasubramanian2002isomap,
belkin2003laplacian,coifman2005geometric,coifman2006diffusion,van2009dimensionality, talmon2013diffusion},
 and semi-supervised learning \cite{nadler2009semi, slepcev2019analysis, li2018deeper, li2019label}{, among others}.
 Applications range {broadly} from model reduction of chemical systems
  \cite{singer2009detecting, rohrdanz2011determination, talmon2013empirical, crosskey2017atlas} to 
general data visualization \cite{borg2003modern,hinton2003stochastic, maaten2008visualizing,wang2020novel}.
The  graph {\it affinity matrix} $W$, which is real symmetric and has non-negative entries,
can be viewed as weights on the edges of a weighted undirected graph, denoted as ${\cal G} = (V,E)$, 
$V = \{ 1, \cdots, N\} $, and
$E = \{ (i,j), W_{ij} > 0\}$. 
The kernelized affinity matrix $W$ takes the form of 
a  kernelized Gram matrix,
that is,
 $W_{ij} = K(x_i, x_j)$ for some real symmetric kernel function $K: \R^D \times \R^D \to \R$.
In particular, 
a widely  used setting is
\begin{equation}\label{eq:W-fixed-eps}
W_{ij } = k_0 \left( \frac{\| x_i - x_j\|^2 }{\epsilon} \right),
\end{equation}
for some univariate kernel function $k_0$ and some $\epsilon > 0$. E.g., $k_0 (r) = e^{-r}$ gives the Gaussian affinity $W$. 

Given an affinity matrix $W$, the un-normalized and normalized (random-walk) graph Laplacian matrices are usually constructed as
$(D - W)$ and $I - D^{-1}W$ respectively, where  $D$, called the {\it degree matrix}, is the diagonal matrix with $D_{ii} = \sum_{j=1}^{N} W_{ij}$.
The row-stochastic matrix $D^{-1}W$ gives the transition law of a random walk on the graph ${\cal G}$, 
which is a discrete diffusion process on the graph \cite{masuda2017random}. 
The eigenvalues and eigenvectors of the graph Laplacian matrix
provide a  dimension-reduced representation of the data samples,
and are used for downstream tasks like clustering and dimension reduced embedding.
Variants of the basic form \eqref{eq:W-fixed-eps}
includes adaptive kernel bandwidth \cite{zelnik2005self, perrault2017improved},
anisotropic kernel 
\cite{singer2009detecting, talmon2013empirical,berry2016local,cheng2020two},
adoption of landmark sets 
 \cite{bermanis2016measure,long2017landmark, shen2020scalability},
kernel normalization schemes \cite{marshall2019manifold,wormell2020spectral}, 
and neural network approaches \cite{gong2006neural,mishne2017diffusion,shaham2018spectralnet}.
The current paper focuses on the adaptive bandwidth problem
 and the analysis of the kernelized graph Laplacian.

The convergence of the graph Laplacian to certain limiting operator 
as the graph size (number of data samples) $N \to \infty$,
is a classical theoretical problem. 
Under the {\it manifold setting}, 
namely when $x_i$ are sampled 
from a low-dimensional manifold ${\calM}$ embedded in the ambient space $\R^D$, 
the convergence 
of the graph Laplacian to a differential operator on the manifold ${\calM}$
has been proved in several places, 
when $N \to \infty$ and $\epsilon \to 0$ under a joint limit
\cite{belkin2003laplacian,hein2005graphs,belkin2007convergence,coifman2006diffusion,singer2006graph,ting2011analysis},
and more recently \cite{calder2019improved,dunson2019diffusion,trillos2020error}.
Of particular importance is when the limiting operator recovers
the manifold Laplace-Beltrami operator $\Delta_{{\calM}}$,
or the weighted Laplace operator
\begin{equation}\label{eq:def-weighted-detla-p}
\Delta_p : =  \Delta_{\calM} +   \frac{ \nabla_{\calM} p}{p } \cdot \nabla_{\calM},
\end{equation}
where $ \nabla_{\calM}$ denote the manifold derivative,
and $p$ is the density of a positive measure (not necessarily integrated to 1, i.e., a probability density).
Both $\Delta_{\calM} $ 
and $\Delta_p$ are intrinsic operators 
associated with different measures 
independent of the particular embedding
of ${\calM}$ in $\R^D$. 
The weighted laplacian $\Delta_p$  is the Fokker-Plank operator of the diffusion process on the manifold ${\calM}$,
and its spectral decomposition in $({\calM}, p dV)$ 
reveals key physics quantities of the stochastic process, 
e.g., the low-lying eigenfunctions  of $\Delta_p$ 
indicate the meta-stable states of the diffusion process \cite{matkowsky1981eigenvalues,eckhoff2005precise}. 
Thus when the discrete diffusion process on the finite-sample graph has a continuous limit,
the graph Laplacian that approximates the limiting operator $\Delta_p$
can be applied to data-driven analysis of dynamical systems
and  clustering analysis of data clouds \cite{nadler2006diffusion}. 
The asymptotic analysis of kernelized graph Laplacian 
thus
lays the theoretical foundation for applications of graph Laplacian methods in high dimensional data analysis.

A problem in the affinity matrix construction \eqref{eq:W-fixed-eps}
is the choice of the scaler parameter $\epsilon$, 
or $\sigma:=\sqrt{\epsilon}$, 
which is called the {\it kernel bandwidth}.
In practice, especially when data vectors are in high dimensional space or the sampling density is not even, the choice of $\epsilon$ can be challenging
and the performance of kernel Laplacian methods may also become sensitive to the choice,
see, e.g., \cite{perrault2017improved}.
A common practice to overcome the issue of choosing $\epsilon$ is called ``self-tuning'',
that is, 
setting a personalized bandwidth for each point $x_i$ 
 and then using these adaptive bandwidths to compute the kernelized affinity. 
Specifically, the original self-tune spectral clustering method \cite{zelnik2005self} considered the affinity matrix as
\begin{equation}\label{eq:selftune-1}
W_{ij} = k_0 \left(  \frac{\| x_i - x_j\|^2}{ \hat{R}_i \hat{R}_j } \right)
\end{equation}
where $\hat{R}_i$ equals the distance 
{from $x_i$ to its} $k$-th nearest neighbor 
{(kNN)} in the dataset $X$ itself, 
where $k$ is a parameter chosen by the user.
While several works in the  literature have addressed such kernel construction,
to the knowledge of the authors, 
the same type of results of graph Laplacian convergence as for the fixed-bandwidth kernel
have only been partially established, particularly when the kernel bandwidth is unknown and needs to be estimated from data. 
A more detailed review of related works  is given in Section \ref{subsec:lit}.

In this paper,
we introduce the following family of graph affinity matrices, defined for $\alpha \in \R$, $\epsilon > 0$, 
\begin{equation}\label{eq:selftune-family}
W_{ij}^{(\alpha)} 
:= k_0 \left(  \frac{\| x_i - x_j\|^2}{ \epsilon \hat{\rho}(x_i)  \hat{\rho}(x_j) } \right)
\frac{1}{ \hat{\rho}(x_i)^\alpha \hat{\rho}(x_j)^\alpha},
\end{equation}
where the kernel function $k_0$ is non-negative and satisfies {certain regularity and decay conditions}
(c.f. Assumption \ref{assump:h-selftune}).
We also 
split the dataset into $X$ and $Y$,
and the stand-alone $Y$ is used to estimate $\hat{\rho}(x_i)$ for each $x_i $ in $X$, 
where
$\hat{\rho}$ is a function mapping from $\R^D$ to $\R$,
called the (estimated) {\it bandwidth function}.
Specifically,
$\hat{\rho}$ is normalized from the kNN distance $\hat{R}_i$
by $\hat{\rho}(x_i) = \hat{R}_i (\frac{1}{m_0} \frac{k}{N_y})^{-1/d}$,
$k$ is the parameter in kNN, $N_y = |Y|$, 
and $m_0 > 0$ is a constant with analytical expression.
The normalization in $\hat{\rho}(x_i)$
by 
$(\frac{1}{m_0} \frac{k}{N_y})^{-1/d}$ is to guaranteed that $\hat{\rho}$
has an $O(1)$ limit.
 In view of \eqref{eq:selftune-family},
  \eqref{eq:selftune-1} is a special case where 
$\alpha =0$ and $\epsilon =\left( \frac{1}{m_0}\frac{k}{N_y} \right)^{2/d} $.
The usage of a stand-alone $Y$ to estimate $\hat{\rho}$ reduces dependence, 
and in practice can reduce variance error (c.f. Section \ref{subsec:standaloneY}).
Note that while the above definition of $\hat{\rho}$ involves the manifold dimensionality $d$, 
the practical algorithm \rev{(c.f. Algorithm \ref{algo1})} 
does not require knowledge or estimation of $d$.
\rev{We summarize the algorithm  in Section \ref{subsec:algo},}
where we introduce the construction of $\hat{\rho}$ and choice of parameters in more detail.

 \begin{table}[t]
 \small
   \caption{
\label{tab:notations}
List of default notations
} \vspace{3pt}
 \begin{minipage}[t]{0.44\linewidth}
 \begin{tabular}{  p{0.5cm}  p{5.5cm}   }
 \hline
 ${\calM}$ 	& $d$-dimensional manifold in $\R^D$  	  \\
 $p$			& data sampling density on ${\calM}$, uniformly bounded between $p_{min}$ and $p_{max}$	\\
 $\Delta_{\calM}$ & Laplace-Beltrami operator, also as $\Delta$ 		\\	
 $\Delta_{p}$ & weighed Laplace operator		\\	
 $\nabla_{\calM}$ & manifold gradient, also as $\nabla$ 		\\	
  $\bar{\nabla}$ & gradient in ambient space $\R^D$ 		\\	
 $\bar{\rho}$	& $p^{-1/d}$, population bandwidth function, 
 			uniformly bounded between $\rho_{min}:=p_{max}^{-1/d}$ and $\rho_{max}:=p_{min}^{-1/d}$	\\
$\hat{\rho}$      & estimated bandwidth function  		\\
$\varepsilon_\rho$    & error bound of $\sup_{\calM}|\hat{\rho} - \bar{\rho}|/\bar{\rho}$ \\
$k$      		& $k$-the nearest neighbor in kNN 		\\
$W$ 		 	& general kernelized affinity matrix 			\\
$D$			& degree matrix, $D_{ii} = \sum_j W_{ij}$\\
$\epsilon$ 	&  kernel bandwidth parameter used in   theoretical analysis 		\\
$\sigma_0$ 	&  kernel bandwidth parameter used in Algorithm \ref{algo1}   		\\
$\alpha$ 		& the power in $\hat{\rho}^{\alpha}$ used to normalized the self-tuned kernel 	\\
$k_0$, $h$       & function  $\R \to \R$, $h$ used in kNN estimation, $k_0$ used to construct affinity kernel			\\
$X$      		& dataset used for computing  $W$ 		\\
$Y$      		& dataset used for estimating  $\hat{\rho}$ \\
$\hat{R}_i$	& distance to $k$-th nearest neighbor of $x_i$ in $X$ 	\\
$\hat{R}$		& $\hat{R}(x)$ is the distance to $k$-th nearest neighbor of $x$ in $Y$\\
$N_x$      		& number of samples in $X$ 			\\
$N_y$      		& number of samples in $Y$ 			\\
$N$ 			& $N_x$ or $N_y$ depending on context 		\\
  \hline
\end{tabular}
\end{minipage}
 \begin{minipage}{0.5\linewidth}
 \begin{tabular}{  p{0.5cm}  p{7.5cm}   }
 \hline
 $m_0$		& $m_0[h]:=\int_{\R^d} h(|u|^2) du$ 	\\
$m_2$		& $m_2[h]:= \frac{1}{d}\int_{\R^d} |u|^2 h(|u|^2) du$ 	\\
 $\hat{K}$		& normalized self-tuned kernel function $\R^D \times \R^D \to \R$ parametrized by $\alpha$ 	\\
 $W^{(\alpha)}$			& $W$ computed using $\hat{K}$ parametrized by $\alpha$	\\
 ${\calL}^{(\alpha)}$		& limiting differential operator parametrized by $\alpha$\\
 $L_N$ 				& graph Laplacian operator of $W$ \\
 $f$				& function on ${\calM}$, also denote the vector $\{f(x_i)\}_i \in \R^{N_x}$ \\
 ${\calE}_p$		& differential Dirichlet form of $\Delta_p$\\
 ${\calE}^{(\alpha)}$		& kernelized Dirichlet form of kernel $\hat{K}$ \\
  $ E_N$				& graph Dirichlet form of $W$ \\
 $\hat{p}$				& estimated density \\
$\varepsilon_p$    & error bound of $\sup_{\calM}|\hat{p} - p |/p$ \\
  $\beta$ 		& the power in $\hat{p}^{\beta}$ used to normalized the fixed-bandwidth kernel 	\\
  $\rho$				& general kernel bandwidth function \\
  $G_\epsilon^{(\rho)}$ 	& kernel integral operator with variable bandwidth $\rho$ \\  
  $\tilde{\rho}$			& perturbed bandwidth function from $\rho$\\
  \hline
\end{tabular}
 \begin{tabular}{  p{0.5cm}  p{7.5cm}   }
  \hline
 \multicolumn{2}{|c|}{Asymptotic Notations} \\
 \hline
$O(\cdot)$ & 		$f = O(g)$: $|f| \le C |g|$ in the limit, $C> 0$, 
				$O^{[a]}(\cdot)$ declaring the constant dependence on $a$	\\ 
$\Theta(\cdot)$ 	& $f = \Theta(g)$: for $f$, $g \ge 0$, $C_1 g \le f \le C_2 g$ in the limit, $C_1, C_2 >0$,
				$\Theta^{[a]}(\cdot)$ declaring the constant dependence on $a$	\\ 
$\sim$			& $f \sim g$ same as $f = \Theta(g)$								\\
$o(\cdot)$ 	& $f = o(g)$: for $g > 0$, $|f|/g \to 0$ in the limit, 
				$o^{[a]}(\cdot)$ declaring the constant dependence on $a$\\ 
$\Omega(\cdot)$ 	& $f = \Omega(g)$: for $f, g > 0$, $f/g \to \infty$ in the limit \\ 
 \multicolumn{2}{p{8.0cm}}{ 
~ When the superscript $^{[a]}$ is $^{[1]}$,
 it declares that the constants are absolute ones.
 
~~ 
\rev{
By definition, for finite $m$, $ O(  g_1) + \cdots + O( g_m) = O(|g_1|+\cdots + |g_m| ) $,
which is also denoted as $O( g_1, ..., g_m) $.} 
				 }\\
\hline
\end{tabular}
\end{minipage}
\end{table}

The theoretical results of our work are twofold. \rev{To summarize, suppose $N=N_x$, and  $N_y= \Theta(N)$},

$\bullet$
We prove that when data are sampled according to a smooth density $p$ on a smooth compact $d$-dimensional manifold ${\calM}$,
$\hat{\rho}$ uniformly converges to $\bar{\rho} = p^{-1/d}$ on ${\calM}$, 
where the  point-wise relative error ${| \hat{\rho} - \bar{\rho}|}/{ \bar{\rho}}$ is uniformly bounded with high probability (w.h.p.)  by 
(c.f. Theorem \ref{thm:hatrho})
\[
\varepsilon_\rho =O (  ( {k}/{N} )^{2/d}  \rev{,\,}  \sqrt{{\log N}/{k}}  ).
\]
The choice of $k$ that balances the two errors is
$k \sim N^{1/(1+d/4)}$, 
which leads to  $\varepsilon_\rho =O ( N^{-1/(2+d/2)} )$
(c.f. Remark \ref{rk:choice-k-thm-hatrho}).
In particular, 
the constant in front of the second term (the ``variance error'') is independent from the sampling density $p$. 
This result augments previous analysis of kNN estimation in literature,
and the bound of the relative error (rather than absolute error) illustrates a difference between kNN $\hat{\rho}$ and the standard fixed-bandwidth kernel density estimation (KDE) (c.f. Remark \ref{rk:compare-fix-eps-kde}).
The relative error bound is also useful for our analysis of self-tuned kernel Laplacian.

$\bullet$
Conditioning on the good event of an accurate $\hat{\rho}$ estimation, 
we establish a series of convergence results of kernelized graph Laplacians  with self-tuned kernels. 
Specifically,
we prove

-
The convergence of the graph Dirichlet form (c.f. Theorem \ref{thm:limit-form}), where the error is
\[
O( \epsilon ,\, \varepsilon_\rho, \, N^{-1/2} \epsilon^{-d/4} ).
\]
The choice of $\epsilon$ to balance bias and variance errors is $\epsilon \sim N^{-1/(2+d/2)}$,
which 
\rev{makes $O( \epsilon , \, N^{-1/2} \epsilon^{-d/4} ) = O( N^{-1/(2+d/2)} )$,}
same as the order of $\varepsilon_\rho$ with the optimal scaling of $k$
(c.f. Remark \ref{rk:optimal-eps-thm-limit-form}).

-
The convergence of $L_N f(x)$ for the  (re-normalized form of) random-walk 
and un-normalized
graph Laplacian operators $L_N$ 
respectively (c.f. Theorems \ref{thm:limit-pointwise-rw-3} and \ref{thm:limit-pointwise-un-3}),
where the error is
\[
O(\epsilon, \, { \varepsilon_\rho } /{ \epsilon}, \, N^{-1/2} \epsilon^{-1/2-d/4} ).
\]
A weak convergence result in Theorem \ref{thm:limit-weak-un} shows that the error is 
$O(\epsilon ,\, \varepsilon_\rho, \, N^{-1/2} \epsilon^{-1/2} )$.

These results are compared with the counterparts for fixed bandwidth kernel,
in terms of the Dirichlet form convergence (c.f. Theorem \ref{thm:limit-form-fixeps})
and the  point-wise operator convergence (c.f. Theorem \ref{thm:limit-pointwise-rw-fix-epsilon}).
\vspace{3pt}

\rev{
As for how $N_y$ should scale with $N_x$,
if we set $N_x$ and $N_y$ independently, 
for the graph Dirichlet form convergence result, 
the overall error is optimized when $N_y \sim N_x$ (c.f. Remark \ref{rk:optimal-eps-thm-limit-form}).
The other rates, e.g., the point-wise convergence rate, 
lead to different theoretical optimal choice of $N_y$ with respect to $N_x$.
We empirically study the splitting of stand-alone $Y$ in Section \ref{subsec:standaloneY} with further discussion.
}

A key difference comparing the self-tuned kernel and the fixed-bandwidth kernel lies in the constant dependence on density $p$ 
in front of the variance term.
For example, 
in Theorem \ref{thm:limit-pointwise-rw-fix-epsilon}, the fixed-bandwidth kernel leads to a variance bound proportional to $p^{-1/2}$,
while the factor is $p^{1/d}$ in Theorem \ref{thm:limit-pointwise-un-3} for the self-tuned kernel. 
The negative factor $-1/2$ suggests a large variance error at place where $p(x)$ is small, 
reflecting the difficulty of fixed bandwidth kernel in such cases. 
The positive factor $1/d$ for self-tuned kernel suggests an improvement, which can be intuitively expected,
from a theoretical perspective.

\rev{Tab. \ref{tab:theory-summary}  gives a roadmap of analysis to facilitate the reading of Sections \ref{sec:knn-hatrho} and \ref{sec:laplacian}.}
Our theoretical results are supported by numerical experiments in Section \ref{sec:experiment}. 
Besides model \eqref{eq:selftune-family},
we also propose another affinity kernel using mixed normalization by $\hat{\rho}$ and a density estimator $\hat{p}$ 
that recovers the Laplace-Beltrami operator
and does not require knowledge of $d$,
and we give empirical results in Section \ref{subsec:exp-d-not-known},
Apart from simulated manifold data, 
we apply the self-tuned diffusion kernels  to the
MNIST dataset of hand-written digit images \cite{mnist-download},
where the self-tuned kernel shows a better stability than  the fixed-bandwidth kernel
at data points sampled at low-density places,
as  is consistent with the theory.

{\bf Notations.}
A list of default notations is provided in Tab. \ref{tab:notations}.
We use superscripts in big-$O$ notations to declare dependence of the implied constants.
In this work, 
we mainly track the dependence on the data density function $p$,
and we treat constants depending on \rev{$\calM$, $h$, $k_0$, including  $d$,} as absolute ones
so as to simplify presentation. 
\rev{For $O^{[\calM, h, \text{x}]}(\cdot)$ we may omit $(\calM, h)$ in the superscript and write as 
$O^{[\text{x}]}(\cdot)$ so as to track dependence on x only.}
Specific constant dependence can be recovered from proof.

\begin{algorithm}[t]
\small
	\caption{\small
	 kNN Self-tuned kernel graph Laplacian (with stand-alone $Y$)}\label{algo1}
	
	{\bf Input:} Datasets $X$ and $Y$, 
	and algorithm parameters $k$, $1 <  k < N_y$, $\sigma_0 > 0$, $\alpha \in \R$.

	{\bf Output:} kNN values $\hat{R}$,
	graph affinity matrix $W$, 
	degree matrix $D$, the eigenpairs $(\Psi, \Lambda)$.

	{\bf External:} Subroutine $\textproc{eig}$ ($\textproc{eig-gen}$)
	that solves eigen-problem (generalized eigen-problem) for real symmetric matrices. 
	
	\begin{algorithmic}[1]
		
		\Function{SelftunedKernel}{$X$, $Y$, $k$, $\sigma_0$, $\alpha$}
		
		\State ${N_x} \leftarrow \text{size}(X)$, ${N_y} \leftarrow \text{size}(Y)$

		\State Compute kNN distances $\hat{R}_i  \leftarrow \| x - y^{(x,k)}\|$, where
		$y^{(x,k)}$ is the $k$-th nearest neighbor of $x$ in $Y$.

		\State Compute the matrix $W$ by 
\begin{equation}\label{eq:def-W-alpha-algo1}
W_{ij}= k_0\left( \frac{ \| x_{i}-x_{j} \|^{2}}{ \sigma_0^2 \hat{R}_{i} \hat{R}_{j}}\right)  
\frac{ 1}{ \hat{R}_{i}^{\alpha} \hat{R}_{j}^{\alpha}}
\end{equation}
		\Comment{$W$ can be constructed as  a sparse matrix}

		\State  Compute the degree matrix $D$, $D_{ii} \leftarrow \sum_j W_{ij}$

		\State Compute the eigenvalue/eigenvectors $(\Psi, \Lambda)$ of 
		
		 either (1) $L_{un}$ by $\textproc{eig}(D-W)$
		
		or (2) $L_{rw'}$ by $\textproc{eig-gen}(D-W, D D_{\hat{R}}^2)$,
		where the diagonal matrix $D_{\hat{R}} = \text{diag}\{ \hat{R}_i\}_{i=1}^{N_x}$

		\State \textbf{return} $\hat{R}$, $W$, $D$, $(\Psi, \Lambda)$
		
		\EndFunction
		
	\end{algorithmic}
\end{algorithm}

\subsection{Summary of Algorithm}\label{subsec:algo}

The algorithm to 
compute the self-tuned kernel graph Laplacian on dataset $X$ with a stand-alone dataset  $Y$ to estimate the kNN bandwidth function
is summarized  in Algorithm \ref{algo1}.
It has the following parameters \rev{to be specified by the user},
\begin{itemize}

\item
$k$:  The $k$ in kNN bandwidth estimation. By Theorem \ref{thm:hatrho}, 
the choice of $k$ that balances the bias and variance errors is theoretically $k \sim  N_y^{1/(1+d/4)}$.

\item
$\alpha$: The kernel is normalized by $\hat{R}^\alpha$, where $\alpha$ is a real number. 
Different choice of $\alpha$ leads to a family of 
{different} limiting operators depending on $\alpha$, as 
{is} analyzed in Section \ref{sec:laplacian}.
In particular, choosing $\alpha  =1$ recovers the weighted Laplacian $\Delta_p$.

\item
$\sigma_0$: 
The kernel bandwidth parameter. 
{To compare with} the notation in Section \ref{sec:laplacian},
$\sigma_0 \hat{R}_i = \sqrt{\epsilon} \hat{\rho}(x_i)$,
and that is 
$\epsilon = \sigma_0^2 (\frac{1}{m_0 [h_{knn}]} \frac{k}{N_y})^{2/d}$.

\end{itemize}

Choosing $\sigma_0 = \Theta (1)$ corresponds to  $\epsilon \sim   (k/N_y)^{2/d} $.
Under the optimal scaling of $k$ which is $k \sim N_y^{1/(1+d/4)}$ (c.f. Remark \ref{rk:choice-k-thm-hatrho}),
the above scaling of $\epsilon $ is also the optimal one to balance errors of 
estimating the Dirichlet form (c.f. Theorem \ref{thm:limit-form}, Remark \ref{rk:optimal-eps-thm-limit-form}).
The algorithm does not require any prior knowledge of the intrinsic dimensionality $d$,
which can be applied to general manifold data.
We also introduce a variant form of kernel matrix to recover $\Delta_{\calM}$ in Section \ref{subsec:exp-d-not-known}.

A common usage, as in the original self-tune kernel in \cite{zelnik2005self}, 
is to estimate $\hat{R}_i$ from the dataset $X$ itself rather than a stand-alone $Y$. 
The analysis of the former case will be more complicated, 
as estimating kNN from $X$ introduces more dependence across the kernel matrix entries,
while a stand-alone $Y$ allows a two-step analysis:
showing the uniform  point-wise convergence of the kNN bandwidth function first and then analyzing the kernel matrix $W$ conditioning on a good event of $Y$,  
as we do in the current paper. 
Empirically, we find that using a stand-alone $Y$ gives a comparable result and can improve {the performance by reducing} the variance error when $N_y$ is large (Section \ref{subsec:standaloneY}).
This suggests the usage of extra data samples in the bandwidth estimation when they are not used in the kernel matrix construction
due to memory or computational constraints.

\subsection{Related Works}\label{subsec:lit}

kNN estimated kernel bandwidth was used in the original self-tune based algorithm, particularly for the spectral clustering purpose \cite{zelnik2005self}. To fully understand the role of the kNN estimated kernel bandwidth, \rev{an} understanding of its relationship with the nearest-neighbor density estimator (NNDE), an approach closely related to but different from the well known KDE, is necessary. 
NNDE was studied in the classical statistical literature dating back to 60s
\cite{loftsgaarden1965nonparametric, mack1979multivariate, hall1983near},
and more general variable bandwidth KDE was studied in 90s \cite{terrell1992variable, hall1995improved}. 
The uniform convergence with probability (w.p.) 1 was proved in \cite{devroye1977strong}, 
Our result bounds the relative error $| \hat{\rho}- \bar{\rho}|/\bar{\rho}$ uniformly w.h.p.,
which implies uniform convergence 
w.p. 1, and is under the manifold data density setting.

With this relationship in mind, to our knowledge, the first paper dealing with the asymptotical behavior of self-tune based algorithm in the manifold setup is \cite{ting2011analysis}.
The limiting operator of self-tune kernel has been identified in 
following a general framework of kernel construction of the graph Laplacian.
In particular,
the uniform convergence of NNDE in \cite{devroye1977strong} 
was used to derive the limiting operator of kNN constructed graph Laplacian. 
However, the proof is without error rate,
and the impact of NNDE has not been fully analyzed.
A recent paper addressing this issue is \cite{berry2016variable}, where the convergence rate has been proved. 
However, 
the formulation in  \cite{berry2016variable} assumes knowledge of the desired bandwidth function to use,
or that of the density function,
and the impact of NNDE is not discussed.
Also, the algorithm in \cite{berry2016variable} needs to estimate the intrinsic dimension $d$ if not given,
which may be difficult in practice.

Self-turning based algorithms are natural generalizations of those with fixed bandwidths. When the bandwidth is fixed, its asymptotical analysis has been widely discussed, particularly under the manifold setup. For example, the  point-wise convergence of the graph Laplacian to the Laplace-Beltrami operator, or more general weighted Laplace-Beltrami operator, has been extensively discussed in, for example, \cite{belkin2003laplacian,coifman2006diffusion,singer2006graph,hein2006uniform}. The spectral convergence of the graph Laplacian to the (weighted-) Laplace-Beltrami operator is more challenging, and has attracted several attentions. See, for example, \cite{belkin2007convergence,von2008consistency,wang2015spectral,singer2016spectral,eldridge2017unperturbed,trillos2020error}. 
Recently, the spectral convergence in the $L^2$ sense with rate has been provided with rate in \cite{calder2019improved}; the spectral convergence in the $L^\infty$ sense with rate, as well as the uniform heat kernel reconstruction, has been provided with rate in \cite{dunson2019diffusion}. 
We establish a  point-wise convergence of the graph Laplacian operator and convergence of the graph Dirichlet form for the kNN self-tuned kernel graph Laplacian with rates.

Other approaches of adaptive kernel bandwidth choice include multiscale SVD \cite{little2009multiscale, rohrdanz2011determination, crosskey2017atlas}.
A bandwidth selection method  
based on preserving data geometric information  was proposed in \cite{perrault2017improved}.
These methods can be computationally more expensive than kNN self-tuned bandwidth.

 \begin{table}[t]
\centering
\small
   \caption{\small
\label{tab:theory-summary}
Roadmap of analysis
} 
\vspace{3pt}
 \begin{tabular}{  m{1.1cm} | m{2.2cm} | m{3cm} |  m{3cm}   m{3cm} }
 \hline
				&	Estimation of bandwidth $\hat{\rho}$				    				  & \multicolumn{3}{ c }{ Convergence of graph Laplacian 	}		 \\
\hline 			
\hspace{-15pt} Parameters		& 	 \centering{$(k, N_y)$}								      					      &	  \multicolumn{3}{c }{ $(\epsilon, N_x)$}  			  		\\
 \hline
\hspace{-5pt}\multirow{7}{*}{ Results}	& 	 			& 		 & kNN self-tuned 	& 	fixed-bandwidth  \\
\cline{3-5}
	 					& 												& Dirichlet form & Thm. \ref{thm:limit-form}  	&	Thm. \ref{thm:limit-form-fixeps}		\\
\cline{3-5}						
						& 	Prop. \ref{prop:hatrho-one-point}			& \multirow{2}{*}{Point-wise } 	& $L_{rw}'$: Thm. \ref{thm:limit-pointwise-rw-3} &  \multirow{2}{*}{$L_{rw}$: Thm. \ref{thm:limit-pointwise-rw-fix-epsilon}}	\\					
						&	 + Lemma \ref{lemma:knn-rx} 					&			   			&$L_{un}$:  Thm. \ref{thm:limit-pointwise-un-3}	&			\\
\cline{3-5}						
					 	&	 $\Rightarrow$ Thm. \ref{thm:hatrho}			&  Weak form	  			&  Thm. \ref{thm:limit-weak-un}				&	-		\\
\cline{3-5}
				 		&											&	\multicolumn{3}{ c }{ Needed lemmas: Lemma \ref{lemma:right-operator-3} $\Rightarrow$ Prop. \ref{prop:hatE(f,f)} $\Rightarrow$ Thm. \ref{thm:limit-form} \& \ref{thm:limit-weak-un} }	\\
						&											&	\multicolumn{3}{ c }{  Lemma \ref{lemma:right-operator-repalce-rho} $\Rightarrow$ Thm. \ref{thm:limit-pointwise-rw-3} \& \ref{thm:limit-pointwise-un-3}}		\\
						
\hline
\end{tabular}
\hspace{-5pt}
\end{table}

\section{kNN Estimation of Kernel Bandwidth}\label{sec:knn-hatrho}

In this section, we prove the uniform convergence of the kNN constructed bandwidth function $\hat{\rho}$,
which is computed from the stand-alone dataset $Y$,
to $\bar{\rho} = p^{-1/d}$ w.h.p. and in terms of relative error (c.f. Theorem \ref{thm:hatrho}). 
We simplify notation by setting $N=N_y$  in this section.
All proofs are in Section \ref{sec:proofs} and 
Appendix.

Let ${\calM}$ denote the low-dimensional manifold, 
and $dV$ the volume element of $\calM$. 
When $\calM$ is orientable, $dV$ is the Riemann volume form; 
otherwise, $dV$ is the measure associated with the local volume form.
In \rev{both cases}, 
$({\calM}, dV)$ is a measure space. 
More differential geometry set-ups are provided in Appendix \ref{app:diffgeo}.

\begin{assumption}[Assumption on manifold ${\calM}$ and data density $p$.]\label{assump:M-p}

(A1) ${\calM}$ is a $d$-dimensional $C^{\infty}$ and compact manifold without boundary,
isometrically embedded in $\mathbb{R}^{D}$ 
{via $\iota$. When there is no danger of confusion, we use the same notation $x$ to denote $x\in {\calM}$ and $\iota(x)\in \mathbb{R}^D$}.

(A2) $p\in C^{\infty}(\calM)$ and uniformly bounded both from below and above, that is, $\exists p_{min}, \, p_{max} > 0$ s.t.
\[
0< p_{min} 
\le p(x) \le
p_{max} < \infty,
\quad\forall x\in{\calM}.
\]
\end{assumption}

Smoothness of $\calM$ and $p$ suffices \rev{most} application scenarios, 
and theoretically can be relaxed by standard \rev{functional approximation techniques}.
For simplicity we consider smooth {$\calM$ and} $p$ only.

\subsection{kNN Construction of $\hat{\rho}$}\label{subsec:def-hatrho}

Given $Y = \{ y_j\}_{j=1}^N$,
the kNN-estimated bandwidth function $\hat{\rho}(x)$ is a scalar field on $x \in {\calM}$ computed from $Y$, defined as
\begin{equation}\label{eq:def-knn-hatR}
\hat{\rho}(x) :=  \hat{R}(x) \left( \frac{1}{m_0[h]}\frac{ \rev{k} }{N_y} \right)^{-1/d},
\quad
\hat{R} (x) := \inf_{r} \left\{ r > 0,\, \text{ s.t. } \sum_{j=1}^{N_y} {\bf 1 }_{\{ \| y_j - x\| < r \}} \ge k \right\},
\end{equation}
and
 $m_0$ is a functional defined for function $h$ on $[0, \infty)$ sufficiently decayed as 
$m_0[h] := \int_{\R^d} h( |u|^2 )  du$.
We use $m_0$ to denote the scalar when not to emphasize the dependence on the function $h$. 
Note that for $h = {\bf 1}_{[0,1)}$, $m_0[h]$  equals the volume of unit $d$-\rev{ball}.
The definition \eqref{eq:def-knn-hatR}
is equivalent to that 
$\hat{R} (x)= \| x - y^{(k,x)}\|$,
where ${y^{(k,x)}}$ is the $k$-th nearest neighbor of $x$ in $Y$.
The following lemma gives a direct proof of the piecewise differentiability and 
 Lipschitz continuity of the knn-constructed $\hat{R}$.
The  Lipschitz constant of $\hat{R}$ is important in our proof of uniform convergence of $\hat{\rho}$. 

\begin{lemma}\label{lemma:knn-rx}
Suppose $Y$ has distinct data points $y_j$ 
and  $1< k <N_y$. 
Then $\hat{R}$ defined in \eqref{eq:def-knn-hatR}
is Lipchitz continuous on $\R^D$ with  $\text{Lip}_{\R^D}(\hat{R}) \le 1$.
Moreover, $\hat{R}$ is $C^\infty$ on $\R^D \backslash E$, 
where $E$ is a finite union of ($D$-1)-hyperplanes
(finitely many points when $D=1$).
\end{lemma}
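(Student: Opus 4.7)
The plan has two parts, matching the two assertions.

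\textbf{Part 1 (Lipschitz estimate).} For each $y_j \in Y$, the function $f_j(x) := \|x - y_j\|$ is $1$-Lipschitz on $\R^D$ by the reverse triangle inequality. I would argue that $\hat{R}(x)$, being essentially the $k$-th order statistic of $\{f_j(x)\}_{j=1}^{N_y}$, inherits this property directly from the definition \eqref{eq:def-knn-hatR}. Fix $x, x' \in \R^D$ and set $r := \hat{R}(x)$. By definition of $\hat{R}(x)$, at least $k$ indices $j$ satisfy $f_j(x) \le r$. For each such $j$, the $1$-Lipschitz bound for $f_j$ yields $f_j(x') \le f_j(x) + \|x - x'\| \le r + \|x - x'\|$, so at least $k$ indices satisfy $f_j(x') \le r + \|x - x'\|$. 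The definition then gives $\hat{R}(x') \le \hat{R}(x) + \|x - x'\|$, and swapping the roles of $x$ and $x'$ yields $|\hat{R}(x) - \hat{R}(x')| \le \|x - x'\|$.

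\textbf{Part 2 (Piecewise smoothness).} The exceptional set should collect those $x$ at which the ordering of distances to $Y$ is ambiguous, i.e., those equidistant to two distinct points of $Y$. Define
\[
E := \bigcup_{1 \le i < j \le N_y} H_{ij}, \qquad H_{ij} := \{x \in \R^D : \|x - y_i\| = \|x - y_j\|\}.
\]
Since the $y_j$ are distinct, each $H_{ij}$ is the perpendicular bisector hyperplane of $y_i, y_j$, of dimension $D-1$ (a single point when $D=1$), and $E$ is a finite union of such. For any $x_0 \in \R^D \setminus E$, the values $\{f_j(x_0)\}_{j=1}^{N_y}$ are pairwise distinct, so by continuity their strict ordering persists on an open ball $B$ around $x_0$ contained in $\R^D \setminus E$. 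Hence the index of the $k$-th nearest neighbor is constant on $B$, call it $j_0$, and $\hat{R}(x) \equiv f_{j_0}(x)$ on $B$. Furthermore $x_0 \ne y_{j_0}$: otherwise $f_{j_0}(x_0) = 0$ would make $y_{j_0}$ strictly the closest point in $Y$ to $x_0$, contradicting its being the $k$-th nearest for $k \ge 2$. So $x \mapsto \|x - y_{j_0}\|$ is $C^\infty$ on a neighborhood of $x_0$, showing $\hat{R} \in C^\infty(\R^D \setminus E)$.

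\textbf{Anticipated obstacle.} The only nonroutine point is verifying that the identity of the $k$-th nearest neighbor is locally constant on $\R^D \setminus E$; this is precisely where the assumption $x_0 \notin E$ does real work, upgrading the non-strict inequalities coming from the definition of $\hat{R}$ to strict ones that persist under small perturbations. Everything else reduces to the triangle inequality and the elementary fact that the distance function $x \mapsto \|x-y_{j_0}\|$ is smooth away from $y_{j_0}$. The distinctness hypothesis on $Y$ and the range $1 < k < N_y$ are both used in this last smoothness verification and, implicitly, to ensure $E$ is a union of genuine hyperplanes rather than degenerating.
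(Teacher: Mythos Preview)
Your proof is correct and, for the Lipschitz bound, more direct than the paper's. The paper establishes piecewise smoothness first (identifying the polygonal cells of $\R^D\setminus E$ and showing $\hat R(x)=\|x-y_{\mathbf p}\|$ on each cell $\mathbf p$), then proves continuity of $\hat R$ separately by an $\varepsilon$-$\delta$ argument, and finally deduces the global Lipschitz bound by decomposing the segment $[x,x']$ into finitely many sub-segments, each lying in a closed cell where $|\nabla\hat R|=1$. Your order-statistic argument via the reverse triangle inequality bypasses all of this in one stroke and needs no separate continuity step. For the smoothness claim the two arguments are essentially the same: both identify $E$ as the union of perpendicular bisectors and observe that the $k$-th nearest neighbor index is locally constant off $E$, with $k>1$ ruling out $x_0=y_{j_0}$. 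What the paper's longer route buys is an explicit global cell structure (each polygon $\mathbf p$ carries a fixed $y_{\mathbf p}\notin\mathbf p$), which it later reuses to bound higher derivatives of $\hat R$; but for the lemma as stated your argument is the cleaner one.
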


One may consider variants of kNN-estimator.
Specifically, 
\rev{a} generalization of \eqref{eq:def-knn-hatR} \rev{can be}
$\hat{R} (x) = \inf_{r} \left\{ r > 0,\, \text{ s.t. } \sum_{j=1}^{N_y} h \left(  \frac{ \| x- y_j \|^2}{r^2}  \right) \ge k \right\}$,
\rev{where one can} 
introduce weights proportional to the distance $\|x-y_j\|$ by considering a more general $h$.
\rev{
The definition \eqref{eq:def-knn-hatR} is equivalent to taking $h = {\bf 1}_{[0,1)}$.}
One advantage of the classical kNN-estimator \eqref{eq:def-knn-hatR} is its efficient computation by \rev{the} fast kNN algorithm.
We are not aware of any other widely used weighted version of kNN-estimator, 
thus we postpone the possible extension  to larger class of $h$  to future work.

\subsection{$C^0$ Consistency of $\hat{\rho}$}\label{subsec:concen-one-point}

The concentration of $\hat{\rho}$ at $\bar{\rho}$ at a point $x_0$ is a result of the concentration of the independent sum
in \eqref{eq:def-knn-hatR},
which we prove in the following proposition:

\begin{proposition}\label{prop:hatrho-one-point}
Under Assumption \ref{assump:M-p},
if as $N \to \infty$, $k = o(N)$
and $k = \Omega( \log N)$,
then, \rev{for any $s > 0$,
when $N$ is sufficiently large,
for any $x \in {\calM}$,}
w.p. $> 1- 2N^{- s}$, 
\begin{equation}\label{eq:bound-hatrho}
\frac{ |\hat{\rho}(x) - \bar{\rho}(x) |}{\bar{\rho}(x)}
= O^{[p]} \left( \left(\frac{k}{N} \right)^{2/d} \right) + 
 \frac{\rev{3}}{d} \sqrt{ \frac{ s \log N }{k}},
\end{equation}
where 
\rev{
the constant in $O^{[p]}(\cdot)$ is determined by $p$,
the threshold for large $N$ depends on $p$ and $s$,
and both are uniform for all $x$.
}
\end{proposition}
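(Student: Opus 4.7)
The plan is to reduce the concentration of $\hat{R}(x)$ to a binomial tail inequality applied at two cleverly chosen radii. Fix $x \in \calM$ and set
\begin{equation*}
S_r := \sum_{j=1}^{N} \mathbf{1}_{\{\|y_j - x\| < r\}},
\qquad
q(r) := \Pr_{y \sim p\,dV}(\|y-x\| < r),
\end{equation*}
so that $S_r \sim \mathrm{Binomial}(N, q(r))$. From \eqref{eq:def-knn-hatR}, $\hat{R}(x)$ is the $k$-th order statistic of the distances $\|y_j - x\|$, hence (almost surely, since $p\,dV$ is continuous) $\{\hat{R}(x) > r\} = \{S_r < k\}$ and $\{\hat{R}(x) < r\} = \{S_r \geq k\}$. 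Because $\hat{\rho}(x)/\bar{\rho}(x) = \hat{R}(x)/r^{*}$ for the natural center $r^{*}$ defined below, relative deviations of $\hat{R}$ around $r^{*}$ translate verbatim into relative deviations of $\hat{\rho}$ around $\bar{\rho}$.

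The next step is a second-order expansion of $q$. Using normal coordinates at $x$ (valid below a threshold $r_0 > 0$ depending only on $\calM$'s reach) and Taylor-expanding the smooth density $p$, standard manifold volume comparison gives
\begin{equation*}
q(r) = p(x)\, m_0 \, r^d \bigl( 1 + C(x)\, r^2 + O(r^4) \bigr),
\end{equation*}
where $|C(x)|$ is bounded uniformly on $\calM$ by derivatives of $p$ up to order $2$ and by the second fundamental form of $\iota$. I would then define $r^{*} := \bar{\rho}(x) ( k/(m_0 N) )^{1/d}$, so that $N p(x) m_0 r^{*d} = k$ exactly; the assumption $k = o(N)$ ensures $r^{*} < r_0$ for $N$ larger than a $p$-dependent threshold, and $r^{*2} = \Theta^{[p]}((k/N)^{2/d})$ controls the bias of $N q(r^{*})$ from $k$.

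The heart of the argument is a two-sided Chernoff bound at $r_{\pm} := r^{*}(1 \pm \delta)$. The means satisfy
\begin{equation*}
\mu_{\pm} := N q(r_{\pm}) = k\, (1 \pm \delta)^d \bigl( 1 + O^{[p]}(r^{*2}) \bigr),
\end{equation*}
and the multiplicative Chernoff inequality, e.g.\ $\Pr(|S_r - \mu| \geq \theta \mu) \leq 2 e^{-\theta^2 \mu/3}$ for $\theta \in (0,1)$, applied with $\theta_{\pm} := |1 - k/\mu_{\pm}|$, bounds the two failure events $\{\hat{R}(x) > r_{+}\}$ and $\{\hat{R}(x) < r_{-}\}$. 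Linearising $(1 \pm \delta)^d = 1 \pm d\delta + O(\delta^2)$ gives $\theta_{\pm} \geq d\delta - O^{[p]}(r^{*2}) - O(\delta^2)$. Splitting $\delta = \delta_{\text{bias}} + \delta_{\text{var}}$ with $\delta_{\text{bias}} = O^{[p]}(r^{*2})$ chosen to absorb the bias and $\delta^2$ remainders, and demanding $d^2 \delta_{\text{var}}^2 k / 3 \geq s \log N$, yields $\delta_{\text{var}} = (3/d) \sqrt{s \log N / k}$ (the constant $3$ being a deliberately loose upper bound on the sharp $\sqrt{3}$). A union bound gives failure probability at most $2 N^{-s}$ and the two pieces of $\delta$ combine into \eqref{eq:bound-hatrho}.

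The main obstacle is constant tracking rather than any deep idea. Obtaining exactly the $1/d$ factor in the variance term hinges on the first-order linearisation of $(1 \pm \delta)^d$, and for that linearisation to be legitimate one must keep $\delta$ genuinely small, which is precisely where the hypotheses $k = \Omega(\log N)$ (making $\delta_{\text{var}} \to 0$) and $k = o(N)$ (making $\delta_{\text{bias}} \to 0$) enter. A secondary subtlety is that the threshold on $N$ for the Taylor expansions, the injectivity-radius condition $r^{*} < r_0$, and the bound on $C(x)$ must all be uniform in $x \in \calM$; this is immediate from compactness of $\calM$ and the uniform bounds $p_{\min} \leq p \leq p_{\max}$.
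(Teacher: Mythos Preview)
Your proposal is correct and follows essentially the same approach as the paper: reduce to a two-sided tail bound for the binomial count $S_r$ at radii $r_\pm = r^*(1\pm\delta)$, use the manifold volume expansion $q(r) = m_0 p(x) r^d(1 + O^{[p]}(r^2))$ to control the bias, and split $\delta$ into a $\Theta^{[p]}((k/N)^{2/d})$ bias piece plus a variance piece sized by concentration. The only cosmetic differences are that the paper phrases the concentration step via Bernstein's inequality (with an explicit variance bound $\bar{\nu}_H \le 1.5\,k/N$) rather than the multiplicative Chernoff form, and packages the volume expansion as Lemma~\ref{lemma:G-expansion-h-indicator}; both routes produce the same $\sqrt{3}/d$ constant that is then relaxed to $3/d$.
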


Combined with the global Lipschitz continuity of $\hat{\rho}$ (Lemma \ref{lemma:knn-rx})
and a bound of the covering number of ${\calM}$ (Lemma \ref{lemma:covering-2}),
we are ready to prove the main result of this section:

\begin{theorem}\label{thm:hatrho}
Under Assumption \ref{assump:M-p},  $\hat{\rho}$ defined as in \eqref{eq:def-knn-hatR},
$N=N_y$. 
If as $N \to \infty$, $k = o(N)$ and
$k = \Omega( \log N)$,
then when $N$ is sufficiently large,
w.p. higher than  $1- N^{-10}$,
\[
\sup_{x \in {\calM}} \frac{|  \hat{\rho}(x)- \bar{\rho}(x) |}{ \bar{\rho}(x)}
=
O^{[p]} \left(\left(\frac{k }{N }\right)^{2/d} \right) 
+   \frac{ \rev{3\sqrt{13}}}{d} \sqrt{  \frac{  \log N }{k }} ,
\]
and the right-hand side (r.h.s.) is $o^{[p]}(1)$.
\end{theorem}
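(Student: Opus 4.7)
The plan is to lift the pointwise concentration of Proposition \ref{prop:hatrho-one-point} to a uniform bound on $\calM$ via a standard covering / union-bound argument, using the global Lipschitz control of $\hat\rho$ from Lemma \ref{lemma:knn-rx} to interpolate between net points.

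First I would fix a finite $\varepsilon$-net $\{x_\alpha\}_{\alpha\in A}\subset\calM$ with $|A|\le C_{\calM}\,\varepsilon^{-d}$, invoking Lemma \ref{lemma:covering-2}. Applying Proposition \ref{prop:hatrho-one-point} at each $x_\alpha$ with tail parameter $s=13$ and union-bounding, the per-point estimate \eqref{eq:bound-hatrho} holds simultaneously at every $x_\alpha$ with probability at least $1-2C_{\calM}\,\varepsilon^{-d}N^{-13}$.

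Next, for an arbitrary $x\in\calM$ I would pick the nearest net point $x_\alpha$, so $\|x-x_\alpha\|\le\varepsilon$. Lemma \ref{lemma:knn-rx} gives $\mathrm{Lip}_{\R^D}(\hat R)\le 1$; since $\hat\rho$ differs from $\hat R$ only by the constant factor $(m_0 k/N)^{-1/d}$, this yields $\mathrm{Lip}(\hat\rho)\le (m_0 N/k)^{1/d}$. Because $p$ is smooth and uniformly bounded away from zero, $\bar\rho=p^{-1/d}$ is Lipschitz on $\calM$ with a $p$-dependent constant. Combining these bounds and dividing by $\bar\rho(x)$, which is $\Theta^{[p]}(1)$ uniformly, gives
\[
\frac{|\hat\rho(x)-\bar\rho(x)|}{\bar\rho(x)} \le (1+o(1))\,\frac{|\hat\rho(x_\alpha)-\bar\rho(x_\alpha)|}{\bar\rho(x_\alpha)} + C_1\,\varepsilon\,(N/k)^{1/d} + C_2\,\varepsilon,
\]
where $C_1,C_2$ depend on $p$. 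To absorb the interpolation error into the variance term of \eqref{eq:bound-hatrho}, I would set $\varepsilon\sim(k/N)^{1/d}\sqrt{\log N/k}$, which makes $\varepsilon(N/k)^{1/d}\sim\sqrt{\log N/k}$ and keeps $\varepsilon^{-d}$ polynomial in $N$; then $2C_{\calM}\,\varepsilon^{-d}N^{-13}\le N^{-10}$ once $N$ is large, and the $\sqrt{13}$ in the stated variance constant is precisely $\sqrt{s}$.

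The main obstacle is that $\mathrm{Lip}(\hat\rho)$ grows like $(N/k)^{1/d}$, forcing a fairly fine net; this is only affordable because Proposition \ref{prop:hatrho-one-point} provides a tunable polynomial-in-$N$ tail via the parameter $s$. One must also verify that the interpolation error is steered into the variance rather than the bias term, so that no extra $p$-dependence appears in the stated constant $3\sqrt{13}/d$ in front of $\sqrt{\log N/k}$, while the $p$-dependence remains confined to the bias factor $O^{[p]}((k/N)^{2/d})$; the choice of $\varepsilon$ above is precisely what secures this split.
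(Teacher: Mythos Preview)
Your overall strategy---covering net, union bound over Proposition \ref{prop:hatrho-one-point}, Lipschitz interpolation via Lemma \ref{lemma:knn-rx}---is exactly the paper's. The error is in where you route the interpolation term, and your stated reason for that routing is inverted.

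The interpolation contribution $|\hat\rho(x)-\hat\rho(x_\alpha)|/\bar\rho(x)$ carries the Lipschitz constant $(m_0^{-1}N/k)^{1/d}$ divided by $\bar\rho(x)\ge\rho_{\min}$; it is therefore intrinsically $p$-dependent. Steering it into the variance term would \emph{add} a factor of order $1/\rho_{\min}$ to the coefficient of $\sqrt{\log N/k}$, contaminating the explicit constant $3\sqrt{13}/d$---the opposite of what you claim. The paper does the reverse: it takes the net radius $r=\Theta^{[p]}((k/N)^{3/d})$, so that $r\cdot(N/k)^{1/d}=\Theta^{[p]}((k/N)^{2/d})$ and the interpolation error is bounded by $t_1(k/N)^{2/d}$, i.e.\ absorbed into the \emph{bias} term, which is already $O^{[p]}$. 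This doubles the bias constant but leaves the variance constant exactly $t_2\sqrt{s}/d$ with $s=13$.

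There is a second consequence of the paper's choice that your $\varepsilon$ does not secure. With $r=\Theta^{[p]}((k/N)^{3/d})$ the covering number is $n\le V(\calM)r^{-d}=O^{[p]}((N/k)^3)\le N^3$, so $2nN^{-13}\le N^{-10}$ regardless of $d$ and of how $k$ scales within $[\Omega(\log N),\,o(N)]$. With your $\varepsilon\sim(k/N)^{1/d}\sqrt{\log N/k}$ one gets $\varepsilon^{-d}\sim(N/k)(k/\log N)^{d/2}$, which can exceed $N^3$ when $d$ is large and $k$ is a sizable power of $N$; then $s=13$ is insufficient, and enlarging $s$ would again shift the stated constant $\sqrt{13}$.
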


\begin{remark}\label{rk:choice-k-thm-hatrho}
In the error bound, the $O( (k/N)^{2/d} )$ term is the ``bias'' error,
and the $O ( \sqrt{ {\log N}/{k}} )$ term is the ``variance'' error.
To balance the two  errors,
$k$ should be chosen according to
$k^{-1/2} \sim ( {k}/{N} )^{2/d}$, where we omit the $\sqrt{\log N}$ factor, and that is
$ k \sim N^{{1}/( 1+d/4 )}$. In this scaling,  the constant in front  theoretically depends on $p$
and generally is impractical to estimate. 
\end{remark}

\begin{remark}\label{rk:compare-fix-eps-kde}
One can compare Theorem \ref{thm:hatrho} to the estimation error bound of 
a fixed bandwidth KDE estimator of  $p$, e.g., for $\epsilon > 0$,
\begin{equation}\label{eq:def-hatp-fixeps}
\hat{p}(x) :=  \frac{\epsilon^{-d/2}}{ m_0 [h_{kde}]} 
\frac{1}{N_y} \sum_{j=1}^{N_y} h_{kde} \left(  \frac{ \| x- y_j\|^2}{ \epsilon} \right),
\end{equation}
where $ h_{kde}: \R_+ \to \R $ is usually a non-negative regular function.
When $h_{kde} \ge 0$ and satisfies Assumption \ref{assump:h-diffusionmap},
by analyzing the bias and variance errors of the independent sum in \eqref{eq:def-hatp-fixeps}
and using  Lemma \ref{lemma:h-integral-diffusionmap}, one can verify that
$
\hat{p}(x) = p(x) + O^{[p]}( \epsilon ) + O^{[1]} \left( {p(x)}^{1/2} \sqrt{\frac{\log N}{N  \epsilon^{d/2}}} \right)$.
The relative error, i.e., $ {|  \hat{p}(x)- p(x) |}/{ p(x)}$, at point $x$ is  then bounded by 
\[
O^{[p]} (\epsilon )+  O^{[1]} \left( p(x)^{-1/2} \sqrt{  \frac{  \log N }{ N \epsilon^{d/2} }}\right).
\]
To make a comparison to kNN $\hat{\rho}$,
we set \rev{$\epsilon =\Theta^{[1]} ( (k/N)^{2/d} )$},
 then Theorem \ref{thm:hatrho}  gives that the relative error
of $\hat{\rho}$, i.e.,
$ {|  \hat{\rho}(x)- \bar{\rho}(x) |}/{ \bar{\rho}(x)}$, is uniformly bounded by
$
O^{[p]} (\epsilon )+  O^{[1]} \left(  \sqrt{  \frac{  \log N }{ N \epsilon^{d/2} }}\right)$.
This illustrates that the variance error in the relative error of $\hat{p}$
has a factor $p(x)^{-1/2}$, 
while for $\hat{\rho}$ by kNN the \rev{variance error} term is uniformly bounded for all $x$ 
\rev{by $O^{[1]} \left(  \sqrt{  \frac{  \log N }{ N \epsilon^{d/2} }}\right)$ and the constant is} independent of $p(x)$. 
This difference between kNN $\hat{\rho}$ and fixed-bandwidth KDE $\hat{p}$
 is numerically verified in Section \ref{subsec:exp-hatrho} (Fig. \ref{fig:hatrho-hatp-1}).
\end{remark}

\subsection{$C^1$ Divergence of $\hat{\rho}$}\label{subsec:C1-inconsist}

As shown in the proof of Lemma \ref{lemma:knn-rx}, for any $x \in \R^D \backslash E $,
$| \bar{\nabla} \hat{R}(x) | =  1$,  where $ \bar{\nabla} $ denotes the gradient in the ambient space $\R^D$.
Thus,
\[
| \bar{\nabla} \hat{\rho}(x)  | = \left( \frac{1}{m_0[h]}\frac{ \rev{k} }{N_y} \right)^{-1/d}, \quad \forall x \in \R^D \backslash E,
\]
which is $\Omega(1)$ as $\frac{ \rev{k} }{N_y} \to 0$. 
This means that $\nabla_{\calM} \hat{\rho}(x)$   point-wise diverges almost everywhere, 
and cannot have  point-wise consistency to $\nabla_{\calM} \bar{\rho}(x)$, which is $O(1)$.

While the $l$-th $\R^D$-derivative of $\hat{\rho}$ can be bounded to be $O( (\frac{k}{N})^{-l/d} )$ 
(Lemma \ref{lemma:knn-rx-part2}),
this $C^1$ inconsistency of $\hat{\rho}$ by kNN estimation poses challenge to the graph Laplacian convergence,
because the limiting operator (see Section \ref{subsec:lap-operators})
 involves $\nabla_{\calM} \rho$ when $\rho$ is a deterministic bandwidth function \cite{berry2016variable}.
 On the other hand,
 the wide usage of self-tuned diffusion kernel
 in spectral clustering and spectral embedding
 suggests that the kNN-estimated $\hat{\rho}$ can lead to a consistent estimator of certain limiting manifold differential operators,
 though the $C^0$ consistency of $\hat{\rho}$ alone may not be able to directly prove that. 

In Section \ref{sec:laplacian}, we will show theoretically that the point-wise consistency of the graph Laplacian operator has a different and worse error rate 
than the consistency of the graph \rev{Dirichlet} form,
where consistency in both cases is obtained 
but under different conditions on $\rev{k}, N_y$ related to the bandwidth parameter  $\epsilon \to 0$.
The distinction is also revealed in experiments in Section \ref{sec:experiment}:
while  point-wisely $L_N f(x)$ can be oscillating and deviating from the ${\calL} f(x)$,
where $L_N$ is the graph Laplacian and ${\calL}$ is the limiting differential operator,
the Dirichlet form has much smaller error especially when $\epsilon$ is small
 (Fig. \ref{fig:Ln-error-1d} and Fig. \ref{fig:Ln-1d}).

\section{Analysis of  Graph Laplacian}\label{sec:laplacian}

In this section, 
we analyze the convergence of self-tuned graph Laplacian computed 
from dataset $X = \{x_i\}_{i=1}^{N_x}$, $x_i \sim p$,
sampled on ${\calM}$,
where $\hat{\rho}(x_i)$ has been computed from a stand-alone dataset $Y$,
and we assume that Theorem \ref{thm:hatrho} holds.
We first introduce the notations of limiting operators and the Dirichlet forms in Section \ref{subsec:lap-operators},
and then prove 

\begin{itemize}
\item
The convergence of the kernelized Dirichlet form in Section \ref{subsec:dir-form-kernel},
as a middle-step result;

\item
The convergence of the graph Dirichlet form in Section \ref{subsec:dir-form-graph};

\item
The convergence of $L_N f(x)$ for un-normalized and random-walk graph Laplacian operators $L_N$ 
in Section \ref{subsec:Lnf-convergence}.
\end{itemize}

We simplify notation $N = N_x$ in the section.
 All proofs are in Section \ref{sec:proofs} and 
 Appendix. 
The following regularity and decay condition 
is needed for \rev{the function} $k_0$ in \eqref{eq:selftune-family}.
The condition on $k_0$ in  \cite{coifman2006diffusion} is in Assumption \ref{assump:h-diffusionmap},
and here
we further assume 
non-negative $k_0$,  and $C^4$ regularity for simplicity.

\begin{assumption}[Assumption on $k_0$]
\label{assump:h-selftune}
$k_0$ satisfies Assumption \ref{assump:h-diffusionmap} and in addition,

(C1) Regularity. $k_0$ is continuous on $[0,\infty)$,  $C^4$ on $(0, \infty)$. 

(C2) Decay condition.  $\exists a, a_k >0$, s.t., $ |h^{(k)}(\xi )| \leq a_k e^{-a \xi}$ for all $\xi > 0$, $k=0, 1, \cdots, 4 $.

(C3) {Non-negativity}. $k_0 \ge 0$ on $[0, \infty)$.
 \end{assumption}
 
{We use} $m_0 = m_0[k_0]$ and $m_2 = m_2 [k_0]$
if the kernel function dependence is not clarified.

\subsection{Notation of Manifold Laplacian Operators and Dirichlet Forms}
\label{subsec:lap-operators}

Recall the weighted laplacian $\Delta_p$ defined as in \eqref{eq:def-weighted-detla-p}
on $({\calM}, p dV) $, where $p$ is the density of a positive measure on ${\calM}$.
Below, we write $\Delta_{\calM}$ as $\Delta$,  $\nabla_{\calM}$ as $\nabla$ when there is no danger of confusion.

Take a positive $C^1$ function $\rho$ on $\calM$.
As will appear in the analysis, we introduce ${\calL}^{(\alpha)}_{{\rho}}$ as 
\begin{equation}\label{eq:def-Lrho-alpha}
{\calL}_{\rho}^{(\alpha)} :=  
\Delta + 2  \frac{\nabla p}{p}\cdot \nabla + (d-2\alpha+2) \frac{\nabla \rho}{\rho } \cdot \nabla\,.
\end{equation}
When $\rho = \bar{\rho}= p^{-1/d}$, one can verify that 
${\calL}^{(\alpha)}:={\calL}^{(\alpha)}_{\bar{\rho}} $ satisfies
\begin{equation}\label{eq:def-calL-alpha}
{\calL}^{(\alpha)}  
=  
 \Delta +  \left(1 + \frac{2(\alpha -1)}{d}\right) \frac{\nabla p}{p } \cdot \nabla\,.
\end{equation}
We will show that the operators ${\calL}^{(\alpha)}  $ and $ p^{\frac{2(\alpha-1)}{d}}  {\calL}^{(\alpha)} $ 
are the limiting operators of the  (modified) random-walk and un-normalized graph Laplacians respectively.

The  {\it differential Dirichlet form} associated with $\Delta_p$ is defined as 
\[
{ \calE  }_{p} (f,f)
: = - \langle f, \Delta_p f\rangle_p = \int_{\calM} p |\nabla f|^2 \rev{dV},
\]
where $\langle f, g \rangle : = \int_{\calM} f g dV$ for $f, g \in L^\infty({\calM})$,
and $\langle f, g \rangle_q = \int_{\calM} f g  q dV$ for $q$ a density of a positive measure on ${\calM}$.
\rev{In below, we may omit $dV$ in the notation of integral over $\calM$,  that is, $\int_\calM f$ means $\int_\calM f dV$.}
Given a graph affinity matrix $W$ and a vector $ f: V \to \R$, $f \in \R^N$,
we consider the  
(normalized) {\it graph Dirichlet form} defined as 
\begin{align}\label{eq:def-EN(f,f)}
E_N(f,f)
 &:= \frac{2}{ \epsilon m_2} \frac{1}{N^2} \epsilon^{-d/2} f^T (D-W) f \\
& =  \frac{2}{ \epsilon m_2} \frac{1}{N^2}
\sum_{i,j=1}^N \epsilon^{-d/2} W_{ij}  f_i (f_i - f_j)   
 = \frac{1}{ \epsilon m_2} \frac{1}{N^2}
\sum_{i,j=1}^N \epsilon^{-d/2} W_{ij} (f_i - f_j)^2.  \nonumber
\end{align}
We will  prove that the graph Dirichlet form converges to the differential Dirichlet form of 
a density $p_{\alpha}: =  p^{ 1 + \frac{2(\alpha -1)}{d} }$ on ${\calM}$.
This is consistent with the above limiting operator, as  one can verify that 
\[
- \langle f,  p^{\frac{2(\alpha-1)}{d}} {\calL}^{(\alpha)} f \rangle_p
=
- \langle f, \Delta_{ p_{\alpha} } f \rangle_{p_{\alpha}} = {\calE}_{p_\alpha}(f,f).
\]

\subsection{Convergence of the Kernelized Dirichlet Form}
\label{subsec:dir-form-kernel}

Consider  $W = W^{(\alpha)}$ as in \eqref{eq:selftune-family},
and define
\begin{equation}\label{eq:def-hatK}
\hat{K}(x,y) := \epsilon^{-d/2} k_0  \left(  \frac{\| x - y \|^2}{ \epsilon \hat{\rho}(x)  \hat{\rho}( y ) } \right)
\frac{1}{ \hat{\rho}( x )^\alpha \hat{\rho}(y)^\alpha}.
\end{equation}
Then by definition  $\epsilon^{-d/2} W_{ij}^{(\alpha)} =  \hat{K}(x_i,x_j)$.
We use ``hat'' to emphasize the dependence on the estimated bandwidth $\hat{\rho}$.
When $f_i = f(x_i)$ for $f: {\calM} \to \R$ (here we use the notation $f$ for both the function and the vector),
$E_N(f,f)$ has the following population counterpart which is an integral form on ${\calM}$,
\begin{equation}\label{eq:def-hatE(f,f)}
{\calE}^{(\alpha)} (f,f)
 : =
 \frac{1}{\epsilon m_2} 
\int_{\calM} \int_{\calM}   (f(x) - f(y))^2 
\hat{K}(x,y) p(x) p(y)
dV(x) dV(y).
\end{equation}
We call ${\calE}^{(\alpha)} (f,f)$ the  {\it kernelized Dirichlet form}.
The following proposition proves the convergence of 
${\calE}^{(\alpha)} (f,f)$ to the differential Dirichlet form ${\calE}_{p_\alpha}(f,f)$: 
\begin{proposition}
\label{prop:hatE(f,f)}
Suppose $\hat{\rho}$ satisfies that 
$\sup_{x \in {\calM}}\frac{ | \hat{\rho}(x) - \bar{\rho}(x) | }{ | \bar{\rho}(x)|} < \varepsilon_\rho < 0.1$.
{Then} for any $f \in C^\infty( {\calM} )$,
\[
 {\calE}^{(\alpha)} (f,f) =   {\calE}_{p_\alpha}(f,f) (1 + O^{[\alpha]}( \varepsilon_\rho ))  + O^{[f,p]}( \epsilon),
\quad
p_{\alpha} =  p^{ 1 + {2(\alpha -1)}/{d} }.
\]
\end{proposition}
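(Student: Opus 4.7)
The plan is to split the proof into two independent pieces: (i) a perturbation step that replaces the random kNN bandwidth $\hat\rho$ by the deterministic $\bar\rho$ inside the kernel, producing only a multiplicative $(1+O^{[\alpha]}(\varepsilon_\rho))$ error; and (ii) a standard small-bandwidth expansion on the manifold of the resulting smooth-bandwidth form, whose leading term is exactly $\calE_{p_\alpha}(f,f)$ with remainder $O^{[f,p]}(\epsilon)$. Multiplying the two outputs and absorbing the cross term (using $\varepsilon_\rho<0.1$) then yields the stated estimate.

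\textbf{Step 1: bandwidth perturbation.} Write $\hat\rho(x)=\bar\rho(x)(1+\eta(x))$ with $\sup_\calM|\eta|\le\varepsilon_\rho$. In \eqref{eq:def-hatK} the explicit prefactor becomes $\bar\rho(x)^{-\alpha}\bar\rho(y)^{-\alpha}(1+\eta(x))^{-\alpha}(1+\eta(y))^{-\alpha}$, contributing a uniform multiplicative factor $1+O^{[\alpha]}(\varepsilon_\rho)$. The argument of $k_0$ is rescaled from $\xi:=\|x-y\|^2/(\epsilon\bar\rho(x)\bar\rho(y))$ to $\xi(1+\delta)$ with $|\delta|\le C\varepsilon_\rho$ (absolute $C$). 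By Assumption \ref{assump:h-selftune}(C2) both $k_0$ and $\xi k_0'(\xi)$ are bounded by exponentially decaying weights, so a first-order expansion $k_0(\xi(1+\delta))=k_0(\xi)+\delta\xi k_0'(\tilde\xi)$ produces a correction that is uniformly $O(\varepsilon_\rho)$ times an integrable function of $\xi$. Using nonnegativity of $(f(x)-f(y))^2 p(x)p(y)$, integration yields
\[
\calE^{(\alpha)}(f,f)=(1+O^{[\alpha]}(\varepsilon_\rho))\,\calE^{(\alpha)}_{\bar\rho}(f,f),
\]
where $\calE^{(\alpha)}_{\bar\rho}$ denotes the same form with $\hat\rho$ replaced by $\bar\rho$. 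This is precisely the kernel-bandwidth perturbation estimate that Lemma \ref{lemma:right-operator-3} is intended to supply.

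\textbf{Step 2: smooth-bandwidth expansion.} For the deterministic form, localize around $x$ using normal coordinates $y=\exp_x(s)$ and rescale $s=\sqrt{\epsilon}\,\bar\rho(x)\,u$. To leading order $\xi=|u|^2+O(\sqrt\epsilon|u|^3)$; the Riemannian volume element contributes $(\epsilon\bar\rho(x)^2)^{d/2}(1+O(\epsilon))$ that cancels the $\epsilon^{-d/2}$ normalization; and $(f(y)-f(x))^2=\epsilon\bar\rho(x)^2(\nabla f(x)\cdot u)^2+O(\epsilon^{3/2}|u|^3)$. All $O(\sqrt\epsilon)$ corrections generated by the first-order Taylor expansion of $f,p,\bar\rho$ and by the embedded metric are odd in $u$ and vanish against the even weight $k_0(|u|^2)$, leaving an $O(\epsilon)$ relative error per inner integral. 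The isotropy identity $\int_{\R^d}u_iu_j k_0(|u|^2)\,du=m_2\delta_{ij}$ then collapses the inner integral to
\[
\epsilon\,\bar\rho(x)^{d+2-\alpha}\,p(x)\,m_2\,|\nabla f(x)|^2 + O^{[f,p]}(\epsilon^2),
\]
the tail $\|x-y\|\gg\sqrt\epsilon$ being suppressed by the exponential decay in (C2). Dividing by $\epsilon m_2$ and integrating in $x$ against $\bar\rho(x)^{-\alpha}p(x)\,dV$ produces
\[
\calE^{(\alpha)}_{\bar\rho}(f,f)=\int_{\calM}\bar\rho^{d+2-2\alpha}\,p^2\,|\nabla f|^2\,dV+O^{[f,p]}(\epsilon).
\]
Substituting $\bar\rho=p^{-1/d}$ simplifies the exponent to $\bar\rho^{d+2-2\alpha}p^2=p^{1+2(\alpha-1)/d}=p_\alpha$, identifying the leading term as $\calE_{p_\alpha}(f,f)$.

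\textbf{Main obstacle.} The delicate point is Step 1: the rescaling inside $k_0$ acts nonlinearly, and the pointwise ratio $k_0(\xi(1+\delta))/k_0(\xi)$ is unbounded in $\xi$ when $k_0$ decays exponentially. One must integrate first, using the decay bounds on both $k_0$ and $k_0'$ to turn the pointwise correction into an $\varepsilon_\rho$-multiple of an integrable weight, and only then convert the additive estimate into a uniform multiplicative $(1+O^{[\alpha]}(\varepsilon_\rho))$ factor. Once the bandwidth is deterministic and smooth, the manifold asymptotic expansion in Step 2 is routine.
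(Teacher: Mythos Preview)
Your proposal is correct and follows essentially the same two-step strategy as the paper: replace $\hat\rho$ by $\bar\rho$ at multiplicative cost $1+O^{[\alpha]}(\varepsilon_\rho)$, then expand the smooth-bandwidth form to get $\calE_{p_\alpha}(f,f)+O^{[f,p]}(\epsilon)$. Two minor points worth flagging. First, your citation is off: Lemma~\ref{lemma:right-operator-3} is not the bandwidth-perturbation estimate but the \emph{smooth-bandwidth expansion} (your Step~2 in operator form); the paper applies it to $G^{(\bar\rho)}_{\epsilon\hat\rho(y)}$ after only partially replacing $\hat\rho(x)\to\bar\rho(x)$ so that the bandwidth \emph{function} inside the operator is $C^\infty$. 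Second, your Step~1 passage from ``correction is $O(\varepsilon_\rho)$ times an integrable weight'' to a \emph{multiplicative} factor needs one more sentence: the correction integral involves the auxiliary kernel $k_1(\xi)=\xi\,|k_0'(\tilde\xi)|\lesssim \xi e^{-a\xi/1.1}$, which again satisfies Assumption~\ref{assump:h-selftune}, so rerunning your Step~2 expansion for $k_1$ shows this correction integral is itself $\Theta(\calE_{p_\alpha}(f,f))+O(\epsilon)$, hence the additive $O(\varepsilon_\rho)\cdot\Theta(\calE_{p_\alpha}(f,f))$ bound becomes the desired multiplicative one. The paper makes exactly this move (its ``$\textcircled{3}'$ with $k_1$'' step).
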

\rev{
In the proposition, we omit the dependence on $\alpha$ in the notation of the $O^{[f,p]}( \epsilon)$ term.
Here and in below, 
we omit the dependence on $\alpha$ and track that on $p$ and $f$ in the big-$O$ notation,
unless we want to stress the former.}
The proposition leads to the convergence of $\E E_N(f,f)$, 
and is used in proving the convergence of $E_N(f,f)$ (Theorem \ref{thm:limit-form})
and the weak convergence of $L_N f$ (\rev{Theorem \ref{thm:limit-weak-un}}).
An important technical object used in the analysis of ${\calE}^{(\alpha)} (f,f)$ 
and later analysis 
is the following integral operator $G_\epsilon^{(\rho)}$ 
defined for $f \in C^\infty({\calM})$ and any $\epsilon > 0$,
\begin{equation}\label{eq:def-G-R-rho-epsilon}
G^{(\rho)}_\epsilon f(x) 
:= \epsilon^{-d/2} \int_{\calM} 
{  k_0 \left(  \frac{\| x - y\|^2}{  \epsilon  {\rho}( y)  } \right)}   f(y)  dV(y),
\end{equation}
which is well-defined when $\rho$ is positive and has some regularity so that the integral exists, e.g. $C^0$ regularity and bounded from below.
The following lemma is a reproduce of a similar step used in \cite{berry2016variable} 
where we derive point-wise error bound (see remark \ref{remark:lemma-right-operator}).

\begin{lemma}\label{lemma:right-operator-3}
Under Assumption \ref{assump:M-p}, suppose $k_0$ satisfies Assumption \ref{assump:h-selftune},
$f$ and $\rho$ are in $C^4 ( {\calM})$,
and $0 < \rho_{min} < \rho < \rho_{max}$ uniformly on ${\calM}$, then
\begin{equation}\label{eq:G-R-expansion-3}
\begin{split}
& G_\epsilon^{(\rho)} f 
 =  m_0 f {\rho}^{\frac{d}{2}}
+ \epsilon \frac{m_2}{2} (   \omega  f {\rho}^{1+\frac{d}{2}} + \Delta (f  {\rho}^{1+\frac{d}{2}} ) ) 
+ r_{\epsilon}^{(2)}, \\
&~~~
 \sup_{x \in {\calM}} | r_\epsilon^{(2)}(x) |
\le 
c_\rho (1+\sum_{l=0}^4 \| D^{(l)}  f \|_\infty ) (1+\sum_{l=0}^4  \| D^{(l)}  \rho^{-1}\|_\infty)
\epsilon^2
= O^{ [ f,\rho] } (\epsilon^2),
\end{split}
\end{equation}
where $c_\rho > 0$ is a constant depending on $({\calM}, k_0, \rho_{max},\rho_{min})$
(a rational function of $(\rho_{max},\rho_{min})$ where the coefficients depend on $({\calM}, k_0)$),
$ D^{(l)} $ is $l$-th manifold intrinsic derivatives, 
and $\omega(x)$ depends on local derivatives of the extrinsic manifold coordinates at $x$.
\end{lemma}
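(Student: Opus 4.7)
\textbf{Proof plan for Lemma \ref{lemma:right-operator-3}.} My plan is to reduce this to a standard small-bandwidth asymptotic expansion of a variable-bandwidth kernel integral on a smooth manifold, following the Laplace-type expansion used in \cite{coifman2006diffusion,singer2006graph,berry2016variable}, while keeping track of the constants' dependence on $f$ and $\rho$ so as to get the uniform $O^{[f,\rho]}(\epsilon^2)$ remainder.

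First I would fix a point $x \in \calM$ and localize the integral. Because $k_0$ satisfies the exponential decay bound (C2) of Assumption \ref{assump:h-selftune} and $\rho \ge \rho_{min} > 0$, the contribution to $G_\epsilon^{(\rho)} f(x)$ from $\{y : \|x-y\| > \epsilon^{1/2-\kappa}\}$ for any small $\kappa > 0$ is super-polynomially small in $\epsilon$, with a constant depending only on $(\calM, k_0, \rho_{min})$ and $\|f\|_\infty$. Then I would pass to normal coordinates $u \in T_x\calM \cong \R^d$ via $y = \exp_x(u)$, for which $dV(y) = (1 + O(|u|^2))\,du$ and the extrinsic Euclidean distance admits the standard expansion $\|x-y\|^2 = |u|^2 + Q_x(u,u,u,u) + O(|u|^5)$, where the quartic term $Q_x$ encodes the second fundamental form and will eventually produce the extrinsic curvature coefficient $\omega(x)$.

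Next I would perform the rescaling $u = \sqrt{\epsilon\,\rho(x)}\,v$, which converts $\epsilon^{-d/2}\,du$ into $\rho(x)^{d/2}\,dv$. The key subtlety is that the kernel argument involves $\rho(y)$, not $\rho(x)$; I handle this by writing
\begin{equation*}
\frac{\|x-y\|^2}{\epsilon\,\rho(y)}
= \frac{|v|^2}{1} \cdot \frac{\rho(x)}{\rho(y)} \cdot \Bigl(1 + \frac{Q_x}{|u|^2} + O(|u|^3)\Bigr),
\end{equation*}
and Taylor-expanding $\rho(y)/\rho(x) = 1 + \rho(x)^{-1}\nabla\rho(x)\cdot u + O(|u|^2)$. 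Then I Taylor-expand $k_0$ around $|v|^2$ using (C1)--(C2), and similarly Taylor-expand $f(y)$ and the volume density in $u$. The resulting integrand splits into a leading piece plus $\sqrt{\epsilon}$-order odd-moment terms (which vanish by radial symmetry of the remaining Gaussian-like measure, after using that $k_0$ acts on $|v|^2$) plus $\epsilon$-order terms. Collecting, the $\epsilon^0$ term gives $m_0 f(x)\rho(x)^{d/2}$, and the $\epsilon^1$ terms reorganize, via standard bookkeeping (converting second derivatives in normal coordinates into $\Delta$, and the quartic extrinsic correction into $\omega$), into exactly $\tfrac{m_2}{2}(\omega\,f\,\rho^{1+d/2} + \Delta(f\,\rho^{1+d/2}))$ at $x$; one verifies this by expanding $\Delta(f\rho^{1+d/2})$ via the Leibniz rule and matching coefficients of $\Delta f$, $\nabla f\cdot\nabla\rho$, $\Delta\rho$, $|\nabla\rho|^2$, and $f$.

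The main obstacle, and the only nontrivial bookkeeping, is showing that the Taylor expansion of $\rho(y)$ inside the kernel argument recombines with the $f(y)$ expansion to produce the clean form $\Delta(f\rho^{1+d/2})$ rather than some unstructured bilinear in $f,\rho$ and their derivatives; this is where the power $1+d/2$ appears, tracing back to the Jacobian $\rho(x)^{d/2}$ of the rescaling and the fact that $k_0'(|v|^2)$ arising from Taylor-expanding the denominator $\rho(y)$ produces, after integration by parts in $v$, a shift that is absorbed into the $\Delta(f\rho^{1+d/2})$ combination. For the uniform remainder bound, I would use that on the localized region $|u| \le \epsilon^{1/2-\kappa}$ the Taylor remainders of $f$, $\rho$, $\rho^{-1}$, and the embedding are controlled by $\sum_{l\le 4}\|D^{(l)} f\|_\infty$ and $\sum_{l\le 4}\|D^{(l)} \rho^{-1}\|_\infty$ together with intrinsic geometric constants of $(\calM, k_0)$, and the compactness of $\calM$ makes these bounds uniform in $x$; multiplying by the integrable envelopes $|v|^k e^{-a|v|^2/\rho_{max}}$ from (C2) gives the stated $O^{[f,\rho]}(\epsilon^2)$ remainder with the constant $c_\rho$ a rational function of $(\rho_{min}, \rho_{max})$ as claimed.
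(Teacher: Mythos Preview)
Your outline is correct, but the paper takes a structurally different route that avoids redoing the normal-coordinate expansion from scratch. Instead of passing to local coordinates and expanding everything simultaneously, the paper fixes $x$, writes
\[
\frac{\|x-y\|^2}{\epsilon\,\rho(y)} = \frac{\|x-y\|^2}{\epsilon\,\rho(x)} + \delta_{r_\epsilon}(x,y),
\qquad
\delta_{r_\epsilon}(x,y) = \frac{\|x-y\|^2}{\epsilon\,\rho(x)}\Bigl(\tfrac{\rho(x)}{\rho(y)} - 1\Bigr),
\]
and Taylor-expands $k_0$ to fourth order in $\delta_{r_\epsilon}$. This produces five integrals $\textcircled{1}$--$\textcircled{5}$, each of which is a \emph{fixed}-bandwidth kernel integral (bandwidth $\epsilon\rho(x)$) with kernel $k_j(r)=k_0^{(j)}(r)r^j$ applied to $g_j(y)=(\rho(x)/\rho(y)-1)^j f(y)$. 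Then Lemma~\ref{lemma:h-integral-diffusionmap} (the standard Coifman--Lafon expansion) is invoked as a black box on each piece. The recombination into $\Delta(f\rho^{1+d/2})$ becomes a clean algebraic identity using the explicit moment relations $m_2[k_0'(r)r]=-\tfrac{d+2}{2}m_2[k_0]$ and $m_2[k_0''(r)r^2]=\tfrac{(d+2)(d+4)}{4}m_2[k_0]$, together with the formulas $\Delta g_1(x)=\rho f\Delta\rho^{-1}+2\rho\nabla f\cdot\nabla\rho^{-1}$ and $\Delta g_2(x)=2f\rho^{-2}|\nabla\rho|^2$ (note $g_j(x)=0$, so the $m_0$ terms drop out for $j\ge 1$).

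What each approach buys: your direct normal-coordinate route is self-contained but requires you to carry out the local analysis in full, and your key step---the ``integration by parts in $v$'' that produces the power $1+d/2$---is left as a sketch. The paper's decomposition is more modular: it reduces everything to repeated application of the fixed-bandwidth lemma, and the emergence of $\rho^{1+d/2}$ is a verifiable three-line computation from the moment identities rather than an implicit cancellation. The paper's route also makes the remainder bound transparent, since the derivatives of $g_j$ are manifestly controlled by products of $\|D^{(l)}f\|_\infty$ and $\|D^{(l)}\rho^{-1}\|_\infty$, which is exactly the form claimed for $c_\rho$.
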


However, we cannot directly apply the lemma to \eqref{eq:def-hatE(f,f)} because $\hat{\rho}$ does not have $C^4$ regularity. 
The proof of Proposition \ref{prop:hatE(f,f)}
is via substituting $\hat{\rho}$ by $\bar{\rho}$ and control the error, 
and then applying Lemma \ref{lemma:right-operator-3} where $\rho = \bar{\rho}$ which is $C^\infty$.
As a postponed discussion,
modifying $\hat{\rho}$ to be $C^\infty$ is considered in Appendix \ref{app:another-pt-limit} under another limiting setting.

\subsection{Convergence of the Graph Dirichlet Form}\label{subsec:dir-form-graph}

We will show that when $N \to \infty$ and $\epsilon \to 0$ under a proper joint limit, 
$E_N(f ,f)$ converges  to
${\calE}_{p_\alpha}(f,f)$. 
This means that with the self-tuned kernel the graph Dirichlet form 
asymptotically recovers the differential Dirichlet form of
the weighted Laplacian $ \Delta_{ p_{\alpha} } $ on $({\calM}, p_{\alpha} dV)$.
In particular,
\begin{itemize}
\item 
When $\alpha =1$, $p_1 = p$, 
and the graph Laplacian recovers $\Delta_p$ on $({\calM}, p dV)$.

\item
When $\alpha = 0$, $p_0 =  p^{1-2/d}$,
thus the original self-tune graph Laplacian recovers weighted Laplacian with a modified density.  

\item
When $\alpha = 1-\frac{d}{2}$,
$p_\alpha $ is a constant,
then the graph Laplacian recovers $\Delta_{\calM}$
and the Dirichlet form with uniform density.
We provide an approach to obtain $\Delta_{\calM}$ when $d$ is not known in Section \ref{subsec:exp-d-not-known}. 
\end{itemize}

For the estimated $\hat{\rho}$ from $Y$, 
suppose Theorem \ref{thm:hatrho} holds,  
and we consider the randomness over $X$ conditioning on a realization of $Y$ under the good event.

\begin{theorem}\label{thm:limit-form}
Suppose 
$\hat{\rho}$ satisfies that 
$\sup_{x \in {\calM}}\frac{ | \hat{\rho}(x) - \bar{\rho}(x) | }{ | \bar{\rho}(x)|} < \varepsilon_\rho < 0.1$,
and as $N \to \infty$,
\[
\epsilon = o(1),
\quad
\epsilon^{d/2 } N = \Omega( \log N),
\quad
\varepsilon_\rho = o(1),
\]
then for any $f \in C^\infty({\calM})$,
 when $N$ is sufficiently large, w.p. $> 1- 2N^{-10}$, and $p_{\alpha} =  p^{ 1 + {2(\alpha -1)}/{d} }$,
\[
 E_N(f,f)
= 
{ \calE}_{p_\alpha}(f,f)(1 + O^{[\alpha]}(\varepsilon_\rho ))
+ O^{[f,p]} ( \epsilon ) 
+ O^{[1]} \left(  \sqrt{  \frac{  \log N    }{ N \epsilon^{d/2  }}  \int_{\calM}  |\nabla f |^4 p^{1+ \frac{4(\alpha-1)}{d}}  }\right).
\]
\end{theorem}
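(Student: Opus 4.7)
Condition on a realization of the stand-alone set $Y$ lying in the good event of Theorem \ref{thm:hatrho}, so that $\sup_{\calM}|\hat\rho-\bar\rho|/\bar\rho<\varepsilon_\rho$. Over the i.i.d.\ samples $\{x_i\}\subset X$ I would use the decomposition
\[
E_N(f,f)=\E[E_N(f,f)\mid Y]+\bigl(E_N(f,f)-\E[E_N(f,f)\mid Y]\bigr).
\]
For the bias, diagonal contributions $i=j$ vanish because $(f_i-f_j)^2=0$, so $\E[E_N(f,f)\mid Y]=(1-1/N)\,{\calE}^{(\alpha)}(f,f)$, and Proposition \ref{prop:hatE(f,f)} directly yields the first two error terms of the target expansion; the $1/N$ prefactor correction is absorbed into $O^{[f,p]}(\epsilon)$ under $\epsilon^{d/2}N=\Omega(\log N)$.

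The bulk of the work is the fluctuation bound. Set $Z_{ij}:=\hat K(x_i,x_j)(f(x_i)-f(x_j))^2/(\epsilon m_2)$ so that $E_N=N^{-2}\sum_{i,j}Z_{ij}$. Two moment estimates on $Z$ are needed. First, a uniform bound $|Z_{ij}|\le M=O^{[f]}(\epsilon^{-d/2})$ via the Lipschitz continuity of $f$ together with the decay condition $\xi\,k_0(\xi)=O(1)$ from Assumption \ref{assump:h-selftune}. Second, a conditional second moment
\[
\E[Z_{12}^2\mid Y] = O^{[1]}\Bigl(\epsilon^{-d/2}\int_{\calM}|\nabla f|^4\,p^{1+\tfrac{4(\alpha-1)}{d}}\,dV\Bigr)(1+o(1)),
\]
obtained by first replacing $\hat\rho$ by $\bar\rho$ inside $\hat K$ at multiplicative cost $1+O(\varepsilon_\rho)$ (the same substitution used to prove Proposition \ref{prop:hatE(f,f)}) and then performing the local change of variables $u=(y-x)/\sqrt{\epsilon\bar\rho(x)\bar\rho(y)}$ with the Taylor expansion $f(y)-f(x)=\nabla f(x)\cdot(y-x)+O(\|y-x\|^2)$. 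The $\epsilon^{-d/2}$ scaling arises because $\epsilon^{-d}$ from $\hat K^2$, times the volume factor $\epsilon^{d/2}$, times $\epsilon^2$ from $(f(x)-f(y))^4$, divided by $(\epsilon m_2)^2$, leaves exactly $\epsilon^{-d/2}$.

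Recognizing $E_N$ (up to the vanishing diagonal and the factor $(N-1)/N$) as a U-statistic of order two in i.i.d.\ samples, I would apply a Bernstein-type inequality for bounded U-statistics. Together with the two moment estimates this delivers, with probability at least $1-2N^{-10}$,
\[
|E_N(f,f)-\E[E_N(f,f)\mid Y]|=O\bigl(\sqrt{\E[Z_{12}^2\mid Y]\log N/N}\bigr)+O\bigl(M\log N/N\bigr),
\]
both summands of which are dominated by the stated $O^{[1]}\bigl(\sqrt{\log N/(N\epsilon^{d/2})\cdot\int_\calM|\nabla f|^4 p^{1+4(\alpha-1)/d}}\bigr)$ in the regime $\epsilon^{d/2}N=\Omega(\log N)$. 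The main obstacle I anticipate is the second-moment estimate: because $\hat\rho$ is only $C^0$-close to $\bar\rho$ and is $C^1$-divergent (Section \ref{subsec:C1-inconsist}), every appearance of $\hat\rho$ inside the kernel must be handled through the uniform relative bound $\varepsilon_\rho<0.1$ rather than by differentiating $\hat\rho$, exactly mirroring the trick used to prove Proposition \ref{prop:hatE(f,f)}. The choice of Bernstein-type inequality itself is standard and does not add further difficulty.
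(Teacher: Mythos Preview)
Your proposal is correct and follows essentially the same strategy as the paper: the same bias--fluctuation split, Proposition~\ref{prop:hatE(f,f)} for the bias, the same moment bounds $|Z_{12}|=O(\epsilon^{-d/2})$ and $\E Z_{12}^2=O(\epsilon^{-d/2}\int|\nabla f|^4 p^{1+4(\alpha-1)/d})$, and Bernstein-type concentration for the order-two U-statistic. The paper carries out the concentration step explicitly via the permutation-plus-Jensen decoupling (reducing to an i.i.d.\ sum over $N/2$ pairs), which is precisely the standard proof of the U-statistic Bernstein inequality you invoke; and your boundedness argument via global Lipschitz of $f$ and $\sup_\xi \xi k_0(\xi)<\infty$ is in fact cleaner than the paper's case split on $\|x_1-x_2\|\gtrless\delta_\epsilon$. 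One small slip: the $1/N$ correction from $(N-1)/N$ is not in general $O(\epsilon)$ when $d=1$ (only $\epsilon^{1/2}N=\Omega(\log N)$ is assumed), but it is always $o$ of the variance term $\sqrt{\log N/(N\epsilon^{d/2})}$, which is where the paper absorbs it.
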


\begin{remark}\label{rk:optimal-eps-thm-limit-form}
By Remark \ref{rk:choice-k-thm-hatrho}, 
the optimal choice of $k$ to minimize $\varepsilon_\rho$
is when $k \sim N_y^{1/(1+d/4)}$
and this leads to \rev{$\varepsilon_\rho \sim N_y^{-1/(2+d/2)} $ up to a $\log N$ factor. 
The possible $\log N$ factor is no longer declared in all the scalings in this remark.
To make} $\epsilon \sim \varepsilon_\rho$, it gives 
$\epsilon \sim  (\frac{k}{N_y})^{2/d} \sim N_y^{-1/(2+d/2)}$.
The scaling $\epsilon \sim  (\frac{k}{N_y})^{2/d}$
 is the same one as in the original kNN self-tune kernel \eqref{eq:selftune-1}.
\rev{Meanwhile, in the error bound in Theorem \ref{thm:limit-form},
leaving the term due to $\varepsilon_\rho$ aside,
the other two terms of bias and variance errors are balanced 
when $\epsilon \sim N_x^{- 1/( 2 + d/2) }$,
and this gives the overall error of the two terms as $N_x^{- 1/( 2 + d/2)}$.
Compared to $\varepsilon_\rho \sim N_y^{-1/(2+d/2)} $ at the optimal scaling of $k$ with $N_y$,
the overall error bound in Theorem \ref{thm:limit-form} is balanced when $N_y = \Theta( N_x)$.
}
\end{remark}

To see the effect of self-tuning kernel, 
we compare Theorem \ref{thm:limit-form} with the following theorem for a fixed-bandwidth kernel
normalized by density estimators,
defined \rev{for $\beta \le 1$} as 
\begin{equation}\label{eq:def-Wbeta-fixepsilon}
W_{ij}^{(\beta)} = k_0 \left( \frac{\|x_i - x_j\|^2}{\epsilon} \right) \frac{1}{ \hat{p}(x_i )^\beta \hat{p}( x_j)^\beta },
\end{equation}
\rev{assuming $\hat{p}(x_i) > 0$ for all $i$.}
Let $E_{N,\epsilon}(f,f)$ equals \eqref{eq:def-EN(f,f)} with $W = W^{(\beta)}$,  
and that gives 
\begin{align*}
E_{N,\epsilon}(f,f)
 := \frac{1}{ \epsilon m_2} \frac{1}{N^2}
\sum_{i,j=1}^N \epsilon^{-d/2} W^{(\beta)}_{ij} (f_i - f_j)^2\,. 
\end{align*}
\rev{
Below, in Theorems \ref{thm:limit-form-fixeps} and \ref{thm:limit-pointwise-rw-fix-epsilon} about fixed-bandwidth kernel,
we track the constant dependence on $\beta$ more carefully since 
for the special case where $\beta =0$ no density estimation is needed.
}

\begin{theorem}\label{thm:limit-form-fixeps}
Suppose as $N \to \infty$, $ \epsilon = o(1)$, $ \epsilon^{d/2 } N = \Omega( \log N)$,
and 
if $\beta  \rev{\neq}  0$,
 the estimated density $\hat{p}$ satisfies that
$\sup_{x  \in {\calM} }\frac{ |\hat{p}(x) - p(x)| }{p(x)} < \varepsilon_p < 0.1$,
and $\varepsilon_p = o(1)$,
then for any $f \in C^{\infty} ({\calM})$, 
 when $N$ is sufficiently large,
w.p. $> 1- 2N^{-10}$, \rev{ and $c_\beta =  \max\{ 1.1^{-\beta-1}, 0.9^{-\beta-1}\}$,} 
\[
E_{N,\epsilon}(f,f)
= {\calE}_{p^{2-2\beta} } (f,f)(1+ \rev{ O^{ [1] }(\beta c_\beta} \varepsilon_p   )) 
+ O^{[f,p, \rev{ \beta } ]} \left( \epsilon \right) 
+ O^{[1]} \left(  \sqrt{  \frac{  \log N    }{ N \epsilon^{d/2  }}  \int_{\calM}  |\nabla f |^4 p^{2-4\beta}  }\right).
\]
\rev{
In particular, when $\beta =0$,
$
E_{N,\epsilon}(f,f)
= {\calE}_{p^{2} } (f,f)
+ O^{[f,p ]} \left( \epsilon \right) 
+ O^{[1]} \left(  \sqrt{  \frac{  \log N    }{ N \epsilon^{d/2  }}  \int_{\calM}  |\nabla f |^4 p^{2}  }\right)$.
}
\end{theorem}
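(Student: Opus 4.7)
The plan is to mirror the strategy used for the self-tuned case in Theorem \ref{thm:limit-form}, adapted to a fixed bandwidth and a KDE-type density estimator $\hat{p}$. The argument decomposes naturally into three parts: (i) a uniform substitution of $\hat{p}$ by $p$ at the cost of a multiplicative $(1+O(\beta c_\beta \varepsilon_p))$ error, (ii) a bias expansion of the resulting kernelized Dirichlet form using Lemma \ref{lemma:right-operator-3} with $\rho\equiv 1$, and (iii) a Bernstein-type variance/concentration estimate.

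For step (i), on the event $\sup_{\calM}|\hat{p}-p|/p<\varepsilon_p<0.1$, the mean value theorem applied to $t\mapsto t^{-\beta}$ on the interval between $\hat{p}(x)$ and $p(x)$ gives $|\hat{p}(x)^{-\beta}-p(x)^{-\beta}|\le |\beta|\,c_\beta\, p(x)^{-\beta-1}|\hat{p}(x)-p(x)|$, so $\hat{p}(x)^{-\beta}/p(x)^{-\beta}=1+O^{[1]}(\beta c_\beta \varepsilon_p)$ uniformly on $\calM$. Multiplying at the two sample points and factoring out of the double sum, we get $E_{N,\epsilon}(f,f)=\tilde E_{N,\epsilon}(f,f)(1+O^{[1]}(\beta c_\beta \varepsilon_p))$, where $\tilde E_{N,\epsilon}$ is the V-statistic with $\hat{p}$ replaced by $p$. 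When $\beta=0$ this step is vacuous and the prefactor is exactly $1$.

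For step (ii), let
\[
Y_{ij}:=\frac{1}{\epsilon m_2}\,\epsilon^{-d/2}\, k_0\!\left(\frac{\|x_i-x_j\|^2}{\epsilon}\right)(f_i-f_j)^2\,p(x_i)^{-\beta}p(x_j)^{-\beta},
\]
so $\tilde E_{N,\epsilon}(f,f)=\frac{1}{N^2}\sum_{i,j}Y_{ij}$. Its expectation equals $\frac{N-1}{N}$ times the integral
\[
\frac{1}{\epsilon m_2}\int_{\calM}p^{1-\beta}(x)\left(\int_{\calM}\epsilon^{-d/2}k_0\!\left(\tfrac{\|x-y\|^2}{\epsilon}\right)(f(x)-f(y))^2 p^{1-\beta}(y)\,dV(y)\right)dV(x).
\]
Apply Lemma \ref{lemma:right-operator-3} with $\rho\equiv 1$ to the inner integral, viewing the function in the lemma's role as $y\mapsto (f(x)-f(y))^2 p^{1-\beta}(y)$. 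Both this function and its gradient vanish at $y=x$, so the $m_0$ term and the $\omega$ term contribute zero, and only the Laplacian term survives: $\Delta_y[(f(x)-f(y))^2 p^{1-\beta}(y)]\big|_{y=x}=2|\nabla f(x)|^2 p^{1-\beta}(x)$. The inner integral is therefore $\epsilon m_2|\nabla f(x)|^2 p^{1-\beta}(x)+O^{[f,p,\beta]}(\epsilon^2)$, where the constant in $O^{[\cdot]}$ is uniform by the bounds on $p$ and $\|f\|_{C^4}$. Dividing by $\epsilon m_2$ and integrating against $p^{1-\beta}(x)$ yields $\E\tilde E_{N,\epsilon}(f,f)={\calE}_{p^{2-2\beta}}(f,f)+O^{[f,p,\beta]}(\epsilon)$, with the $1/N$ correction absorbed into the remainder.

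For step (iii), the concentration of $\tilde E_{N,\epsilon}$ around its expectation is the same V-statistic argument invoked in Theorem \ref{thm:limit-form}. A direct computation under the change of variables $y=x+\sqrt\epsilon u$ on a normal chart gives $\E Y_{ij}^2\lesssim \epsilon^{-d/2}\int_{\calM}|\nabla f|^4 p^{2-4\beta}\,dV$, while $|Y_{ij}|\lesssim \epsilon^{-d/2-1}\|f\|_\infty^2 p_{\min}^{-2\beta}$. A Bernstein-type inequality (applied either through a U-statistic Hoeffding decomposition or via the double-sum concentration used for the self-tuned case) then gives, under $N\epsilon^{d/2}=\Omega(\log N)$, the deviation bound $O^{[1]}\bigl(\sqrt{\tfrac{\log N}{N\epsilon^{d/2}}\int_{\calM}|\nabla f|^4 p^{2-4\beta}}\bigr)$ with probability at least $1-2N^{-10}$. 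Combining (i)-(iii) and noting that the multiplicative factor from (i) couples with the limiting Dirichlet form gives the stated formula. The special case $\beta=0$ is obtained by dropping step (i).

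The main obstacle is step (iii): the V-statistic kernel $Y_{ij}$ is highly peaked (pointwise size $\epsilon^{-d/2-1}$) yet has typical magnitude $\Theta(1)$, so the Bernstein bound must exploit the much smaller variance $\E Y_{ij}^2\sim\epsilon^{-d/2}$ rather than the crude $L^\infty$ bound. This is exactly the technical ingredient already developed for Theorem \ref{thm:limit-form}, and we expect to reuse it verbatim with $\hat{\rho}$ replaced by the constant $1$ in the kernel and $p^{-\beta}$ playing the role of the self-tuning weight.
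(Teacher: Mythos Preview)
Your overall architecture matches the paper's: substitute $\hat p\to p$ using the uniform relative error, expand the fixed-bandwidth kernelized Dirichlet form to get $\calE_{p^{2-2\beta}}(f,f)+O(\epsilon)$, then control the V-statistic fluctuation by Bernstein via the decoupling trick. The minor variants you introduce---doing the $\hat p\to p$ replacement on the random sum (using nonnegativity of all terms) rather than on its expectation, and applying the kernel lemma directly to $y\mapsto (f(x)-f(y))^2 p^{1-\beta}(y)$ rather than expanding into $q$ and $fq$---are both fine and lead to the same conclusion.

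There is, however, one genuine gap in step (iii). You state the crude bound $|Y_{ij}|\lesssim \epsilon^{-d/2-1}\|f\|_\infty^2 p_{\min}^{-2\beta}$ and then claim Bernstein gives the deviation bound under $N\epsilon^{d/2}=\Omega(\log N)$. That does not follow: with $L\sim\epsilon^{-d/2-1}$ and $\nu\sim\epsilon^{-d/2}$, the sub-Gaussian regime of Bernstein requires $tL\lesssim\nu$, i.e.\ $t\lesssim\epsilon$, which with $t\sim\sqrt{\nu\log N/N}$ forces the stronger condition $N\epsilon^{d/2+2}\gtrsim\log N$. The paper avoids this by sharpening the boundedness to $|V_{ij}|\le L=\Theta^{[f,p,\beta]}(\epsilon^{-d/2})$: one truncates to $\|x_i-x_j\|<\delta_\epsilon\sim\sqrt{\epsilon\log(1/\epsilon)}$ (the far-field contribution is $O(\epsilon^4)$ by the exponential decay of $k_0$), and on that ball uses $|f(x_i)-f(x_j)|\le 1.1\|\nabla f\|_\infty\|x_i-x_j\|$ so that $(f_i-f_j)^2/\epsilon$ is bounded by $\epsilon^{-1}\cdot O(\epsilon)\cdot k_0(\cdot)\cdot(\cdot)=O(1)$ times a bounded kernel. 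With $L\sim\epsilon^{-d/2}$ and $\nu\sim\epsilon^{-d/2}$ the ratio $tL/\nu$ is $O\bigl(\sqrt{\log N/(N\epsilon^{d/2})}\bigr)=o(1)$, and Bernstein goes through under the stated hypothesis. Your closing remark about reusing the Theorem~\ref{thm:limit-form} argument ``verbatim'' would in fact import this refinement, but the explicit $\epsilon^{-d/2-1}$ bound you wrote is the wrong one to feed into Bernstein.
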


Note that $ \Delta_{p^{2-2\beta}}  = \Delta + 2(1-\beta) \frac{ \nabla p }{p} \cdot \nabla$,
which is consistent with the limiting operator of the original Diffusion Map paper \cite{coifman2006diffusion},
and in particular,  $\beta = \frac{1}{2}$ recovers $\Delta_p$.
Strictly speaking, the setting is different because in \cite{coifman2006diffusion},
$D_{ii}^{1/2}$ is used to normalize the affinity matrix $W_{ij}=k_0 \left( \frac{\|x_i - x_j\|^2}{\epsilon} \right)$,
and $D_{ii} = \sum_{j}  k_0 \left( \frac{\|x_i - x_j\|^2}{\epsilon} \right)$.
While $D_{ii}$ can be viewed as a KDE,
normalizing by $D_{ii}$ introduces dependence and techniques to analyze normalized graph Lapalcian
are needed, e.g., as in Theorem \ref{thm:limit-pointwise-rw-3}.

\begin{remark}\label{rk:hatp-rel-error}
We have shown in Remark \ref{rk:compare-fix-eps-kde} that the relative error of $\hat{p}$
by a fixed-bandwidth KDE \eqref{eq:def-hatp-fixeps} behaves differently from that of $\hat{\rho}$.
Specifically, 
when variance error dominates, $| \hat{p} (x)  - p (x)|/p(x)$ is proportional to $p(x)^{-1/2}$,
while the variance error in $|\hat{\rho} (x)  - \bar{\rho} (x)|/ \bar{\rho}(x)$ can be made small uniformly for $x \in {\calM}$ independent of $p(x)$.
This means that, though 
the error bound in Theorem \ref{thm:limit-form-fixeps} has a $O(\varepsilon_p)$ term (when $\beta \rev{\neq} 0$)
which appears to be the counterpart of the $O(\varepsilon_\rho)$ term in the bound in Theorem \ref{thm:limit-form}, 
\rev{under situations where $p$ is} small at some places,
the kNN self-tuned kernel can have an advantage due to its ability to make $\varepsilon_\rho$ small.
To achieve \rev{the} same \rev{property} by the fixed-bandwidth kernel considered in Theorem \ref{thm:limit-form-fixeps},
it calls for \rev{ the KDE to} make $\varepsilon_p$ small,
which may need the KDE to be else than  \eqref{eq:def-hatp-fixeps}.
\end{remark}

We postpone further discussion about fixed bandwidth kernel,
since the current paper focuses on the estimated variable bandwidth kernel.

\subsection{Convergence of  $L_N f$}\label{subsec:Lnf-convergence}

We consider two types of graph Laplacian operator $L_N f$, 
where, 
using kernel $K^{(\alpha)}_{\epsilon, \hat{\rho}}$ as in \eqref{eq:selftune-family},
the un-normalized graph Laplacian operator applied to $f \in C^\infty({\calM})$ is defined as
\begin{equation}\label{eq:def-Lalapha-un}
L^{(\alpha)}_{ un}
f(x) = 
\frac{2 \epsilon^{-\frac{d}{2}-1}}{ {m_2 } }\frac{1}{\hat{\rho}(x)^\alpha}
\frac{1}{N}
\sum_{j=1}^{N}  
k_0 \left(  \frac{\| x - x_j\|^2}{ \epsilon \hat{\rho}(x)  \hat{\rho}(x_j) } \right)
\frac{f(x_j) - f(x)}{ \hat{\rho}(x_j)^\alpha},
\end{equation}
and the (modified) random-walk graph Laplacian operator is
\begin{equation}\label{eq:def-Lalapha-rw}
L^{(\alpha)}_{ rw'}
f(x) = 
\frac{1}{\epsilon \frac{m_2}{2m_0} 
\hat{\rho}(x)^{2}}
\left(\frac{\sum_{j=1}^{N}  
{  k_0 \left(  \frac{\| x - x_j\|^2}{  \epsilon  \hat{\rho}( x)  \hat{\rho}( x_j) } \right)}
\frac{f(x_{j})}{\hat{\rho}(x_j)^{\alpha} } }
{\sum_{j=1}^{N} 
{  k_0 \left(  \frac{\| x - x_j\|^2}{  \epsilon  \hat{\rho}( x)  \hat{\rho}( x_j) } \right)}
 \frac{1}{\hat{\rho}(x_j)^{\alpha} } }
 -f(x)
 \right).
\end{equation}
In the matrix form,
the operator differs from the usual random-walk Laplacian $(I - D^{-1} W)$ by multiplying another diagonal matrix $D_{\hat{\rho}}^{-2}$
(up to multiplying a constant and the sign),
thus we call it ``modified'' and denote it by ``rw-prime".

The point-wise convergence of $L_N f(x)$ at a fixed point $x \in {\calM}$
is a more traditional setting under which 
the convergence to a limiting diffusion operator has been considered
{in various papers}
\cite{coifman2006diffusion, singer2006graph, berry2016variable}.
The closest 
{one} is the result in \cite{berry2016variable}.
However, 
an extension of the method therein
 leads to a convergence to ${\calL}^{(\alpha)}_{\hat{\rho}} f$
under the asymptotic that $\epsilon = o( (k/N_y)^{4/d})$ (c.f. Theorems \ref{thm:limit-pointwise-rw} and \ref{thm:limit-pointwise-un} in Appendix \ref{app:another-pt-limit}). 
However, 
this convergence result does not imply consistency to ${\calL}^{(\alpha)}_{\bar{\rho}} f = {\calL}^{(\alpha)} f$,
due to the lack of convergence of $\frac{ \nabla \hat{\rho}}{ \hat{\rho}}$ to $\frac{ \nabla \bar{\rho}}{ \bar{\rho}}$,
as discussed in Section \ref{subsec:C1-inconsist}.
Meanwhile, note that the uniform $C^0$ consistency of $\hat{\rho}$ to $\bar{\rho}$ does imply weak convergence of 
${\calL}^{(\alpha)}_{\hat{\rho}} f \to {\calL}^{(\alpha)}_{\bar{\rho}} f$
when $\varepsilon_\rho \to 0$,
a result of the same type as Theorem \ref{thm:limit-weak-un},
while the latter shows an improved variance error (\rev{with} $\epsilon^{-1/2}$ rather than $\epsilon^{-d/4-1/2}$).

Back to the point-wise convergence of $L_N f(x)$.
To be able to establish the consistency to ${\calL}^{(\alpha)} f$,
we instead consider another limiting regime of $\epsilon$,
namely $\epsilon = \Omega(\varepsilon_\rho)$,
which is $\Omega( (k/N_y)^{2/d})$ up to a factor of $\sqrt{\log N}$ under the optimal scaling of $k$ as in Remark \ref{rk:choice-k-thm-hatrho}, 
 and we take a different approach. 
The following lemma  shows  that substituting $\bar{\rho}$ with $\hat{\rho}$ in $G_\epsilon^{(\rho)} f(x)$ incurs an extra error of $O(\varepsilon_\rho)$  point-wisely.
\begin{lemma}\label{lemma:right-operator-repalce-rho}
Under the same condition of Lemma \ref{lemma:right-operator-3}, in particular, $f$ and $\rho$ are in  $C^4( {\calM})$.
Suppose a positive integrable 
 $\tilde{\rho}: {\calM} \to \R^+$ satisfies that $\sup_{x \in {\calM}} \frac{|\tilde{\rho}(x) - \rho(x)|}{ \rho(x) } < \varepsilon < 0.1$, 
then when the $\epsilon$ in $G_\epsilon^{(\cdot)}$ is sufficiently small,
 \begin{equation}\label{eq:G-R-expansion-tilderho}
 G_\epsilon^{( \tilde{\rho})} f 
 =   G_\epsilon^{(\rho)} f + \tilde{r},
 \quad
 \sup_{x\in {\calM}} |\tilde{r}(x)| 
\le c_{\rho}' \|f\|_\infty \varepsilon
 = O^{[f, \rho]}(\varepsilon),
\end{equation}
where $c_\rho' > 0$ is a constant 
depending on  
$({\calM}, k_0, \rho_{max},\rho_{min})$. 
\end{lemma}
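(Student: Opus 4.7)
The plan is to bound the difference $G_\epsilon^{(\tilde\rho)} f(x) - G_\epsilon^{(\rho)} f(x)$ pointwise in $x$ by writing it as a single integral and controlling the integrand via a first-order Taylor expansion of $k_0$. Setting $u(y) := \|x-y\|^2/\epsilon$, the integrand involves $k_0(u(y)/\tilde\rho(y)) - k_0(u(y)/\rho(y))$. Since $k_0 \in C^4((0,\infty))$ by Assumption \ref{assump:h-selftune}, the mean value theorem produces some $\xi(y)$ between $u(y)/\rho(y)$ and $u(y)/\tilde\rho(y)$ such that this difference equals $k_0'(\xi(y)) \cdot u(y) \cdot (1/\tilde\rho(y) - 1/\rho(y))$.

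Next I would extract the $\varepsilon$ factor from the bandwidth perturbation. Because $\sup_\calM |\tilde\rho-\rho|/\rho < \varepsilon < 0.1$, we have $0.9\rho \le \tilde\rho \le 1.1\rho$ everywhere on $\calM$, and therefore
\[
\left| \frac{1}{\tilde\rho(y)} - \frac{1}{\rho(y)} \right| = \frac{|\tilde\rho(y) - \rho(y)|}{\tilde\rho(y)\rho(y)} \le \frac{\varepsilon}{0.9\,\rho_{\min}}.
\]
Consequently, on the interval containing $\xi(y)$, we also have the uniform lower bound $\xi(y) \ge u(y)/(1.1\rho_{\max})$, i.e.\ $u(y) \le 1.1\rho_{\max}\,\xi(y)$. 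Combined with the exponential decay $|k_0'(\xi)| \le a_1 e^{-a\xi}$ from Assumption \ref{assump:h-selftune}(C2), this lets me replace $|k_0'(\xi)|\cdot u$ by an expression that still decays exponentially in $u(y) = \|x-y\|^2/\epsilon$, namely of order $e^{-a'\|x-y\|^2/\epsilon}$ with $a' = a/(1.1\rho_{\max})$, at the cost of only absolute constants depending on $(k_0,\rho_{\min},\rho_{\max})$.

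With the pointwise-in-$y$ bound
\[
\epsilon^{-d/2}\left| k_0\!\left(\tfrac{u}{\tilde\rho}\right) - k_0\!\left(\tfrac{u}{\rho}\right) \right|\,|f(y)| \le C\,\|f\|_\infty\,\varepsilon\, \epsilon^{-d/2}\, \frac{\|x-y\|^2}{\epsilon}\, e^{-a' \|x-y\|^2/\epsilon}
\]
in hand, the last step is to integrate over $y\in\calM$. A standard localization and change of variables to exponential coordinates at $x$ (as in Lemma \ref{lemma:right-operator-3}), together with a compactness/volume comparison on $\calM$, shows that $\epsilon^{-d/2}\int_\calM (\|x-y\|^2/\epsilon)\, e^{-a'\|x-y\|^2/\epsilon}\, dV(y) = O(1)$ uniformly in $x$ and in small $\epsilon$, because after rescaling by $\sqrt{\epsilon}$ the integral becomes the Euclidean integral of $|v|^2 e^{-a'|v|^2}$ up to $O(\epsilon)$ curvature corrections. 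This yields $\sup_x |\tilde r(x)| \le c_\rho'\|f\|_\infty \varepsilon$ with $c_\rho'$ depending on $(\calM, k_0,\rho_{\max},\rho_{\min})$ as claimed.

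The main obstacle is bookkeeping rather than any deep idea: I need $\xi(y)$ to stay in the regime where both the $C^4$ regularity and the exponential decay of $k_0'$ hold, which is ensured precisely by the two-sided bound $0.9\rho \le \tilde\rho \le 1.1\rho$ coming from $\varepsilon < 0.1$, and I must make sure the growing factor $u(y)$ generated by differentiating $k_0$ in its argument is absorbed by the exponential, so that no extra $\epsilon^{-1}$ appears in the final bound. Once these are checked, the estimate reduces to a Gaussian-type integral that already appears in the proof of Lemma \ref{lemma:right-operator-3}.
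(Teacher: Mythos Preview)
Your argument is correct and follows essentially the same route as the paper: mean value theorem on $k_0$, the two-sided bound $0.9\rho\le\tilde\rho\le1.1\rho$ to control both $\xi$ and the perturbation factor, exponential decay of $k_0'$ to absorb the extra $\|x-y\|^2/\epsilon$, and then an $O(1)$ bound on the remaining kernel integral. The only cosmetic difference is that the paper keeps the $\rho(y)$-dependence in the exponent, recognizes the resulting integral as $G_\epsilon^{(\rho)}[k_1]\mathbf{1}(x)$ with $k_1(r)=a_1 r e^{-(a/1.1)r}$, and invokes Lemma~\ref{lemma:right-operator-3} directly, whereas you bound $\rho(y)$ by $\rho_{\min},\rho_{\max}$ first and reduce to a fixed-bandwidth Gaussian-type integral; both yield the same $c_\rho'\|f\|_\infty\varepsilon$ conclusion.
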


With the lemma, the following two theorems 
prove the  point-wise convergence to the limiting operators of the two graph Laplacians operators,
assuming that $\varepsilon_\rho = o(\epsilon)$.

\begin{theorem}\label{thm:limit-pointwise-rw-3}
Suppose $\hat{\rho}$ 
satisfies that
$\sup_{x \in {\calM}}\frac{ | \hat{\rho}(x) - \bar{\rho}(x) | }{ | \bar{\rho}(x)|} < \varepsilon_\rho < 0.1$,
and as $N \to \infty$, 
\[
\epsilon = o(1), 
\quad \epsilon^{d/2+1} N= \Omega (\log N),
\quad \varepsilon_\rho = o(\epsilon),
\]
then for any $f \in C^\infty({\calM})$, 
when $N$ is sufficiently large  \rev{and the threshold is determined by $(\calM, f,p)$ and  uniform for all $x$}, 
w.p. higher than $ 1- 4N^{-10} $, 
\[
L^{(\alpha)}_{ rw'} f(x) 
= {\calL}^{(\alpha)} f(x)
+ O^{[f,p  ]} \left(   \epsilon, \, \frac{\varepsilon_\rho}{\epsilon} \right)
+O^{[1]} \left(  \rev{ \| \nabla f \|_\infty  } p(x)^{1/d}   
\sqrt{     \frac{\log N}{   N  \epsilon^{d/2+1}}   }  \right),
\]
where the constants in big-$O$ are uniform for all $x \in {\calM}$.
\end{theorem}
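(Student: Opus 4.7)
The plan is to represent $L^{(\alpha)}_{rw'} f(x)$ as the single ratio
\[
L^{(\alpha)}_{rw'} f(x) = \frac{S_f(x) - f(x) S_1(x)}{\epsilon \tfrac{m_2}{2m_0}\hat{\rho}(x)^2\, S_1(x)},
\quad
S_g(x) := \frac{1}{N}\sum_{j=1}^N k_0\!\left(\frac{\|x-x_j\|^2}{\epsilon\hat{\rho}(x)\hat{\rho}(x_j)}\right)\frac{g(x_j)}{\hat{\rho}(x_j)^\alpha},
\]
and then, conditioning on $Y$ so that $\hat{\rho}$ is deterministic, to control (i) the bias of this ratio via a $C^0$-swap $\hat{\rho} \mapsto \bar{\rho}$ followed by the smooth expansion of Lemma \ref{lemma:right-operator-3} applied to $\bar{\rho}$, and (ii) the fluctuations of $S_f - f(x) S_1$ and of $S_1$ via Bernstein's inequality.

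For the bias, I evaluate $\E S_g(x) = \int_{\calM} k_0(\|x-y\|^2/(\epsilon\hat{\rho}(x)\hat{\rho}(y))) g(y)\hat{\rho}(y)^{-\alpha} p(y)\, dV(y)$. Because $\hat{\rho}$ lacks $C^4$ regularity, Lemma \ref{lemma:right-operator-3} is not directly applicable; instead, Lemma \ref{lemma:right-operator-repalce-rho} lets me replace the bandwidth $\hat{\rho}(x)\hat{\rho}(y)$ by $\rho^*(y) := \bar{\rho}(x)\bar{\rho}(y)$ at relative cost $O(\varepsilon_\rho)$, and $\hat{\rho}(y)^{-\alpha} = \bar{\rho}(y)^{-\alpha}(1 + O(\varepsilon_\rho))$ pointwise. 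Lemma \ref{lemma:right-operator-3} applied with $\rho = \rho^*$ and the smooth integrand $p\,\bar{\rho}^{-\alpha} g$ then expands the integral to order $\epsilon^2$. Using $\rho^*(x)^{d/2} = \bar{\rho}(x)^d = 1/p(x)$, the zeroth-order contributions cancel in $\E S_f - f(x)\E S_1$, and the $O(\epsilon)$ correction, after simplifying via $\nabla\bar{\rho}/\bar{\rho} = -\nabla p/(dp)$, assembles exactly into $\epsilon \tfrac{m_2}{2m_0}\bar{\rho}(x)^{2-\alpha}\,\calL^{(\alpha)} f(x)$. Dividing by $\epsilon\hat{\rho}(x)^2\, \E S_1 \sim \tfrac{m_2}{2}\epsilon^{d/2+1} p(x)^{(\alpha-2)/d}$ yields the deterministic part $\calL^{(\alpha)} f(x) + O^{[f,p]}(\epsilon) + O^{[f,p]}(\varepsilon_\rho/\epsilon)$; the $1/\epsilon$ inflation in the $\varepsilon_\rho$-term comes from dividing the absolute bias defect $O(\epsilon^{d/2}\varepsilon_\rho)$ by $\epsilon\cdot\epsilon^{d/2}$, and is exactly what forces the hypothesis $\varepsilon_\rho = o(\epsilon)$.

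For the variance, let $T(x) := S_f(x) - f(x) S_1(x) = \frac{1}{N}\sum_j \xi_j$ with $\xi_j = k_0(\cdot)(f(x_j)-f(x))\hat{\rho}(x_j)^{-\alpha}$. A change of variables $y = x + \sqrt{\epsilon}\bar{\rho}(x) u$ gives $(f(y)-f(x))^2 \sim \epsilon\bar{\rho}(x)^2|\nabla f(x)|^2|u|^2$ inside the effective support of $k_0$, hence $\E\xi_j^2 \le C\epsilon^{d/2+1}\|\nabla f\|_\infty^2 p(x)^{2(\alpha-1)/d}$ and $|\xi_j| \le C\sqrt{\epsilon}\|\nabla f\|_\infty$ up to exponential tails of $k_0$. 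Bernstein's inequality then yields $|T - \E T| \le C\|\nabla f\|_\infty p(x)^{(\alpha-1)/d}\sqrt{\epsilon^{d/2+1}\log N/N}$ with probability at least $1 - 2N^{-10}$, provided $\epsilon^{d/2+1}N = \Omega(\log N)$; the same argument applied to $S_1$ gives $|S_1 - \E S_1| = o(\E S_1)$, so in the denominator $S_1$ can be swapped for $\E S_1 \sim m_0\epsilon^{d/2} p(x)^{\alpha/d}$ at negligible cost. Dividing the variance bound by $\epsilon\hat{\rho}(x)^2\,\E S_1 \sim \tfrac{m_2}{2}\epsilon^{d/2+1} p(x)^{(\alpha-2)/d}$ collapses the $p$-power to $(\alpha-1)/d - (\alpha-2)/d = 1/d$, producing the claimed variance factor $p(x)^{1/d}\|\nabla f\|_\infty\sqrt{\log N/(N\epsilon^{d/2+1})}$.

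The main obstacle is the $C^1$ inconsistency of $\hat{\rho}$ of Section \ref{subsec:C1-inconsist}: one cannot smoothly expand $\hat{\rho}$-integrals or use $\nabla\hat{\rho}$ as a proxy for $\nabla\bar{\rho}$. The workaround, made possible by the uniform $C^0$ consistency (Theorem \ref{thm:hatrho}) and Lemma \ref{lemma:right-operator-repalce-rho}, is to perform every differential expansion on the smooth $\bar{\rho}$ and pay $O(\varepsilon_\rho)$ per substitution; this yields the worse pointwise bias rate $O(\varepsilon_\rho/\epsilon)$, compared to the $O(\varepsilon_\rho)$ obtained for the Dirichlet form in Theorem \ref{thm:limit-form}, because of the extra $1/\epsilon$ prefactor of $L^{(\alpha)}_{rw'}$. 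A final union bound over the concentration events for $T$, $S_1$, and the good event of Theorem \ref{thm:hatrho} gives the $1 - 4N^{-10}$ probability in the statement.
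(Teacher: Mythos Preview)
Your proposal is correct and follows the same bias--variance decomposition as the paper: swap $\hat\rho\mapsto\bar\rho$ at $O(\varepsilon_\rho)$ cost via Lemma~\ref{lemma:right-operator-repalce-rho}, expand via Lemma~\ref{lemma:right-operator-3}, and apply Bernstein. The paper organizes the variance step slightly differently: instead of concentrating $T=S_f-f(x)S_1$ and $S_1$ separately, it writes the deviation of the ratio exactly as $\frac{\frac1N\sum_j Y_j}{\E G\cdot\frac1N\sum_j G_j}$ with $Y_j=F_j\,\E G-G_j\,\E F$ (where $F_j=\epsilon^{-d/2}$ times your $k_0(\cdot)f(x_j)\hat\rho(x_j)^{-\alpha}$), and then computes $\E Y_j^2$ by expanding $\E F^2,\E G^2,\E(FG)$ to second order and observing the same cancellation you obtain more directly through $(f(x_j)-f(x))^2$. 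Your route is closer to the paper's proof of Theorem~\ref{thm:limit-pointwise-un-3} for the un-normalized operator; both yield the same variance bound. One small correction: the probability $1-4N^{-10}$ comes from the two Bernstein events (for your $T$ and $S_1$, each $2N^{-10}$); the good event of Theorem~\ref{thm:hatrho} is already part of the hypothesis ``$\sup_{\calM}|\hat\rho-\bar\rho|/\bar\rho<\varepsilon_\rho$'' and should not be counted in that union bound.
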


\begin{remark}\label{rk:nablaf(x)=0}
\rev{
As shown in the proof of Theorem \ref{thm:limit-pointwise-rw-3},
at $x$ where $\nabla f(x) \neq 0$, 
the variance error can be bounded by
$O^{[1]} \left(  | \nabla f (x)|   p(x)^{1/d}   
\sqrt{     \frac{\log N}{   N  \epsilon^{d/2+1}}   }  \right)$ with the same high probability and the threshold of  large $N$ possibly depends on $x$. 
One can also verify 
$O^{[1]} \left(  ( | \nabla f (x)| +0.1) p(x)^{1/d}   
\sqrt{     \frac{\log N}{   N  \epsilon^{d/2+1}}   }  \right)$ as the variance error bound
for large $N$ with $x$-uniform threshold.
The addition of $0.1$ is to make the factor $( | \nabla f (x)| +0.1) $ uniformly bounded from below 
and prevent the bound to vanish when $\nabla f(x) =0$, and 0.1 can be any other positive constant. 
If the behavior at a point $x$ is of interest, theoretically the variance error can be improved in rate
at $x$ where $\nabla f(x)$ vanishes \cite{singer2006graph}.
As we mainly track the influence of $p(x)$ which may be small at some $x$,
we adopt the $\| \nabla f \|_\infty$ factor in the theorem for simplicity. 
The same applies to the point-wise convergence results in Theorems \ref{thm:limit-pointwise-un-3} and \ref{thm:limit-pointwise-rw-fix-epsilon}.
}
\end{remark}

\begin{theorem}\label{thm:limit-pointwise-un-3}
With notation and condition same  as 
those in Theorem \ref{thm:limit-pointwise-rw-3}, when $N$ is sufficiently large \rev{and the threshold is determined by $(\calM, f,p)$ and uniform for all $x$},
w.p. higher than $ 1- 2N^{-10} $,
\[
L^{(\alpha)}_{ un} f(x) 
= p^{ \frac{ 2 (\alpha-1)}{d}}  {\calL}^{(\alpha)} f(x)
+ O^{[f,p   ]} \left(   \epsilon, \, \frac{\varepsilon_\rho}{ \epsilon } \right)
+O^{[1]} \left(  \rev{ \|\nabla f \|_\infty   } 
p(x)^{\frac{2 \alpha - 1}{d}}   
\sqrt{     \frac{\log N}{   N  \epsilon^{d/2+1}}   }  \right).
\]
\end{theorem}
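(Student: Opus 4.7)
\textbf{Proposal for the proof of Theorem \ref{thm:limit-pointwise-un-3}.}

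The plan is to treat $L^{(\alpha)}_{un} f(x)$ as a single i.i.d.\ empirical average with a deterministic prefactor, and then repeat the two–step template used for $L^{(\alpha)}_{rw'}$ in the proof of Theorem \ref{thm:limit-pointwise-rw-3}: bias expansion via Lemmas \ref{lemma:right-operator-3} and \ref{lemma:right-operator-repalce-rho}, followed by a Bernstein-type concentration on the empirical sum. A simplification compared to the random-walk case is that no quotient of two empirical sums appears, so there is no linearization of $1/B_N(x)$ to perform; $L^{(\alpha)}_{un}$ is already linear in one empirical average.

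First I would write
\[
L^{(\alpha)}_{un} f(x) \;=\; \frac{2}{\epsilon m_2}\,\hat{\rho}(x)^{-\alpha}\, S_N(x),
\qquad
S_N(x) \;:=\; \frac{1}{N}\sum_{j=1}^N \hat K(x,x_j)\,\bigl(f(x_j)-f(x)\bigr)\,\hat\rho(x_j)^{-\alpha},
\]
with $\hat K$ as in \eqref{eq:def-hatK}. Taking expectation over $x_j\sim p$ gives an integral against $\hat K(x,y)\hat\rho(y)^{-\alpha}p(y)$. To reduce to the setting of Lemma \ref{lemma:right-operator-3}, I would absorb $\hat\rho(x)$ into a rescaled bandwidth $\epsilon'=\epsilon\hat\rho(x)$ so that the kernel argument becomes $\|x-y\|^2/(\epsilon'\hat\rho(y))$, i.e.\ the integrand matches $G_{\epsilon'}^{(\hat\rho)}$ applied to $(f-f(x))\hat\rho^{-\alpha}p$. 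Since $\hat\rho$ is only $C^0$ close to $\bar\rho$, I would invoke Lemma \ref{lemma:right-operator-repalce-rho} to substitute $\hat\rho\to\bar\rho$ at the cost of an additive $O(\varepsilon_\rho)$ error, and then apply Lemma \ref{lemma:right-operator-3} with the smooth $\rho=\bar\rho$ to expand up to $O(\epsilon^2)$. Using $\bar\rho=p^{-1/d}$ to simplify the prefactor $\bar\rho(x)^{-\alpha}$ and the powers of $\bar\rho$ inside the integrand, the leading $O(1)$ pieces cancel because of the $f(y)-f(x)$ factor, and the $O(\epsilon)$ terms combine into $p(x)^{2(\alpha-1)/d}\mathcal L^{(\alpha)}f(x)$ after the algebraic identity that produced \eqref{eq:def-calL-alpha}. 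After multiplication by the prefactor $\frac{2}{\epsilon m_2}\hat\rho(x)^{-\alpha}$, the leftover $O(\epsilon^2)$ becomes $O(\epsilon)$ and the substitution error $O(\varepsilon_\rho)$ becomes $O(\varepsilon_\rho/\epsilon)$, yielding the stated bias.

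For the variance term I would condition on the good event for $Y$ of Theorem \ref{thm:hatrho}, so that $\hat\rho$ is deterministic and the summands in $S_N(x)$ are i.i.d. Each summand is bounded by $\|\nabla f\|_\infty\,\sqrt\epsilon\cdot \epsilon^{-d/2}$ times constants depending on $p_{\min},p_{\max}$ (the $\sqrt\epsilon$ comes from $|f(y)-f(x)|\le\|\nabla f\|_\infty\|y-x\|$ on the effective support of $k_0$). The second moment is computed by the usual change of variables in a geodesic chart around $x$ and concentrates the integral to a neighborhood of radius $O(\sqrt\epsilon)$, producing a factor $\epsilon^{1-d/2}\,p(x)\,\bar\rho(x)^{2-2\alpha+d/2}\|\nabla f\|_\infty^2$. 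Substituting $\bar\rho(x)=p(x)^{-1/d}$ and then multiplying by the prefactor $\frac{2}{\epsilon m_2}\hat\rho(x)^{-\alpha}\sim\epsilon^{-1}p(x)^{\alpha/d}$ gives exactly the $p(x)^{(2\alpha-1)/d}\sqrt{\log N/(N\epsilon^{d/2+1})}$ variance factor. A Bernstein inequality (where the condition $\epsilon^{d/2+1}N=\Omega(\log N)$ ensures the variance term dominates the boundedness term) then delivers the stated probability $1-2N^{-10}$; the uniform-in-$x$ threshold for ``$N$ sufficiently large'' is obtained exactly as in Remark \ref{rk:nablaf(x)=0}.

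The main obstacle I anticipate is the algebraic bookkeeping in the bias step: (i) carefully tracking how the $\epsilon\hat\rho(x)$ absorption interacts with the $\omega$ term and the $\Delta(f\rho^{1+d/2})$ expansion from Lemma \ref{lemma:right-operator-3}, and (ii) verifying that, after multiplying by $\hat\rho(x)^{-\alpha}$ and dividing the integral expansion of the numerator minus $f(x)$ times the expansion of the ``mass'' integral $\int \hat K(x,y)\hat\rho(y)^{-\alpha}p(y)\,dV(y)$, the resulting first-order differential expression simplifies to $p^{2(\alpha-1)/d}\mathcal L^{(\alpha)}f$ rather than $\mathcal L_{\bar\rho}^{(\alpha)}f$ with an extra constant factor. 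The identity $\bar\rho=p^{-1/d}$ makes $\frac{\nabla\bar\rho}{\bar\rho}=-\frac{1}{d}\frac{\nabla p}{p}$, which is precisely the identity that converts the $(d-2\alpha+2)\frac{\nabla\rho}{\rho}\cdot\nabla$ term in $\mathcal L_{\bar\rho}^{(\alpha)}$ into the coefficient $1+\frac{2(\alpha-1)}{d}$ appearing in \eqref{eq:def-calL-alpha}; making sure that the $p(x)^{2(\alpha-1)/d}$ prefactor emerges from the un-normalized (as opposed to random-walk) normalization requires a patient comparison with the derivation in the proof of Theorem \ref{thm:limit-pointwise-rw-3}, which is essentially the only nontrivial step.
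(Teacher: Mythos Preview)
Your approach is the paper's: write $L^{(\alpha)}_{un}f(x)=\frac{1}{N}\sum_j H_j$ with $H_j=\frac{2}{\epsilon m_2}\hat\rho(x)^{-\alpha}(F_j-f(x)G_j)$ in the notation of \eqref{eq:def-Fj-Gj}, reuse the bias identity \eqref{eq:EF-fEG-3} and the second-moment expansions of $F^2,G^2,FG$ from the proof of Theorem~\ref{thm:limit-pointwise-rw-3}, and finish with Bernstein. Two points to fix in your sketch. First, your formula for $S_N$ double-counts the $\hat\rho^{-\alpha}$ factors, since $\hat K$ in \eqref{eq:def-hatK} already carries $\hat\rho(x)^{-\alpha}\hat\rho(y)^{-\alpha}$; this is only notational. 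Second, and more substantively, Lemma~\ref{lemma:right-operator-repalce-rho} only swaps $\hat\rho\to\bar\rho$ inside the kernel argument of $G_\epsilon^{(\rho)}$; after that swap the integrand still carries the factor $\hat\rho(y)^{-\alpha}$, which is not $C^4$, so Lemma~\ref{lemma:right-operator-3} cannot be applied directly. The paper handles this with an additional one-line estimate (Lemma~\ref{lemma:replace-hatrhoalpha-GR-hatrho}) showing $\bigl|G_\epsilon^{(\tilde\rho)}(f/\tilde\rho^\alpha)-G_\epsilon^{(\tilde\rho)}(f/\rho^\alpha)\bigr|=O^{[f,\rho]}(\varepsilon_\rho)$, which follows from $|\hat\rho^{-\alpha}-\bar\rho^{-\alpha}|\le c_\alpha\bar\rho^{-\alpha}\varepsilon_\rho$ pointwise; you should insert this step. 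Finally, the exponent you quote for the summand's second moment is off: it is $\epsilon^{1-d/2}\,p\,\bar\rho^{\,2+d-2\alpha}\,|\nabla f|^2$ (not $\bar\rho^{\,2-2\alpha+d/2}$), which after multiplying by the prefactor squared $\sim\epsilon^{-2}\bar\rho^{-2\alpha}$ and using $\bar\rho=p^{-1/d}$ gives $\epsilon^{-1-d/2}p^{(4\alpha-2)/d}$ and hence the claimed $p(x)^{(2\alpha-1)/d}$ in the deviation bound.
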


\rev{In Theorems \ref{thm:limit-pointwise-rw-3} and \ref{thm:limit-pointwise-un-3},}
the error bound has an additional term of $O(\frac{\varepsilon_\rho}{\epsilon})$ compared with that in \cite{berry2016variable,singer2006graph}.
The technical reason is that we use lemma \ref{lemma:right-operator-repalce-rho} to substituting $\hat{\rho}$ with $\bar{\rho}$,
which gives $O(\varepsilon_\rho)$ error at the ``$O(1)$'' level but not at the ``$O(\epsilon)$'' level.
In the proof of Theorem \ref{thm:limit-form},
the  $O(\varepsilon_\rho)$ substituting error takes place at the ``$O(\epsilon)$'' level thanks to the quadratic form.

For the un-normalized graph Laplacian operator, the additional $O(\frac{\varepsilon_\rho}{\epsilon})$  error 
can be removed if we consider the weak convergence,
which can be of interest in certain settings.

\begin{theorem}\label{thm:limit-weak-un}
Suppose $\hat{\rho}$ 
satisfies  that
$\sup_{x \in {\calM}}\frac{ | \hat{\rho}(x) - \bar{\rho}(x) | }{ | \bar{\rho}(x)|} < \varepsilon_\rho < 0.1$,
and as $N \to \infty$, 
\[
\epsilon = o(1), 
\quad 
\epsilon N= \Omega (\log N),
\quad
 \varepsilon_\rho = o(1), 
\]
then for any $\varphi \in C^{\infty}({\calM})$,
 when $N$ is large,
w.p. $> 1- 2N^{-10}$, $p_\alpha := p^{1 + \frac{2(\alpha-1)}{d}}$,
\[
\langle \varphi, L^{(\alpha)}_{ un} f \rangle_p 
= \langle \varphi, \Delta_{p_\alpha} f \rangle_{p_\alpha}
+ O^{ [ p,f,\varphi   ]} \left(  \epsilon, \, \varepsilon_\rho \right)
+O^{[1]} \left(        \|\varphi\|_\infty \| p^{ {\alpha}/{d}} \|_\infty
\sqrt{    \frac{\log N}{   N  \epsilon}   \int p_\alpha |\nabla f |^2  }  \right).
\]
\end{theorem}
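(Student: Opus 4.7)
The plan is to express $\langle \varphi, L^{(\alpha)}_{un} f\rangle_p$ as an empirical average over $X$ and combine a bilinear extension of Proposition \ref{prop:hatE(f,f)} with a Bernstein concentration bound. By the definition of $L^{(\alpha)}_{un}$ and Fubini,
\begin{equation*}
\langle \varphi, L^{(\alpha)}_{un} f\rangle_p = \frac{1}{N}\sum_{j=1}^N Z_j,
\quad
Z_j := \frac{2\epsilon^{-d/2-1}}{m_2 \hat{\rho}(x_j)^\alpha} \int_{\calM} \frac{\varphi(x) p(x)}{\hat{\rho}(x)^\alpha} k_0\!\left(\tfrac{\|x - x_j\|^2}{\epsilon \hat{\rho}(x)\hat{\rho}(x_j)}\right)(f(x_j) - f(x))\, dV(x),
\end{equation*}
and, conditional on the good event of $Y$, the $Z_j$ are i.i.d.\ in the randomness of $X$.

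For the expectation $P:=\E Z_j$, Fubini yields a double integral on $\calM\times\calM$; symmetrizing by $x\leftrightarrow y$ and using $\hat{K}(x,y)=\hat{K}(y,x)$ identifies $P=-\calE^{(\alpha)}(\varphi,f)$, the bilinear extension of \eqref{eq:def-hatE(f,f)}. Polarization of Proposition \ref{prop:hatE(f,f)}, applied to $\calE^{(\alpha)}(\varphi\pm f,\varphi\pm f)$, together with $\calE_{p_\alpha}(\varphi,f)=-\langle\varphi,\Delta_{p_\alpha}f\rangle_{p_\alpha}$, gives
\[
P \;=\; \langle\varphi,\Delta_{p_\alpha}f\rangle_{p_\alpha} + O^{[p,f,\varphi]}(\varepsilon_\rho,\epsilon).
\]
The proof of Proposition \ref{prop:hatE(f,f)} proceeds via the substitution $\hat\rho\to\bar\rho$ by Lemma \ref{lemma:right-operator-repalce-rho} and then Lemma \ref{lemma:right-operator-3} with $\rho=\bar\rho$, which handles the $C^1$-inconsistency of $\hat\rho$ using only its $C^0$ closeness.

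It remains to control $|\tfrac{1}{N}\sum_j Z_j-P|$. The kernel decay confines the integrand to $\|x-x_j\|\lesssim\sqrt\epsilon$, on which $|f(x_j)-f(x)|\lesssim\sqrt\epsilon\|\nabla f\|_\infty$, the $x$-integral is $O(\epsilon^{d/2})$, and the prefactor is $\epsilon^{-d/2-1}$, giving $|Z_j|\le M=O^{[f,\varphi,p]}(\epsilon^{-1/2})$. For the variance, weighted Cauchy--Schwarz splits
\begin{equation*}
Z_j^2 \lesssim \tfrac{\epsilon^{-d-2}}{\hat\rho(x_j)^{2\alpha}}\!\left(\int_{\calM}k_0(\cdot)\tfrac{\varphi^2 p^2}{\hat\rho^{2\alpha}}\,dV\right)\!\left(\int_{\calM}k_0(\cdot)(f(x_j)-f(x))^2\,dV\right),
\end{equation*}
whose Laplace expansions (after $\hat\rho\to\bar\rho$) yield leading orders $O(\epsilon^{d/2}\bar\rho^d\varphi^2 p^2/\bar\rho^{2\alpha})$ and $O(\epsilon^{d/2+1}\bar\rho^{d+2}|\nabla f|^2)$ at $x_j$; taking expectation against $p(y)\,dV(y)$ and collapsing $\bar\rho=p^{-1/d}$ produces $\sigma^2:=\E Z_j^2=O^{[1]}(\epsilon^{-1}\|\varphi\|_\infty^2\|p^{\alpha/d}\|_\infty^2\int p_\alpha|\nabla f|^2\,dV)$. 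Bernstein's inequality under $\epsilon N=\Omega(\log N)$, which keeps the estimate in the sub-Gaussian regime, then yields $|\tfrac{1}{N}\sum_jZ_j-P|=O^{[1]}(\|\varphi\|_\infty\|p^{\alpha/d}\|_\infty\sqrt{(\log N/(N\epsilon))\int p_\alpha|\nabla f|^2})$ with probability $>1-2N^{-10}$.

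The main obstacle is the sharp variance estimate: a careless Cauchy--Schwarz leaves an $\|\nabla f\|_\infty^2$ factor rather than $\int p_\alpha|\nabla f|^2$, so one must track the $\hat\rho$-factors through the Laplace expansions and only then substitute $\bar\rho=p^{-1/d}$ to recover the advertised $\|p^{\alpha/d}\|_\infty^2\int p_\alpha|\nabla f|^2$ normalization. In contrast to Theorem \ref{thm:limit-pointwise-un-3}, the weak form avoids the $O(\varepsilon_\rho/\epsilon)$ penalty because the $\hat\rho\to\bar\rho$ substitution is performed inside the quadratic (polarized bilinear) Dirichlet form, contributing only an additive $O^{[p,f,\varphi]}(\varepsilon_\rho)$, exactly as in Theorem \ref{thm:limit-form}.
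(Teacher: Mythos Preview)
Your proposal is correct and follows the paper's approach: write $\langle\varphi,L^{(\alpha)}_{un}f\rangle_p$ as an i.i.d.\ sum, compute the mean by polarizing Proposition~\ref{prop:hatE(f,f)}, and control the fluctuation by Cauchy--Schwarz plus Bernstein. The only technical difference is in the variance step: you expand both Cauchy--Schwarz factors pointwise via Lemma~\ref{lemma:right-operator-3} (after the $\hat\rho\to\bar\rho$ substitution) and then integrate in $x_j$, tracking the $\bar\rho$-powers until they collapse to $p^{2\alpha/d}p_\alpha$; the paper instead bounds the $\varphi$-factor uniformly by $c_1\|\varphi p^{\alpha/d}\|_\infty^2$ and observes that the remaining double integral is exactly $\epsilon m_2\,\calE^{(\alpha)}(f,f)$, so Proposition~\ref{prop:hatE(f,f)} applies again with no further expansion. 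Both routes yield the same bound; the paper's reassembly into $\calE^{(\alpha)}(f,f)$ is shorter and avoids the bookkeeping you flag as the ``main obstacle.'' (One small inaccuracy: the proof of Proposition~\ref{prop:hatE(f,f)} does not invoke Lemma~\ref{lemma:right-operator-repalce-rho}; the $\hat\rho\to\bar\rho$ replacement there is done by direct comparison of the integrands.)
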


Note that the above weak convergence result is only possible for the un-normalized operator,
because the $D^{-1}W$ normalization in the random-walk operator breaks the linearity.

At last, we compare with the graph Laplacian operator $L_N$ defined by fixed bandwidth kernel matrix  \eqref{eq:def-Wbeta-fixepsilon},
namely
\[
L^{(\beta)}_{ \epsilon, rw} f(x) = 
\frac{1}{\epsilon 
\frac{m_2}{2m_0} 
}
\left(\frac{\sum_{j=1}^{N}  
{  k_0 \left(  \frac{\| x - x_j\|^2}{  \epsilon   } \right)}
\frac{f(x_{j})}{\hat{p}(x_j)^{\beta} } }
{\sum_{j=1}^{N} 
{  k_0 \left(  \frac{\| x - x_j\|^2}{  \epsilon  } \right)}
 \frac{1}{\hat{p}(x_j)^{\beta} } }  -f(x) \right).
\]
The counterpart of Theorem \ref{thm:limit-pointwise-rw-3} is the following

\begin{theorem}\label{thm:limit-pointwise-rw-fix-epsilon}
Suppose as $N \to \infty$, 
$\epsilon = o(1)$, $\epsilon^{d/2+1} N= \Omega (\log N)$,
and if $\beta \rev{\neq} 0$, 
the estimated density $\hat{p}$ satisfies that
$\sup_{x  \in {\calM} }\frac{ |\hat{p}(x) - p(x)| }{p(x)} < \varepsilon_p < 0.1$,
and $\varepsilon_p = o(\epsilon)$.
Then for any $f \in C^\infty({\calM})$,
when $N$ is large \rev{and the threshold is determined by $(\calM, f,p,\beta)$ and uniform for all $x$},
\[
L^{(\beta)}_{\epsilon, rw} f(x) 
= \Delta_{p^{2-2\beta}}  f(x)
+ O^{[f,p \rev{, \beta}]} \left(   \epsilon, \, \beta \frac{\varepsilon_p}{\epsilon} \right)
+O^{[1]} \left(  \rev{ \| \nabla f \|_\infty }
p(x)^{-1/2}   
\sqrt{     \frac{\log N}{   N  \epsilon^{d/2+1}}   }  \right).
\]
\rev{
In particular, when $\beta =0$,
the bias error term is reduced to $O^{[f,p]} (  \epsilon )$.
}
\end{theorem}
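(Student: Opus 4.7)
The plan is to follow the same overall strategy as Theorem \ref{thm:limit-pointwise-rw-3}, with $\hat{p}$ playing the role of $\hat{\rho}$ and the simpler fixed-bandwidth kernel $k_0(\|x-x_j\|^2/\epsilon)$ replacing the self-tuned one. First, I would write
\[
L^{(\beta)}_{\epsilon, rw} f(x) = \frac{2 m_0}{\epsilon m_2} \cdot \frac{A_N(x) - f(x) B_N(x)}{B_N(x)},
\]
where $A_N(x) := \frac{1}{N}\sum_j \epsilon^{-d/2} k_0(\|x-x_j\|^2/\epsilon) \hat{p}(x_j)^{-\beta} f(x_j)$ and $B_N(x)$ is the same sum without the factor $f(x_j)$. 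This ratio form is the reason the leading $O(1)$ contributions in the numerator cancel against $f(x) B_N(x)$, so that dividing by $\epsilon$ leaves the correct $O(1)$ limit.

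Second, when $\beta \neq 0$ I would substitute $\hat{p}^{-\beta}$ by $p^{-\beta}$ in both sums. Using $|\hat{p}/p - 1|<\varepsilon_p <0.1$, one has $\hat{p}^{-\beta} = p^{-\beta}(1+r)$ with $|r|\le c_\beta |\beta|\varepsilon_p$ uniformly on $\calM$; this is the fixed-bandwidth analogue of Lemma \ref{lemma:right-operator-repalce-rho}. The substitution produces an additive error of order $|\beta|\varepsilon_p$ at the $O(1)$ level in $A_N - f(x) B_N$ (after the $O(1)$ leading cancellation, the difference is already $O(\epsilon)$, but the multiplicative perturbation is not organized to cancel), and dividing by $\epsilon$ gives the stated bias contribution $O(\beta \varepsilon_p/\epsilon)$. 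When $\beta = 0$ the step is vacuous and this term disappears entirely.

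Third, working with the $p^{-\beta}$-weighted sums, I would evaluate the expectations by applying Lemma \ref{lemma:right-operator-3} with $\rho \equiv 1$ (the classical fixed-bandwidth manifold kernel expansion) to obtain
\[
\E[B_N(x)] = m_0 p(x)^{1-\beta} + \epsilon \tfrac{m_2}{2}\bigl(\omega\, p^{1-\beta} + \Delta\, p^{1-\beta}\bigr)(x) + O^{[p,\beta]}(\epsilon^2),
\]
and the analogous expansion for $\E[A_N(x)]$ with $p^{1-\beta}$ replaced by $f \, p^{1-\beta}$. Forming $\E[A_N - f B_N]/\E[B_N]$ cancels the $m_0$ terms, and after dividing by $\epsilon m_2/(2 m_0)$, the identity $\Delta(fg)/g = \Delta f + 2 \nabla f \cdot \nabla g / g + f \Delta g/g$ applied with $g = p^{1-\beta}$ collapses the leading term to $\Delta f + (2-2\beta)\frac{\nabla p}{p}\cdot \nabla f$, which is exactly $\Delta_{p^{2-2\beta}} f(x)$.

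Finally, for the variance error I would apply Bernstein's inequality to $A_N(x)$ and $B_N(x)$ separately (after the $\hat{p} \to p$ substitution). Each summand is bounded, and a standard second-moment computation gives $\mathrm{Var} \sim \epsilon^{-d/2} p(x)^{1-2\beta}$; propagating this through the ratio and through the $\epsilon^{-1}$ prefactor yields a standard-deviation bound of order $p(x)^{-1/2}\|\nabla f\|_\infty \sqrt{\log N/(N\epsilon^{d/2+1})}$ at the high probability $1-O(N^{-10})$ after tuning the Bernstein constant. The $p(x)^{-1/2}$ factor arises because the effective local sample count at $x$ scales with $p(x) N \epsilon^{d/2}$, which is the well-known shortcoming of the fixed-bandwidth kernel in low-density regions and is precisely what motivates the self-tuned construction analyzed in Theorem \ref{thm:limit-pointwise-un-3}. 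The hardest step is the second one: controlling the $\hat{p}$-substitution error at the correct scale requires verifying that the $(1+r)$ perturbation does not interact with the $O(\epsilon)$ numerator cancellation in a way that destroys the $O(\beta\varepsilon_p/\epsilon)$ bound, which is exactly the obstacle already addressed in the self-tuned proof via Lemma \ref{lemma:right-operator-repalce-rho}.
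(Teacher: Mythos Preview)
Your bias analysis follows the paper's approach closely and is correct. The gap is in the variance step. Applying Bernstein to $A_N$ and $B_N$ separately, each with summand variance $\sim \epsilon^{-d/2}p(x)^{1-2\beta}$ as you state, only controls $A_N/B_N - \E F/\E G$ to order $\sqrt{\log N/(N\epsilon^{d/2})}$; after the $\epsilon^{-1}$ prefactor this yields $\sqrt{\log N/(N\epsilon^{d/2+2})}$, which is worse than the claimed bound by a factor $\epsilon^{-1/2}$ and carries no $\|\nabla f\|_\infty$ factor. Your stated conclusion does not follow from your stated variance.

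The paper (mirroring the proof of Theorem~\ref{thm:limit-pointwise-rw-3}) instead writes
\[
\frac{A_N}{B_N}-\frac{\E F}{\E G}=\frac{\tfrac{1}{N}\sum_j Y_j}{B_N\cdot \E G},\qquad Y_j:=F_j\,\E G-G_j\,\E F,
\]
and applies Bernstein to $\sum_j Y_j$. The point is that the leading $O(\epsilon^{-d/2})$ contributions in $\E F_j^2(\E G)^2+\E G_j^2(\E F)^2-2\,\E(F_jG_j)\,\E F\,\E G$ cancel, via the same identity $\Delta(f^2g)-2f\Delta(fg)+f^2\Delta g=2g|\nabla f|^2$ used in the Dirichlet-form argument, leaving
\[
\E Y_j^2=\epsilon^{-d/2+1}\Bigl\{m_2[k_0^2]\,m_0^2\,p^{3-4\beta}(x)\,|\nabla f(x)|^2+O^{[f,p]}\bigl(\epsilon,\,\beta\tfrac{\varepsilon_p}{\epsilon}\bigr)\Bigr\}.
\]
This extra power of $\epsilon$ in the variance of the \emph{correlated} combination $Y_j$ is what produces both the $\|\nabla f\|_\infty$ factor and the correct $\epsilon^{d/2+1}$ in the final bound. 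A separate Bernstein on $A_N$ and $B_N$ cannot see this cancellation.
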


The counterpart for un-normalized graph Laplacian can be derived similarly and omitted.
The limiting operator is the same as in Theorem \ref{thm:limit-form-fixeps},
and consistent with the result in \cite{coifman2006diffusion}.
Compared 
with Theorem \ref{thm:limit-pointwise-rw-3},
apart from the needed condition on the relative error of $\hat{p}$ (c.f. Remarks \ref{rk:compare-fix-eps-kde} and  \ref{rk:hatp-rel-error}),
the variance error term has a factor of $p(x)^{-1/2}$,
while for self-tuned kernel $W^{(\alpha)}$ the factor is $p(x)^{1/d}$. 
This can be expected because the self-tuned  bandwidth is designed to overcome the difficulty of low data density
by enlarging the kernel bandwidth at those places, 
and our analysis reveals  the effect  by the reduced the variance error at $x$ where $p(x)$ is small. 
Such advantage is supported by experiments on the hand-written digit image dataset in Section \ref{subsec:mnist}.

\begin{figure}[t]
\centering
\hspace{-10pt}
\raisebox{5pt}{
\includegraphics[height=.22\linewidth]{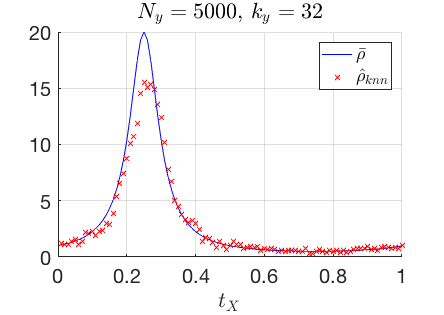} 
\includegraphics[height=.22\linewidth]{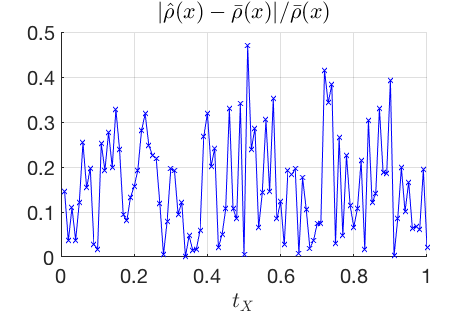} 
}
\includegraphics[height=.25\linewidth]{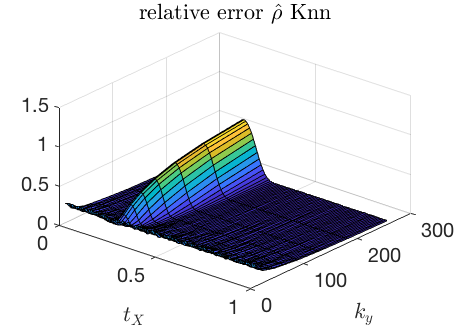}  \\
\vspace{3pt}
\hspace{-10pt}
\raisebox{5pt}{
\includegraphics[height=.22\linewidth]{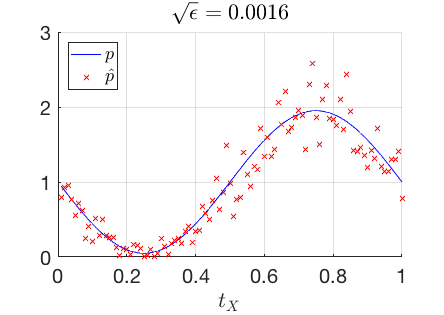}
\includegraphics[height=.22\linewidth]{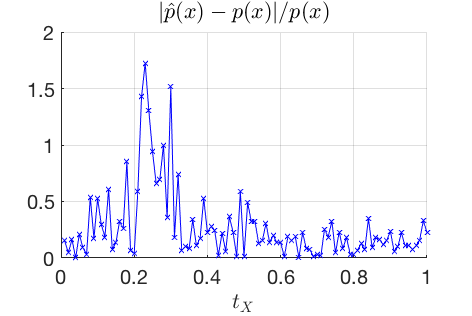}
 }
\includegraphics[height=.25\linewidth]{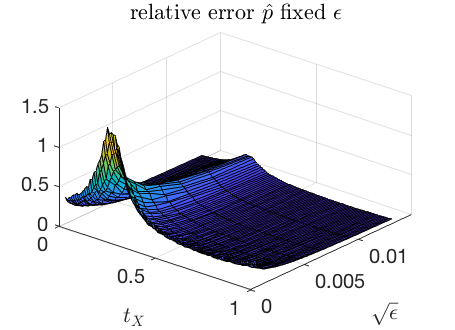}
\vspace{-10pt}
\caption{
\small
kNN estimation of $\bar{\rho} = p^{-1/d}$ 
and KDE estimation of $p$ for $x \in S^1$ \rev{embedded in $\R^2$, and $0 \le t_X \le 1$ is the intrinsic coordinate (arclength)}.
(Top) \rev{The left two plots show a} typical realization of $\hat{\rho}$ by kNN \rev{defined in \eqref{eq:def-knn-hatR} compared with $\bar{\rho}$},
\rev{and the right plot shows the} relative error for varying values of $k_y$,
$N_y = 5000$,  averaged over 500 runs.
(Bottom) Same plots for $\hat{p}$ \rev{defined in \eqref{eq:def-hatp-fixeps} compared with $p$}, 
and relative error for varying values of $\epsilon$.
}
\label{fig:hatrho-hatp-1}
\end{figure}

\begin{figure}[b]
\centering{
\includegraphics[height=.225\linewidth]{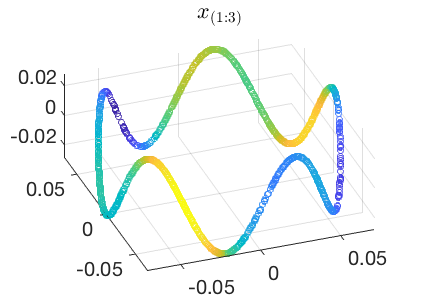} 
\includegraphics[height=.225\linewidth]{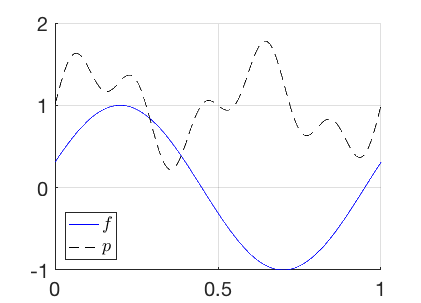}
\includegraphics[height=.225\linewidth]{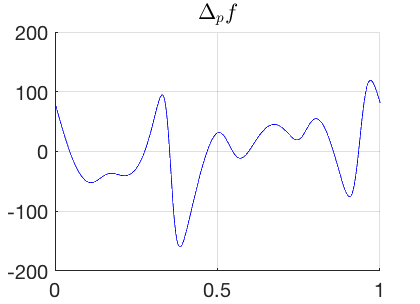}
}\vspace{-5pt}
\caption{
\small
Data in $\R^4$ lying on a 1D closed curve of length 1.
(Left) First 3 coordinates of 2000 samples {with color indicating the density function $p$}.
(Middle) The density {function} $p$ and a function $f$,
and (Right) $\Delta_p f$, all plotted v.s. the intrinsic coordinate (the arclength) on $[0,1]$. 
}
\label{fig:data-1d}
\end{figure}

\section{Numerical Experiments}\label{sec:experiment}

\rev{In this section, we denote the parameter $k$ as $k_y$ when the $k$NN  estimation is conducted on the dataset $Y$. 
In Subsection \ref{subsec:standaloneY}, we use the notation $k_x$ when computing the $k$NN  estimation on $X$.
}

\subsection{kNN Estimator $\hat{\rho}$ of $\bar{\rho}= p^{-1/d}$}\label{subsec:exp-hatrho}

We numerically examine the  kNN estimation of $\bar{\rho}$, \rev{namely $\hat{\rho}$ as defined in \eqref{eq:def-knn-hatR}},
and compare it with the fixed bandwidth KDE estimator $\hat{p}$ as in \eqref{eq:def-hatp-fixeps},
where $ h_{kde} (r) := e^{-r/(4/\pi)} $.
The dataset
is sampled from  a circle of length 1 
{isometrically embedded} in $\R^2$ i.i.d. according to \rev{a} density function $p$, which \rev{equals} 0.05 at $ \rev{t_X} =0.25$,
\rev{$0 \le t_X \le 1$} being the intrinsic coordinate (arclength),
as shown in Fig. \ref{fig:hatrho-hatp-1}. 
The plots on the right {hand side} show the difference of the relative error at place where $p$ is low.
As $k_y$ decreases ($\epsilon$ increases),
the variance error starts to dominate,
 and $\hat{\rho}$ gives the \rev{relative} error uniformly small across locations,
as predicted by Theorem \ref{thm:hatrho}.
\rev{In contrast}, $\hat{p}$ gives a larger \rev{relative} error near $\rev{t_X} = 0.25$. 
\rev{
The result empirically verifies Theorem \ref{thm:hatrho} and Remark \ref{rk:compare-fix-eps-kde}.
}

\subsection{Estimation of Dirichlet Form and $L_N f(x)$}
\label{subsec:experiment-form-and-operator}

\begin{figure}
\hspace{-15pt}
\includegraphics[height=.25\linewidth]{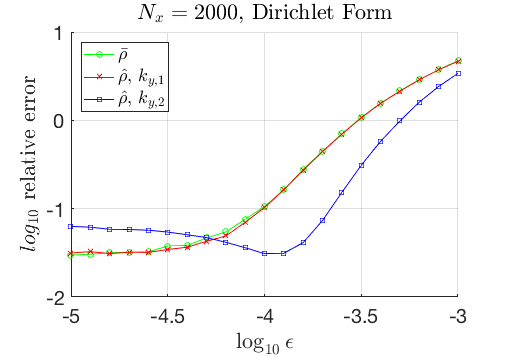}
\hspace{-13pt}
\includegraphics[height=.25\linewidth]{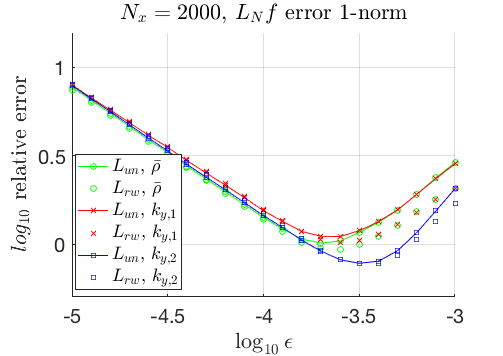}
\hspace{-13pt}
\includegraphics[height=.25\linewidth]{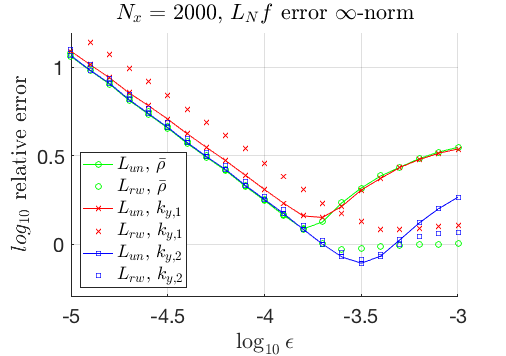}
\vspace{-5pt}
\caption{
\small
Relative error of 
(Left) \rev{Dirichlet} form computed using $L_{un}$
and 
(Middle)
$\text{Err}_1$  error in \eqref{eq:def-L1-err}  of $L_N f$ computed by various $L_N$ 
plotted v.s. a range of values of $\epsilon$,
$N_y=4000$, $k_y = 32$, 256,
averaged over 500 runs.
(Right) Same plot for $\text{Err}_\infty$ error.
}
\label{fig:Ln-error-1d}
\end{figure}

\begin{figure}
\begin{minipage}{1.1\linewidth}
\begin{minipage}{0.3 \linewidth}
\includegraphics[width=.8\linewidth]{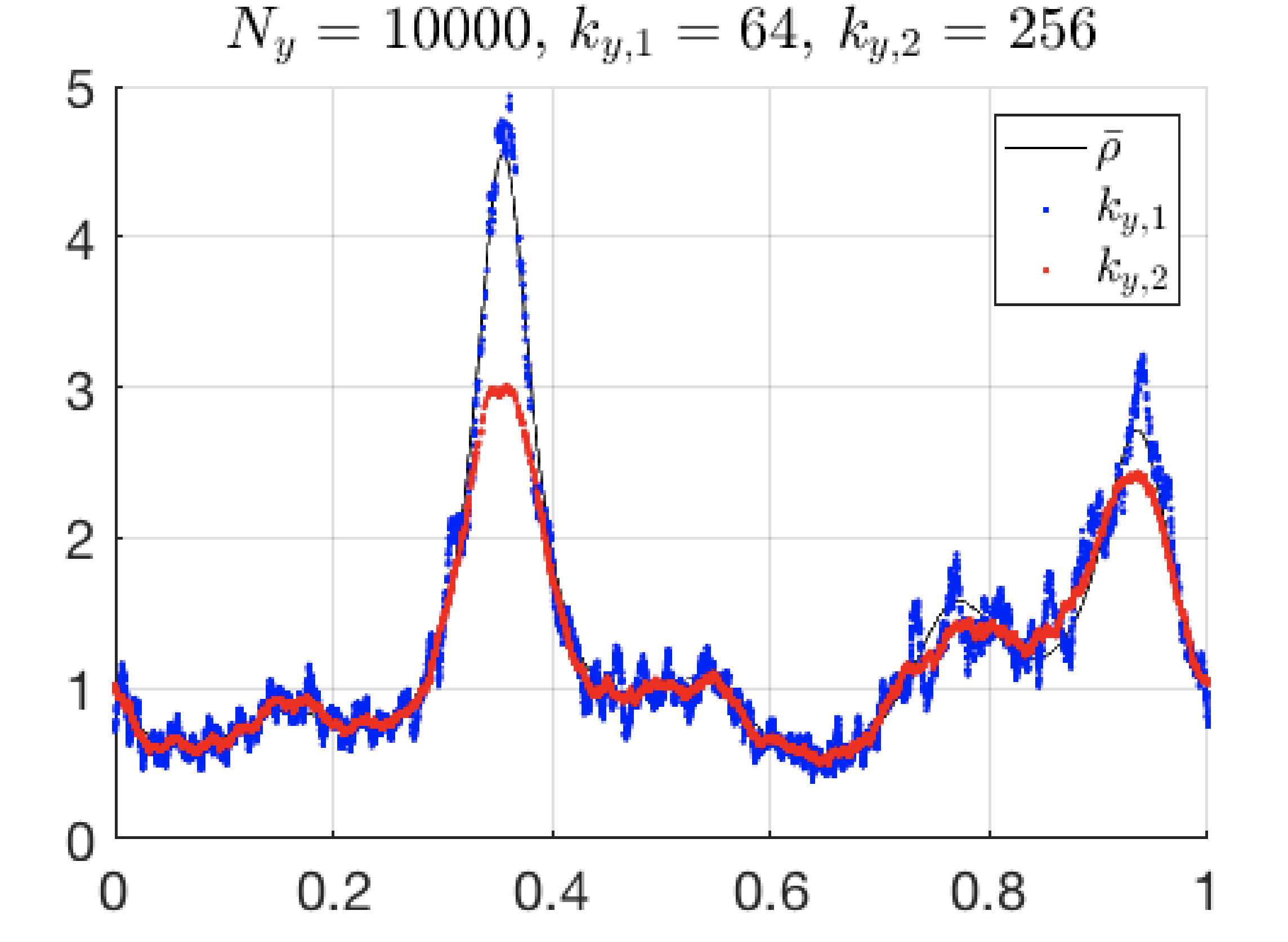}
\includegraphics[width=.8\linewidth]{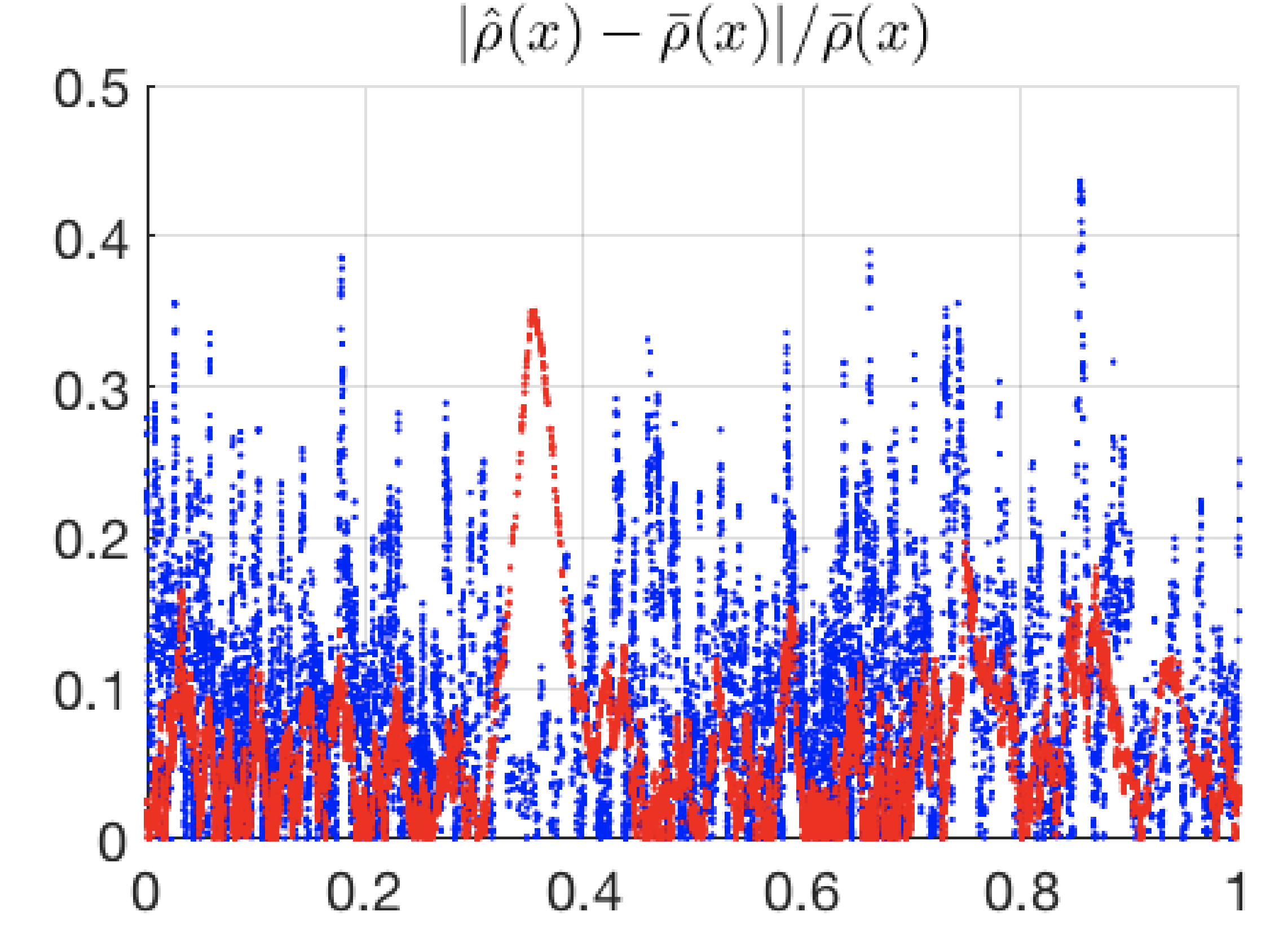}
\end{minipage}
\hspace{-20pt}
\begin{minipage}{0.7 \linewidth}
\includegraphics[width=.45 \linewidth]{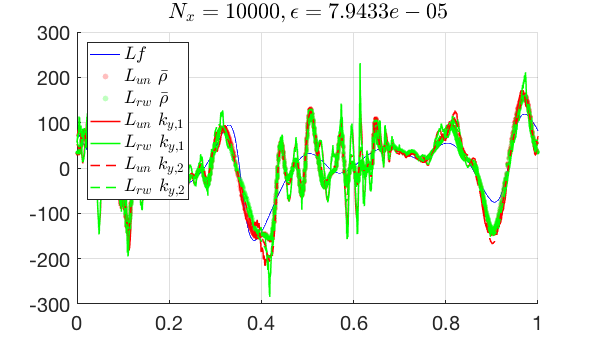}
\includegraphics[width=.45 \linewidth]{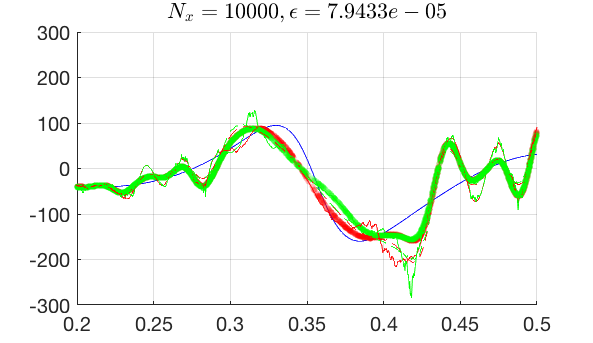}\\
\includegraphics[width=.45 \linewidth]{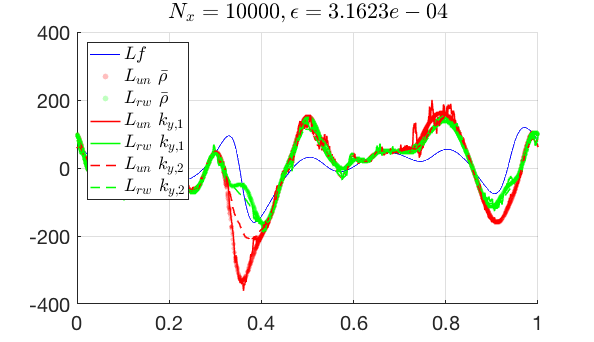}
\includegraphics[width=.45 \linewidth]{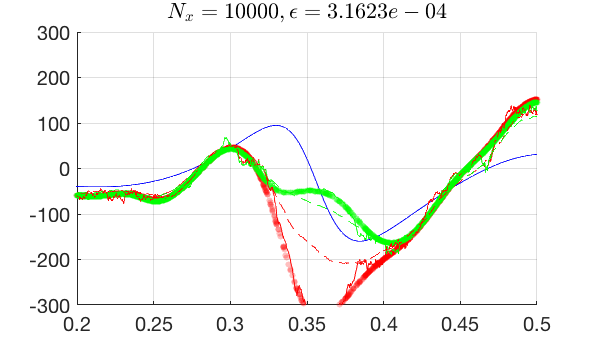}
\end{minipage}
\end{minipage}
\caption{
\small
$L_N f$ by estimated $\hat{\rho}$ \rev{compared with true $\Delta_p f$, denoted as $Lf$ (blue curve)}, with two values of $k_y$.
The data is as in Fig. \ref{fig:data-1d}.
(Left) kNN-estimated $\hat{\rho}$ and relative errors.
(Right upper) Estimated $L_N f$ where $L_N$ equals $L_{un}$ and $L_{rw'}$, and using kNN-estimated $\hat{\rho}$,
compared with using population $\bar{\rho}$.
The right plot is the zoom in of the left plot on interval $[0.2, 0.5]$.
(Right bottom) Same plot as the \rev{right upper} panel at another value of $\epsilon$.
\rev{See more explanation in Section \ref{subsec:experiment-form-and-operator}.}
}
\label{fig:Ln-1d}
\end{figure}

On the simulated data lying on a 1D smooth manifold {embedded} in $\R^4$ with a non-uniform density $p$  (Fig. \ref{fig:data-1d}),
we compute self-tuned graph Laplacians on $N_x= 2000$ data samples using
(1)  $L_{un}$ \eqref{eq:def-Lalapha-un}
and 
(2) $L_{rw'}$ as in \eqref{eq:def-Lalapha-rw},
$\alpha =1$,
where $\hat{\rho}$ is estimated from a stand-alone dataset $Y$ with $N_y = 4000$, and $k_y=32$, $256$ respectively.
To evaluate the influence of $\bar{\rho}$ estimation,
we also compute $L_{un}$ and $L_{rw'}$ where $\hat{\rho}$ is replaced to be the true $\bar{\rho}$.
The relative errors of 
\begin{itemize}
\item
 The Dirichlet form $\langle f, \Delta_p f \rangle_p$,

\item
The  point-wise error measure by 
\begin{equation}\label{eq:def-L1-err}
\text{Err}_1:= \sum_{i=1}^{N_x} |L_N f(x_i) - \Delta_p f(x_i)|,
\quad
\text{Err}_\infty:= \max_{1 \le i \le N_x} |L_N f(x_i) - \Delta_p f(x_i)|,
\end{equation}
\end{itemize}
are given in Fig. \ref{fig:Ln-error-1d}.
The error of $L_N f$ shows a scale of \rev{about} $\epsilon^{-d/4-1/2}=\epsilon^{-0.75}$ \rev{in Fig. \ref{fig:Ln-error-1d} right two plots}, 
when the variance error dominates due to the small value of $\epsilon$. 
In comparison, the accuracy of Dirichlet form is less sensitive to the small value of $\epsilon$,
as shown in Fig. \ref{fig:Ln-error-1d}(Left),
which is consistent with the theoretical result in Theorems \ref{thm:limit-form} and \ref{thm:limit-pointwise-rw-3}.
\rev{
Note that the relative 1-norm error shown in the plot divides $\text{Err}_1$  by $\| \{ \Delta_p f(x_i) \}_{i=1}^{N_x} \|_1$, 
thus its magnitude (about or greater than  1 in Fig. \ref{fig:Ln-error-1d}) depends on the choice of the test function $f$. Same with the relative $\infty$-norm error.
When $N_x$ is increased to be 10,000, with the same $f$, the smallest relative error across $\epsilon$ is about 0.5 (averaged over 20 runs).
}

Taking $N_x = 10,000$, $N_y = 10,000$, with $k_y =64 $, 256, respectively, 
we visualize in Fig. \ref{fig:Ln-1d} 
snapshots of single realizations of $L_N f$.
With smaller value of $\epsilon$, the estimated $L_N f$ has more oscillation around the true value,
and when $\epsilon$ is larger, the oscillation is less 
but the function $L_N f$ is significantly biased at certain places on the manifold.
Note that when $k_y$ is larger, the estimated $\hat{\rho}$ is smoother but has a significant bias at places where $p$ is small,
and such bias is also reflected in the estimated $L_N f$.
The two Laplacians, $L_{un}$ and $L_{rw'}$, give comparable results.

\subsection{The Influence of Stand-alone $Y$}\label{subsec:standaloneY}

We compare with the results using $X$ to estimate $\hat{\rho}$.
The dataset is the same as that in Fig. \ref{fig:data-1d}.
 $N_x = 2000$ 
{and}  $k_x = 32$ 
{are} used to compute $\hat{\rho}_X$.
{Take} $N_y = \{ 2000, 4000, \cdots,  32000\}$ 
{and}  $k_y = \{37, 64, \cdots, 338 \}$, where
$k_y$ is chosen to scale as $N_y^{4/5}$, according to Theorem \ref{thm:hatrho} ($d=1$).
The result for one realization with the largest $N_y$ is in Fig. \ref{fig:Ln-hatrhoX},
where using a stand-alone $Y$ of a much larger size than $X$ reduces the error in the estimated $\hat{\rho}$
as well as the oscillation in the estimated $L_{N} f$ (plots for $L_{rw'} f$ are similar and not shown).
The relative errors of Dirichlet form and  $\text{Err}_\infty$ of $L_{N} f $
across $\epsilon$ are shown in Fig. \ref{fig:Ln-error-hatrhoX},
where using $\hat{\rho}_X$ and $\hat{\rho}_Y$ give comparable accuracy. 
Moreover, the {result with $\hat{\rho}_Y$} approaches  {$L_N f$} computed from $\bar{\rho}$ as $N_y$ and $k_y$ increase.
This suggests that when \rev{significantly} more data samples than $N_x$ are available,
using the rest as $Y$ to estimate the bandwidth function $\hat{\rho}$ may improve the estimation of the self-tuned graph Laplacian. 
\rev{
With limited number of data samples, splitting stand-alone $Y$ may worsen the performance (due to decreasing $N_x$)
than using  the whole dataset as $X$ and estimating the bandwidth on itself.
}

\begin{figure}[b]
\begin{minipage}{1.1\linewidth}
\hspace{-55pt}
\includegraphics[height=.16\linewidth]{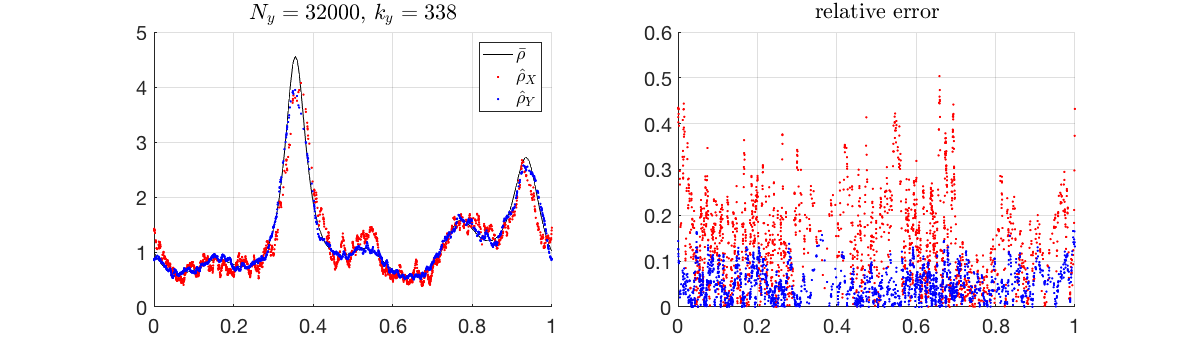}
\hspace{-20pt}
\includegraphics[height=.16\linewidth]{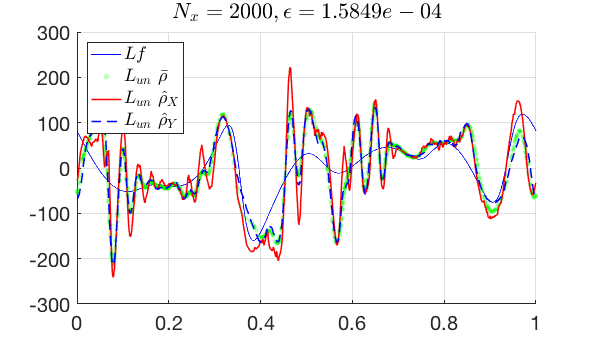}
\hspace{-10pt}
\includegraphics[height=.16\linewidth]{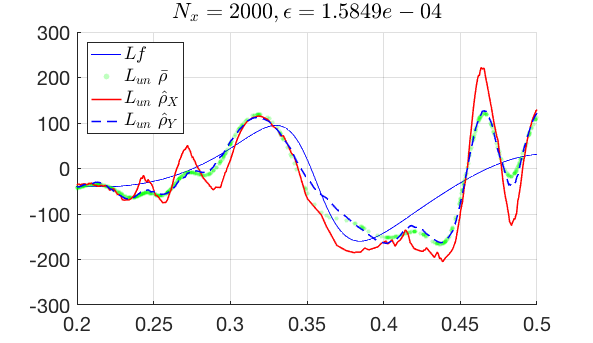}
\end{minipage}
\vspace{-5pt}
\caption{
\small
Same plots as Fig. \ref{fig:Ln-1d} for self-tune Laplacian computed with $\hat{\rho}_X$.
(Left two) kNN-estimated $\hat{\rho}$ and relative errors computed from $X$ and $Y$.
(Right two) Estimated $L_{un} f$,
with the zoom on the interval $[0.2, 0.5]$.
}
\label{fig:Ln-hatrhoX}
\vspace{5pt}
\end{figure}

\begin{figure}
\hspace{-15pt}
\includegraphics[height=.25\linewidth]{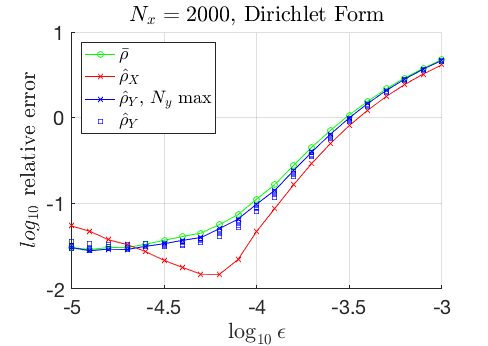}
\hspace{-13pt}
\includegraphics[height=.25\linewidth]{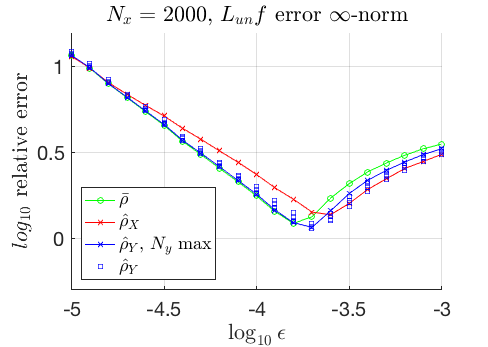}
\hspace{-13pt}
\includegraphics[height=.25\linewidth]{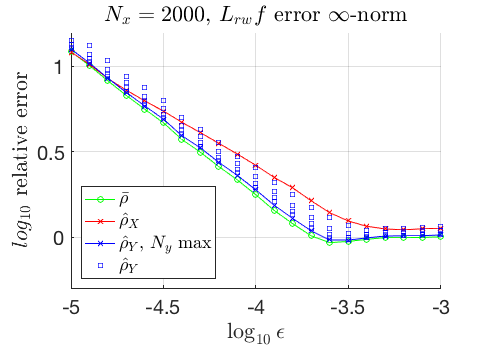}
\vspace{-10pt}
\caption{
\small
Relative error of 
(Left) \rev{Dirichlet} form computed using $L_{un}$,
and 
(Middle) $\text{Err}_\infty$ error of $L_{un}$ computed using $\bar{\rho}$,
$\hat{\rho}_X$, and $\hat{\rho}_Y$ over a range of $\epsilon$ and different $\{ N_y, k_y \}$ (blue squares),
averaged over 500 runs.
(Right) Same plot for $L_{rw'}$. 
}
\label{fig:Ln-error-hatrhoX}
\end{figure}

\begin{figure}[b]
\begin{minipage}{1.02\linewidth}
\hspace{-17pt}
\includegraphics[trim=30 5 10 3, clip, height=.181\linewidth]{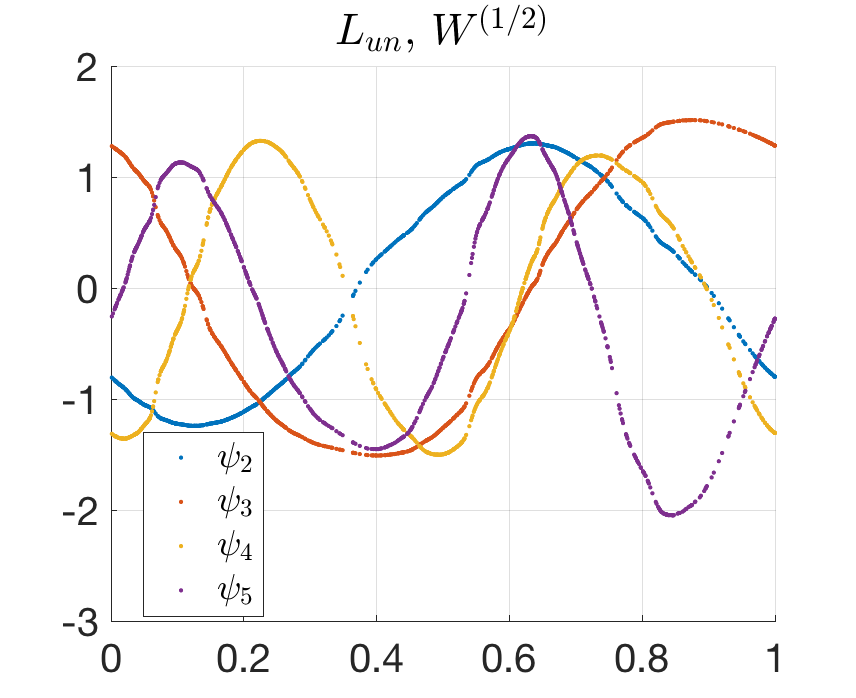}
\hspace{-10pt}
\includegraphics[trim=30 5 10 3, clip, height=.181\linewidth]{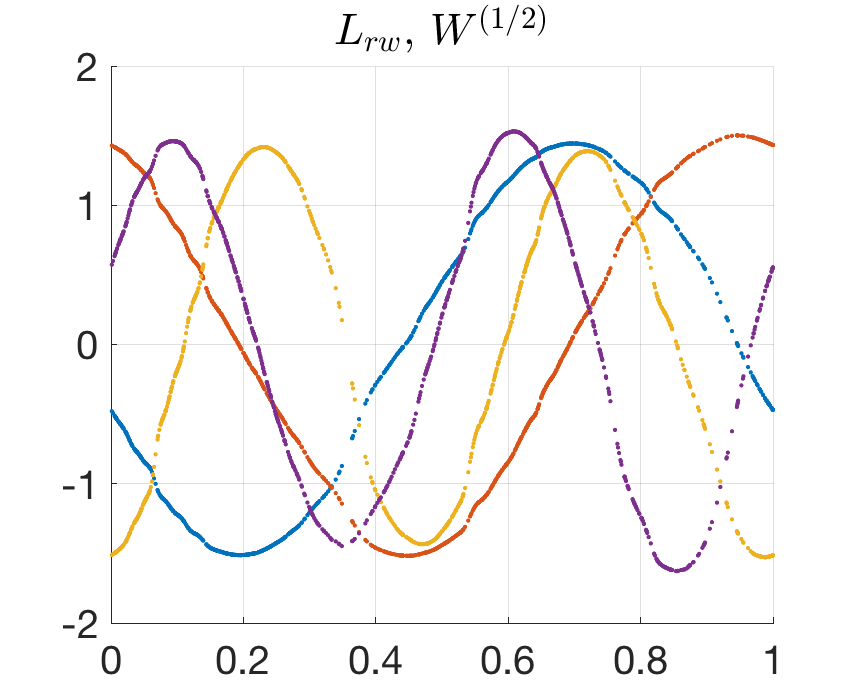}
\hspace{-10pt}
\includegraphics[trim=30 5 10 3, clip,height=.181\linewidth]{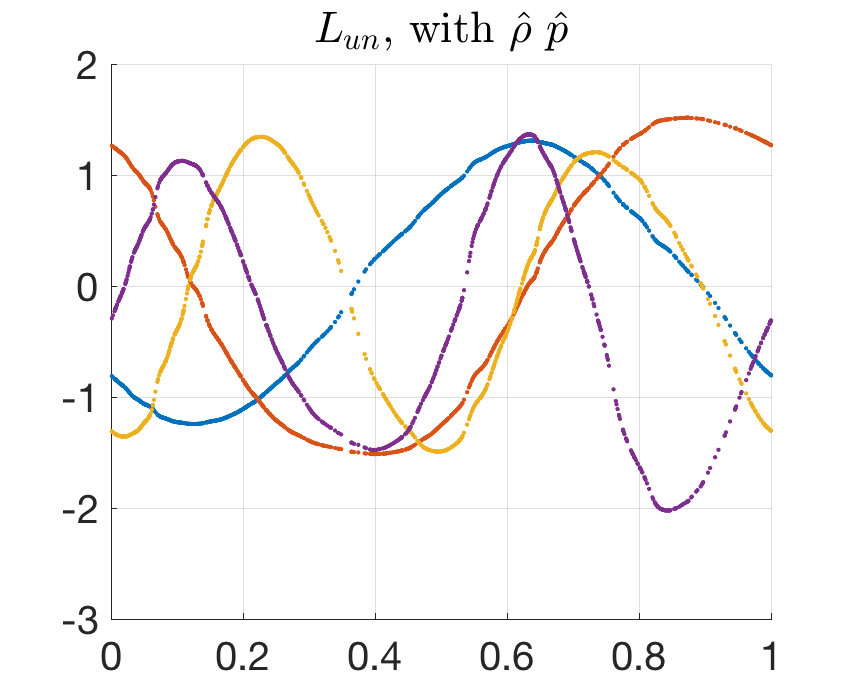}
\hspace{-10pt}
\includegraphics[trim=30 5 10 3, clip,height=.181\linewidth]{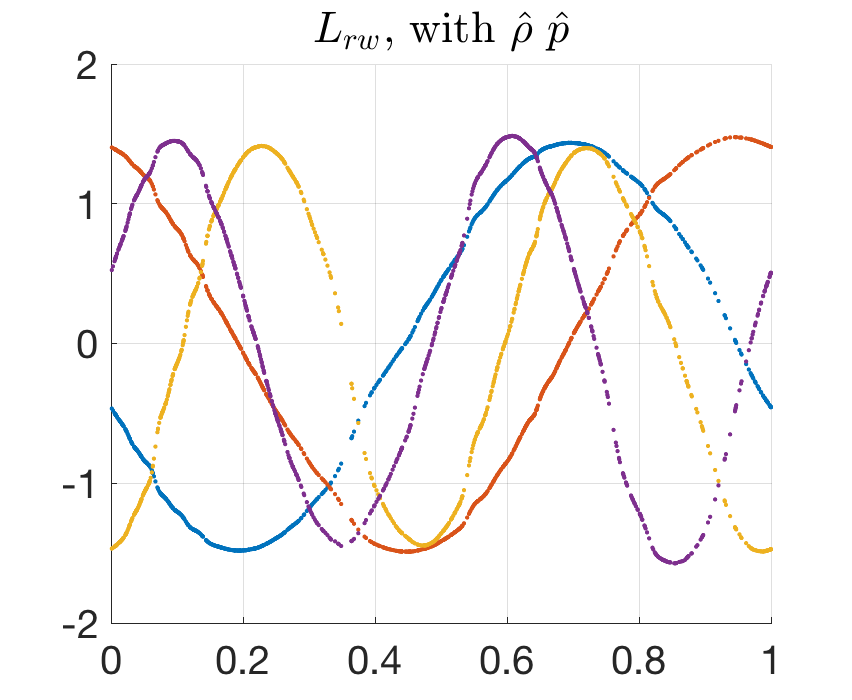}
\hspace{-10pt}
\includegraphics[trim=30 5 10 3, clip,height=.181\linewidth]{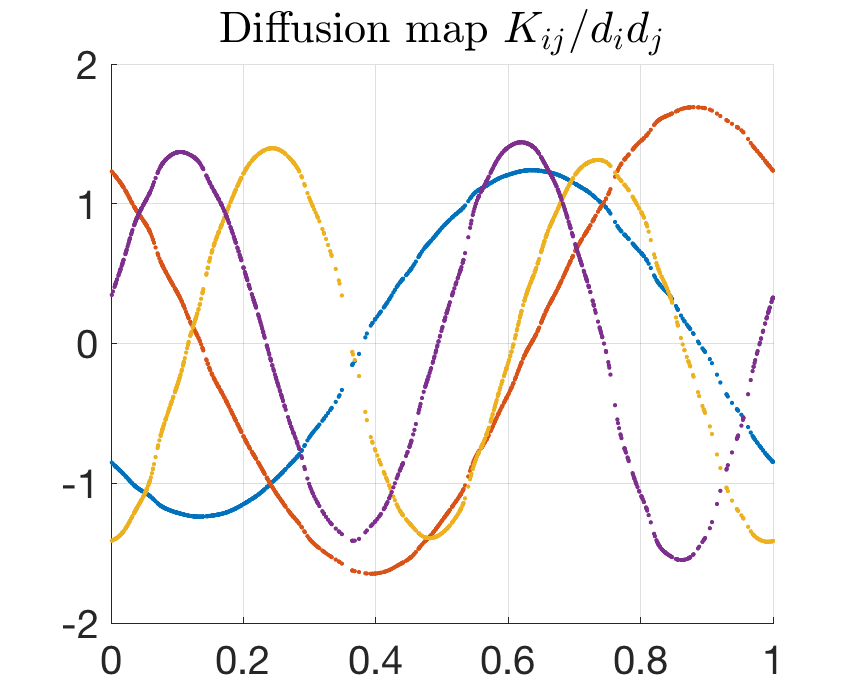}
\end{minipage}
\hspace{-10pt}
\caption{
\small
Plots of first 4 (non-trivial) eigenvectors of graph Laplacians which approximate eigenfunctions of $\Delta_{\calM}$ of $S^1$.
From left to right:
$L_{un}$,  $L_{rw'}$ using $W^{(1-\frac{d}{2})}$;
$L_{un}$,  $L_{rw'}$ using \eqref{eq:W-mixed-normalize};
degree $d_i^{-1}d_j^{-1}$normalized Diffusion Map \cite{coifman2006diffusion}.
Data as in Fig. \ref{fig:data-1d}, 
$N_x = 1000$,
the kNN self-tune bandwidth $\hat{\rho}$ computed from $X$ with $k_x = 21$,
$\epsilon = 1e-4$.
}
\label{fig:eig-S1}
\end{figure}

\subsection{Recovery of the Laplace-Beltrami Operator}\label{subsec:exp-d-not-known}

According to the theory, the limiting operator is $\Delta_{\calM}$ when $\alpha = 1-\frac{d}{2}$.
Here we examine two ways to recover $\Delta_{\calM}$:

(1) By the self-tuned kernel affinity $W^{(1-\frac{d}{2})}$.

(2) By a normalization with 
{a combination of $\hat{p}$ and $\hat{\rho}$}, 
\begin{equation}\label{eq:W-mixed-normalize}
W_{ij} = \frac{k_0 \left(  \frac{ \|x_i - x_j \|^2 }{\epsilon \hat{\rho}(x_i)  \hat{\rho}(x_j)  }\right)}{ (\hat{\rho}\hat{p}^{1/2}) (x_i)  (\hat{\rho}\hat{p}^{1/2}) (x_j)}
=  \frac{ W^{(1)}_{ij} }{ \hat{p}^{1/2} (x_i) \hat{p}^{1/2} (x_j) },
\end{equation}
because $\bar{\rho}^{-d/2} = p^{1/2}$. 
The second approach does not need prior knowledge or estimation of the intrinsic dimensionality $d$,
and thus can be applied in more general scenarios.

Consider the same
1D manifold data 
used in Fig. \ref{fig:data-1d}. Take
$N_x = 1000$, $\epsilon = 10^{-4}$,
and  compute $\hat{\rho}$ from $X$.
The embeddings by the first 4 (non-trivial) eigenvectors of 
various graph Laplacians are shown in Fig. \ref{fig:eig-S1}, 
where
the last column shows the 
result produced by 
affinity matrix $W_{ij} = \frac{K_{ij}}{d_i d_j}$, where
$K_{ij}$  is the fix-bandwidth kernel affinity $K_{ij} = k_0(\frac{ \|x_i - x_j\|^2}{\epsilon})$ and $d_i = \sum_j K_{ij}$, as in \cite{coifman2006diffusion}.
Recall that the eigenfunctions of the Laplace-Beltrami operator are 
{sine and cosine functions with different frequencies.}
In this example,
the random-walk graph Laplacian produces a visually
better eigenfunction approximation 
compared with the unnormalized graph Laplacian.
We postpone 
the study of the random-walk graph Laplacian with self-tuned kernel to future investigation.

\begin{figure}[t]
\includegraphics[trim= 360 0 310 0, clip,height=.18\linewidth]{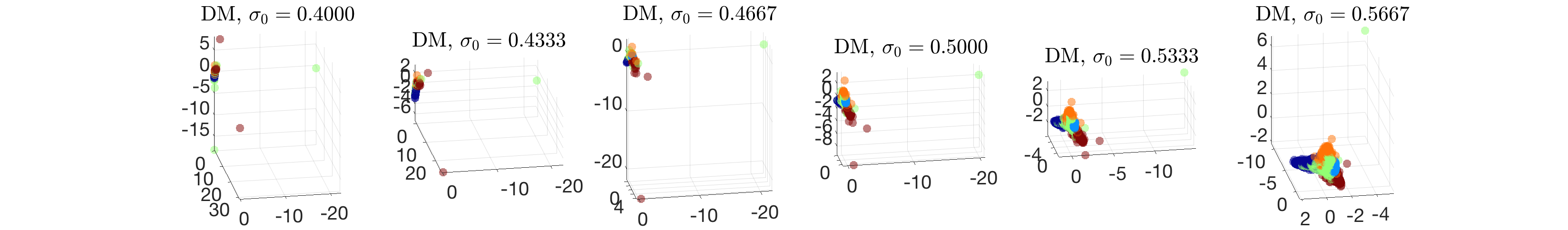} 
\includegraphics[trim= 420 0 300 0, clip,height=.18\linewidth]{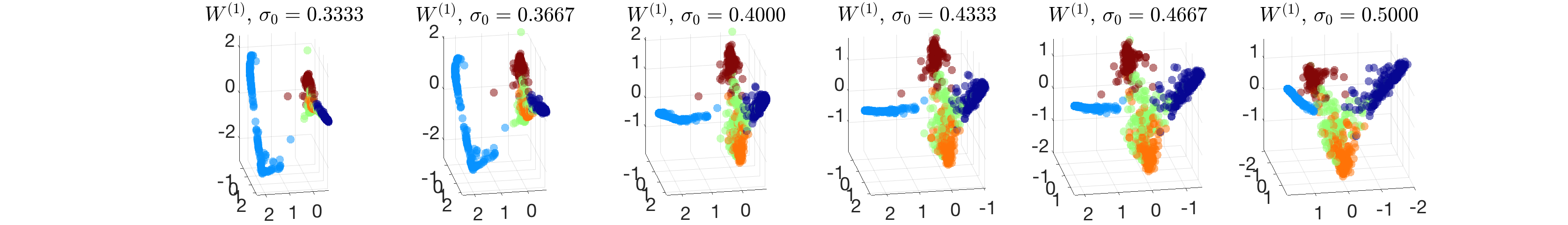}  
\includegraphics[trim= 420 0 300 0, clip,height=.18\linewidth]{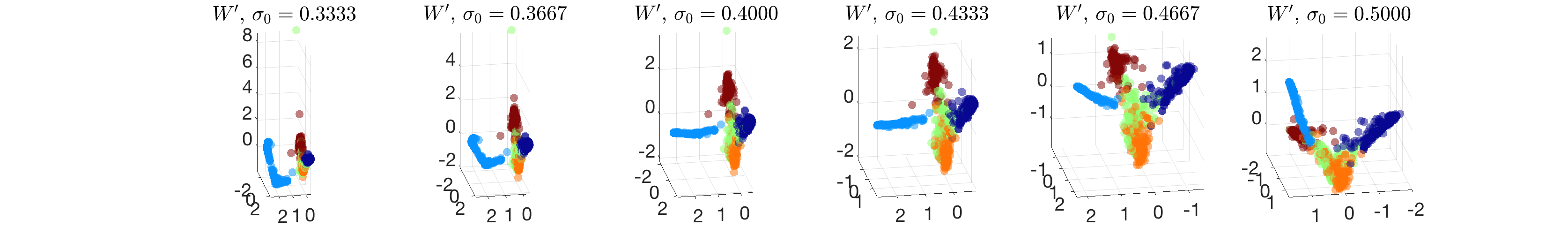}
\vspace{-5pt}
\caption{
\small
Eigenvector embedding of $N_x=$1000 MNIST hand-written digit images of 5 classes, $k_x=7$,
colored by digit class labels.
(Top) By $\frac{K_{ij}}{d_i d_j}$ with fixed-bandwidth kernel,
(Middle) by self-tuned kernel $W^{(1)}$,
(Bottom) by self-tuned kernel $W^{'}$,
as defined in \eqref{eq:W-mnist}. 
}
\label{fig:mnist}
\end{figure}

\begin{figure}[b]
\centering{
\includegraphics[height=.18\linewidth]{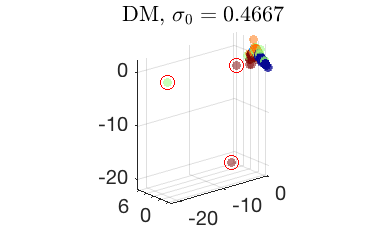}
\includegraphics[height=.18\linewidth]{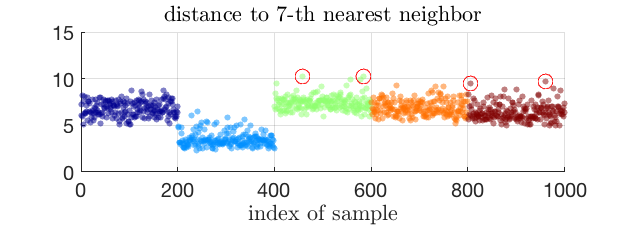}
}
\vspace{-5pt}
\caption{
\small
Outliers in the fixed-bandwidth kernel embedding.
(Left) 
One embedding in the top panel of Fig. \ref{fig:mnist} 
{with a proper rotation for visualization purpose},
where the outlier samples are marked with red circles.
(Right)
Values of $\hat{R}_i$,
i.e. the distance to the 7-th nearest neighbor,
 of the $N_x=$1000 samples.
 The outlier samples in the left plot are marked by red circles.
The plot is colored by digit class labels. 
}
\label{fig:mnist-low-density}
\end{figure}

\subsection{Embedding of Hand-written Digits Data}\label{subsec:mnist}

We implement the embedding on $N_x = 1000$ samples from the MNIST dataset, containing 5 classes (digits `0', `1', \ldots, `4') 
with 200 images in each class. 
The hand-written images can be viewed as lying near certain low-dimensional sub-manifolds in the $\R^{784}$ ambient space (each sample is a 28$\times$28 gray-scale image). 
We use $k_x = 7$,
and compute $\hat{R}_i $ as the $L^2$ distance 
between the $i$-th image to its $k_x$-th nearest neighbor.
Also compute $\hat{\mu}_i := \frac{1}{N} \sum_{j=1}^N h_{kde} ( \frac{ \|x_i -x_j\|^2}{\epsilon_{kde}} )$,
where $\epsilon_{kde}^{1/2}  = \text{Median}_{i} \{\hat{R}_i\}$. 
{Here,}
$\hat{\mu}$ is the (un-normalized) density estimator. 
{Consider two} self-tuned kernel affinities: 
\begin{equation}\label{eq:W-mnist}
W_{ij}^{(1)} = k_0\left( \frac{ \|x_i - x_j\|^2}{ \sigma_0^2 \hat{R}_i  \hat{R}_j} \right) \frac{1}{ \sigma_0^2 \hat{R}_i  \hat{R}_j},
\quad
W_{ij}^{'} = k_0\left( \frac{ \|x_i - x_j\|^2}{ \sigma_0^2 \hat{R}_i  \hat{R}_j} \right) \frac{1}{ \sigma_0^2 \hat{R}_i  \hat{R}_j \sqrt{ \hat{\mu}_i   \hat{\mu}_j}}\,.
\end{equation}
We use $L_{rw'} = D_{\hat{R}}^{-2}(D^{-1}W-I)$, {where} $W = W^{(1)}$ or $W^{'}$, $(D_{\hat{R}})_{ii} = \sigma_0 \hat{R}_i $,
and $D$ is the degree matrix of $W$.  
The parameter $\sigma_0^2$ serves as the dimension-less bandwidth ``$\epsilon$''.
We also compare with the fixed-bandwidth kernel affinity matrix, where $ \epsilon^{1/2} = \sigma_0 \text{Median}_i( \hat{R}_i)$,
called the Diffusion Map (DM) embedding.
The embeddings by the first 3 (non-trivial) eigenvectors over a range of values of $\sigma_0$ are shown in Fig. \ref{fig:mnist}.

We observe that
the DM embedding  
is disconnected at small value of $\sigma_0$
and
consists of points which are far away from the bulk (outlier points),
 due to sensitivity to data points which are relatively farther away from its neighbor samples.
As illustrated in Fig. \ref{fig:mnist-low-density},
the outlier points in the DM embedding
are those whose values of $\hat{R}_i$ are large.
In comparison, 
both the self-tuned kernels
provide informative embeddings of the dataset over the range of values of $\sigma_0$,
showing improved stability at small values of $\sigma_0$
to data samples lying at places where the data density is low.
The $W^{(1)}$ kernel affinity shows a better stability than the $W'$ kernel at the small value of $\sigma_0$
on this dataset,
due to that the $W'$ kernel still involves a fixed-bandwidth KDE $\hat{\mu}$ in the normalization.

\section{Proofs}\label{sec:proofs}

\subsection{Proofs  in Section \ref{sec:knn-hatrho}}

%
%
\begin{proof}[Proof of Lemma \ref{lemma:knn-rx}]
Given $Y$ and $k>1$ fixed, define
\[
{\calS}_Y := \left\{ x \in \R^D, \, s.t. \, \exists j \neq j', \, \|x-y_{j}\| = \|x - y_{j'}\| \right\}\,.
\]
Since $y_j$'s are distinct points, ${\calS}_Y$ is a collection of finitely many hyperplanes in $\R^D$ (finitely many points when $D=1$),
and  ${\calS}_Y \cap Y  = \emptyset$.
Whenever $x$ lies outside ${\calS}_Y$,
the set $ \{ \| x-y_j\| \}_{j=1}^N$ consists of distinct non-negative values.
The set $\R^D \backslash {\calS}_Y$ is open and consists of a finite union of polygons (the polygons can be  unbounded),
as illustrated in Fig. \ref{fig:diag-knn}.

We prove the lemma in three parts as below.

\vspace{5pt}
\noindent
~
\underline{Part 1}: To prove that $\hat{R}$ is piece-wise $C^\infty$ on $\R^D \backslash {\calS}_Y$,
and on each polygon $\mathbf{p}$ in $\R^D \backslash {\calS}_Y$,
$\hat{R}(x) = \| x - y_{\mathbf{p}}\|$ for a point $y_{\mathbf{p}} \in Y$ and outside $\mathbf{p}$.

First, for each (open) polygon $\mathbf{p}$ and any $x \in \mathbf{p}$,
 the $k$-th nearest neighbor (kNN) of $x$ in $Y$ is uniquely defined due to the fact that the distance list $ \{ \| x-y_j\| \}_{j=1}^N$ has distinct values.
Thus 
the function $\hat{R}(x)$ equals $\|x - y^{(k,x)}\|$, where $ y^{(k,x)}$ is the kNN of $x$ in $Y$.

Second, we claim that the point $ y^{(k,x)}$ is the same $y \in Y$ for all $x$ inside the polygon $\mathbf{p}$, 
because the ordered list of nearest neighbors is fixed for all $x$ within $\mathbf{p}$. 
Indeed, for the ordered list to cross, the distances of $\|x - y_{j}\|$ and $\|x - y_{j'}\|$ need to be equal at some $x$, and this $x$ lies on ${\calS}_Y$. 
We call this point $ y_{\mathbf{p}}$, and then $\hat{R}(x) = \|x - y_{\mathbf{p}}\|$ for $x \in \mathbf{p}$.

Third, we claim that $y_{\mathbf{p}} \notin \mathbf{p} $.
Note that each polygon $\mathbf{p}$ has at most one point $y_j $ inside it.
Because otherwise, suppose $y_j \neq y_{j'} $  are both inside $\mathbf{p}$, then so is the middle point $\frac{ y_j + y_{j'}}{2} $
due to that $\mathbf{p}$ is convex,
but $\frac{ y_j + y_{j'}}{2} $ is in ${\calS}_Y$ and cannot intersect with $\mathbf{p}$. 
Now if $y_{\mathbf{p}} \in \mathbf{p} $, then by definition $y_{\mathbf{p}}$ is the kNN of itself,
which means that $k = 1$. This contradicts with the condition that $k > 1$.

The above gives us that $ \hat{R}(x)=\| x - y_{\mathbf{p}}\|$ is  $C^\infty$  and hence $\| \bar{\nabla} \hat{R} \| =1$ inside $\mathbf{p}$,
by the fact that the mapping $x \mapsto \|x\|$ is $C^\infty$ on $\R^D \backslash \{0\}$.
These properties hold for all polygons $\mathbf{p}$,
 thus $\hat{R} (x)$ is $C^\infty$ on $\R^D \backslash {\calS}_Y$,
 and $\| \bar{\nabla} \hat{R} \| =1$ at point of differentiability.

\begin{figure}[t]
\centering{
\includegraphics[height=.35\linewidth]{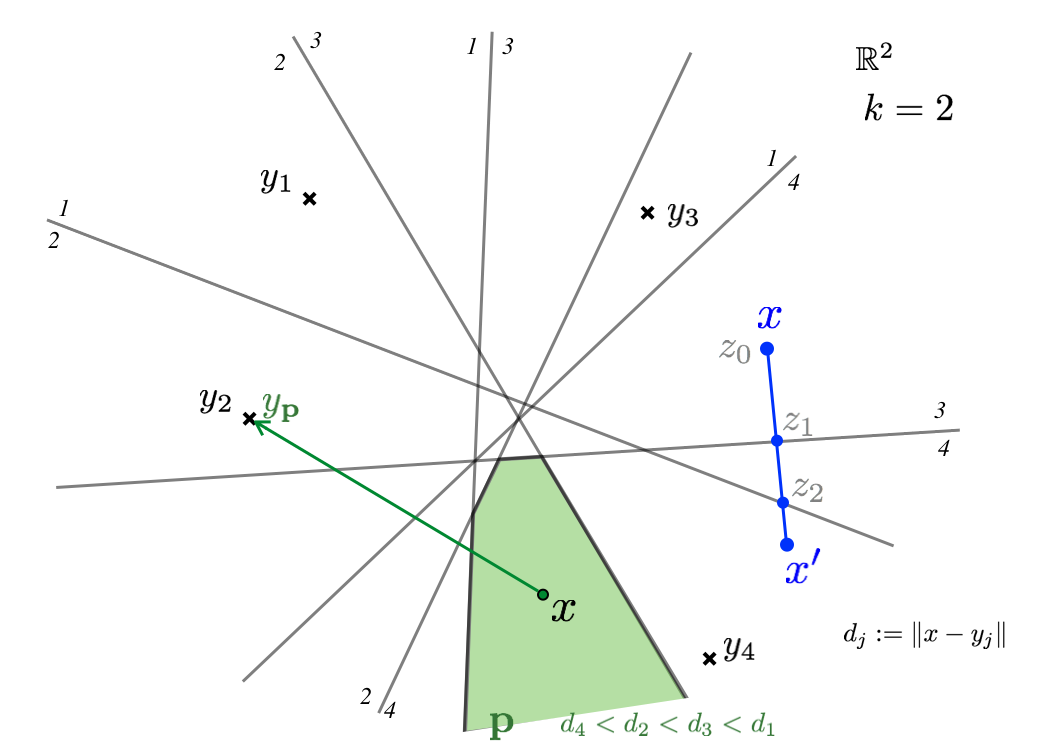}
}
\vspace{-5pt}
\caption{
\small
Illustration of the set ${\calS}_Y$ and example polygons in
the proof of Lemma \ref{lemma:knn-rx}, 
$D=2$, $Y = \{ y_1, \cdots ,y_4\}$, $k=2$, 
For each polygon $\mathbf{p}$, there is a point 
$y_{\mathbf{p}} \in Y$ such that 
$\hat{R}(x) = \| x - y_{\mathbf{p}}\|$ for $x \in \mathbf{p}$.
}
\label{fig:diag-knn}
\end{figure}

\noindent
~
\underline{Part 2}: To prove that $\text{Lip}_{\R^D} (\hat{R}) \le 1$. 

We assume that $\hat{R} $ is continuous $\R^D$, which will be proved in Part 3.
By Part 1, we have that $\hat{R}$ is Lipschitz 1 on each open polygon $\mathbf{p}$, 
and combined with the continuity of $\hat{R}$ at points on the boundary of $\mathbf{p}$,
we have that $\text{Lip}_{\bar{\mathbf{p}}} (\hat{R}) \le 1$, 
where $\bar{\mathbf{p}}$ is the closure of $\mathbf{p}$,
that is,
\begin{equation}\label{eq:Lip-p-closure}
| \hat{R}(z) - \hat{R}(z') |  \le \| z - z'\|,
\quad 
\forall z, z' \in \bar{\mathbf{p}}.
\end{equation}

For two points $x \neq x'$ in $\R^D$, we want to show that $ | \hat{R}(x) - \hat{R}(x') |  \le \| x - x'\|$. 
Consider the segment line $l$ connecting the two points. 
If $l$ is contained in some $\bar{\mathbf{p}}$,
then the claim is proved. 
Otherwise, 
there is a sub-segment $l_0$ connecting from $x$ and $z_1 \in {\calS}_Y $ such that $l_0$ is in some $\bar{\mathbf{p}}$.
Continue the process gives  finitely many distinct points $\{ z_1, \cdots, z_M\} \subset l$
such that the sub-segment $l_i$ connecting from $z_{i}$ to $z_{i+1}$ is contained in some $\bar{\mathbf{p}}$
for $i=0$ to $M$,
where $z_0 = x$
and 
$z_{M+1} = x'$.
Note that this decomposition of $l$ into the union of $l_i$'s holds even when one or both of $x$ and $x'$ are in ${\calS}_Y$,
as illustrated in Fig. \ref{fig:diag-knn}.

Now by construction,  $\| x - x'\| = \sum_{i=0}^{M} \| z_i -   z_{i+1} \|$. 
Meanwhile, applying \eqref{eq:Lip-p-closure} to each $l_i$ gives that 
$| \hat{R} (z_i)-\hat{R} (z_{i+1}) | \le \| z_i - z_{i+1} \|$. 
Thus 
 \[
 |\hat{R}(x) - \hat{R}(x')| \le  \sum_{i=0}^{M} |\hat{R}(z_i) - \hat{R}( z_{i+1})|
\le  \sum_{i=0}^{M} \| z_i -   z_{i+1} \| = \| x - x'\|.
\]

\vspace{5pt}
\noindent
~
\underline{Part 3}: To prove that $\hat{R}$ is continuous on $\R^D$.

To finish the proof,
it remains to prove the continuity of $\hat{R} $ on $\R^D$. For any $x_0 \in \R^D$, 
let $\hat{R} (x_0) = r_0$. Since $Y$ has distinct points by assumption, at most one point $y_j$ coincides with  $x_0$. Since $k > 1$, $r_0 > 0$.
We prove that when $x \to x_0$, $\hat{R}(x) \to r_0$.
Define
\[
F(x,r):= \sum_{j=1}^N {\bf 1}_{\{ \|x-y_j\| < r \}}\,.
\]
Recall that
\[
\hat{R} (x) = \inf \{ r > 0, \text{ s.t. }  F(x,r) \ge k \}\,.
\]
Since $F(x_0, r)$ is monotonically increasing as $r$ increases,
for any $r' := r_0 +\varepsilon > r_0$, 
$F(x_0,r') \ge k$. This means that $| Y \cap B_{r'}(x_0) | :=k' \ge k $.
Since $B_{r'}(x_0) $ is an open ball,  and there are $k'$ many $y_j$'s lying inside it,
they also all lie inside $B_{r''}(x_0) $ where $r_0<r'' < r'$.
Thus when $\|x - x_0\| < (r'-r'')/2:= r'''$,  these $k'$ points of $y_j$ also lie inside $B_{r'}(x)$,
then $F(x, r') \ge k' \ge k$. This gives that $ \hat{R} (x) \le r_0 +\varepsilon$, whenever $\|x - x_0\| < r'''$.

Meanwhile, for any $ 0 < r' := r_0 - \varepsilon < r_0$, by definition $F(x_0, r'+\frac{\varepsilon}{2}) < k$,
i.e., $| Y \cap B_{r'+\frac{\varepsilon}{2}}(x_0) | :=k' < k $.
This means that for any $y \in Y\backslash B_{r'+\frac{\varepsilon}{2}}(x_0)$, 
the distance $\|y - x_0\| \ge r'+\frac{\varepsilon}{2}$. 
Thus, when $\|x - x_0\| < \frac{\varepsilon}{4}$, 
the smallest distance $\| y - x \|$ for any $y \in Y\backslash B_{r'+\frac{\varepsilon}{2}}(x_0)$ 
is $\ge  r'+\frac{\varepsilon}{4}$,
and then $F(x,  r') \le k'  < k$.
This shows that $\hat{R}  (x) \ge r_0 - \varepsilon$, whenever  $\|x - x_0\| < \frac{\varepsilon}{4}$. 
Putting together, this proves the continuity of $\hat{R} (x)$ at $x_0$.
\end{proof}

%
%
%
\begin{proof}[Proof of Proposition \ref{prop:hatrho-one-point}]
Recall that $\bar{\rho}(x)=p(x)^{-1/d}$. Define
\begin{equation}\label{eq:def-barR}
\bar{R}(x) := \bar{\rho}(x) \left(  \frac{1}{m_0[h]}\frac{k}{N} \right)^{1/d}\,.
\end{equation}
Then, since we have $\hat{\rho}(x) = \hat{R}(x) \left(  \frac{1}{m_0[h]}\frac{k}{N} \right)^{-1/d}$
and $\bar{\rho}(x) = \bar{R}(x) \left(  \frac{1}{m_0[h]}\frac{k}{N} \right)^{-1/d}$,
the proposition can be equivalently proved 
by controlling 
$\frac{ |\hat{R}(x) - \bar{R}(x) |}{\bar{R}(x)}$.
For the given $s>0$,  
define
\begin{equation}\label{eq:def-deltar}
\delta_r := t_1 \left(\frac{k}{N}\right)^{2/d} + \frac{t_2}{d} \sqrt{\frac{s \log N}{k}}\,,
\end{equation}
where 
$t_1 =\Theta^{[p]} (1)$, $t_2 =\Theta^{[1]} (1)$, 
both will be determined later.
We will show that,
\rev{when $N$ exceeds a threshold depending on $(p,s)$,}
for any $x \in {\calM}$ fixed, 
w.p. greater than $1-2 N^{-s/4}$, 
\begin{equation}\label{Proposition2.2 Proof first bound}
\bar{R}(x)(1 - \delta_r) \le  \hat{R}(x)  \le \bar{R}(x)(1 + \delta_r).
\end{equation}
To \rev{prove \eqref{Proposition2.2 Proof first bound}}, 
we introduce some notations. Denote
\[
R_{-}(x) := \bar{R}(x)(1 - \delta_r),
\quad\mbox{and}\quad 
R_ +(x) = \bar{R}  {(x)}(1+ \delta_r)\,.
\]
\rev{Let $h = {\bf 1}_{[0,1)}$,}
and define, for any $x\in\calM$ and $r >0$,
\[
\hat{\mu}(x,r) := \frac{1}{N} \sum_{j=1}^N h \left(  \frac{ \| x- y_j\|^2 }{r^2}\right) 
=: \frac{1}{N} \sum_{j=1}^N H_j(x,r)\,,
\]
\rev{then, by \eqref{eq:def-knn-hatR},
$\hat{R} (x) = \inf_{r} \left\{ r > 0,\, \text{ s.t. }  \hat{\mu}(x,r) \ge \frac{k}{N} \right\}$}.
For fixed $x$ and $r$, $H_j$ are i.i.d. random variables, and
\begin{equation}
\E H_j(x,r) = \int_{\calM} h \left(  \frac{ \| x- y\|^2 }{r^2}\right)  p(y) dV(y) =: \mu(x,r).
\end{equation}
Below, to simplify notation, we omit the dependence on $x$ in  $\bar{R}$, $R_\pm$ and $H_j$ when there is no confusion.
The argument is for a fixed $x$, and we make sure that the constants $t_1$ and $t_2$ in $\delta_r$
\rev{as well as the large-$N$ threshold}
 are uniform for all $x$.

\begin{figure}[t]
\centering{
\includegraphics[height=.23\linewidth]{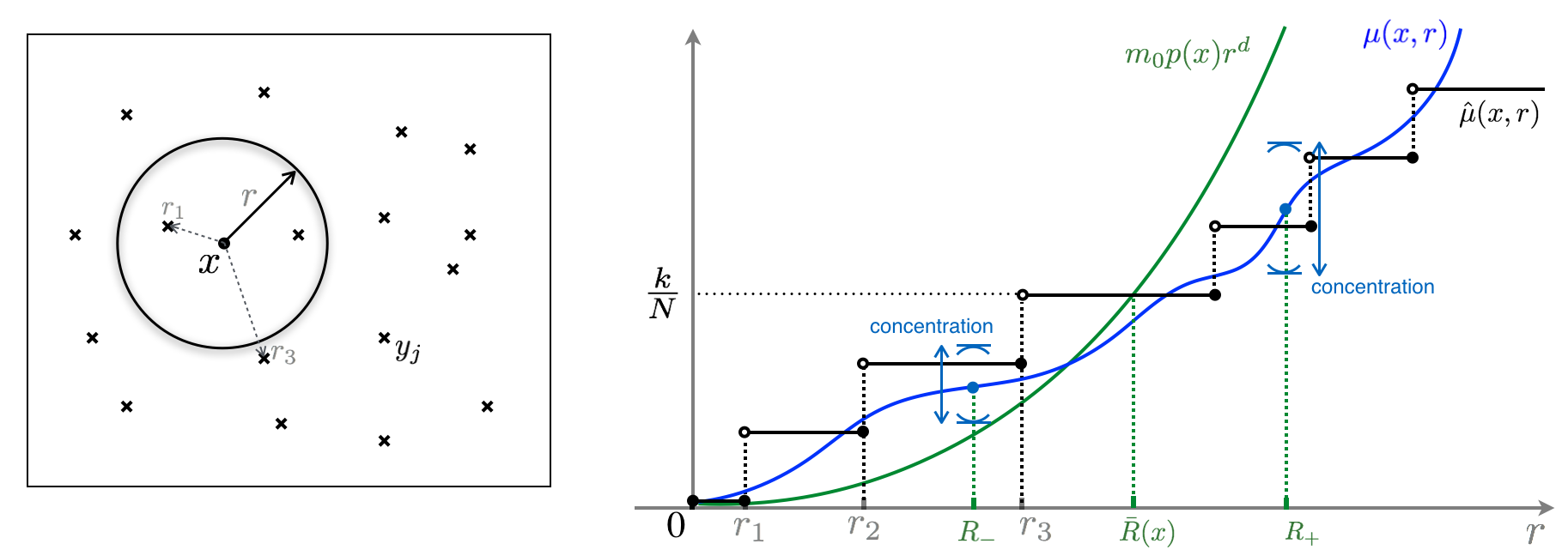}
}
\vspace{-5pt}
\caption{
\small
Given a dataset $Y$ and a fixed $x $,
plots of $\hat{\mu}(x,r)$, $\mu(x,r)$ and $m_0[h]p(x) r^d$ as  functions of $r$.
The values of $\hat{R}(x)$, $\bar{R}(x)$ and $R_{\pm}$ are marked. 
These quantities are used in the proof of Proposition \ref{prop:hatrho-one-point}.
}
\label{fig:diag-hatmu}
\end{figure}

We first address the lower bound in \eqref{Proposition2.2 Proof first bound}. By definition, $\hat{\mu}(x,r)$ is monotonically increasing on $(0, \infty)$.
 We claim that 
\begin{equation}\label{eq:lower-bound-relation-1}
\Pr [ \hat{R}(x) < R_- ]
\le 
\Pr \left[ \hat{\mu}( x, R_- ) \ge \frac{k}{N}  \right].
\end{equation}
Because 
$\hat{R}(x) = \inf \{ r > 0,  \hat{\mu}(x,r) \ge \frac{k}{N} \}$, if $\hat{R}(x) < R_- $, 
there is some $r'$, $ \hat{R}(x) < r'  < R_-$ such that $\hat{\mu}(x, r') \ge \frac{k}{N}$,
and by monotonicity $\hat{\mu}(x, R_-) \ge \hat{\mu}(x, r') \ge \frac{k}{N}$.

To bound the probability $\Pr \left[ \hat{\mu}(x,  R_- ) \ge \frac{k}{N}  \right]$,
we use that the expectation $\mu(x,R_-)$ would be smaller than $\frac{k}{N}$ under some conditions for $\delta_r$
defined in \eqref{eq:def-deltar}.

Note that by definition \eqref{eq:def-barR},
$\bar{R}(x) = \Theta^{[p]} ( (\frac{k}{N})^{1/d} ) =o^{[p]}(1)$,
and the implied constant is uniform for all $x$ by the uniform boundedness of $\bar{\rho}$.
Also, we have that
$\delta_r = o^{[p]}(1)$ 
under the asymptotic condition on $k$.
As a result, we have that
\begin{equation}\label{eq:Rminus(x)-asymp-uniform-x}
R_- 
= \Theta^{[p]} (
\bar{R}(x) )
= \Theta^{[p]} \left(
 ( \frac{k}{N} )^{1/d} 
 \right) 
= o^{[p]}(1).
\end{equation}
Then, \rev{ Lemma \ref{lemma:G-expansion-h-indicator}}
 gives that when $N$ is sufficiently large and then $R_-$ is small,   
\begin{align}
\mu( x,R_- ) 
&=  m_0[h] p(x) R_- ^d + O^{[p]} ( R_-^{d+2})  \nonumber \\
& = m _0[h] p(x) \bar{R}^d  (1- \delta_r)^d + O^{[p]}( \bar{R}^{d+2})   \nonumber \\
& = m _0[h] p(x) \bar{R}^d  \left( 1- d \delta_r + O^{\rev{[1]}}(\delta_r^2) + O^{[p]}( \bar{R}^{2})  \right)  \nonumber  \\
& \le 
\frac{k}{N} (1 - 0.9 d  \delta_r )
+ O^{[p]}(\bar{R}^{d+2})
=: \frac{k}{N} - \delta_{\mu_-}\,,
\rev{~~~ \text{(by that $m _0[h] p(x) \bar{R}^d = \frac{k}{N}$)}}
\label{eq:def-delta-mu-minus}
\end{align}
where
\rev{
the inequality in the last row is obtained by that $\delta_r = o^{[p]}(1)$,
and the large-$N$ threshold here only depends on $p$. 
}
Note that the implied constant of $O^{[p]}(\bar{R}^{d+2})$, denoted as  $c_p$, is uniform for all $x$.
Meanwhile, by \rev{uniform boundedness of $p$ from below}, we have
\[
\bar{R}(x) \le \max_{x \in {\calM}}\bar{\rho}(x) \left(\frac{1}{m_0[h]} \frac{k}{N}\right)^{1/d}
= \left( \frac{1}{ 
\rev{p_{min}}
m_0[h]} \right)^{1/d} \left(\frac{k}{N}\right)^{1/d}\,.
\]
Denote $c_{p,1}:=\left( \rev{p_{min}}  m_0[h] \right)^{-1/d}$,
and choose
\begin{equation}\label{eq:pick-t1}
t_1 := \frac{ c_p c_{p,1}^{d+2} }{0.8 d}
\rev{ = \Theta^{[p]}(1)},
\end{equation}
which 
is uniform for all $x$,
\rev{then}
$
t_1 \cdot 0.9 d   \left(\frac{k}{N}\right)^{1+2/d} > c_p   c_{p,1}^{d+2} \left(\frac{k}{N}\right)^{1+2/d}  \ge c_p \bar{R}^{d+2}$.
\rev{Thus,  when $N$ is sufficiently large and the threshold depends on $p$,
} 
we have
\begin{align}
\delta_{\mu_-} &= 0.9 d \frac{k}{N}
\left(  t_1 \left(\frac{k}{N}\right)^{2/d} + \frac{t_2}{d} \sqrt{\frac{s \log N}{k}} \right)
 + O^{[p]}(\bar{R}^{d+2}) \nonumber \\
& >  t_2 \cdot 0.9  \frac{k}{N}\sqrt{\frac{s \log N}{k}} 
= t_2 \cdot 0.9  \left( \frac{k}{N} \right)^{1/2} \sqrt{\frac{s \log N}{N}} 
=: \tilde{s}.
\label{eq:def-tilde-s}
\end{align}
To use the concentration of $\hat{\mu}( R_- )$ at $\mu(R_-)$,
we compute the boundedness and variance of $H_j(R_-)$.
Because $0 \le h \le 1$, so is $H_j$, and then $|H_j| \le L _H =1$. The variance
\[
\text{Var}(H_j)
\le \E H_j^2 
= \int_{\calM} h^2 \left(  \frac{ \| x- y\|^2 }{ R_-^2}\right)  p(y) dV(y)
= \mu(R_-),
\]
because the kernel function $h: \R \to \R$ satisfies $h^2 = h$. Thus, 
\rev{by that  $\mu(R_-)=  m_0[h] p(x) R_- ^d + O^{[p]} ( R_-^{d+2})$ with \eqref{eq:Rminus(x)-asymp-uniform-x},
and that $ \delta_r = o^{[p]}(1)$},
when $N$ is sufficiently large,
\[
\text{Var}(H_j) \le 1.1  m_0[h] p(x) R_-^d
\le 1.5 m_0[h] p(x) \bar{R}^d 
= 1.5 \frac{k}{N}
=: \bar{\nu}_H,
\]
\rev{and the two inequalities hold when $N$ exceeds a threshold depending on $p$ only.}
By the classical Bernstein {inequality}, 
as long as $\tilde{s} L_H < 3 \bar{\nu}_H$, 
then 
\[
\Pr [  \hat{\mu}( R_-)-  \mu( R_-) > \tilde{s}] < e^{- \frac{1}{4} \tilde{s}^2 \frac{N} {\bar{\nu}_H}}.
\]
To verify that $\tilde{s} L_H < 3 \bar{\nu}_H$:
note that it is equivalent to that $ t_2 \cdot 0.9   < 3 \cdot 1.5 ( \frac{k}{s \log N} )^{1/2}$,
and since  we have assumed $k = \Omega(\log N)$, 
if we have $t_2 = \Theta( 1)$,
then it holds when $N$ is sufficiently large \rev{where the threshold depends on $s$.}
This is fulfilled by
setting $t_2$ being an absolute constant such that
\begin{equation}\label{eq:pick-t2}
\frac{ ( t_2 0.9 )^2  }{4 \cdot 1.5} = 1,
\rev{ \quad  0 < t_2 < 3}.
\end{equation}
Thus, together with \eqref{eq:def-delta-mu-minus} and \eqref{eq:def-tilde-s}, 
{we have}
\[
\mu(R_-) 
\le \frac{k}{N} - {\delta_{\mu_-}} 
< \frac{k}{N} - \tilde{s}\,.
\]
As a result, \eqref{eq:lower-bound-relation-1} continues as
\begin{equation*} 
\Pr [ \hat{R}(x) < R_- ]
\le
\Pr \left[ \hat{\mu}( R_-) \ge \frac{k}{N} \right]
\le 
\Pr \left[ \hat{\mu}( R_-) > \mu(R_- ) + \tilde{s}  \right]
< e^{- \frac{1}{4} \tilde{s}^2 \frac{N} {\bar{\nu}_H}} = N^{-s},
\end{equation*}
which proves that { w.p. higher than} $1 - N^{-s}$,
the lower bound $ \hat{R}(x) \ge R_-  $ holds. 
We call the event $ [ \hat{R}(x) \ge R_- ]$ the good event $E_1$.
\rev{All the large-$N$ thresholds depend on $(p,s)$ and are uniform for all $x$}.

The upper bound is proved in a similar way. Specifically,
\begin{align*}
 \mu(R_+)
 & = \frac{k}{N} (1 + \delta r)^d + O^{[p]}( R_+^{d+2})
 \ge \frac{k}{N} (1 + 0.9 d \delta r ) + O^{[p]}( \bar{R}^{d+2}) \\
 & = \frac{k}{N}  + 0.9 d \frac{k}{N} 
 \left( t_1 (\frac{k}{N})^{2/d} + \frac{t_2}{d} \sqrt{\frac{s \log N}{k}} \right) + O^{[p]}( \bar{R}^{d+2}),
\end{align*}
and {the implied constant in} $O^{[p]}(  \bar{R}^{d+2} )$, 
{$c_{p}$,} is same as {the} above by Lemma \ref{lemma:G-expansion-h-indicator}.
Then, again by {the} uniform upper bound of $\bar{R}(x)$ by $c_{p,1} (\frac{k}{N})^{1/d}$,
{by setting} $t_1$ to { be that} in \eqref{eq:pick-t1}, {we have} 
\[
\mu(R_+) > \frac{k}{N} + t_2 0.9  \left(\frac{k}{N}\right)^{1/2} \sqrt{ \frac{s \log N}{N}} = \frac{k}{N} + \tilde{s}.
\]
Same as before, $H_j(R_+) $ is bounded by 1 and for {a sufficiently} large $N$,
\[
\text{Var}(H_j) \le \E H_j^2
= \mu( R_+) 
\le  1.5 \frac{k}{N} = \bar{\nu}_H\,.
\]
By letting $t_2$ as in \eqref{eq:pick-t2}, we have
\begin{equation*} 
\Pr[ \hat{R}(x) > R_+ ]
\le 
\Pr \left[ \hat{\mu}(R_+) < \frac{k}{N} \right]
\le 
\Pr \left[ \hat{\mu}(R_+) <  \mu(R_+) - \tilde{s} \right]
< e^{-\frac{1}{4} \tilde{s}^2 \frac{N}{ \bar{\nu}_H}} = N^{-s}.
\end{equation*}
This proves that w.p. higher than $1- N^{-s}$, the upper bound $\hat{R}(x) \le R_+$ holds.
We call the event  $[ \hat{R}(x) \le R_+ ]$ the good event $E_2$.

Putting {the above} together, under {the event $E_1\cap E_2$, which happens w.p. greater than} $1- 2N^{-s}$,
\[
\frac{|\hat{R}(x) - \bar{R}(x)|}{ \bar{R}(x) } \le {\delta_r} =
 \frac{ c_p c_{p,1}^{d+2} }{0.8 d} \left(\frac{k}{N}\right)^{2/d} 
 + \frac{\frac{2 \sqrt{ 1.5 } }{0.9}}{d} \sqrt{\frac{s \log N}{k}},
\]
which proves the claim of the proposition.
\end{proof}
%
%

%
%
\begin{proof}[Proof of Theorem \ref{thm:hatrho}]
We restrict to when $Y$ has distinct points,
which, under Assumption \ref{assump:M-p}, holds w.p. 1,
and then Lemma \ref{lemma:knn-rx} holds.

We cover ${\calM}$ using $r$-Euclidean balls, 
{where $r>0$ is a constant of order $(k/N)^{3/d}$ with the implied constant to be determined.} 
Suppose $N$ { is} large enough such that $r < \delta_0$ in Lemma \ref{lemma:M-delta0},
then by Lemma \ref{lemma:covering-2}, {we can find an} $r$-net 
$F :=\{ x_1, \cdots, x_n \}$
whose cardinal number is $n$, $n     { \le }  V({\calM}) r^{-d}$.
We ask for the bound in Proposition \ref{prop:hatrho-one-point} to hold at each $x_i$,
\rev{where $s > 0$ will be chosen later as an $\Theta^{[1]}(1)$ constant.}
Then, \rev{when $N$ exceeds a threshold depending on $p$ and uniform for all $x_i$,}
by a union bound, \rev{under a good event $E_{\hat{\rho},net}$ which happens} w.p. higher than $1- 2n N^{-s}$, 
we have
\begin{equation}\label{eq:def-epsilon-covering-bound}
\left| \frac{\hat{\rho}(x_i) }{\bar{\rho}(x_i) } - 1  \right| 
\le t_1 \left(\frac{k}{N}\right)^{2/d} +  \frac{t_2}{d} \sqrt{ \frac{s \log N }{k}} 
:= \varepsilon, 
\quad
\text{for all $i=1,\cdots, n$,}
\end{equation}
where $t_1 = \Theta^{[p]}( 1)$ and $t_2= \Theta^{[1]}( 1)$ are \rev{defined as} in the proof of Proposition \ref{prop:hatrho-one-point}.
Under the asymptotic condition on $k$,
$\varepsilon = o^{[p]}(1)$ as $N \to \infty$.

We {now} consider $\hat{\rho}/\bar{\rho}$ on each $\bar{B}_{r}(x_i) \cap {\calM}$.
Because $\bar{\rho} = p^{-1/d}$ is $C^\infty$ on ${\calM}$,
$\sup_{x \in {\calM}}| \nabla_{\calM} \bar{\rho} (x) | \le L_p$.
Then,  for each $x_i$,   by \eqref{eq:metric-geo-2}, 
\[
| \bar{\rho}(x) - \bar{\rho}(x_i) | \le L_p  d_{\calM}(x,x_i)
\le
1.1 L_p \| x - x_i \| 
\le 1.1 L_p r,
\quad 
\forall x \in \bar{B}_{r}(x_i) \cap {\calM}.
\]
Meanwhile, 
Lemma \ref{lemma:knn-rx} gives that 
\[
\text{Lip}_{\R^D}(\hat{\rho}) =  \left( \frac{1}{m_0[h]}\frac{k}{N} \right)^{-1/d}
\text{Lip}_{\R^D}( \hat{R})
\le  \left( \frac{1}{m_0[h]}\frac{k}{N} \right)^{-1/d},
\]
{so we have}
\[
| \hat{\rho}(x) - \hat{\rho}(x_i) | \le  \left( \frac{1}{m_0[h]}\frac{k}{N} \right)^{-1/d} r,
\quad
\text{$\forall x \in \bar{B}_{r}(x_i) \cap {\calM}$.}
\]
Together, we have that \rev{$\forall x \in \bar{B}_{r}(x_i) \cap {\calM}$,}
\begin{align}
\left| \frac{\hat{\rho}(x)}{\bar{\rho}(x)} - \frac{\hat{\rho}(x_i)}{\bar{\rho}(x_i)} \right|
&\le 
\frac{1}{\bar{\rho}(x)} \left| (\hat{\rho}(x)) - \hat{\rho}(x_i)) 
- \frac{\hat{\rho}(x_i)}{\bar{\rho}(x_i)} ( \bar{\rho}(x)-\bar{\rho}(x_i) ) \right| \nonumber \\
& \le 
{ \frac{1}{{\rho}_{min}}}
\left| 
\left( \frac{1}{m_0[h]}\frac{k}{N} \right)^{-1/d} 
+ (1+\varepsilon) \cdot 1.1 L_p 
\right| r 
=  \Theta^{[p]}\left( \left({k}/{N}\right)^{-1/d} \right) r\,.
\label{eq:bound-diff-rhoratio-from-xi}
\end{align}
Thus, \rev{one can choose $r>0$ to be $\Theta^{[p]}( (k/N)^{3/d} )$
so as to make \eqref{eq:bound-diff-rhoratio-from-xi} bounded by $ t_1  (\frac{k}{N})^{2/d}$
when $N$ is sufficiently large, where the threshold of $N$ depends on $p$ only.
This gives that 
$\left| \frac{\hat{\rho}(x)}{\bar{\rho}(x)} - \frac{\hat{\rho}(x_i)}{\bar{\rho}(x_i)} \right| \le  t_1  (\frac{k}{N})^{2/d}$.
Meanwhile, we already have \eqref{eq:def-epsilon-covering-bound} under $E_{\hat{\rho},net}$, and putting together,}
\[
\left| \frac{\hat{\rho}(x)}{\bar{\rho}(x)} -1 \right|
\le 
\left| \frac{\hat{\rho}(x)}{\bar{\rho}(x)} - \frac{\hat{\rho}(x_i)}{\bar{\rho}(x_i)} \right|
+ \left| \frac{\hat{\rho}(x_i)}{\bar{\rho}(x_i)} -1 \right|
\le t_1 \left(\frac{k}{N}\right)^{2/d} 
+ \varepsilon, 
\quad \forall x \in \bar{B}_{r}(x_i) \cap {\calM}.
\]
By that $\calM\subset \cup_i \bar B_r(x_i)$, \rev{the above bound holds for all $x \in \calM$.}
Recall the definition of $\varepsilon$ in \eqref{eq:def-epsilon-covering-bound},
we have that, \rev{under $E_{\hat{\rho},net}$,} 
\[
\sup_{x \in {\calM}}
\left| \frac{\hat{\rho}(x)}{\bar{\rho}(x)} -1 \right|
\le 2 t_1 \left(\frac{k}{N}\right)^{2/d} +  \frac{t_2}{d} \sqrt{ \frac{s \log N }{k}}.
\]
Finally, \rev{to show the high probability of $E_{\hat{\rho},net}$,} 
by that $n \le V({\calM}) r^{-d} $,
\begin{align*}
 2n N^{-s}
&\le 2V({\calM}) r^{-d} N^{-s}
\le c_p \left(\frac{k}{N}\right)^{-3} N^{-s} 
\quad \text{(constant $c_p$ depending on $p$)}
\\
& 
\le N^{-s+3}.
\quad \text{(with large $N$, because $k = \Omega(1)$)}
\end{align*}
so by setting $s = 13$, 
\rev{we have that $E_{\hat{\rho},net}$  happens w.p. higher than $>1- N^{-s+3} = 1-N^{-10}$}.
\end{proof}

\subsection{Proof of Proposition \ref{prop:hatE(f,f)}}

%
%
%
\begin{proof}[Proof of Proposition \ref{prop:hatE(f,f)}]
To simplify the notation, when there is no danger of confusion, we omit the dependence of $m_l[h]$ on $h$ and use the notation $m_l$, where $l=0,2$.

Under the condition that
\begin{equation}\label{eq:hatrho-varepsilon}
\sup_{x \in {\calM}}\frac{ | \hat{\rho}(x) - \bar{\rho}(x) | }{ | \bar{\rho}(x)|} 
< \varepsilon_\rho < 0.1,
\end{equation}
we have that
\begin{equation}\label{eq:bound-hatrho-0.1}
0.9 \bar{\rho}(x) < \hat{\rho}(x) < 1.1 \bar{\rho}(x),
\quad \forall x \in {\calM}.
\end{equation}
Recall that
\begin{align*}
&
{\calE}^{(\alpha)}(f,f)
 = \frac{\epsilon^{-\frac{d}{2}-1}}{ m_2}  
\int_{\calM} \int_{\calM}  ( f(x) - f(y) )^2 
k_0 \left(  \frac{\| x - y \|^2}{ \epsilon \hat{\rho}(x)  \hat{\rho}( y ) } \right)
\frac{p(x) p(y) }{ \hat{\rho}(x)^\alpha \hat{\rho}(y )^\alpha}
 dV(x) dV(y)
 =: \textcircled{1},
\end{align*}
and we consider the counterpart of $\textcircled{1}$ where $\hat{\rho}(x)$ is replaced with $\bar{\rho}(x)$, namely,
\[
\textcircled{2}
: = \frac{\epsilon^{-\frac{d}{2}-1}}{ m_2  }  
\int_{\calM} \int_{\calM}  ( f(x) - f(y) )^2 
k_0 \left(  \frac{\| x - y \|^2}{ \epsilon \bar{\rho}(x)  \hat{\rho}( y ) } \right)
\frac{p(x) p(y) }{ \bar{\rho}(x)^\alpha \hat{\rho}(y )^\alpha}
 dV(x) dV(y).
\]
With the operator $G^{(\rho)}_\epsilon$ defined as in \eqref{eq:def-G-R-rho-epsilon},
writing the integration over $dV(x)$ via $G^{(\bar{\rho})}_\epsilon$,
\begin{align}
\textcircled{2}
& = \frac{1}{ \epsilon m_2  }  
\left( 
\int_{\calM} (p \hat{\rho}^{d/2-\alpha}  )(y)
G_{\epsilon \hat{\rho}(y) }^{(\bar{\rho})} \frac{f^2 p}{ \bar{\rho}^{\alpha} }(y) dV(y)
- 2 \int_{\calM}  (f p \hat{\rho}^{d/2-\alpha}  )(y)
G_{\epsilon \hat{\rho}(y) }^{(\bar{\rho})}  \frac{f p}{ \bar{\rho}^{\alpha} } (y)dV(y) \right.
 \nonumber \\
& ~~~~~~~~~~
 \left.
+ \int_{\calM}  (f^2 p \hat{\rho}^{d/2-\alpha}  )(y)
G_{\epsilon \hat{\rho}(y) }^{(\bar{\rho})}  \frac{ p}{ \bar{\rho}^{\alpha} } (y)dV(y)
 \right).
\label{eq:circle2-2}
\end{align}
Recall that
$\bar{\rho} = p^{-1/d}$ is in $C^\infty({\calM})$ and uniformly bounded from below and above.
By Lemma \ref{lemma:right-operator-3},
 \begin{align*}
G_{\epsilon \hat{\rho}(y) }^{(\bar{\rho})}  \frac{f^2 p}{ \bar{\rho}^{\alpha} } 
&  = m_0 f^2 p \bar{\rho}^{\frac{d}{2}-\alpha} 
+ \epsilon \hat{\rho} \frac{m_2}{2} (   \omega  f^2 p \bar{\rho}^{1+\frac{d}{2}-\alpha}  + \Delta ( f^2 p \bar{\rho}^{1+\frac{d}{2}-\alpha}  ) ) 
+ \hat{\rho}^2 r_{1}^{(2)},  \\
G_{\epsilon \hat{\rho}(y) }^{(\bar{\rho})}  \frac{f p}{ \bar{\rho}^{\alpha} } 
&  = m_0 f p \bar{\rho}^{\frac{d}{2}-\alpha} 
+ \epsilon \hat{\rho} \frac{m_2}{2} (   \omega  f p \bar{\rho}^{1+\frac{d}{2}-\alpha}  + \Delta ( f p \bar{\rho}^{1+\frac{d}{2}-\alpha}  ) ) 
+ \hat{\rho}^2 r_{2}^{(2)}, 
\\
G_{\epsilon \hat{\rho}(y) }^{(\bar{\rho})}  \frac{ p}{ \bar{\rho}^{\alpha} } 
&  = m_0  p \bar{\rho}^{\frac{d}{2}-\alpha} 
+ \epsilon \hat{\rho} \frac{m_2}{2} (   \omega  p \bar{\rho}^{1+\frac{d}{2}-\alpha}  + \Delta ( p \bar{\rho}^{1+\frac{d}{2}-\alpha}  ) ) 
+ \hat{\rho}^2 r_{3}^{(2)}\,, 
 \end{align*}
where $\| r_1^{(2)} \|_\infty = O^{[f,p]}(\epsilon^2)$, $\| r_2^{(2)} \|_\infty = O^{[f,p]}(\epsilon^2)$ and $\| r_3^{(2)} \|_\infty = O^{[p]}(\epsilon^2)$ and we omit the evaluation of all functions at $y$ in the notation.
Then, \eqref{eq:circle2-2} becomes 
\begin{align*}
 \textcircled{2} 
 & = \frac{1}{ \epsilon m_2 }  
\left( 
\int_{\calM} (p \hat{\rho}^{d/2-\alpha}  )
\left\{
 m_0 f^2 p \bar{\rho}^{\frac{d}{2}-\alpha} 
+ \epsilon \hat{\rho} \frac{m_2}{2} (   \omega  f^2 p \bar{\rho}^{1+\frac{d}{2}-\alpha}  + \Delta ( f^2 p \bar{\rho}^{1+\frac{d}{2}-\alpha}  ) ) 
+ \hat{\rho}^2 r_{1}^{(2)} 
\right\}
 \right.
\\
&  ~~~~~
- 2 \int_{\calM} 
(f p \hat{\rho}^{d/2-\alpha}  )
\left\{
m_0 f p \bar{\rho}^{\frac{d}{2}-\alpha} 
+ \epsilon \hat{\rho} \frac{m_2}{2} (   \omega  f p \bar{\rho}^{1+\frac{d}{2}-\alpha}  + \Delta ( f p \bar{\rho}^{1+\frac{d}{2}-\alpha}  ) ) 
+ \hat{\rho}^2 r_{2}^{(2)}  
\right\}
\\
&  ~~~~~
\left.
+ \int_{\calM} 
(f^2 p \hat{\rho}^{d/2-\alpha}  )
\left\{
m_0  p \bar{\rho}^{\frac{d}{2}-\alpha} 
+ \epsilon \hat{\rho} \frac{m_2}{2} (   \omega   p \bar{\rho}^{1+\frac{d}{2}-\alpha}  + \Delta ( p \bar{\rho}^{1+\frac{d}{2}-\alpha}  ) ) 
+ \hat{\rho}^2 r_{3}^{(2)}
\right \}
\right)
 \\
& = 
\frac{1}{2} \int_{\calM} 
p \hat{\rho}^{\frac{d}{2}-\alpha+1} 
\left\{
  \Delta ( f^2 p \bar{\rho}^{1+\frac{d}{2}-\alpha}  ) 
-  2 f   \Delta ( f p \bar{\rho}^{1+\frac{d}{2}-\alpha}  ) 
+ f^2 \Delta (  p \bar{\rho}^{1+\frac{d}{2}-\alpha}  )
\right\} \\
&  ~~~~~
+ \frac{1}{ \epsilon m_2  }  \int_{\calM}  
 p \hat{\rho}^{ \frac{d}{2} -\alpha  +2 }  
 ( r_{1}^{(2)} -  2 f r_{2}^{(2)} + f^2  r_{3}^{(2)}) \\
& =:  \textcircled{2}_1  +  \textcircled{2}_2\,,
\end{align*}
 where again, we omit the evaluation of all functions at $y$ and the integration over \rev{$dV(y)$} in the notation, \rev{and same in below.}

We first consider $\textcircled{2}_1$.
Note that, by defining $g: = p \bar{\rho}^{1+d/2-\alpha}$, 
the bracket inside the integrand becomes
\begin{align*}
 \left\{  \cdots \right\}
=  \Delta( f^2 g) - 2f \Delta( f g) + f^2 \Delta g 
 = 2 g |\nabla f|^2 
 = 2 p \bar{\rho}^{1+d/2-\alpha} |\nabla f|^2\,.
\end{align*}
We define $\textcircled{2}_1'$ by substituting $\hat{\rho}$ with $\bar{\rho}$ in $\textcircled{2}_1$, namely, 
\[
\textcircled{2}_1' := 
\frac{1}{2} \int_{\calM} 
p \bar{\rho}^{\frac{d}{2}-\alpha+1} 
\left\{
\cdots \right\},
\quad
\textcircled{2}_1  - \textcircled{2}_1'
=  \frac{1}{2} \int_{\calM} 
p  (  \hat{\rho}^{\frac{d}{2}-\alpha+1} -  \bar{\rho}^{\frac{d}{2}-\alpha+1}  )
\left\{
\cdots \right\},
\]
Inserting the expression of $\{\cdots \}$,
by the definition of $\bar{\rho} = p^{-1/d}$ and $p_\alpha$, we have
\[
\textcircled{2}_1' 
= \int_{\calM} 
p^2  \bar{\rho}^{2+ d -2 \alpha} |\nabla f|^2
  =
  \int_{\calM} 
p_\alpha
|\nabla f|^2  
= { \calE}_{p_\alpha}(f,f),
\]
and also 
\[
\textcircled{2}_1  - \textcircled{2}_1'
=  
\int_{\calM} 
p^2    \bar{\rho}^{1+d/2-\alpha} |\nabla f|^2 (  \hat{\rho}^{\frac{d}{2}-\alpha+1} -  \bar{\rho}^{\frac{d}{2}-\alpha+1}  ).
\]
 Since $p$ and $\bar{\rho}$ are positive, we have
\[
| \textcircled{2}_1  - \textcircled{2}_1' |
\le   
\int_{\calM} 
p^2    \bar{\rho}^{1+d/2-\alpha} |\nabla f|^2  |  \hat{\rho}^{\frac{d}{2}-\alpha+1} -  \bar{\rho}^{\frac{d}{2}-\alpha+1}  |.
\]
Let $\gamma := d/2 - \alpha + 1 \in \R$. 
By the Mean Value Theorem and \eqref{eq:bound-hatrho-0.1}, 
for any $y$, there exists $\xi > 0$ between $\hat{\rho}(y)$ and $\bar{\rho}(y)$, 
$\xi^{\gamma-1} \le  \max \{ 1.1 ^{\gamma-1},   0.9^{\gamma-1} \} \bar{\rho}(y)^{\gamma-1}$
such that
\begin{equation}\label{eq:bound-anypower-hatrho-barho}
| \hat{\rho}(y)^\gamma -  \bar{\rho}(y)^\gamma  |
= \gamma \xi^{\gamma-1} | \hat{\rho}(y)-  \bar{\rho}(y) |
\le \gamma  \max \{ 1.1 ^{\gamma-1},   0.9^{\gamma-1} \} \bar{\rho}(y)^{\gamma-1} | \hat{\rho}(y)-  \bar{\rho}(y) |
\le c_\gamma \bar{\rho}(y)^{\gamma} \varepsilon_\rho,
\end{equation}
where the last inequality comes from \eqref{eq:hatrho-varepsilon} and $c_\gamma:= \gamma  \max \{ 1.1 ^{\gamma-1},   0.9^{\gamma-1} \} $ is a constant determined by $\alpha$ and $d$. 
Then
\[
| \textcircled{2}_1  - \textcircled{2}_1' |
\le   
c_\gamma  \varepsilon_\rho
 \int_{\calM} 
p^2    \bar{\rho}^{1+d/2-\alpha} |\nabla f|^2 
 \bar{\rho}^{\gamma}
 =  c_\gamma  \varepsilon_\rho \cdot \textcircled{2}_1' ,
\]
which gives that 
\begin{equation}\label{eq:bound-circle2-1-relative-error}
 \textcircled{2}_1  =  \textcircled{2}_1' (1 + O(c_\gamma \varepsilon_\rho ) ) 
 = { \calE}_{p_\alpha}(f,f) (1 +   O^{[\alpha]}(  \varepsilon_\rho ) )\,,
\end{equation}

{
Next we bound  $\textcircled{2}_2$. By definition, 
\[
| \textcircled{2}_2 |
\le \frac{1}{ \epsilon m_2  }  
\int_{\calM}  
 p \hat{\rho}^{ \frac{d}{2} -\alpha  +2 }  
 ( | r_{1}^{(2)} | + 2 |f| |r_{2}^{(2)}| +  |f|^2 |r_{3}^{(2)}| )\,.
 \]
Again,
by \eqref{eq:bound-hatrho-0.1}, $\hat{\rho}(y)^{{d}/{2} -\alpha  +2} 
\le \max\{ 1.1^{{d}/{2} -\alpha  +2}, 0.9^{{d}/{2} -\alpha  +2} \} \bar{\rho}(y)^{{d}/{2} -\alpha  +2}:= c_\alpha'  \bar{\rho}(y)^{{d}/{2} -\alpha  +2}$.
Then
\[
| \textcircled{2}_2 |
 \le O^{[f,p]}(\epsilon)
 \int_{\calM}   c_\alpha'  p \bar{\rho}^{ \frac{d}{2} -\alpha  +2 } 
 = O^{[f,p]}(\epsilon).
 \]
 Together with \eqref{eq:bound-circle2-1-relative-error}, we have that 
 \begin{equation}\label{eq:bound-circle2-relative-error}
  \textcircled{2}  
 = { \calE}_{p_\alpha}(f,f) (1 +   O^{[\alpha]}(  \varepsilon_\rho ) )
 + O^{[f,p]}(\epsilon).
 \end{equation}
}

It remains to bound $|\textcircled{2} - \textcircled{1}|$ to prove the same bound for $ \textcircled{1}$.
Define
\[
\textcircled{3}
: = 
\frac{\epsilon^{-\frac{d}{2}-1}}{ m_2  }  
\int_{\calM} \int_{\calM} ( f(x) - f(y) )^2
k_0 \left(  \frac{\| x - y \|^2}{ \epsilon \bar{\rho}(x)  \hat{\rho}( y ) } \right) 
 \frac{  p(x) p(y) }{\hat{\rho}(x)^\alpha \hat{\rho}(y )^\alpha}  
 dV(x) dV(y)\,.
\]
Then,
\begin{equation}\label{eq:circle3-circle2}
 \textcircled{3} - \textcircled{2} 
= \frac{\epsilon^{-\frac{d}{2}-1}}{ m_2  }  
\int_{\calM} \int_{\calM} ( f(x) - f(y) )^2
 \left(   \frac{ \bar{\rho}(x)^\alpha }{\hat{\rho}(x)^\alpha } - 1  \right)
k_0 \left(  \frac{\| x - y \|^2}{ \epsilon \bar{\rho}(x)  \hat{\rho}( y ) } \right) 
 \frac{  p(x) p(y) }{ \bar{\rho}(x)^\alpha \hat{\rho}(y )^\alpha} 
 dV(x) dV(y).
\end{equation}
{
By  \eqref{eq:hatrho-varepsilon} and \eqref{eq:bound-hatrho-0.1}, we have that 
\begin{equation}\label{eq:bound-uniform-rel-error-Oalpha}
\sup_{x \in {\calM}} \left| \frac{ \bar{\rho}(x)^\alpha }{\hat{\rho}(x)^\alpha } - 1 \right| = O^{[\alpha]}(\varepsilon_\rho),
\quad
\sup_{x \in {\calM}} \left| \frac{ \bar{\rho}(x)}{\hat{\rho}(x) } - 1 \right| = O^{[1]}(\varepsilon_\rho)\,.
\end{equation}
Note that by definition, $\textcircled{2} \ge 0$. Therefore, by that $k_0 \ge 0$  and $p$, $\bar{\rho}$, $\hat{\rho} > 0$,
\begin{align*}
| \textcircled{3} - \textcircled{2} |
& \le 
O^{[\alpha]}(\varepsilon_\rho) \cdot 
 \frac{\epsilon^{-\frac{d}{2}-1}}{ m_2  }  
\int_{\calM} \int_{\calM} ( f(x) - f(y) )^2
k_0 \left(  \frac{\| x - y \|^2}{ \epsilon \bar{\rho}(x)  \hat{\rho}( y ) } \right) 
 \frac{  p(x) p(y) }{ \bar{\rho}(x)^\alpha \hat{\rho}(y )^\alpha} 
 dV(x) dV(y) \\
& = O^{[\alpha]}(\varepsilon_\rho) \textcircled{2}.
\end{align*}
Together with \eqref{eq:bound-circle2-relative-error}, this gives that 
\begin{equation}\label{eq:bound-circle3-relative-error}
\textcircled{3} = \textcircled{2} (1 +  O^{[\alpha]}(\varepsilon_\rho))
= 
 { \calE}_{p_\alpha}(f,f) (1 +   O^{[\alpha]}(  \varepsilon_\rho ) ) 
 +  O^{[f,p]}(\epsilon).
\end{equation}
}

Meanwhile,
\[
\textcircled{3} - \textcircled{1} 
= 
\frac{\epsilon^{-\frac{d}{2}-1}}{ m_2  }  
\int_{\calM} \int_{\calM} ( f(x) - f(y) )^2
\left( 
k_0 \left(  \frac{\| x - y \|^2}{ \epsilon \bar{\rho}(x)  \hat{\rho}( y ) } \right) -
k_0 \left(  \frac{\| x - y \|^2}{ \epsilon \hat{\rho}(x)  \hat{\rho}( y ) } \right) 
\right) 
 \frac{  p(x) p(y) }{\hat{\rho}(x)^\alpha \hat{\rho}(y )^\alpha}  
 dV(x) dV(y),
\]
and
\[
k_0 \left(  \frac{\| x - y \|^2}{ \epsilon \bar{\rho}(x)  \hat{\rho}( y ) } \right) -
k_0 \left(  \frac{\| x - y \|^2}{ \epsilon \hat{\rho}(x)  \hat{\rho}( y ) } \right) 
= k_0'(\xi) \frac{\| x - y \|^2}{ \epsilon \bar{\rho}(x)  \hat{\rho}( y ) }  \left( 1 - \frac{\bar{\rho}(x)  }{ \hat{\rho}(x) } \right),
\]
where $\xi$ is between $  \frac{\| x - y \|^2}{ \epsilon \bar{\rho}(x)  \hat{\rho}( y ) } $ and $\frac{\| x - y \|^2}{ \epsilon \hat{\rho}(x)  \hat{\rho}( y ) } $.
By \eqref{eq:bound-hatrho-0.1},
$\xi \ge \frac{\| x - y \|^2}{ \epsilon 1.1 \bar{\rho}(x)  \hat{\rho}( y ) }$, 
{ and then, }
\[
|k_0'(\xi) | \le a_1 e^{-a \xi } 
\le  a_1 e^{-a  \frac{\| x - y \|^2}{ \epsilon 1.1 \bar{\rho}(x)  \hat{\rho}( y ) } }.
\]
By \eqref{eq:bound-uniform-rel-error-Oalpha}, we have that
\begin{align}
| \textcircled{3} - \textcircled{1}  |
&\le 
\frac{\epsilon^{-\frac{d}{2}-1}}{ m_2  }  
\int_{\calM} \int_{\calM} ( f(x) - f(y) )^2 
|k_0'(\xi) | \frac{\| x - y \|^2}{ \epsilon \bar{\rho}(x)  \hat{\rho}( y ) }  \left| 1 - \frac{\bar{\rho}(x)  }{ \hat{\rho}(x) } \right|
 \frac{  p(x) p(y) }{\hat{\rho}(x)^\alpha \hat{\rho}(y )^\alpha}  
 dV(x) dV(y) \nonumber \\
& \le O(\varepsilon_\rho)
\frac{\epsilon^{-\frac{d}{2}-1}}{ m_2  }  
\int_{\calM} \int_{\calM} ( f(x) - f(y) )^2 
a_1 e^{- \frac{a}{1.1}  \frac{\| x - y \|^2}{ \epsilon  \bar{\rho}(x)  \hat{\rho}( y ) } }
\frac{\| x - y \|^2}{ \epsilon \bar{\rho}(x)  \hat{\rho}( y ) } 
 \frac{  p(x) p(y) }{\hat{\rho}(x)^\alpha \hat{\rho}(y )^\alpha}  
 dV(x) dV(y)\nonumber \\
& = O(\varepsilon_\rho)
\frac{\epsilon^{-\frac{d}{2}-1}}{ m_2  }  
\int_{\calM} \int_{\calM} ( f(x) - f(y) )^2 
k_1 \left( \frac{\| x - y \|^2}{ \epsilon \bar{\rho}(x)  \hat{\rho}( y ) }  \right)
 \frac{  p(x) p(y) }{\hat{\rho}(x)^\alpha \hat{\rho}(y )^\alpha}  
 dV(x) dV(y) \nonumber \\
& = O(\varepsilon_\rho) \frac{m_2[k_1]}{m_2[k_0]} \cdot \text{$\textcircled{3}'$}\,,
\label{eq:bound-circle3-circle1-withk1}
\end{align}
where 
$\textcircled{3}'$ is defined by replacing $k_0$ to be $k_1$ in $\textcircled{3}$, where
\[
k_1( r) := 
a_1 r e^{- \frac{a}{1.1} r }\,\, \text{ and }\,\, r \ge 0.
\]
Since $k_1$ satisfies Assumption \ref{assump:h-selftune}, 
our analysis of $\textcircled{2}$ and $\textcircled{3}$  with $k_1$ so far applies.
{
Thus, by \eqref{eq:bound-circle2-relative-error} and \eqref{eq:bound-circle3-relative-error}, 
we have
\[
\text{$\textcircled{3}'$} =  { \calE}_{p_\alpha}(f,f) (1 +   O^{[\alpha]}(  \varepsilon_\rho ) ) + O^{[f,p]}(\epsilon),
\]
and then
\[
| \textcircled{3} - \textcircled{1}  | = O(\varepsilon_\rho) 
( { \calE}_{p_\alpha}(f,f) (1 +   O^{[\alpha]}(  \varepsilon_\rho ) ) + O^{[f,p]}(\epsilon)) 
= 
 { \calE}_{p_\alpha}(f,f)  O^{[\alpha]}(  \varepsilon_\rho )  + O^{[f,p]}(\epsilon \varepsilon_\rho).
\]
Inserting \eqref{eq:bound-circle3-relative-error} gives that
\[
 \textcircled{1}  =
 \textcircled{3} + { \calE}_{p_\alpha}(f,f)  O^{[\alpha]}(  \varepsilon_\rho )  + O^{[f,p]}(\epsilon \varepsilon_\rho)
 = { \calE}_{p_\alpha}(f,f) (1 +   O^{[\alpha]}(  \varepsilon_\rho ) ) + O^{[f,p]}(\epsilon).
\]
This finishes the proof since ${\calE}^{(\alpha)}(f,f) =  \textcircled{1}$.
}
\end{proof}

\subsection{Proofs in Section \ref{subsec:dir-form-graph} (Theorems \ref{thm:limit-form} and \ref{thm:limit-form-fixeps})}

\begin{proof}[Proof of Theorem \ref{thm:limit-form}]
Suppose $f$ is not a constant function, because otherwise $E_N(f,f) = {\calE}_{p_\alpha}(f,f)= 0$ and the theorem holds.
By definition,
\begin{equation}\label{eq:EN-1}
 E_N(f,f) = 
\frac{1}{N^2} \sum_{i,j=1}^N \frac{\epsilon^{-\frac{d}{2}-1}}{m_2}  (f(x_i) - f(x_j))^2 W_{ij}^{(\alpha)} 
= \frac{1}{N^2} \sum_{i \neq j, \, i,j = 1}^N V_{ij},
\end{equation}
where
\[
V_{ij}:= \frac{1 }{\epsilon m_2}  (f(x_i) - f(x_j))^2  \hat{K} ( x_i, x_j).
\]
As \eqref{eq:EN-1} is a V-statistic, we study $\E  E_N(f,f) $ and its variation away from $\E  E_N(f,f) $ respectively.

$\bullet$ \underline{Calculation of  $\E  E_N(f,f) $.}
By definition,
\[
\E   E_N(f,f) =   \frac{N-1}{N} \E V_{1,2}
\,\,\mbox{ and }\,\,
\E V_{1,2} = {\calE}^{(\alpha)}(f,f).
\]
Applying Proposition \ref{prop:hatE(f,f)} gives 
\begin{equation}\label{eq:EV12-formula}
\E V_{1,2}  =
{\calE}_{p_\alpha}(f,f)(1 + O^{[\alpha]} (\varepsilon_\rho) )
 + O^{[f,p]}(\epsilon ).
\end{equation}

$\bullet$ \underline{Bound the deviation of $ E_N(f,f) - \E E_N(f,f)$.}
We use the decoupling trick to bound the deviation of 
a V-statistic by that of an independent sum over $\frac{N}{2}$ terms.
Specifically, define $\tilde{V}_{ij} = V_{ij} - \E V_{ij}$.
For any $ t > 0$, 
 the Markov inequality gives us
\begin{align}\label{Proof Theorem 3.4 Prob first}
& \Pr \left[ 
\frac{1}{N(N-1)} \sum_{i \neq j, i,j=1}^N \tilde{V}_{ij} > t 
\right]
\le e^{-st} \E \exp \left\{ s  \frac{1}{N(N-1)} \sum_{i \neq j, i,j=1}^N \tilde{V}_{ij}  \right\}  
\end{align}
where $s > 0$ will be determined later. By a direct expansion, and denote by ${\calS}_N$ the permutation group,
we have
\begin{align*}
&e^{-st} \E \exp \left\{ s  \frac{1}{N(N-1)} \sum_{i \neq j, i,j=1}^N \tilde{V}_{ij}  \right\}
= 
 e^{-st} \E \exp \left\{ s \frac{1}{N!} \sum_{ \sigma \in {\calS}_N}  \frac{1}{N(N-1)}  \sum_{i \neq j, i,j=1}^N \tilde{V}_{\sigma(i), \sigma(j)}  \right\} \\
=&
 e^{-st}  \E \exp \left\{ s \frac{1}{N!} \sum_{ \sigma \in {\calS}_N}  \frac{1}{N/2}  \sum_{l=1}^{N/2} \tilde{V}_{\sigma(2l-1), \sigma(2l)}  \right\} 
\le  e^{-st}  \E \frac{1}{N!} \sum_{ \sigma \in {\calS}_N}  \exp \left\{ s  \frac{1}{N/2}  \sum_{l=1}^{N/2} \tilde{V}_{\sigma(2l-1), \sigma(2l)}  \right\} \\
=& e^{-st}\E  \exp \left\{ s  \frac{1}{N/2}  \sum_{l=1}^{N/2} \tilde{V}_{2l-1, 2l}  \right\},
\end{align*}
where we apply the Jensen's inequality in the inequality.
Then, as in the derivation of the Classical Bernstein's inequality, 
one can bound the probability in \eqref{Proof Theorem 3.4 Prob first} by 
\begin{equation}\label{eq:bernstein-1}
\Pr [ \cdots ] \le \exp\left\{ - \frac{ \frac{N}{2} t^2}{2 \nu + \frac{2}{3} t L } \right\},
\quad 
\text{where}
\quad 
\nu := \E  \tilde{V}_{1,2}^2,
\quad |\tilde{V}_{1,2}| \le L.
\end{equation}
Below, we control $\nu$ and $L$.
We first show that we can make $L=\Theta^{[f,p]}( \epsilon^{-\frac{d}{2}} )$.
Recall that
\[
V_{1,2} = 
\frac{\epsilon^{-\frac{d}{2}-1}}{m_2}  
k_0 \left(  \frac{\| x_1 - x_2 \|^2}{ \epsilon \hat{\rho}(x_1)  \hat{\rho}(x_2) } \right)
\frac{(f(x_1) - f(x_2))^2  }{ \hat{\rho}(x_1)^\alpha \hat{\rho}(x_2)^\alpha}\,.
\]
By Assumption \ref{assump:h-diffusionmap}(C2')  for the kernel $k_0$,
\[
|V_{1,2}| 
\le
\frac{\epsilon^{-\frac{d}{2}-1}}{m_2}  
a_0 e^{ - a \left(  \frac{\| x_1 - x_2 \|^2}{ \epsilon \hat{\rho}(x_1)  \hat{\rho}(x_2) } \right) }
\frac{(f(x_1) - f(x_2))^2  }{ \hat{\rho}(x_1)^\alpha \hat{\rho}(x_2)^\alpha}.
\]
By the assumption and \eqref{eq:bound-hatrho-0.1}, 
$\hat{\rho}(x) \le 1.1 \bar{\rho}(x) < 1.1 \rho_{max} $ for any $x$. Then
when $\| x_1 - x_2 \| \ge \delta_\epsilon:= \sqrt{ \epsilon 1.1^2 
\rho_{max}^2
  \frac{5+d/2}{a}\log \frac{1}{\epsilon}}$, 
\[
e^{ - a \left(  \frac{\| x_1 - x_2 \|^2}{ \epsilon \hat{\rho}(x_1)  \hat{\rho}(x_2) } \right) }
\le e^{- (5+d/2) \log \frac{1}{\epsilon} } = \epsilon^{5+d/2}
\]
and then 
\[
|V_{1,2}| \le \frac{a_0 \epsilon^{4} }{m_2}   \frac{ ( 2\|f\|_\infty)^2}{ (0.9
\rho_{min}
)^{2\alpha}}
=O^{[f,p]}(\epsilon^{4}), 
\quad
\text{
when $\| x_1 - x_2 \| \ge \delta_\epsilon$.}
\]
Note that $\delta_\epsilon$ is of order $\sqrt{\epsilon\log(\epsilon^{-1})}$,
since $\epsilon =o(1)$, when $\epsilon $ is small enough
 such that $\delta_\epsilon < \delta_0$ in Lemma \ref{lemma:M-delta0},
\begin{equation}\label{eq:local-Lip-f}
| f(x_1)-f(x_2) | \le \| \nabla_{\calM} f \|_\infty 1.1 \|x_1 - x_2\| =: L_f \|x_1 - x_2\|,
\quad
\text{for all $x_2 \in B_{\delta_\epsilon}(x_1) \cap {\calM}$.}
\end{equation}
Then, when $\|x_1 - x_2 \| < \delta_\epsilon$,
\begin{align*}
|V_{1,2}| &\le \frac{\epsilon^{-\frac{d}{2}-1}}{m_2}  
a_0 e^{ - a \left(  \frac{\| x_1 - x_2 \|^2}{ \epsilon \hat{\rho}(x_1)  \hat{\rho}(x_2) } \right) } L_f^2
\frac{   \|x_1-x_2\|^2 }{ \hat{\rho}(x_1) \hat{\rho}(x_2) } \hat{\rho}(x_1)^{1-\alpha} \hat{\rho}(x_2)^{1-\alpha} \\
& \le  \epsilon^{-\frac{d}{2}} \frac{a_0 L_f^2}{m_2} a_1' 
\| \max\{ 0.9^{1-\alpha}, 1.1^{1-\alpha}\} \bar{\rho}^{1-\alpha}\|_\infty^2,
\end{align*}
where $a_1'$ equals an absolute constant times $\frac{a_0}{a}$.
Combining both cases, 
$|V_{1,2}|=O^{[f,p]}(  \epsilon^{-\frac{d}{2}})$, 
and  we denote
\[
|\tilde{V}_{1,2}| \le L =\Theta^{[f,p]}( \epsilon^{-\frac{d}{2}}).
\]

We now compute the variance $\nu$, and show that $\nu \le \epsilon^{-d/2} V_f$ where $V_f =\Theta^{[f,p]}( 1)$.
By definition,
\begin{align}
\E V_{1,2}^2
& = \int_{\calM} \int_{\calM}
 \frac{ \epsilon^{-d-2}}{m_2[k_0]^2}  
k_0^2 \left(  \frac{\| x - y \|^2}{ \epsilon \hat{\rho}(x)  \hat{\rho}( y) } \right)
\frac{(f(x) - f( y))^4  }{ \hat{\rho}(x)^{2\alpha} \hat{\rho}( y)^{2\alpha} }
p(x) p(y) dV(x)dV(y)  
= \frac{ \epsilon^{-d/2-1}}{ \frac{ m_2[k_0]^2}{m_2[k_0^2]}}  \cdot  \textcircled{4}\,,
\label{eq:EV12square-circle4} 
\end{align}
where 
\begin{align}
\textcircled{4}
& := \frac{ \epsilon^{-d/2-1}}{m_2[k_0^2]} 
 \int_{\calM} \int_{\calM}
 k_0^2 \left(  \frac{\| x - y \|^2}{ \epsilon \hat{\rho}(x)  \hat{\rho}( y) } \right)
\frac{(f(x) - f( y))^4  }{ \hat{\rho}(x)^{2\alpha} \hat{\rho}( y)^{2\alpha} }
p(x) p(y) dV(x)dV(y).
\nonumber 
\end{align}
Let $\delta_\epsilon$ be as above,
and we separate the integral within and outside $ \{ \| x - y\| < \delta_\epsilon \}$ and make $\textcircled{4} = \textcircled{4}_1 + \textcircled{4}_2$.
Specifically,
 we define
\begin{align*}
& \textcircled{4}_2:=
\frac{ \epsilon^{-d/2-1}}{m_2[k_0^2]} 
 \int_{\calM} \int_{\calM} 
 {\bf 1}_{\{ \| x-y\| \ge \delta_\epsilon \}}
 k_0^2 \left(  \frac{\| x - y \|^2}{ \epsilon \hat{\rho}(x)  \hat{\rho}( y) } \right)
\frac{(f(x) - f( y))^4  }{ \hat{\rho}(x)^{2\alpha} \hat{\rho}( y)^{2\alpha} }
p(x) p(y) dV(x)dV(y)\,.
\end{align*}
By a direct bound, we have
\begin{align*}
\textcircled{4}_2 & \le 
\frac{ \epsilon^{-d/2-1}}{m_2[k_0^2]} 
\int_{\calM} \int_{\calM} 
a_0^2 e^{ - 2a \left(  \frac{\| x - y \|^2}{ \epsilon \hat{\rho}(x)  \hat{\rho}( y) } \right) }
 {\bf 1}_{\{ \| x-y\| \ge \delta_\epsilon \}}
(f(x) - f( y))^4  \hat{\rho}(x)^{-2\alpha} \hat{\rho}( y)^{-2\alpha} 
p(x) p(y) dV(x)dV(y) \\
& \le 
\frac{ \epsilon^{-d/2-1}}{m_2[k_0^2]} 
a_0^2 \epsilon^{10+d} 
\int_{\calM} \int_{\calM} 
(f(x) - f( y))^4  \max\{ 0.9^{-2\alpha}, 1.1^{-2\alpha}\}^2 \bar{\rho}(x)^{-2\alpha} \bar{\rho}( y)^{-2\alpha} 
p(x) p(y) dV(x)dV(y) \\
& = O^{ [f, p]}( \epsilon^{\frac{d}{2}+9}).
\end{align*}
Define 
\[
\textcircled{4}_1:=
\frac{ \epsilon^{-d/2-1}}{m_2[k_0^2]} 
 \int_{\calM} \int_{\calM} 
 {\bf 1}_{\{ \| x-y\| < \delta_\epsilon \}}
 k_0^2 \left(  \frac{\| x - y \|^2}{ \epsilon \hat{\rho}(x)  \hat{\rho}( y) } \right)
\frac{(f(x) - f( y))^4  }{ \hat{\rho}(x)^{2\alpha} \hat{\rho}( y)^{2\alpha} }
p(x) p(y) dV(x)dV(y)\,.
\]
To control $\textcircled{4}_1$, which involves an integration over  the $\delta_\epsilon$ ball, 
note that for $y \in B_{\delta_\epsilon}(x) \cap {\calM}$,
by Lemma \ref{lemma:M-delta0},
\[
f(y) = f(x) + \nabla f(x) \cdot (y-x) +  O^{[f]} (\|x-y\|^2),
\]
and thus,
\[
(f(y) - f(x))^4
= (\nabla f(x) \cdot (y-x))^4 + O^{[f]}( \|x-y\|^5)
\le( |\nabla f(x)| \| y-x\|)^4 + O^{[f]}( \|x-y\|^5)\,.
\]
We then have
\begin{align*}
 \textcircled{4}_1&
 \le 
\frac{ \epsilon^{-d/2-1}}{m_2[k_0^2]} 
 \int_{\calM} \int_{\calM} 
 {\bf 1}_{\{ \| x-y\| < \delta_\epsilon \}}
 k_0^2 \left(  \frac{\| x - y \|^2}{ \epsilon \hat{\rho}(x)  \hat{\rho}( y) } \right)
\frac{  |\nabla f(x)|^4 \|x-y\|^4  + O^{[f]}( \|x-y\|^5) }
{  \hat{\rho}(x)^{2\alpha} \hat{\rho}( y)^{2\alpha} }
p(x) p(y) dV(x)dV(y) \\
& = \frac{ \epsilon^{-d/2-1}}{m_2[k_0^2]} 
 \int_{\calM} \int_{\calM} 
 {\bf 1}_{\{ \| x-y\| < \delta_\epsilon \}}
 k_0^2 \left(  \frac{\| x - y \|^2}{ \epsilon \hat{\rho}(x)  \hat{\rho}( y) } \right)
\frac{  |\nabla f(x)|^4 \|x-y\|^4   }
{  \hat{\rho}(x)^{2\alpha} \hat{\rho}( y)^{2\alpha} }
p(x) p(y) dV(x)dV(y) \\
& ~~~
+\frac{ \epsilon^{-d/2-1}}{m_2[k_0^2]} 
 \int_{\calM} \int_{\calM} 
 {\bf 1}_{\{ \| x-y\| < \delta_\epsilon \}}
 k_0^2 \left(  \frac{\| x - y \|^2}{ \epsilon \hat{\rho}(x)  \hat{\rho}( y) } \right)
\frac{  O^{[f]}( \|x-y\|^5) }
{  \hat{\rho}(x)^{2\alpha} \hat{\rho}( y)^{2\alpha} }
p(x) p(y) dV(x)dV(y)
=: \textcircled{5} + \textcircled{6}.
\end{align*}
We establish a lemma, which can be proved similarly as in deriving the limit of $\textcircled{1}$ above,
namely, by replacing $\hat{\rho}(x)$ with $\bar{\rho}(x)$ first and then putting back. 
The proof is postponed to
  Appendix \ref{subsec:app-more-lemmas}.

\begin{lemma}\label{lemma:double-int-f2}
Under  \eqref{eq:hatrho-varepsilon} and  \eqref{eq:bound-hatrho-0.1},
suppose $k_0$ satisfies Assumption \ref{assump:h-selftune},
$f \in C^\infty( {\calM}) $ and $\alpha \in \R$, then 
\[
 \epsilon^{-\frac{d}{2}}
\int_{\calM} \int_{\calM}  f(x)^2 
k_0 \left(  \frac{\| x - y \|^2}{ \epsilon \hat{\rho}(x)  \hat{\rho}( y ) } \right)
\frac{p(x) p(y) }{ \hat{\rho}(x)^\alpha \hat{\rho}(y )^\alpha}
 dV(x) dV(y) 
  =  m_0[k_0] \int p^2  f^2 \bar{\rho}^{d-2\alpha}   
 + O^{[f,p]}( \epsilon, \varepsilon_\rho).
\]
\end{lemma}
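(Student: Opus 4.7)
The plan is to adapt the \emph{replace-then-restore} strategy from the proof of Proposition \ref{prop:hatE(f,f)}, but with a much simpler integrand since no quadratic-form cancellation is needed. Denote the left-hand side of the lemma by $\textcircled{1}$ and introduce the intermediate quantity
\[
\textcircled{2} := \epsilon^{-d/2} \int_{\calM}\!\int_{\calM} f(x)^2\, k_0\!\left(\frac{\|x-y\|^2}{\epsilon\, \bar{\rho}(x)\, \hat{\rho}(y)}\right) \frac{p(x)\,p(y)}{\bar{\rho}(x)^\alpha\, \hat{\rho}(y)^\alpha}\, dV(x)\, dV(y),
\]
obtained from $\textcircled{1}$ by substituting $\hat{\rho}(x)\to \bar{\rho}(x)$ in both the kernel argument and the prefactor $\hat{\rho}(x)^\alpha$ of the $x$-variable.

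First I would recast $\textcircled{2}$ as an outer integral over $y$ of the operator $G_{\epsilon\hat{\rho}(y)}^{(\bar{\rho})}$ from \eqref{eq:def-G-R-rho-epsilon} acting on $f^2 p/\bar{\rho}^\alpha$, exactly as in the derivation preceding \eqref{eq:circle2-2}, obtaining
\[
\textcircled{2} = \int_{\calM} p(y)\, \hat{\rho}(y)^{d/2-\alpha}\, G_{\epsilon\hat{\rho}(y)}^{(\bar{\rho})}\!\left(\frac{f^2 p}{\bar{\rho}^\alpha}\right)(y)\, dV(y).
\]
Since $\bar{\rho}=p^{-1/d}$ is $C^\infty$ and uniformly bounded, and $\epsilon\hat{\rho}(y) = \Theta^{[p]}(\epsilon)$ uniformly in $y$ by \eqref{eq:bound-hatrho-0.1}, Lemma \ref{lemma:right-operator-3} applies with remainder $O^{[f,p]}(\epsilon)$ uniformly in $y$. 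Keeping only the leading $m_0\, f^2 p\, \bar{\rho}^{d/2-\alpha}$ term yields
\[
\textcircled{2} = m_0 \int_{\calM} p^2 f^2 \, \hat{\rho}^{d/2-\alpha}\, \bar{\rho}^{d/2-\alpha}\, dV + O^{[f,p]}(\epsilon).
\]
Then a Mean-Value-Theorem estimate of the type \eqref{eq:bound-anypower-hatrho-barho} replaces $\hat{\rho}^{d/2-\alpha}$ with $\bar{\rho}^{d/2-\alpha}$ up to a multiplicative $(1+O^{[\alpha]}(\varepsilon_\rho))$ factor; since the main integral is $O^{[f,p]}(1)$, this becomes an additive $O^{[f,p]}(\varepsilon_\rho)$, giving $\textcircled{2} = m_0 \int p^2 f^2 \bar{\rho}^{d-2\alpha}\, dV + O^{[f,p]}(\epsilon, \varepsilon_\rho)$.

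Next I would bound $|\textcircled{1}-\textcircled{2}|$, which is the error of putting $\hat{\rho}(x)$ back in place of $\bar{\rho}(x)$. I would split this difference into two pieces mirroring the $\textcircled{3}-\textcircled{2}$ and $\textcircled{3}-\textcircled{1}$ steps of the proof of Proposition \ref{prop:hatE(f,f)}: one that only restores the prefactor $\bar{\rho}(x)^\alpha \to \hat{\rho}(x)^\alpha$, and one that only restores $\bar{\rho}(x) \to \hat{\rho}(x)$ inside the kernel. The first is controlled using \eqref{eq:bound-uniform-rel-error-Oalpha} as $O^{[\alpha]}(\varepsilon_\rho)$ times a nonnegative integral of the same type, which is itself $O^{[f,p]}(1)$. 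For the second, a first-order Taylor expansion of $k_0$ produces a factor $(\|x-y\|^2/(\epsilon\bar{\rho}(x)\hat{\rho}(y))\cdot |1-\bar{\rho}(x)/\hat{\rho}(x)|$, and the exponential decay of $k_0'$ from Assumption \ref{assump:h-selftune}(C2) converts $|k_0'|$ into an admissible kernel $k_1(r)=a_1 r e^{-(a/1.1)r}$ exactly as in \eqref{eq:bound-circle3-circle1-withk1}. Applying the leading-order estimate already obtained (for the kernel $k_1$ in place of $k_0$) to this integral yields $O^{[f,p]}(\varepsilon_\rho)$. Combining both pieces with the estimate for $\textcircled{2}$ gives the claim.

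The main subtlety, as opposed to the Dirichlet-form case of Proposition \ref{prop:hatE(f,f)}, is that the leading order here is $O(1)$ rather than $O(\epsilon)$, so every $\hat{\rho}\leftrightarrow\bar{\rho}$ substitution contributes an error $\varepsilon_\rho$ at the leading order. One should therefore not expect the sharper $O(\epsilon\varepsilon_\rho)$ refinement available in Proposition \ref{prop:hatE(f,f)}; the additive $O(\varepsilon_\rho)$ in the statement is the expected size. The only bookkeeping hazard is to ensure that all implied constants in the various $O^{[f,p]}$ remain uniform in $y$ throughout the substitutions, which follows from \eqref{eq:bound-hatrho-0.1} together with the smoothness and uniform boundedness of $f$, $p$, and $\bar{\rho}$.
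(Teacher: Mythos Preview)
Your proposal is correct and follows essentially the same approach as the paper's own proof: define $\textcircled{2}$ by replacing $\hat{\rho}(x)\to\bar{\rho}(x)$, expand it via $G_{\epsilon\hat{\rho}(y)}^{(\bar{\rho})}$ using Lemma~\ref{lemma:right-operator-3} to the $O(\epsilon)$ term, swap the remaining $\hat{\rho}^{d/2-\alpha}$ for $\bar{\rho}^{d/2-\alpha}$ at cost $O(\varepsilon_\rho)$, and then bound $|\textcircled{1}-\textcircled{2}|$ via an intermediate $\textcircled{3}$ exactly as in the proof of Proposition~\ref{prop:hatE(f,f)}. The paper carries this out with the same intermediate $\textcircled{3}$ and the same $k_1$-kernel trick for the kernel-argument restoration.
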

\vspace{5pt}

We bound $\textcircled{5}$ and  $\textcircled{6}$ respectively, where $\textcircled{5}$ will dominate. 
By a direct bound, we have
\[
|\textcircled{6}|
\le
O^{[f]}( \epsilon^{\frac{3}{2}})
\frac{ \epsilon^{-d/2}}{m_2[k_0^2]} 
 \int_{\calM} \int_{\calM} 
 k_0^2 \left(  \frac{\| x - y \|^2}{ \epsilon \hat{\rho}(x)  \hat{\rho}( y) } \right)
\left( \frac{   \|x-y\|^2}
{ \epsilon \hat{\rho}(x) \hat{\rho}( y) } \right)^{\frac{5}{2}}
  \hat{\rho}(x)^{\frac{5}{2}-2\alpha} \hat{\rho}( y)^{\frac{5}{2}-2\alpha}
p(x) p(y) dV(x)dV(y)\,.
\]
Note that there is $b_6 > 0$, determined by $a_0$ and $a$,  such that 
\[
k_0( r)^2 r^{5/2} \le a_0^2 e^{- 2a r} r^{5/2} \le b_6 e^{-a r},
\quad
\forall r > 0.
\]
Thus,
\[
k_0^2 \left(  \frac{\| x - y \|^2}{ \epsilon \hat{\rho}(x)  \hat{\rho}( y) } \right)
\left( \frac{   \|x-y\|^2}
{ \epsilon \hat{\rho}(x) \hat{\rho}( y) } \right)^{\frac{5}{2}}
\le b_6 e^{-a  \frac{\| x - y \|^2}{ \epsilon \hat{\rho}(x)  \hat{\rho}( y) }},
\]
and then
\[
|\textcircled{6}|
\le
O^{[f]}( \epsilon^{\frac{3}{2}})
\frac{ \epsilon^{-d/2}}{m_2[k_0^2]} 
 \int_{\calM} \int_{\calM} 
 b_6 e^{-a  \frac{\| x - y \|^2}{ \epsilon \hat{\rho}(x)  \hat{\rho}( y) }}
 \frac{p(x) p(y) }{   \hat{\rho}(x)^{2\alpha - \frac{5}{2}} \hat{\rho}( y)^{2\alpha - \frac{5}{2}} }
dV(x)dV(y).
\]
Since the kernel $k_6(r) := b_6 e^{-ar}$ satisfies Assumption \ref{assump:h-selftune},
applying Lemma \ref{lemma:double-int-f2}
with $f$ replaced by 1 
and 
$\alpha$ replaced by $2\alpha - \frac{5}{2}$
gives that 
\[
|\textcircled{6}|
= 
 O^{[f]}( \epsilon^{\frac{3}{2}})
\frac{1}{m_2[k_0^2]} \left( m_0[k_6] \int p^2 \bar{\rho}^{d - 2( 2\alpha - \frac{5}{2} )} \rev{ + O^{[p]}( \epsilon, \varepsilon_\rho) }\right)
= O^{[f,p]} (\epsilon^{\frac{3}{2}}).
\]

Write $G(x) =    |\nabla f(x)|^2$. 
Clearly, since $f\in C^\infty(\calM)$, $G$ is in $C^\infty( {\calM})$.
Define 
\[
k_5 (r) := k_0^2(r) r^2\,.
\]
Then, $k_5$ satisfies Assumption \ref{assump:h-selftune} and $m_0[k_5] = d m_2[k_0^2]$.
As a result,
\begin{align*}
 \textcircled{5}
&\le 
\frac{ \epsilon^{-d/2-1}}{m_2[k_0^2]} 
 \int_{\calM} \int_{\calM} 
 G(x)^2
 k_0^2 \left(  \frac{\| x - y \|^2}{ \epsilon \hat{\rho}(x)  \hat{\rho}( y) } \right)
\frac{  \|x-y\|^4   }
{  \hat{\rho}(x)^{2\alpha} \hat{\rho}( y)^{2\alpha} }
p(x) p(y) dV(x)dV(y) \\
& =
\frac{  \epsilon }{m_2[k_0^2]}
\epsilon^{-d/2} \int_{\calM} \int_{\calM} 
 G(x)^2
 k_0^2 \left(  \frac{\| x - y \|^2}{ \epsilon \hat{\rho}(x)  \hat{\rho}( y) } \right)
\left( \frac{  \|x-y\|^2  }
{  \epsilon \hat{\rho}(x)   \hat{\rho}(y)} \right)^2
\frac{p(x) p(y)}{\hat{\rho}(x)^{2\alpha-2} \hat{\rho}( y)^{2\alpha-2}}
 dV(x)dV(y) \\
 &
 = 
 \frac{  \epsilon }{m_2[k_0^2]}
\epsilon^{-d/2} \int_{\calM} \int_{\calM} 
 G(x)^2
 k_5 \left(  \frac{\| x - y \|^2}{ \epsilon \hat{\rho}(x)  \hat{\rho}( y) } \right)
\frac{p(x) p(y)}{\hat{\rho}(x)^{2\alpha-2} \hat{\rho}( y)^{2\alpha-2}}
 dV(x)dV(y) \\
 & =
 \frac{  \epsilon }{m_2[k_0^2]}
 \cdot \left( m_0[k_5]  \int p^2 G^2 \bar{\rho}^{d-2(2\alpha-2)}  + O^{[f,p]}(\epsilon, \varepsilon_\rho) \right) \\
 & = \epsilon d \int p^2 |\nabla f|^4 \bar{\rho}^{d-2(2\alpha-2)} + O^{[f,p]}(\epsilon^2,  \epsilon \varepsilon_\rho)  \\
& = \epsilon d \int  |\nabla f|^4 p^{1+ \frac{4(\alpha-1)}{d}} + O^{[f,p]}(\epsilon^2,  \epsilon \varepsilon_\rho)  \,,
\end{align*}
where the third equality holds by applying Lemma \ref{lemma:double-int-f2} with $f$ replaced by $G$ and $\alpha$ replaced by $2\alpha-2$.
Putting together,
we have that
\begin{align*}
\textcircled{4}
& \le  \textcircled{5} + \textcircled{6} +  \textcircled{4}_2 \\
 & \le
 \epsilon d \int  |\nabla f|^4 p^{1+ \frac{4(\alpha-1)}{d}} {+ O^{[f,p]}(\epsilon^2,  \epsilon \varepsilon_\rho)} 
 +  O^{[f,p]} (\epsilon^{\frac{3}{2}}) 
+  O^{[f,p]}(\epsilon^{\frac{d}{2}+9}) \\
&= 
 \epsilon d \int  |\nabla f|^4 p^{1+ \frac{4(\alpha-1)}{d}}
+ O^{[f,p]} (\epsilon^{\frac{3}{2}}, \, \epsilon \varepsilon_\rho)\,.
\end{align*}
Plugging the above bounds
back to \eqref{eq:EV12square-circle4}, this gives that
\begin{align*}
\text{Var}(V_{1,2})
&   \le \E V_{1,2}^2 \le
 \frac{ \epsilon^{-d/2-1}}{ \frac{ m_2[k_0]^2}{m_2[k_0^2]}} 
 \left(
 \epsilon d \int  |\nabla f|^4 p^{1+ \frac{4(\alpha-1)}{d}}
+ O^{[f,p]} (\epsilon^{\frac{3}{2}}    {, \, \epsilon \varepsilon_\rho})
\right) \\
& =\frac{m_2[k_0^2]}{ m_2[k_0]^2} 
\epsilon^{-d/2}
\left( d \int  |\nabla f|^4 p^{1+ \frac{4(\alpha-1)}{d}}
+ O^{[f,p]} (\epsilon^{\frac{1}{2}}   {, \,  \varepsilon_\rho})
\right).
\end{align*}
Since  $f$ is not constant valued,
 $ \int  |\nabla f|^4 p^{1+ \frac{4(\alpha-1)}{d}} > 0$,
 \rev{and by that $\epsilon^{1/2}, \, \varepsilon_\rho = o(1)$,}
we have that with \rev{sufficiently} large $N$, 
\[
\nu = \text{Var}(V_{1,2})
\le \Theta^{[1]}( \epsilon^{-d/2}) \int  |\nabla f|^4 p^{1+ \frac{4(\alpha-1)}{d}}
= \epsilon^{-d/2} V_f,
\]
where
\[ 
V_f :=\Theta^{[1]}\left( \int  |\nabla f|^4 p^{1+ \frac{4(\alpha-1)}{d}}\right).
\]
Back to \eqref{eq:bernstein-1}, 
by that $\nu \le  \epsilon^{-d/2} V_f$ and then
\begin{equation}\label{eq:bound-bern-2}
\exp\{ - \frac{ \frac{N}{2} t^2}{2 \nu + \frac{2}{3} t L } \} 
\le 
\exp\{ - \frac{ \frac{N}{2} t^2}{2 \epsilon^{-d/2} V_f + \frac{2}{3} t L } \}  ,
\end{equation}
we now control the r.h.s.
Let $s=\Theta(1)$ to be determined, and we set
\[
t = \sqrt{  \epsilon^{-d/2}V_f  \frac{s  \log N}{N}}\,. 
\]
Since $L  =\Theta^{[f,p]}( \epsilon^{-d/2})$, and by the condition that
$\epsilon^{d/2 } N = \Omega( \log N)$,
$\frac{\log N}{ N \epsilon^{d/2}} = o(1)$, 
and hence $t=o^{[f,p]}(1)$. 
With large $N$, 
\[
\frac{tL}{\epsilon^{-d/2} V_f} 
 =\Theta^{[f,p]} \left( 
\sqrt{ \frac{ \epsilon^{-d/2}}{V_f} \frac{s  \log N}{N} } \right)
= o^{[f,p]}(1) \,. 
\]
Therefore, when $N$ is sufficiently large, we have $\frac{tL}{\epsilon^{-d/2} V_f}<3$.
Then \eqref{eq:bound-bern-2} bounds  the tail probability 
in \eqref{eq:bernstein-1}
to be less than
$ \exp\{ - \frac{N t^2}{8 \epsilon^{-d/2}V_f  } \} = N^{-s/8}$.
Let $s = 80$, 
and use the same argument to bound $\Pr[ \frac{1}{N(N-1)} \sum_{i \neq j} \tilde{V}_{ij}  < -t ]$.
We have that 
w.p. greater than $ 1- 2N^{-10}$
\begin{equation}\label{Proof of Theorem 3.4 Final bound}
\left|  \frac{1}{N(N-1)} \sum_{i \neq j, i,j=1}^N \tilde{V}_{ij}  \right|
\le 
 \sqrt{  \epsilon^{-d/2}V_f  \frac{ 80  \log N}{N}}
 = O \left(
 \sqrt{ \frac{\log N}{ N \epsilon^{d/2}} \int  |\nabla f|^4 p^{1+ \frac{4(\alpha-1)}{d}} } \right).
\end{equation}
Call the event set that \eqref{Proof of Theorem 3.4 Final bound} holds the event $E_{Dir}$.

At last, 
\[
E_N (f,f) =  \left(1-\frac{1}{N} \right)\cdot \frac{1}{N(N-1)} \sum_{i \neq j} V_{ij},
\]
and with \eqref{eq:EV12-formula}
we have shown that under good event $E_{Dir}$, 
\[
\frac{1}{N(N-1)} \sum_{i \neq j} V_{ij} = 
{\calE}_{p_\alpha}(f,f)(1 + O^{[\alpha]}( \varepsilon_\rho) )
 + O^{[f,p]}(\epsilon )
+ O \left(
 \sqrt{ \frac{\log N}{ N \epsilon^{d/2}} \int  |\nabla f|^4 p^{1+ \frac{4(\alpha-1)}{d}} } \right),
\]
which is $ O^{[f,p]}(1)$,
thus $E_N (f,f)$ differs from it by $ O^{[f,p]}(\frac{1}{N}) = o^{[f,p]} ( \sqrt{ \frac{1}{ N}})$,
which is dominated by the variance error.
This finishes the proof of the theorem. 
\end{proof}

%
\begin{proof}[Proof of Theorem \ref{thm:limit-form-fixeps}]
The proof uses same techniques as that of Theorem \ref{thm:limit-form},
and is simplified due to fixed bandwidth kernel.
We track the influence of $\beta$ and $\varepsilon_p$ which differs from the proof of Theorem \ref{thm:limit-form}.
Inherit the notations in the proof of  Theorem \ref{thm:limit-form},
the random variable $V_{ij}$ is now
\[
V_{ij}  
 := \frac{\epsilon^{-\frac{d}{2}}}{ \epsilon m_2}  (f(x_i) - f(x_j))^2 k_0 \left( \frac{\|x_i - x_j\|^2}{\epsilon} \right)
 \frac{1}{ \hat{p}( x_i )^\beta \hat{p}( x_j )^\beta }.
\]
Similarly as before, we define
\[
\textcircled{1} 
: = \E V_{ij} = 
\frac{1}{\epsilon m_2}
\int \int
  (f(x) - f(y))^2 \epsilon^{-\frac{d}{2}} k_0 \left( \frac{\| x-y \|^2}{\epsilon} \right)
 \frac{p(x) p(y) }{ \hat{p}(x )^\beta \hat{p}( y )^\beta }
 dV(x)dV(y),
\]
and 
\begin{align*}
\textcircled{2} 
& : = 
\frac{1}{\epsilon m_2}
\int \int
  (f(x) - f(y))^2 \epsilon^{-\frac{d}{2}} k_0 \left( \frac{\| x-y \|^2}{\epsilon} \right)
 \frac{p(x)^{1-\beta} p(y) }{  \hat{p}( y )^\beta }
 dV(x)dV(y),  \\
 \textcircled{3} 
& : = 
\frac{1}{\epsilon m_2}
\int \int
  (f(x) - f(y))^2 \epsilon^{-\frac{d}{2}} k_0 \left( \frac{\| x-y \|^2}{\epsilon} \right)
 p(x)^{1-\beta} p(y)^{1-\beta} 
 dV(x)dV(y).
\end{align*}
Define $q := p^{1-\beta}$ \rev{which is a non-negative power of $p$ because $\beta \le 1$, thus $q \in C^\infty(\calM)$}. We then have
\begin{align*}
\textcircled{3} 
& =  
\frac{1}{\epsilon m_2}
\int \int
  (f(x) - f(y))^2 \epsilon^{-\frac{d}{2}} k_0 \left( \frac{\| x-y \|^2}{\epsilon} \right)
q(x)  q(y) dV(x)dV(y) \\
& =
\frac{2}{\epsilon m_2}
\int \int
  (f(x)^2 - f(x)f(y)) \epsilon^{-\frac{d}{2}} k_0 \left( \frac{\| x-y \|^2}{\epsilon} \right)
q(x)  q(y) dV(x)dV(y)  \\
& = 
\frac{2}{\epsilon m_2}
\left( 
\int (qf^2)G_\epsilon (q)
- \int (qf)G_\epsilon (fq)
\right).
\end{align*}
By Lemma \ref{lemma:h-integral-diffusionmap},
\[
G_\epsilon (q) = m_0 q + \epsilon \frac{m_2}{2} (wq + \Delta q) + O^{ [q^{(\le 4)} ]}(\epsilon^2),
\]
\[
G_\epsilon (fq) = m_0 fq + \epsilon \frac{m_2}{2} (wfq + \Delta (fq)) + O^{ [(fq)^{(\le 4)}] }(\epsilon^2),
\]
and thus, 
\begin{align}
\textcircled{3}
& = 
\int qf ( f   \Delta q -   \Delta (fq) )+ O^{[f, \rev{q} ]}(\epsilon)  
 = 
- \int q^2 f   ( \Delta f + 2 \frac{\nabla q}{q} \cdot \nabla f )
+ O^{[f, \rev{q}]}(\epsilon) \nonumber \\
& = 
 - \langle f, \Delta_{p_\beta} f \rangle_{p_\beta} + O^{[f, \rev{q}]}(\epsilon) 
\quad \text{($p_\beta : = q^2 = p^{2-2\beta}$)} \nonumber \\
 & = {\calE}_{p_\beta}(f,f) + O^{[f,\rev{q}]}(\epsilon).
 \label{eq:bound-fixeps-circle3}
\end{align}

To bound $|\textcircled{3} - \textcircled{2} |$: 
Because
\[
\textcircled{3} - \textcircled{2}  
= \frac{1}{\epsilon m_2}
\int \int
  (f(x) - f(y))^2 \epsilon^{-\frac{d}{2}} k_0 \left( \frac{\| x-y \|^2}{\epsilon} \right)
 p(x)^{1-\beta} p(y) ( p(y)^{-\beta} - \hat{p}(y)^{-\beta})
 dV(x)dV(y),
\]
by the positivity of $p$ and $k_0$,
\[
| \textcircled{3} - \textcircled{2}   |
\le \frac{1}{\epsilon m_2}
\int \int
  (f(x) - f(y))^2 \epsilon^{-\frac{d}{2}} k_0 \left( \frac{\| x-y \|^2}{\epsilon} \right)
 p(x)^{1-\beta} p(y) | p(y)^{-\beta} - \hat{p}(y)^{-\beta} |
 dV(x)dV(y).
\]
Note that for any $y$, by the Mean Value Theorem and that $ | \hat{p}(y) - p(y) |/ p(y) < \varepsilon_p < 0.1$, 
 $\exists \xi$ between $\hat{p}(y)$ and $p(y)$ and thus $\xi^{-\beta-1} \le  \max\{ 1.1^{-\beta-1}, 0.9^{-\beta-1}\}  p(y)^{-\beta-1}$,
 and then we have
\begin{align}
 |\hat{p}(y)^{-\beta} - p(y)^{-\beta} | 
& =  \rev{|\beta|} \xi^{-\beta-1} | \hat{p}(y) - p(y) |
\le  \rev{|\beta|} \max\{ 1.1^{-\beta-1}, 0.9^{-\beta-1}\}  p(y)^{-\beta-1}  | \hat{p}(y) - p(y) |   \nonumber \\
& \le c_\beta   \rev{|\beta|}  p(y)^{-\beta}  \varepsilon_p,
\quad 
c_\beta:=  \max\{ 1.1^{-\beta-1}, 0.9^{-\beta-1}\}.
\label{eq:hatp-p-beta-bound}
\end{align}
 Thus,
\[
| \textcircled{3} - \textcircled{2}   |
\le 
   \rev{|\beta|}
c_\beta  \varepsilon_p
\frac{1}{\epsilon m_2}
\int \int
  (f(x) - f(y))^2 \epsilon^{-\frac{d}{2}} k_0 \left( \frac{\| x-y \|^2}{\epsilon} \right)
 p(x)^{1-\beta} 
  p(y)^{1-\beta}
 dV(x)dV(y)
 =  \rev{|\beta|}
  c_\beta  \varepsilon_p \textcircled{3}.
\]
Combined with \eqref{eq:bound-fixeps-circle3}, we have that 
\begin{align}
 \textcircled{2} 
& =  \textcircled{3} (1 + O^{[1]}(\beta c_\beta  \varepsilon_p ))
 = ({\calE}_{p_\beta}(f,f) + O^{[f, \rev{q}]}(\epsilon))  (1 + O^{[1]}(\beta c_\beta  \varepsilon_p ))  \nonumber \\
& = {\calE}_{p_\beta}(f,f) (1 + O^{[1]}( \rev{c_\beta} \beta  \varepsilon_p )) + O^{[f, \rev{q}]}(\epsilon). 
 \label{eq:bound-fixeps-circle2}
\end{align}

To bound $ |\textcircled{2} - \textcircled{1} |$:
 By \eqref{eq:hatp-p-beta-bound}
\begin{align*}
& | \textcircled{2} - \textcircled{1}  |
\le \frac{1}{\epsilon m_2}
\int \int
  (f(x) - f(y))^2 \epsilon^{-\frac{d}{2}} k_0 \left( \frac{\| x-y \|^2}{\epsilon} \right)
  p(y)\hat{p}(y)^{-\beta}
  p(x)  | p(x)^{-\beta}  - \hat{p}(x)^{-\beta} |
 dV(x)dV(y) \\
 & 
\le  \rev{|\beta|} c_\beta \varepsilon_p \frac{1}{\epsilon m_2}
\int \int
  (f(x) - f(y))^2 \epsilon^{-\frac{d}{2}} k_0 \left( \frac{\| x-y \|^2}{\epsilon} \right)
  p(y)\hat{p}(y)^{-\beta}
 p(x)^{1-\beta} 
 dV(x)dV(y) 
 =  \rev{|\beta|}
  c_\beta \varepsilon_p \textcircled{2} .
\end{align*}
Then, together with \eqref{eq:bound-fixeps-circle2}, we have that 
\begin{align*}
 \E V_{ij}
 & =  \textcircled{1} 
 =  \textcircled{2} (1+ O^{[1]}( \beta c_\beta \varepsilon_p))
 = ( {\calE}_{p_\beta}(f,f) (1 + O^{[1]}(\beta \rev{c_\beta}  \varepsilon_p )) + O^{[f,\rev{q}]}(\epsilon)) 
 (1+ O^{[1]}(\beta \rev{c_\beta}  \varepsilon_p)) \\
& =  {\calE}_{p_\beta}(f,f) (1 + O^{[1]}(\beta \rev{c_\beta}   \varepsilon_p )) 
 + O^{[f,\rev{q}]}(\epsilon),
 \rev{
 \quad \text{and } O^{[f,\rev{q}]}(\epsilon) =O^{[f,\rev{p, \beta}]}(\epsilon) \text{ by definition.}
}
\end{align*}
\rev{In the special case where $\beta=0$, we have $q = p$,
and $\textcircled{1} = \textcircled{2} = \textcircled{3} $.
Then \eqref{eq:bound-fixeps-circle3} gives that
\[
 \E V_{ij}
  =  \textcircled{1} 
 =  {\calE}_{p^2}(f,f)  + O^{[f,p]}(\epsilon).
\]}

The boundedness  of $|V_{ij}|$ follows by the same  argument of truncation on the $\delta_\epsilon$ ball as in the proof of Theorem \ref{thm:limit-form},  which gives 
\[
|V_{ij}| \le L 
= \Theta^{[f,p \rev{, \beta}]}( 
\epsilon^{-d/2} ).
\]
The variance 
\begin{align*}
\text{Var}(V_{ij}) 
& \le  \E V_{ij}^2
=\int \int  \frac{\epsilon^{-d-2}}{m_2^2}  (f(x) - f(y))^4 k_0^2 \left( \frac{\|x_i - x_j\|^2}{\epsilon} \right) 
\frac{p(x)p(y)}{\hat{p}(x)^{2 \beta}  \hat{p}(y)^{2 \beta}}
dV(x) dV(y), 
\end{align*}
and, similarly as in the proof of Theorem \ref{thm:limit-form}, we can show that 
\[
\E V_{ij}^2  \le  \epsilon^{-d/2} V_f,
 \quad
V_f  = \Theta^{[1]}(
\int |\nabla f|^4 p^{2-4\beta}).
\]
Thus, 
by the V-statistics decoupling argument,
w.p. higher than $ 1- 2N^{-10}$,
the variance error is 
\[
O^{[1]} 
\left( 
\sqrt{ \frac{\log N}{ N \epsilon^{d/2}}  \int |\nabla f|^4 p^{2- 4\beta}} 
\right).
\]
The normalization $\frac{1}{N^2}$ and  $\frac{1}{N(N-1)}$ in the V-statistics incurs higher order error,
and putting together bias and variance error proves the theorem.
\end{proof}

\subsection{Proofs in Section \ref{subsec:Lnf-convergence}
(Theorems \ref{thm:limit-pointwise-rw-3}, \ref{thm:limit-pointwise-un-3}, \ref{thm:limit-weak-un} and \ref{thm:limit-pointwise-rw-fix-epsilon}) }

%
%
%
%

\begin{proof}[Proof of Theorem \ref{thm:limit-pointwise-rw-3}]
{ Define $ \tilde{m}:= \frac{m_2}{2m_0}$,}
and 
rewrite \eqref{eq:def-Lalapha-rw} as
\begin{equation}
L^{(\alpha)}_{ rw'}
f(x) = 
\frac{1}{\epsilon \tilde{m} \hat{\rho}(x)^{2}}
\left(
\frac{ \frac{1}{N} \sum_{j=1}^{N}   F_j(x)}
{ \frac{1}{N} \sum_{j=1}^{N}  G_j(x) }
 -f(x) \right),
\end{equation}
where 
\begin{equation}\label{eq:def-Fj-Gj}
F_j  : =  \epsilon^{-d/2}  k_0 \left(  \frac{\| x - x_j\|^2}{  \epsilon  \hat{\rho}( x)  \hat{\rho}( x_j)}  \right) \frac{f(x_{j})}{\hat{\rho}(x_j)^{\alpha} } ,
\quad
G_j  :=  \epsilon^{-d/2} {  k_0 \left(  \frac{\| x - x_j\|^2}{  \epsilon  \hat{\rho}( x)  \hat{\rho}( x_j) } \right)} \frac{1}{\hat{\rho}(x_j)^{\alpha} }.
\end{equation}
Note that since $x_j$  $\sim p$ i.i.d., 
$\{F_j\}_{j=1}^N$ are i.i.d. rv's, so are $\{G_j\}_{j=1}^N$,
while $F_j$'s and $G_j$'s are dependent. 
The expectations are
\begin{align*}
\E F (x)
& = \epsilon^{-d/2} \int_{\calM} {  k_0 \left(  \frac{\| x - y\|^2}{  \epsilon  \hat{\rho}( x)  \hat{\rho}( y) } \right)} \frac{f( y )}{\hat{\rho}(y)^{\alpha} } p(y) dV(y), \\
\E G (x)
& = \epsilon^{-d/2} \int_{\calM} {  k_0 \left(  \frac{\| x - y\|^2}{  \epsilon  \hat{\rho}( x)  \hat{\rho}( y) } \right)} \frac{1}{\hat{\rho}(y)^{\alpha} } p(y) dV(y).
\end{align*}
Following the strategy in \cite{singer2006graph,berry2016variable}
to analyze the bias and variance errors respectively,
 we will show that 
\begin{itemize}

\item
The bias: 
\begin{equation}\label{eq:bias-error-tildeLN-goal-3}
\frac{1}{\epsilon \tilde{m} \hat{\rho}(x)^{2}}
\left(
\frac{ \E F(x)}{ \E G(x) } -f(x) \right)
\overset{?}{=}
{\calL}^{(\alpha)} f (x)
+ O^{[f,p]} \left(   \epsilon, \, \frac{\varepsilon_\rho}{ \epsilon} \right)
\end{equation}

\item
The variance:  
\begin{equation}\label{eq:variance-error-tildeLN-goal-3}
\frac{1}{\epsilon \tilde{m} \hat{\rho}(x)^{2 }}
\left(
\frac{ \frac{1}{N} \sum_{j=1}^{N}   F_j(x)}
{ \frac{1}{N} \sum_{j=1}^{N}  G_j(x) }
 - \frac{ \E F(x)}{ \E G(x) }
 \right)
\overset{?}{=} 
O^{[1]} \left(  \rev{ \| \nabla f \|_\infty}  p(x)^{1/d}   
\sqrt{     \frac{\log N}{   N  \epsilon^{d/2+1}}   }  \right)
\end{equation}
\end{itemize}

\underline{Proof of \eqref{eq:bias-error-tildeLN-goal-3}}:
By the definition of $G^{({\rho})}_{\epsilon}$ in \eqref{eq:def-G-R-rho-epsilon},
\[
\E F(x) 
 = \hat{\rho}(x)^{d/2} G^{(\hat{\rho})}_{\epsilon \hat{\rho}(x)} ( \frac{fp}{\hat{\rho}^{\alpha}} ) (x), 
 \]
 yet  $\frac{fp}{\hat{\rho}^{\alpha}}$ is not $C^4$.
 In order to apply Lemma \ref{lemma:right-operator-3} and \ref{lemma:right-operator-repalce-rho},
 we compare to replacing it with $\frac{fp}{\bar{\rho}^{\alpha}}$:

 \begin{lemma}\label{lemma:replace-hatrhoalpha-GR-hatrho}
Suppose notation and condition {are the} same as 
those in Lemma \ref{lemma:right-operator-3} and \ref{lemma:right-operator-repalce-rho}.
{When} $\epsilon$ is sufficiently small, 
for some constant $c_\rho''$ determined by $({\calM}, k_0, \rho_{max}, \rho_{min})$,
 \[
\sup_{x \in {\calM}} | G_\epsilon^{ (\tilde{\rho})} (\frac{f }{ \tilde{\rho}^\alpha} )(x) -
 G_\epsilon^{ (\tilde{\rho})} (\frac{f }{ {\rho}^\alpha} ) (x)|
 \le c_\rho'' \|f\|_\infty \varepsilon = O^{ [f,\rho] }(\varepsilon).
 \]
 \end{lemma}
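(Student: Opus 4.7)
The plan is to write the difference as a single integral, pull out the algebraic factor that causes the discrepancy, and control the remaining kernel integral via the two lemmas already established. By the definition \eqref{eq:def-G-R-rho-epsilon},
\[
G_\epsilon^{(\tilde{\rho})}\!\left(\tfrac{f}{\tilde{\rho}^\alpha}\right)(x) - G_\epsilon^{(\tilde{\rho})}\!\left(\tfrac{f}{{\rho}^\alpha}\right)(x)
= \epsilon^{-d/2}\int_{\calM} k_0\!\left(\tfrac{\|x-y\|^2}{\epsilon\tilde{\rho}(y)}\right) f(y)\Bigl(\tilde{\rho}(y)^{-\alpha} - \rho(y)^{-\alpha}\Bigr) dV(y),
\]
so only the pointwise factor $\tilde{\rho}^{-\alpha}-\rho^{-\alpha}$ needs bounding.

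The key pointwise estimate is exactly the one already used in \eqref{eq:bound-anypower-hatrho-barho}. Since the relative error hypothesis on $\tilde{\rho}$ gives $\tilde{\rho}(y) \in [0.9\rho(y), 1.1\rho(y)]$, the Mean Value Theorem applied to $t\mapsto t^{-\alpha}$ yields some $\xi(y)$ between $\tilde{\rho}(y)$ and $\rho(y)$, with $\xi(y)^{-\alpha-1}\le \max\{0.9^{-\alpha-1},1.1^{-\alpha-1}\}\rho(y)^{-\alpha-1}$, so that
\[
|\tilde{\rho}(y)^{-\alpha}-\rho(y)^{-\alpha}| \le c_\alpha\, \rho(y)^{-\alpha}\, \varepsilon,
\]
where $c_\alpha := |\alpha|\max\{0.9^{-\alpha-1},1.1^{-\alpha-1}\}$. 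Using this together with $k_0\ge 0$ (Assumption \ref{assump:h-selftune} C3) and the uniform bound $\rho^{-\alpha}\le C_\alpha$ on $\calM$ (with $C_\alpha$ depending only on $\alpha, \rho_{min}, \rho_{max}$), I obtain
\[
\bigl|G_\epsilon^{(\tilde{\rho})}\!\left(\tfrac{f}{\tilde{\rho}^\alpha}\right)(x) - G_\epsilon^{(\tilde{\rho})}\!\left(\tfrac{f}{{\rho}^\alpha}\right)(x)\bigr|
\le c_\alpha C_\alpha\, \|f\|_\infty\, \varepsilon\cdot G_\epsilon^{(\tilde{\rho})}(\mathbf{1})(x).
\]

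It remains to show $G_\epsilon^{(\tilde{\rho})}(\mathbf{1})(x)$ is bounded uniformly in $x$ and in small $\epsilon$. The constant function $\mathbf{1}$ is $C^\infty$, so Lemma \ref{lemma:right-operator-3} applied with $f\equiv 1$ and the $C^\infty$ bandwidth $\rho$ gives $G_\epsilon^{(\rho)}(\mathbf{1})(x) = m_0 \rho(x)^{d/2} + O^{[\rho]}(\epsilon)$, and then Lemma \ref{lemma:right-operator-repalce-rho} (applied with $f\equiv 1$) gives $G_\epsilon^{(\tilde{\rho})}(\mathbf{1})(x) = G_\epsilon^{(\rho)}(\mathbf{1})(x) + O^{[\rho]}(\varepsilon)$. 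Both error terms are $o(1)$ when $\epsilon$ is small, so $G_\epsilon^{(\tilde{\rho})}(\mathbf{1})(x)$ is bounded by a constant depending only on $(\calM, k_0, \rho_{min}, \rho_{max})$. Absorbing every constant into $c_\rho''$ finishes the proof.

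There is no serious obstacle here --- the argument is a direct combination of an MVT estimate (identical to the one already done for $|\hat{\rho}^\gamma - \bar{\rho}^\gamma|$ in the proof of Proposition \ref{prop:hatE(f,f)}) with the already-proved uniform boundedness of $G_\epsilon^{(\tilde{\rho})}$ acting on a $C^4$ input. The only mild care needed is that one cannot apply Lemma \ref{lemma:right-operator-3} directly with bandwidth $\tilde{\rho}$ (which is only $C^0$), which is precisely why the two-step reduction through Lemma \ref{lemma:right-operator-repalce-rho} is used for the $G_\epsilon^{(\tilde{\rho})}(\mathbf{1})$ bound.
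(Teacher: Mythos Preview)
Your proof is correct and essentially identical to the paper's: both write the difference as a single integral, bound $|\tilde{\rho}^{-\alpha}-\rho^{-\alpha}|$ by a constant times $\rho^{-\alpha}\varepsilon$ (the paper factors it as $\rho^{-\alpha}\bigl(\tfrac{\rho^\alpha}{\tilde{\rho}^\alpha}-1\bigr)$ but this is the same estimate), pull out $\|f\|_\infty$ and the constants to reduce to $G_\epsilon^{(\tilde{\rho})}(\mathbf{1})(x)$, and then bound the latter via Lemma~\ref{lemma:right-operator-3} plus Lemma~\ref{lemma:right-operator-repalce-rho}. Your explicit remark that Lemma~\ref{lemma:right-operator-3} cannot be applied directly with bandwidth $\tilde{\rho}$ (since it is only $C^0$) is exactly the reason the two-step reduction is needed, and the paper does the same.
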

{The} proof of Lemma {\ref{lemma:replace-hatrhoalpha-GR-hatrho}} is postponed to Appendix \ref{subsec:app-more-lemmas}.
 Applying Lemmas \ref{lemma:right-operator-3}, \ref{lemma:right-operator-repalce-rho}
 and \ref{lemma:replace-hatrhoalpha-GR-hatrho},
 where  $\rho = \bar{\rho}$, $\tilde{\rho} = \hat{\rho}$ and 
 ``$f$'' in the lemmas is replaced with $ fp$, we have
\begin{align*}
& \E F(x) 
 = \hat{\rho}(x)^{d/2} 
 \left( 
 G^{(\hat{\rho})}_{\epsilon \hat{\rho}(x)} ( \frac{fp}{\bar{\rho}^{\alpha}} ) (x) 
 + O^{[f,p]}(\varepsilon_\rho) 
 \right)  \quad \text{(by Lemma \ref{lemma:replace-hatrhoalpha-GR-hatrho})}
 \\
& 
= \hat{\rho}(x)^{d/2} 
 \left( 
 G^{(\bar{\rho})}_{\epsilon \hat{\rho}(x)} ( \frac{fp}{\bar{\rho}^{\alpha}} ) (x) 
 + O^{[f,p]}(\varepsilon_\rho)
 + O^{[f,p]}(\varepsilon_\rho) 
 \right) \quad \text{(by Lemma \ref{lemma:right-operator-repalce-rho})}
 \\
 &=
 \hat{\rho}(x)^{d/2} 
 \left( 
m_0 f p \bar{\rho}^{\frac{d}{2}-\alpha}(x) 
+ \epsilon \hat{\rho} \frac{m_2}{2} (   \omega  f p \bar{\rho}^{1+\frac{d}{2}-\alpha}  + \Delta ( f p \bar{\rho}^{1+\frac{d}{2}-\alpha}  ) )(x) 
+ \hat{\rho}^2(x) O^{[f,p]}(\epsilon^2)   
 + O^{[f,p]}(\varepsilon_\rho)
  \right),
\end{align*}
where the residual terms in big-$O$ are bounded uniformly for all $x \in {\calM}$.
{Below}, we omit the variable $x$ in the notation when there is no confusion.

Because $ 0.9 \bar{\rho} \le \hat{\rho} \le 1.1 \bar{\rho}$ for all $x \in {\calM}$, 
for any power $\gamma \in \R$,
$   \hat{\rho}^\gamma  $ lies between $\bar{\rho}^\gamma 1.1^\gamma$
and $\bar{\rho}^\gamma 0.9^\gamma$ (the order depending on the sign of $\gamma$),
and then uniformly bounded between
$ (0.9\rho_{min})^\gamma$ and $(1.1 \rho_{max})^\gamma$,
both of which are $\Theta^{[p]}(1)$ constants. 
We can also bound $| \hat{\rho}^\gamma - \bar{\rho}^\gamma |$ as in \eqref{eq:bound-anypower-hatrho-barho},
and in summary we have
\begin{equation}\label{eq:diff-any-power}
\sup_{x \in {\calM}} | \hat{\rho}^\gamma (x)|  = O^{[p]}(1),
\quad
\sup_{x \in {\calM}}| \hat{\rho}^{\gamma}(x) - \bar{\rho}^{\gamma}(x) | \le O^{[p]}( \varepsilon_\rho).
\end{equation}

We proceed with these bounds. 
We have shown, omitting the evaluation of $x$ in the notation, that
\[
\E F 
= \hat{\rho}^{d/2} 
 \left( 
m_0 f p \bar{\rho}^{\frac{d}{2}-\alpha} 
+ \epsilon \hat{\rho} \frac{m_2}{2} (   \omega  f p \bar{\rho}^{1+\frac{d}{2}-\alpha}  + \Delta ( f p \bar{\rho}^{1+\frac{d}{2}-\alpha}  ) ) 
  \right)
+  O^{[f,p]}(\epsilon^2, \, \varepsilon_\rho), 
\]
and by \eqref{eq:diff-any-power} with $\gamma = \frac{d}{2}$ and $\frac{d}{2}+1$,
\begin{equation}\label{eq:EF-3}
\E F 
= \bar{\rho}^{d/2} 
 \left( 
m_0 f p \bar{\rho}^{\frac{d}{2}-\alpha} 
+ \epsilon \bar{\rho} \frac{m_2}{2} (   \omega  f p \bar{\rho}^{1+\frac{d}{2}-\alpha}  + \Delta ( f p \bar{\rho}^{1+\frac{d}{2}-\alpha}  ) ) 
  \right)
+  O^{[f,p]}(\epsilon^2, \, \varepsilon_\rho).
\end{equation}
Similarly, we have
 \begin{align}
 \E G (x)
 &  =  \hat{\rho}(x)^{d/2} G^{(\hat{\rho})}_{\epsilon \hat{\rho}(x)} ( \frac{p}{\hat{\rho}^{\alpha}} ) (x) \nonumber \\
& =
\bar{\rho}^{d/2} 
 \left( 
m_0 p \bar{\rho}^{\frac{d}{2}-\alpha} 
+ \epsilon \bar{\rho} \frac{m_2}{2} (   \omega  p \bar{\rho}^{1+\frac{d}{2}-\alpha}  + \Delta ( p \bar{\rho}^{1+\frac{d}{2}-\alpha}  ) ) 
  \right)
+  O^{[p]}(\epsilon^2, \, \varepsilon_\rho),
\label{eq:EG-3}
\end{align}
and expanding $\E G $ to the $O(\epsilon)$ term only gives that
\[
\E G(x) 
= 
m_0 p \bar{\rho}^{ d -\alpha}(x) 
+ r_G(x), 
\quad
 \| r_G \|_\infty =  O^{[p]}(\epsilon, \, \varepsilon_\rho).
\]
Because $\inf_{x \in {\calM}} m_0 p \bar{\rho}^{ d -\alpha}(x)$ is a strictly positive constant depending on $p$,
and  $\epsilon$ and $\varepsilon_\rho$  are $o(1)$,
thus  \rev{when $N$ is large and the threshold depends on $(\calM, p, \alpha)$,}
$\| r_G\|_\infty < \inf_{x \in {\calM}} m_0 p \bar{\rho}^{ d -\alpha}(x)$.
Then for any $x \in {\calM}$,
$|r_G(x)| < m_0 p \bar{\rho}^{ d -\alpha}(x)$,
and we have
\begin{equation}\label{eq:1overEG-1}
\frac{1}{\mathbb E G (x)} 
= 
\frac{1}{ m_0 p \bar{\rho}^{ d -\alpha} (x)} \sum_{l=0}^\infty \left( - \frac{ r_G(x)}{m_0 p \bar{\rho}^{ d -\alpha} (x) } \right)^l
= \frac{1}{ m_0 p \bar{\rho}^{ d -\alpha} (x)} + O^{[p]}( \epsilon, \, \varepsilon_\rho ).
\end{equation}
Meanwhile, 
\[
\E F(x) - f(x) \E G(x) 
=
\epsilon \frac{m_2}{2}  \bar{\rho}^{d/2+1}  
\left[   \Delta ( fp \bar{\rho}^{1+\frac{d}{2} -\alpha} )  - f \Delta ( p \bar{\rho}^{1+\frac{d}{2} -\alpha} )  \right] 
+ O^{[f,p]}(  \epsilon^2, \, \varepsilon_\rho).
\]
Note that the quantity in the square brackets
 \[
\left[ \cdots \right]
= p \bar{\rho}^{1+\frac{d}{2} -\alpha}
\left(  \Delta f 
+    2 \frac{\nabla p}{p}  \cdot \nabla f  +    (2 + d -2\alpha) \frac{\nabla \bar{\rho}}{\bar{\rho}}  \cdot \nabla f  \right),
\]
and then by the definition of ${\calL}_\rho^{(\alpha)}$ in \eqref{eq:def-Lrho-alpha},
and that $ {\calL}_{\bar{\rho}}^{(\alpha)} =  {\calL}^{(\alpha)}$,
we have
\begin{equation}\label{eq:EF-fEG-3}
\E F(x) - f(x) \E G(x) 
=
\epsilon \frac{m_2}{2}  
p \bar{\rho}^{d+2 -\alpha}   {\calL}^{(\alpha)}  f 
+ O^{[f,p]}(  \epsilon^2, \, \varepsilon_\rho).
\end{equation}
Putting together, we have
\begin{align}
\frac{\E F(x)}{\E G(x)} - f(x)
& = \frac{ \E F(x) - f(x) \E G(x) }{\E G(x) } \nonumber \\
& =
\left( 
\epsilon \frac{m_2}{2}  
p \bar{\rho}^{d+2 -\alpha}   {\calL}^{(\alpha)}  f 
+ O^{[f,p]}(  \epsilon^2, \, \varepsilon_\rho)
 \right)
\left( 
\frac{1}{ m_0 p \bar{\rho}^{ d -\alpha} (x)} + O^{[p]}( \epsilon, \, \varepsilon_\rho )
 \right) 	\nonumber
\\
& =
\epsilon \frac{m_2}{2 m_0} \bar{\rho}^2 {\calL}^{(\alpha)} f (x)
+ O^{[f,p]} \left(  \epsilon^2, \, \varepsilon_\rho \right)
+ O^{[f,p]} \left(  \epsilon^2, \, \epsilon \varepsilon_\rho \right)  \nonumber \\
& = \epsilon \tilde{m} \bar{\rho}^2 {\calL}^{(\alpha)} f (x)
+ O^{[f,p]} \left(  \epsilon^2, \, \varepsilon_\rho \right),
\label{eq:bound-bias-error-term1}
\end{align}
and then
\[
\frac{1}{\epsilon \tilde{m} \bar{\rho}^2 }\left(\frac{\E F(x)}{\E G(x)} - f(x) \right)
= {\calL}^{(\alpha)} f (x)
+ O^{[f,p]} \left(  \epsilon, \, \frac{ \varepsilon_\rho}{\epsilon} \right)
= O^{[f,p]}(1).
\]
Finally,  by \eqref{eq:diff-any-power} with $\gamma = -2$,
\[
\left|
\frac{1}{\epsilon \tilde{m}}
\left( \frac{1}{ \hat{\rho}(x)^{2}} - \frac{1}{ \bar{\rho}(x)^{2}} \right) 
\left(\frac{\E F(x)}{\E G(x)} - f(x) \right) \right|
=
O^{[p]}(\varepsilon_\rho)
\frac{1}{\epsilon \tilde{m} \bar{\rho}^2 }
\left| \frac{\E F(x)}{\E G(x)} - f(x) \right|
=  O^{[p]}(\varepsilon_\rho) O^{[f,p]}(1),
\]
and, using triangle inequality, this together with \eqref{eq:bound-bias-error-term1} gives the following
\[
\frac{1}{\epsilon \tilde{m} \hat{\rho}^2 (x) }\left(\frac{\E F(x)}{\E G(x)} - f(x) \right)
= O^{[f,p]}(\varepsilon_\rho)
+  {\calL}^{(\alpha)} f (x)
+ O^{[f,p]} \left(  \epsilon, \, \frac{ \varepsilon_\rho}{\epsilon} \right),
\]
where the constants in $O^{[f,p]}(\cdot)$ are uniform for $x \in {\calM}$. This proves  \eqref{eq:bias-error-tildeLN-goal-3}.
\vspace{5pt}

\underline{Proof of \eqref{eq:variance-error-tildeLN-goal-3}}:
 By definition, we have
\begin{equation}\label{eq:def-Yj-3}
\frac{ \frac{1}{N} \sum_{j=1}^{N}   F_j(x)}
{ \frac{1}{N} \sum_{j=1}^{N}  G_j(x) }
 - \frac{ \E F(x)}{ \E G(x) }
= \frac{ \frac{1}{N} \sum_{j=1}^{N}   
( F_j(x) \E G(x) -  G_j(x) \E F(x) )}
{\E G(x) \cdot \frac{1}{N} \sum_{j=1}^{N}  G_j(x)}
=: \frac{ \frac{1}{N} \sum_{j=1}^{N}   Y_j(x) }
{\E G(x) \cdot \frac{1}{N} \sum_{j=1}^{N}  G_j(x)},
\end{equation}
where $Y_j(x) = F_j(x) \E G(x) -  G_j(x) \E F(x)$.
Below we omit  
$x$, which is fixed, in the notation.
We consider the concentration of 
$ \frac{1}{N} \sum_{j=1}^{N}  G_j$
and 
$\frac{1}{N} \sum_{j=1}^{N}  Y_j$  respectively.

We have shown in \eqref{eq:EG-3} that
\[
\E G
= m_0 p  {\bar{\rho}}^{d -\alpha} 
+ O^{[p]}(   \epsilon, \, \varepsilon_\rho )
= \Theta^{[1]}( 
m_0 p  {\bar{\rho}}^{d -\alpha} ),
\]
and by the boundedness of $k_0$ and uniform boundedness of $\hat{\rho}$,
$|G_j| \le L_G = \Theta^{[p]} (
\epsilon^{-d/2})$.
Using {the} same argument to analyze the operator $G^{(\hat{\rho})}_{\epsilon \hat{\rho}(x)}$
as {that} in {Lemmas} \ref{lemma:right-operator-3}, \ref{lemma:right-operator-repalce-rho}
 and \ref{lemma:replace-hatrhoalpha-GR-hatrho}, 
 the variance
\begin{align}
& \text{Var}(G_j)
 \le \E G_j^2 
~~~ = \int_{\calM}
\epsilon^{-d} {  k_0^2 \left(  \frac{\| x - y\|^2}{  \epsilon  \hat{\rho}( x)  \hat{\rho}( y) } \right)} 
\frac{p(y)}{\hat{\rho}(y)^{2\alpha} } dV(y)  \nonumber \\
&~~~ =
\epsilon^{-d/2} \hat{\rho}(x)^{d/2}
G^{(\hat{\rho})}_{\epsilon \hat{\rho}(x)}[k_0^2]( \frac{p}{\hat{\rho}^{2 \alpha}})(x) \nonumber  \\
&~~~ =
\epsilon^{-d/2}
\left\{  m_0[k_0^2] p {\bar{\rho}}^{d - 2\alpha}
+ \epsilon   \frac{m_2[k_0^2]}{2} \bar{\rho}^{1+\frac{d}{2}} (   \omega  p \bar{\rho}^{1+\frac{d}{2}-2 \alpha} 
+ \Delta ( p  \bar{\rho}^{1+\frac{d}{2} -2\alpha} ) ) 
+   O^{[p]} \left(  \epsilon^2, \, \varepsilon_\rho \right) 
\right\} 
\nonumber\\
& ~~~= \epsilon^{-d/2}
\left\{  m_0[k_0^2] p {\bar{\rho}}^{d - 2\alpha}
+ O^{[p]} \left(   \epsilon, \, \varepsilon_\rho \right) 
\right\} \label{eq:EGsquare-3} \\
&~~~ 
 \le \bar{\nu}_G \rev{(x)}  =\Theta( 
\epsilon^{-d/2} m_0[k_0^2] p {\bar{\rho}}^{d - 2\alpha}\rev{(x)} )
\rev{~\text{ with large $N$, since $p {\bar{\rho}}^{d - 2\alpha}(x) \ge \Theta^{[p]}(1)>0 $ for all $x$.}}
 \nonumber
\end{align}
By that $\frac{\log N}{N \epsilon^{d/2} } = o( 1)$, 
\rev{when $N$ is large,
$\sqrt{ 40 \frac{\log N}{N} \bar{\nu}_G(x)} <  \frac{ 3\bar{\nu}_G(x)}{L_G}$ for all $x$, and then}  w.p. higher than  $> 1- 2N^{-10}$,
\[
\left| \frac{1}{N} \sum_{j=1}^{N}  G_j - \E G 
\right|  \le  \sqrt{  \frac{40 \log N}{N}  \bar{\nu}_G  }  
= O^{[p]} \left( \sqrt{ \frac{\log N}{N \epsilon^{d/2} }  }  \right),
\]
which we define as {the} good event $E_2$.
\rev{
The threshold of large $N$ needed for $\text{Var}(G_j) \le  \bar{\nu}_G \rev{(x)} $ and for applying the sub-Gaussian tail in Bernstein inequality 
depends on $(\calM, p, \alpha)$.
}
Under $E_2$,
\[
\frac{1}{N} \sum_{j=1}^{N}  G_j 
= m_0 p  {\bar{\rho}}^{d -\alpha} 
+ O^{[p]}(   \epsilon, \, \varepsilon_\rho )
+  O^{[p]} \left( \sqrt{ \frac{\log N}{N \epsilon^{d/2} }  }  \right)
=\Theta( 
 m_0 p  {\bar{\rho}}^{d -\alpha}) \,,
\]
and then
\begin{equation}\label{eq:EG-sumG-O(1)^2-3}
\E G \cdot  \frac{1}{N} \sum_{j=1}^{N}  G_j 
= \Theta( 
(m_0 p  {\bar{\rho}}^{d -\alpha})^2 ) \,. 
\end{equation}

To analyze the independent sum
$\frac{1}{N} \sum_{j=1}^{N}  Y_j$, 
first note that $\E Y_j =0$.
For boundedness of $Y_j$, because
\begin{equation}\label{eq:boundedness-Gj-Fj}
\E G = O^{[p]}(1), 
\quad \E F = O^{[f,p]}(1),
\quad
|G_j| \le L_G = \Theta^{[p]}( \epsilon^{-d/2}),
\quad |F_j| \le L_F = \Theta^{[f,p]}( \epsilon^{-d/2}),
\end{equation}
we have that
\[
|Y_j| \le | F_j|  |\E G| +  |G_j|  |\E F | \le L_Y 
= \Theta^{[f,p]}( 
\epsilon^{-d/2}).
\]
For the variance of $Y_j$,
\[
\E Y_j^2 
= \E (F_j \E G -  G_j \E F)^2
=  \E F^2 (\E G)^2 + \E  G^2 (\E F)^2 - 2 \E (F G) \E F \E G,
\]
and  we have
\begin{align}
 \E F^2 
&= \int_{\calM}
\epsilon^{-d} {  k_0^2 \left(  \frac{\| x - y\|^2}{  \epsilon  \hat{\rho}( x)  \hat{\rho}( y) } \right)} 
\frac{f(y)^2 p(y)}{\hat{\rho}(y)^{2\alpha} } dV(y)  \nonumber \\
& =
\epsilon^{-d/2} \hat{\rho}(x)^{d/2}
G^{(\hat{\rho})}_{\epsilon \hat{\rho}(x)}[k_0^2]( \frac{ f^2 p}{\hat{\rho}^{2 \alpha}})(x) \nonumber  \\
& =
\epsilon^{-d/2}
\left\{  m_0[k_0^2] f^2 p {\bar{\rho}}^{d - 2\alpha}
+ \epsilon   \frac{m_2[k_0^2]}{2} \bar{\rho}^{1+\frac{d}{2}} (   \omega  f^2 p \bar{\rho}^{1+\frac{d}{2}-2 \alpha} 
+ \Delta ( f^2 p  \bar{\rho}^{1+\frac{d}{2} -2\alpha} ) ) 
+  O^{[f,p]} \left(   \epsilon^2, \, \varepsilon_\rho \right) 
\right\}, 
\nonumber  \\
 \E [FG] 
&= \int_{\calM}
\epsilon^{-d} {  k_0^2 \left(  \frac{\| x - y\|^2}{  \epsilon  \hat{\rho}( x)  \hat{\rho}( y) } \right)} 
\frac{f(y) p(y)}{\hat{\rho}(y)^{2\alpha} } dV(y)  \nonumber \\
& =
\epsilon^{-d/2} \hat{\rho}(x)^{d/2}
G^{(\hat{\rho})}_{\epsilon \hat{\rho}(x)}[k_0^2]( \frac{ f p}{\hat{\rho}^{2 \alpha}})(x) \nonumber  \\
& =
\epsilon^{-d/2}
\left\{  m_0[k_0^2] f p {\bar{\rho}}^{d - 2\alpha}
+ \epsilon   \frac{m_2[k_0^2]}{2} \bar{\rho}^{1+\frac{d}{2}} (   \omega  f p \bar{\rho}^{1+\frac{d}{2}-2 \alpha} 
+ \Delta ( f p  \bar{\rho}^{1+\frac{d}{2} -2\alpha} ) ) 
+   O^{[f,p]} \left(  \epsilon^2, \, \varepsilon_\rho \right) 
\right\}.
\nonumber 
\end{align}
Together with  \eqref{eq:EGsquare-3} and \eqref{eq:EF-3}\eqref{eq:EG-3},
and defining $m_0':=m_0[k_0^2] $ and  $m_2':=m_2[k_0^2] $,
we have that
{\allowdisplaybreaks
\begin{align*}
\E Y_j^2
&=
\epsilon^{-d/2}
\left\{  m_0' f^2 p {\bar{\rho}}^{d - 2\alpha}
+ \epsilon   \frac{m_2'}{2} \bar{\rho}^{1+\frac{d}{2}} (   \omega  f^2 p \bar{\rho}^{1+\frac{d}{2}-2 \alpha} 
+ \Delta ( f^2 p  \bar{\rho}^{1+\frac{d}{2} -2\alpha} ) ) 
+   O^{[f,p]} \left(  \epsilon^2, \, \varepsilon_\rho\right) 
\right\}\\
&~~~~~~
\cdot \left\{
m_0 p  {\bar{\rho}}^{d -\alpha} 
+ \epsilon \frac{m_2}{2}  \bar{\rho}^{d/2+1}  
(   \omega  p \bar{\rho}^{1+\frac{d}{2} -\alpha } 
+ \Delta ( p \bar{\rho}^{1+\frac{d}{2} -\alpha} ) ) 
+  O^{[p]} \left(  \epsilon^2, \, \varepsilon_\rho\right)
\right\}^2 \\
& ~~~
 +\epsilon^{-d/2}
\left\{  m_0' p {\bar{\rho}}^{d - 2\alpha}
+ \epsilon   \frac{m_2'}{2} \bar{\rho}^{1+\frac{d}{2}} (   \omega  p \bar{\rho}^{1+\frac{d}{2}-2 \alpha} 
+ \Delta ( p  \bar{\rho}^{1+\frac{d}{2} -2\alpha} ) ) 
+   O^{[p]} \left(  \epsilon^2, \, \varepsilon_\rho\right) 
\right\}  \\
& ~~~~~~
\cdot \left\{
m_0 fp  {\bar{\rho}}^{d -\alpha} 
+ \epsilon \frac{m_2}{2}  \bar{\rho}^{d/2+1}  
(   \omega  fp \bar{\rho}^{1+\frac{d}{2} -\alpha } 
+ \Delta ( fp \bar{\rho}^{1+\frac{d}{2} -\alpha} ) ) 
+O^{[f,p]} \left(  \epsilon^2, \, \varepsilon_\rho\right)
\right\}^2 \\
&~~~
 - 2 \epsilon^{-d/2}
\left\{  m_0' f p {\bar{\rho}}^{d - 2\alpha}
+ \epsilon   \frac{m_2'}{2} \bar{\rho}^{1+\frac{d}{2}} (   \omega  f p \bar{\rho}^{1+\frac{d}{2}-2 \alpha} 
+ \Delta ( f p  \bar{\rho}^{1+\frac{d}{2} -2\alpha} ) ) 
+  O^{[f,p]} \left(  \epsilon^2, \, \varepsilon_\rho\right) 
\right\} \\
&~~~~~~
\cdot \left\{
m_0 fp  {\bar{\rho}}^{d -\alpha} 
+ \epsilon \frac{m_2}{2}  \bar{\rho}^{d/2+1}  
(   \omega  fp \bar{\rho}^{1+\frac{d}{2} -\alpha } 
+ \Delta ( fp \bar{\rho}^{1+\frac{d}{2} -\alpha} ) ) 
+ O^{[f,p]} \left(  \epsilon^2, \, \varepsilon_\rho\right)
\right\} \\
&~~~~~~
\cdot \left\{
m_0 p  {\bar{\rho}}^{d -\alpha} 
+ \epsilon \frac{m_2}{2}  \bar{\rho}^{d/2+1}  
(   \omega  p \bar{\rho}^{1+\frac{d}{2} -\alpha } 
+ \Delta ( p \bar{\rho}^{1+\frac{d}{2} -\alpha} ) ) 
+  O^{[p]} \left(  \epsilon^2, \, \varepsilon_\rho\right)
\right\} \\
& = 
\epsilon^{-d/2}
\left\{
\epsilon
 \frac{m_2' m_0^2}{2}
  p^2  {\bar{\rho}}^{ \frac{5}{2}d -2\alpha + 1} 
 (  \Delta ( f^2 p  \bar{\rho}^{1+\frac{d}{2} -2\alpha} ) )  
  +  \epsilon m_2  m_0'  m_0
 p^2 {\bar{\rho}}^{ \frac{5}{2} d - 3\alpha + 1}     
(    f^2  \Delta ( p \bar{\rho}^{1+\frac{d}{2} -\alpha} ) ) 
  \right. \\
 & ~~~
 +  \epsilon
 \frac{m_2' m_0^2}{2}
f^2  p^2  {\bar{\rho}}^{ \frac{5}{2}d -2\alpha + 1} 
 (  \Delta (  p  \bar{\rho}^{1+\frac{d}{2} -2\alpha} ) )  
 + \epsilon m_2  m_0'  m_0
 p^2 {\bar{\rho}}^{ \frac{5}{2} d - 3\alpha + 1}     
(    f  \Delta ( f p \bar{\rho}^{1+\frac{d}{2} -\alpha} ) ) 
  \\
  & ~~~
  - 2    \epsilon
 \frac{m_2' m_0^2}{2}
f  p^2  {\bar{\rho}}^{ \frac{5}{2}d -2\alpha + 1} 
 (  \Delta (  f p  \bar{\rho}^{1+\frac{d}{2} -2\alpha} ) )  
 -  \epsilon m_2  m_0'  m_0
 p^2 {\bar{\rho}}^{ \frac{5}{2} d - 3\alpha + 1}     
(    f  \Delta ( f p \bar{\rho}^{1+\frac{d}{2} -\alpha} ) )  
\\
& ~~~
  \left. -  \epsilon m_2  m_0'  m_0
 p^2 {\bar{\rho}}^{ \frac{5}{2} d - 3\alpha + 1}     
(    f^2  \Delta (  p \bar{\rho}^{1+\frac{d}{2} -\alpha} ) ) 
+O^{[f,p]} \left(  \epsilon^2, \, \varepsilon_\rho
\right)
\right\} \\
& = \epsilon^{-d/2}
\left\{
\epsilon
 \frac{m_2' m_0^2}{2}
  p^2  {\bar{\rho}}^{ \frac{5}{2}d -2\alpha + 1} 
 \left[  
 \Delta ( f^2 p  \bar{\rho}^{1+\frac{d}{2} -2\alpha} )  
 + f^2 \Delta (  p  \bar{\rho}^{1+\frac{d}{2} -2\alpha} )  
 -2 f \Delta ( f p  \bar{\rho}^{1+\frac{d}{2} -2\alpha} )   
 \right]     
 \right. \\
& ~~~
+ \left.
O^{[f,p]} \left(  \epsilon^2, \, \varepsilon_\rho
\right)
\right\}.
\end{align*}
}
Note that the quantity in the square brackets
\begin{equation}\label{eq:simplify-three-deltas-into-|grad f|^2}
\left[ \cdots \right]
=  2 |\nabla f|^2  p  \bar{\rho}^{1+\frac{d}{2} -2\alpha}\,.
\end{equation}
Then, also by the assumption that $\epsilon, \, \frac{\varepsilon_\rho}{ \epsilon} = o(1)$,
we have \rev{with large enough $N$,}
\begin{align}
\E Y_j^2
& = \epsilon^{-d/2+1}
\left\{ 
{m_2' m_0^2}  p^3  {\bar{\rho}}^{ 3 d -4\alpha + 2}   |\nabla f|^2   
+ O^{[f,p]} \left(   \epsilon, \, \frac{\varepsilon_\rho}{ \epsilon} \right)
\right\} 
\label{eq:EYi2-barnuY-1} \\
& \le \bar{\nu}_Y \rev{(x)}
\sim 
\epsilon^{-d/2+1}   p^3  {\bar{\rho}}^{ 3 d -4\alpha + 2}  \rev{(x)  \| \nabla f \|_\infty^2 },
\nonumber
\end{align}
\rev{
where in obtaining the last row we assume that $ \| \nabla f \|_\infty > 0$ (because otherwise the theorem holds trivially),
and use that $p(x) > p_{min}$ for all $x$.
Since $ \bar{\nu}_Y(x) \ge c_{f,p,\alpha} \epsilon^{-d/2+1} $  for $c_{f,p,\alpha} > 0$,
the needed threshold of large $N$ for $\E Y_j^2 \le \bar{\nu}_Y(x)$ is determined by $(\calM,p,f,\alpha)$. 
Meanwhile, under the condition that $\frac{\log N}{N} = o(\epsilon^{d/2+1})$,
with sufficiently large $N$ and the threshold is determined by $(\calM,p,f,\alpha)$,
we have
$ 40 \frac{\log N}{N} <  \frac{ 9 c_{f,p,\alpha} \epsilon^{-d/2+1} }{L_Y^2}\le  \frac{9 \bar{\nu}_Y(x)}{L_Y^2}$,
i.e., $\sqrt{ 40 \frac{\log N}{N} \bar{\nu}_Y(x)} <  \frac{3 \bar{\nu}_Y(x)}{L_Y}$  for any $x \in \calM$.
}
Then, by the classical Bernstein,  w.p. higher than $1-2N^{-10}$,
\[
| \frac{1}{N} \sum_{j=1}^{N}  Y_j  |  \le  \sqrt{  \frac{40 \log N}{N}  \bar{\nu}_Y \rev{(x)}  }  
= O^{[1]} \left( \rev{ \|\nabla f \|_\infty}
p^{3/2} {\bar{\rho}}^{ \frac{3}{2} d -2\alpha + 1}\rev{(x)} 
\epsilon^{-d/4+1/2} 
\sqrt{       \frac{\log N}{N }   }  \right),
\]
and we call the event {the} good event $E_3$.

\rev{Note that in \eqref{eq:EYi2-barnuY-1}, when $ | \nabla f(x) | > 0$,
one can bound the variance of $Y_j$ at $x$ by 
\[ 
 \bar{\nu}_Y(x) \sim 
\epsilon^{-d/2+1}   p^3  {\bar{\rho}}^{ 3 d -4\alpha + 2}  (x) |\nabla f(x)|^2,
\]
and obtain the same large deviation bound where $ \|\nabla f \|_\infty$ is replaced with $ |\nabla f(x)|$,
allowing the large $N$ threshold to depend on $x$ (such that the $O^{[f,p]} \left(   \epsilon, \, \frac{\varepsilon_\rho}{ \epsilon} \right)$ term
in  \eqref{eq:EYi2-barnuY-1} is dominated by $\Theta^{[1]}(1)$ multiplied the first term,
and
  ${ 40 \frac{\log N}{N} } <  \frac{9 \bar{\nu}_Y(x)}{L_Y^2}$ ).
An alternative way to obtain an $x$-uniform threshold of large $N$  is by 
adding $0.1^2$ to $|\nabla f(x)|^2$  in setting $\bar{\nu}_Y(x)$, 
so that the $x$-dependent constant in front of $\epsilon^{-d/2+1}$ is uniformly bounded from below.
This leads to the same variance error bound where $ \|\nabla f \|_\infty$ is replaced with $ |\nabla f(x)|+0.1$.
The above verifies  Remark \ref{rk:nablaf(x)=0}.
}

Back to \eqref{eq:def-Yj-3},
with \eqref{eq:EG-sumG-O(1)^2-3},
 we have that under good events $E_2$ and $E_3$,
\begin{align*}
& \frac{1}{ \epsilon \tilde{m} \hat{\rho}^2}
\left| \frac{ \frac{1}{N} \sum_{j=1}^{N}   F_j }
{ \frac{1}{N} \sum_{j=1}^{N}  G_j }
 - \frac{ \E F }{ \E G } \right|
=  \frac{1}{ \epsilon \tilde{m} \hat{\rho}^2}
\frac{ | \frac{1}{N} \sum_{j=1}^{N}   Y_j | }
{ \E G \cdot  \frac{1}{N} \sum_{j=1}^{N}  G_j  }
{=}
\frac{ O^{[1]} \left(  \rev{ \|\nabla f \|_\infty}
 p^{3/2} {\bar{\rho}}^{ \frac{3}{2} d -2\alpha + 1}
\epsilon^{-d/4+1/2} 
\sqrt{       \frac{\log N}{N }   }  \right) }
{\epsilon \bar{\rho}^2  (m_0 p \bar{\rho}^{d-\alpha})^2 } \\
& ~~~
= O^{[1]} \left(  \rev{ \|\nabla f \|_\infty}
p^{-1/2} {\bar{\rho}}^{ - \frac{d}{2}  - 1}
\epsilon^{-d/4-1/2} 
\sqrt{       \frac{\log N}{N }   }  \right),
\end{align*}
\rev{where the location $x$ is omitted in the notation.}
By that 
$\bar{\rho}=p^{-1/d}$,
this proves \eqref{eq:variance-error-tildeLN-goal-3}.
 \end{proof}
 %
 %

%
%
%
\begin{proof}[Proof of Theorem \ref{thm:limit-pointwise-un-3}]
By the definition of $L^{(\alpha)}_{ un} f$ and that of $F_j$, $G_j$ in \eqref{eq:def-Fj-Gj},
\[
L^{(\alpha)}_{ un} f(x) = 
\frac{1}{N}
\sum_{j=1}^{N}  
\frac{1}{ {\epsilon \frac{m_2}{2} } }
\frac{1}{\hat{\rho}(x)^\alpha}
(F_j(x) - f(x) G_j(x))
=: 
\frac{1}{N}
\sum_{j=1}^{N}   H_j.
\]
We have computed $\E F - f(x) \E G$ in \eqref{eq:EF-fEG-3},
and \eqref{eq:diff-any-power} gives that $ \sup_{x \in {\calM}} | \hat{\rho}(x)^{-\alpha}  - \hat{\rho}(x)^{-\alpha} | = O^{[p]}(\varepsilon_\rho)$.
Then,
\begin{align*}
\E H_j 
& =  \frac{1}{ {\epsilon \frac{m_2}{2} } } \hat{\rho}(x)^{-\alpha} (\E F(x) - f(x) \E G(x) )  \\
&=
\frac{1}{ {\epsilon \frac{m_2}{2} } }
\hat{\rho}(x)^{-\alpha} 
\left( 
\epsilon \frac{m_2}{2}  
p \bar{\rho}^{d+2 -\alpha}   {\calL}^{(\alpha)}  f 
+ O^{[f,p]}(  \epsilon^2, \, \varepsilon_\rho)
\right) \\
& = 
p \bar{\rho}^{d+2 - 2\alpha}   {\calL}^{(\alpha)}  f (x)
+ O^{[f,p]}(  \epsilon, \, \frac{\varepsilon_\rho}{\epsilon}).
\end{align*}
By $\bar{\rho} = p^{-1/d}$, this proves that 
\begin{equation}\label{eq:bias-error-un}
\E L^{(\alpha)}_{ un} f(x) 
= 
p^{ \frac{ 2\alpha - 2}{d}}   {\calL}^{(\alpha)}  f (x)
+ O^{[f,p]}(  \epsilon, \, \frac{\varepsilon_\rho}{\epsilon}),
\end{equation}
where the constant in $O^{[f,p]}(\cdot)$ is uniform for all $x \in {\calM}$.

To analyze the variance,
first note the boundedness of $|H_j|$ as
\[
|H_j | \le L_H =\Theta^{[f,p]} (\epsilon^{-d/2-1}),
\]
which follows by the boundedness of $G_j$, $F_j$ in \eqref{eq:boundedness-Gj-Fj} and the uniform boundedness of $\hat{\rho}$.
For the variance of $H_j$, we have
\[
\E H_j^2
= ( \frac{1}{ {\epsilon \frac{m_2}{2} } } \hat{\rho}(x)^{-\alpha}  )^2
( \E F_j^2 + f(x)^2 \E G_j^2 - 2f(x) \E F_j G_j),
\]
and we have computed $\E F^2$, $\E G^2$ and $\E F G$ in the proof of Theorem \ref{thm:limit-pointwise-rw-3}.
Specifically, with notation the same as therein, we have
\begin{align}
&  \E F^2 =
\epsilon^{-d/2}
\left\{  m_0' f^2 p {\bar{\rho}}^{d - 2\alpha}
+ \epsilon   \frac{m_2'}{2} \bar{\rho}^{1+\frac{d}{2}} (   \omega  f^2 p \bar{\rho}^{1+\frac{d}{2}-2 \alpha} 
+ \Delta ( f^2 p  \bar{\rho}^{1+\frac{d}{2} -2\alpha} ) ) 
+  O^{[f,p]} \left(   \epsilon^2, \, \varepsilon_\rho \right) \right\}, 
\nonumber  \\
& \E G^2 =
\epsilon^{-d/2}
\left\{  m_0' p {\bar{\rho}}^{d - 2\alpha}
+ \epsilon   \frac{m_2'}{2} \bar{\rho}^{1+\frac{d}{2}} (   \omega  p \bar{\rho}^{1+\frac{d}{2}-2 \alpha} 
+ \Delta ( p  \bar{\rho}^{1+\frac{d}{2} -2\alpha} ) ) 
+   O^{[p]} \left(  \epsilon^2, \, \varepsilon_\rho \right) \right\},
  \nonumber\\
&  \E FG 
 =
\epsilon^{-d/2}
\left\{  m_0' f p {\bar{\rho}}^{d - 2\alpha}
+ \epsilon   \frac{m_2'}{2} \bar{\rho}^{1+\frac{d}{2}} (   \omega  f p \bar{\rho}^{1+\frac{d}{2}-2 \alpha} 
+ \Delta ( f p  \bar{\rho}^{1+\frac{d}{2} -2\alpha} ) ) 
+   O^{[f,p]} \left(  \epsilon^2, \, \varepsilon_\rho \right) \right\}.
\nonumber 
\end{align}
Also, 
by \eqref{eq:simplify-three-deltas-into-|grad f|^2}, where the square brackets denote the same quantity as before, we have
\begin{align*}
& \E F^2 + f^2 \E G^2 - 2f \E F G
= 
\epsilon^{-d/2}
\epsilon   \frac{m_2'}{2} \bar{\rho}^{1+\frac{d}{2}} [\cdots]  \\
&=
\epsilon^{-d/2}
\left\{
\epsilon  m_2' 
 |\nabla f|^2  p  \bar{\rho}^{2+ d -2\alpha}
 + O^{[f,p]} \left(  \epsilon^2, \, \varepsilon_\rho \right)
 \right\}.
\end{align*}
Then, also by \eqref{eq:diff-any-power},  we have
\begin{align*}
& \E H_j^2 = 
\frac{4}{ { m_2^2 } }  \epsilon^{-1 - d/2} 
\hat{\rho}^{-2\alpha}(x)
\left\{ m_2'  |\nabla f|^2  p  \bar{\rho}^{2+ d -2\alpha}(x)
 + O^{[f,p]} \left(  \epsilon, \,  \frac{\varepsilon_\rho}{\epsilon} \right)
 \right\} \\
 & =
 \frac{4}{ { m_2^2 } }  \epsilon^{-1 - d/2} 
\left\{ 
 m_2'  |\nabla f|^2  p  \bar{\rho}^{2+ d - 4\alpha}(x)
 + O^{[f,p]} \left(  \epsilon, \,  \frac{\varepsilon_\rho}{\epsilon} \right)
 \right\}
 \quad 
 \text{(by that $\hat{\rho}^{-2\alpha}(x) = \bar{\rho}^{-2\alpha}(x) + O^{[p]}(\varepsilon_\rho)$)} \\
 & \le \bar{\nu}_H  \rev{(x)}
 = \Theta^{[1]}(  \epsilon^{-1 - d/2} \rev{ \| \nabla f \|_\infty^2  }
 p^{ \frac{ 4\alpha-2}{d}}(x)), \rev{~\text{with large $N$, the threshold depending on $(\calM,p,f,\alpha)$.}}
\end{align*}
\rev{
In obtaining the last row, 
we assumed $\| \nabla f\|_\infty >0$ (when $\| \nabla f\|_\infty =0$, the theorem holds trivially),
and used that   
$O^{[f,p]} \left(  \epsilon, \,  \frac{\varepsilon_\rho}{\epsilon} \right) =o(1)$,
 $\bar{\rho} = p^{-1/d}$, 
 and  that $p$ is uniformly bounded from below.
Then same as in the proof of Theorem \ref{thm:limit-pointwise-rw-3},
 the threshold of large $N$ to achieve the sub-Gaussian tail in Bernstein inequality is determined by $(\calM,p,f,\alpha)$.
 }
As a result, when $N$ is large enough,
we have that w.p. higher than  $1-2N^{-10}$, 
\[
\left| \frac{1}{N} \sum_{j=1}^{N}  H_j - \E H_j  \right| 
 \le  \sqrt{  \frac{40 \log N}{N}  \bar{\nu}_H   \rev{(x)} }  
= O^{[1]} \left(  
 \rev{ \|\nabla f \|_\infty    }
 p^{ \frac{ 2\alpha-1}{d}}(x)
\sqrt{   \frac{\log N}{N  \epsilon^{ d/2+1} }   }  \right).
\]
\rev{To replace $ \|\nabla f \|_\infty  $ with $ |\nabla f(x)| $ when strictly positive, or with +0.1, as in Remark \ref{rk:nablaf(x)=0},   
the same argument by re-defining $\bar{\nu}_H(x) $ similarly as in the proof of  Theorem \ref{thm:limit-pointwise-rw-3} applies. 
 }
Combined with \eqref{eq:bias-error-un}, this finishes the proof.
\end{proof}
%
%

 %
%
\begin{proof}[Proof of Theorem \ref{thm:limit-weak-un}]
\rev{Suppose $\varphi \neq 0$ and $\| \nabla f \|_\infty> 0$, otherwise the theorem trivially holds.}
By definition \eqref{eq:def-Lalapha-un}, we have that
\[
\langle \varphi, L^{(\alpha)}_{ un} f \rangle_p  = 
\frac{1}{N} \sum_{j=1}^N  H_j,
\]
where
\[
H_j := 
\frac{2 \epsilon^{-1}}{ {m_2 } } 
\int_{\calM}
\hat{K}(x, x_j)
 (f(x_j) - f(x)) \varphi(x) p(x) dV(x),
\]
and $\hat{K}(x,y)$ is defined as in \eqref{eq:def-hatK}.
We define
\[
B( g, f) := 
\frac{2 \epsilon^{-1}}{ {m_2 } } 
\int_{\calM} \int_{\calM}
\hat{K}(x, y)
 (f(y) - f(x)) g(x) p(x) p(y) dV(x) dV(y).
\]
By $\hat{K}(x,y)= \hat{K}(y,x)$, $B(g,f) = B(f,g)$
i.e., $B(g,f)$ is a symmetric bilinear form.
Meanwhile,
\[
B(f,f) = - {\calE}^{(\alpha)}(f,f),
\]
where, by  Proposition \ref{prop:hatE(f,f)}, 
${\calE}^{(\alpha)} (f,f) = - \langle f, \Delta_{p_\alpha} f \rangle_{p_\alpha} + O^{[f,p]}( \epsilon, \, \varepsilon_\rho)$.
Thus,
\begin{align*}
\E H_j 
& =  B(\varphi, f)
= \frac{1}{4} (  B(\varphi+f, \varphi+f) -   B(\varphi-f, \varphi-f)) \\
&= \frac{1}{4} (  - {\calE}^{(\alpha)}(\varphi+f, \varphi+f) +  {\calE}^{(\alpha)}(\varphi-f, \varphi-f)) \\
& =\frac{1}{4}\left( 
 \langle \varphi+f, \Delta_{p_\alpha} ( \varphi+f) \rangle_{p_\alpha}
 -  \langle \varphi- f, \Delta_{p_\alpha} ( \varphi-f) \rangle_{p_\alpha}
 + O^{ [\varphi,f,p] }( \epsilon, \, \varepsilon_\rho)
 \right) \\
 & =
  \langle \varphi, \Delta_{p_\alpha}  f \rangle_{p_\alpha}
 + O^{ [\varphi,f,p] }( \epsilon, \, \varepsilon_\rho).
\end{align*}

To analyze the variance, we compute the boundedness and variance of $H_j$.
To avoid obtaining $\frac{\varepsilon_\rho}{\epsilon}$, we cannot directly apply 
Lemma \ref{lemma:right-operator-3} 
{and}
Lemma \ref{lemma:replace-hatrhoalpha-GR-hatrho}
as in the proof of Theorem \ref{thm:limit-pointwise-un-3}.
By Cauchy--Schwartz inequality and that $ \hat{K}(x,y) \ge 0$, we have
\[
|H_j|
\le 
\frac{2 \epsilon^{-1}}{ {m_2 } } 
\left( \int_{\calM} K(x, x_j)
 (f(x_j) - f(x))^2  p(x) dV(x)  \right)^{1/2}
\left( \int_{\calM} K(x, x_j)
 \varphi(x)^2 p(x) dV(x) \right)^{1/2}.
\]
We define $\textcircled{1}(y)$ and $\textcircled{2}(y)$ as below and claim the following:  For any $y \in {\calM}$, 
\begin{equation}\label{eq:intKphi2p=O(1)}
\textcircled{1}(y) := \int_{\calM} \hat{K}(x, y)  \varphi(x)^2 p(x) dV(x) 
\le c_1 \|\varphi  p^{ \alpha / d }\|_\infty^2  ,
\quad c_1  = \Theta^{[1]}(  1 ),
\end{equation}
\begin{equation}\label{eq:intKdifff2p=O(1)}
\textcircled{2}(y) : = \int_{\calM} \hat{K}(x, y)  ( f(y)-f(x))^2 p(x) dV(x) = O^{ [ f, p] }( \epsilon).
\end{equation}
If true,  then we have
\[
|H_j| 
{=} 
\epsilon^{-1} O^{p,f,\varphi}(\sqrt{\epsilon}) = O^{[p,f,\varphi]}(\epsilon^{-1/2}),
\]
and at the same time, using the upper bound  \eqref{eq:intKphi2p=O(1)}, we have
\begin{align*}
\E H_j^2 
&\le 
\left( \frac{ 4 \epsilon^{-1}}{ {m_2 } }  \right)
c_1 \|\varphi  p^{ \alpha / d }\|_\infty^2
\left(
\frac{1 }{  \epsilon m_2  }  
 \int_{\calM} \int_{\calM} \hat{K}(x, y)
 (f( y) - f(x))^2  p(x) dV(x)  p(y) dV(y) \right) \\
 & = 
 \epsilon^{-1} 
c_1' \|\varphi  p^{ \alpha / d }\|_\infty^2 {\calE}^{(\alpha)}(f,f),
\quad
c_1'   = \Theta^{[1]}( 1).
\end{align*}
Again, 
${\calE}^{(\alpha)}(f,f) =- \langle f, \Delta_{p_\alpha} f \rangle_{p_\alpha} + O^{[f,p]}( \epsilon, \, \varepsilon_\rho)$,
where
$- \langle f, \Delta_{p_\alpha} f \rangle_{p_\alpha} = \int p_\alpha |\nabla f|^2$ \rev{$>0$}.
We then have that 
\[
\text{Var}(H_j) \le 
\E H_j^2 \le 
\bar{\nu}_H 
= \Theta^{[1]} \left( \epsilon^{-1} 
 \|\varphi  p^{ \alpha / d }\|_\infty^2   \int p_\alpha |\nabla f|^2 
 \right).
\]
Thus, when $N$ is large enough, w.p. higher than $1-2N^{-10}$, we have
\[
\left| \frac{1}{N} \sum_{j=1}^{N}  H_j - \E H_j  \right|  
\le  \sqrt{  \frac{40 \log N}{N}  \bar{\nu}_H  }  
= O^{[1]} \left( 
 \|\varphi \|_\infty \| p^{ \alpha / d }\|_\infty
\sqrt{       \frac{\log N}{   N  \epsilon}   \int p_\alpha |\nabla f|^2 }  \right).
\]
It remains to show \eqref{eq:intKphi2p=O(1)}\eqref{eq:intKdifff2p=O(1)} to finish the proof of the theorem.
\vspace{5pt}

\underline{Proof of \eqref{eq:intKphi2p=O(1)}}: 
By definition,
\[
\textcircled{1}(y)
=  \frac{1}{\hat{\rho}( y)^\alpha}
\epsilon^{-\frac{d}{2}}
\int_{\calM}
 k_0 \left(  \frac{\| x - y \|^2}{ \epsilon \hat{\rho}(x)  \hat{\rho}( y ) } \right)
\frac{  \varphi(x)^2  p(x) }{ \hat{\rho}(x)^\alpha } dV(x),
\]
where by that $\sup_{x \in {\calM}}\frac{ | \hat{\rho}(x) - \bar{\rho}(x) | }{ | \bar{\rho}(x)|} < \varepsilon_\rho < 0.1$,
we have
$\hat{\rho}(x) \hat{\rho}(y) \le 1.1^2 \bar{\rho}(x) \bar{\rho}(y)$,
and then by Assumption \eqref{assump:h-selftune}(C2) we have
\[
k_0 \left (  \frac{\| x - y \|^2}{ \epsilon \hat{\rho}(x)  \hat{\rho}( y ) } \right)
\le 
a_0 e^{-a  \frac{\| x - y \|^2}{ \epsilon \hat{\rho}(x)  \hat{\rho}( y ) }}
\le 
a_0 e^{- \frac{a}{1.1^2}  \frac{\| x - y \|^2}{ \epsilon \bar{\rho}(x)  \bar{\rho}( y ) }}
= \bar{k}_1 \left(  \frac{\| x - y \|^2}{ \epsilon \bar{\rho}(x)  \bar{\rho}( y ) } \right),
\]
where $\bar{k}_1 (r) : = a_0 e^{- \frac{a}{1.1^2} r}$ and satisfies Assumption \ref{assump:h-selftune}.
We introduce $G_{\epsilon}^{(\rho)}[ h]$ 
when in the definition \eqref{eq:def-G-R-rho-epsilon}
 the kernel function $k_0$ is replaced with some $h$ that satisfies Assumption \ref{assump:h-selftune}.
That is, $G_{\epsilon}^{(\rho)} = G_{\epsilon}^{(\rho)}[ k_0]$,
and the notation $[ k_0]$  is to declare the kernel function being used.
To proceed, by that 
$\hat{\rho}(x)^{-\alpha} \le \max\{ 0.9^\alpha, 1.1^\alpha \} \bar{\rho}(x)^{-\alpha} := c_{2} \bar{\rho}(x)^{-\alpha}$,
we have that for any $x \in {\calM}$,
\[
\textcircled{1}(y) 
\le  \frac{c_2^2}{\bar{\rho}( y)^\alpha}
\epsilon^{-\frac{d}{2}}
\int_{\calM}
\bar{k}_1 \left(  \frac{\| x - y \|^2}{ \epsilon \bar{\rho}(x)  \bar{\rho}( y ) } \right)
\frac{  \varphi(x)^2  p(x) }{ \bar{\rho}(x)^\alpha } dV(x)
= \frac{c_2^2}{\bar{\rho}( y)^\alpha} \bar{\rho}(y)^{d/2}
G_{\epsilon \bar{\rho}(y)}^{(\bar{\rho})}[ \bar{k}_1] (\frac{\varphi^2 p}{ \bar{\rho}^\alpha }) (y).
\]
By Lemma \ref{lemma:right-operator-3}, 
\[
G_{\epsilon \bar{\rho}(y)}^{(\bar{\rho})}[ \bar{k}_1] (\frac{\varphi^2 p}{ \bar{\rho}^\alpha }) (y)
= m_0[  \bar{k}_1] \bar{\rho}^{d/2} (\frac{\varphi^2 p}{ \bar{\rho}^\alpha })(y) 
+ \bar{\rho}(y) O^{[ p,f,\varphi ]} (\epsilon),
\]
and then
\[
\textcircled{1}(y) 
\le 
c_2^2   m_0[  \bar{k}_1]
( p  \bar{\rho}^{d - 2 \alpha}  \varphi^2 ) (y) 
+ O^{[p,f,\varphi]} (\epsilon) 
= \Theta^{[1]}( ( p  \bar{\rho}^{d - 2 \alpha}  \varphi^2 ) (y) ).
\]
By that  $\bar{\rho}=p^{1/d}$,
we have shown that 
$ \textcircled{1}(y)  \le c_1 p(y)^{ 2\alpha /d } \varphi(y)^2$,
where $c_1   = \Theta^{[1]} (1)$,
and this proves  \eqref{eq:intKphi2p=O(1)}.
\vspace{5pt}

\underline{Proof of \eqref{eq:intKdifff2p=O(1)}}:
Similarly, we have
\begin{align*}
& \textcircled{2}(y) 
\le
\frac{ c_2^2 }{\bar{\rho}( y)^\alpha}
 \epsilon^{-\frac{d}{2}} \int_{\calM} \bar{k}_1 \left(  \frac{\| x - y \|^2}{ \epsilon \bar{\rho}(x)  \bar{\rho}( y ) } \right)
\frac{ ( f(y)-f(x))^2 }{ \bar{\rho}(x)^\alpha }
 p(x) dV(x) \\
& = 
\frac{ c_2^2 }{\bar{\rho}( y)^\alpha}
\bar{\rho}(y)^{d/2}
G_{\epsilon \bar{\rho}(y)}^{(\bar{\rho})}[ \bar{k}_1] ( g) (y),
\quad
g(x) := (f(y) - f(x))^2 ( \frac{  p}{ \bar{\rho}^\alpha })(x)\,.
\end{align*}
{Note that} 
$g \in C^{\infty}({\calM})$ and $g(y) = 0$.
By Lemma \ref{lemma:right-operator-3}, we have 
\[
G_{\epsilon \bar{\rho}(y)}^{(\bar{\rho})}[ \bar{k}_1] ( g) (y)
= 
\epsilon \bar{\rho}(y) 
\frac{m_2 [ \bar{k}_1]}{2}    \Delta g (y)
+ \bar{\rho}(y)^2 O^{[f,p]} (\epsilon^2),
\]
and then
\begin{align*}
& \textcircled{2}(y) 
\le
 \frac{ c_2^2 }{\bar{\rho}( y)^\alpha}
\bar{\rho}(y)^{d/2}
\left( 
\epsilon \bar{\rho}(y) 
\frac{m_2 [ \bar{k}_1]}{2}    \Delta g (y)
+ \bar{\rho}(y)^2 O^{[f,p]} (\epsilon^2)
\right) \\
& = 
 c_2^2  \frac{m_2 [ \bar{k}_1]}{2}   \epsilon
\bar{\rho}(y)^{d/2 - \alpha +1}
   \Delta g (y)
+  O^{[f,p]} (\epsilon^2)
= O^{[f,p]}(\epsilon ),
\end{align*}
which proves \eqref{eq:intKdifff2p=O(1)}.
\end{proof}
%
%

%
%
%
\begin{proof}[Proof of Theorem \ref{thm:limit-pointwise-rw-fix-epsilon}]
The proof combines the approach in the proof of Theorem \ref{thm:limit-pointwise-rw-3}
 and the computation in that of Theorem \ref{thm:limit-form-fixeps}.
Define 
 \[
 F_j := \epsilon^{-d/2} k_0 \left(  \frac{\| x - x_j\|^2}{  \epsilon   } \right)
\frac{f(x_{j})}{\hat{p}(x_j)^{\beta} },
\quad 
G_j : = \epsilon^{-d/2} k_0 \left(  \frac{\| x - x_j\|^2}{  \epsilon   } \right)
\frac{ 1 }{\hat{p}(x_j)^{\beta} },
 \]
 then we have
 \[
 \E F = G_{\epsilon} ( \frac{fp}{ \hat{p}^\beta}) 
 = \int_\calM \epsilon^{-d/2} k_0 \left(  \frac{\| x - y\|^2}{  \epsilon   } \right)
f( y ) \hat{p}(y)^{-\beta}  p(y) dV(y).
 \]
 By \eqref{eq:hatp-p-beta-bound} and the constant $c_\beta$ defined as therein,
\rev{ again defining $q = p^{1-\beta}$, we have}
 \begin{align*}
&  \left| G_{\epsilon} ( \frac{fp}{ \hat{p}^\beta}) -   G_{\epsilon} ( f {p}^{1-\beta}) \right|
 \le 
 \| f \|_\infty
\int \epsilon^{-d/2} k_0 \left(  \frac{\| x - y\|^2}{  \epsilon   } \right) p(y)
| \hat{p}(y)^{-\beta}  - {p}(y)^{-\beta} | dV(y) \\
& \le 
c_\beta \rev{ | \beta| } \varepsilon_p \| f \|_\infty
\int \epsilon^{-d/2} k_0 \left(  \frac{\| x - y\|^2}{  \epsilon   } \right) 
 p(y)^{1-\beta}
dV(y)  
= O^{[f, p, \beta ]}( \beta \varepsilon_p),
 \end{align*}
 where 
 \rev{
 we apply Lemma \ref{lemma:h-integral-diffusionmap} to obtain that 
 $\int \epsilon^{-d/2} k_0 \left(  \frac{\| x - y\|^2}{  \epsilon   } \right) 
 q(y) dV(y)  \le \|q\|_\infty O(1) $, and absorb 
 the constants $\|q\|_\infty$, $\|f\|_\infty$  and $c_\beta$ in to the notation $O^{[f,p,\beta]}(\cdot)$.
In the rest of the proof, we omit the dependence on $\beta$ in the superscript and write it as $O^{[f,p]}( \beta \varepsilon_p)$,
while we keep $\beta$ in $(\cdot)$ to indicate that the term vanishes when $\beta = 0$. 
 }
Then, using Lemma \ref{lemma:h-integral-diffusionmap} to expand $G_{\epsilon} ( fp^{1-\beta} )$, we have that
\[
\E F 
=  G_{\epsilon} ( fp^{1-\beta} )+ O^{[f, p ]}( \beta \varepsilon_p)
= m_0 fp^{1-\beta} + \epsilon \frac{m_2}{2} ( \omega  fp^{1-\beta} + \Delta ( fp^{1-\beta} ) ) + O^{[ f, p] } (\epsilon^2, \, \beta \varepsilon_p).
\]
Taking $f=1$ then gives
\[
\E G = m_0 p^{1-\beta} + \epsilon \frac{m_2}{2} ( \omega  p^{1-\beta} + \Delta ( p^{1-\beta} ) ) + O^{ [p ]} (\epsilon^2, \, \beta \varepsilon_p) 
= m_0 p^{1-\beta} + O^{[p]} (\epsilon, \, \beta \varepsilon_p).
\]
We can then compute and bound the bias error as
 \begin{align*}
&  \frac{1}{\epsilon \tilde{m}} \frac{\E F - f(x) \E G}{ \E G}
 = 
  \frac{1}{\epsilon \tilde{m}} \frac{  \epsilon \frac{m_2}{2} (  \Delta ( fp^{1-\beta} ) - f \Delta ( p^{1-\beta} )) 
  + O^{ [f, p] } (\epsilon^2, \, \beta \varepsilon_p)}{m_0 p^{1-\beta} + O^{[p]} (\epsilon, \, \beta \varepsilon_p)  } \\
&
= \Delta f + 2 \frac{\nabla p^{1-\beta}}{p^{1-\beta}} \cdot \nabla f + O^{ [f, p] } (\epsilon, \, \beta \frac{ \varepsilon_p}{\epsilon} ),
 \end{align*}
 \rev{
 which, similarly as in \eqref{eq:1overEG-1},
 holds when $N$ exceeds a threshold depending on $(\calM, p, \beta)$.}
 The variance analysis follows a similar computation as before, 
 specifically the computation of the quantities of 
 $\E F^2$, $\E G^2$ and  $\E (F G)$.
First, observe that 
 \[
 \E G^2 =  \epsilon^{-d/2} \{ m_0[k_0^2] p^{1- 2 \beta}  + O^{[p]}( \epsilon, \, \beta \varepsilon_p) \},
 \]
and then, \rev{using that $p^{1- 2\beta }(x) \ge \Theta^{[p,\beta]}(1) > 0$ for all $x$,}
one verifies that 
w.p. higher than  $ 1- 2N^{-10}$,
 \[
 \frac{1}{N} \sum_j G_j = \E G + O^{[p]} \left( \sqrt{ \frac{\log N }{ N \epsilon^{d/2} } } \right).
 \]
 Define $ Y_j := F_j \E G - G_j \E F$, then $\E Y = 0$.
Following the same method as before, one verifies that, with $m_2' := m_2 [k_0^2]$,
\[
\E Y^2  = \E F^2 (\E G)^2 
+ E G^2 (\E F)^2 - 2 \E (FG) \E F \E G
= \epsilon^{-d/2 + 1} \left\{  m_2' m_0^2  p^{3 -4 \beta}  |\nabla f|^2
+ O^{[f,p]}( \epsilon, \, \beta \frac{\varepsilon_p}{ \epsilon}) \right\} .
\]
\rev{Similarly as in the proof of Theorem \ref{thm:limit-pointwise-rw-3}, }
this gives that \rev{(assuming $ \|\nabla f\|_\infty >0$ otherwise the theorem holds trivially) when $N$ exceeds a threshold determined by $(\calM, p, f, \beta)$,}
w.p. higher than  $ 1- 2 N^{-10}$,
\[
\left| \frac{1}{N} \sum_{j=1}^{N}  Y_j  \right|  
= O \left( 
\rev{ \|\nabla f\|_\infty  }
 p^{3/2 -2 \beta}    
\epsilon^{-d/4 + 1/2}   \sqrt{       \frac{\log N}{N }   }  \right).
\]
\rev{
One can also replace $ \|\nabla f \|_\infty  $ with $ |\nabla f(x)| $ when strictly positive, or with +0.1, as in Remark \ref{rk:nablaf(x)=0}.  
 }

Putting together, we have that 
\begin{align*}
\frac{1}{ \epsilon \tilde{m}}
\left| \frac{ \frac{1}{N} \sum_{j=1}^{N}   F_j }
{ \frac{1}{N} \sum_{j=1}^{N}  G_j }
 - \frac{ \E F }{ \E G } \right|
&=  \frac{1}{ \epsilon \tilde{m} }
\frac{ | \frac{1}{N} \sum_{j=1}^{N}   Y_j | }
{ \E G \cdot  \frac{1}{N} \sum_{j=1}^{N}  G_j  }
\le
\frac{ O^{[1]} \left(  
\rev{ \|\nabla f\|_\infty  }
 p^{3/2 -2 \beta}    \epsilon^{-d/4 + 1/2}   \sqrt{       \frac{\log N}{N }   }
  \right) }
{ \epsilon ( m_0 p^{1-\beta} )^2 } \\
&
= O^{[1]} \left(  \rev{ \|\nabla f\|_\infty  }
 p^{-1/2} 
\epsilon^{-d/4-1/2} 
\sqrt{       \frac{\log N}{N }   }  \right).
\end{align*}
Combining the bias and variance error bounds proves the theorem.
\end{proof}

\section{Discussion}

Apart from what has been mentioned in the text,
the following {lists} a few possible future directions.
First, we use a stand-alone $Y$ to estimate the bandwidth function $\hat{\rho}$ for theoretical convenience.
Extending the result to the case where  $\hat{\rho}$ is computed from $X$ itself can be of 
{both theoretical and practical} interest,
especially when number of data samples are not large. 
Second,
one can continue to derive the spectral convergence, 
namely the convergence of eigenvalues and eigenvectors of the self-tuned graph Laplacian matrix to the eigenvalues and eigenfunctions of the associated limiting operators. For the purpose of statistical inference, it would be important to provide a convergence rate.
Our graph Dirichlet form convergence rate is better than the operator point-wise convergence rate by a factor of $\epsilon^{-1/2}$,
and since the Dirichlet form convergence largely implies the spectral convergence in the $L^2$ norm \cite{burago2013graph},
this suggests that the spectral convergence rate may also be better than the ponitwise convergence rate for the graph Laplacian operator  in a proper sense.
This theoretical speculation is supported by our empirical results. A uniform spectral convergence would also be important for various practical applications.
At last,  
the random-walk graph Laplacian in our experiments sometimes shows a better performance
{compared with} the unnormalized graph Laplacian,
especially in terms of eigenvector convergence. 
A theoretical justification then is needed, {which is}
possibly similar to that in \cite{von2008consistency}, 
and will be based on the spectral convergence result if can be established.

\section*{Acknowledgement}
The project was initiated as a {\it DoMath} Project titled ``Local affinity construction for dimension reduction methods'' 
for undergraduate summer research, 
and the authors thank the Duke Mathematics Department for organizing and hosting  the program. 
In 2018 summer,
Tyler Lian, Inchan Hwang, Joseph Saldutti and Ajay Dheeraj participated the project, 
 and contributed to initial experiments 
of the adaptive bandwidth kernel and the analysis of the proposed self-tuned kernel Laplacian when the bandwidth function is known. 
The authors thank Dr. Didong Li, who served as a graduate student mentor of the 2018 {\it DoMath} project, 
for guiding the undergraduate team work 
as well as  helpful discussion on NNDE and estimated bandwidth function.

\small
\bibliographystyle{plain}
\bibliography{kernel}

\begin{thebibliography}{10}

\bibitem{mnist-download}
The {MNIST} ({M}odified {N}ational {I}nstitute of {S}tandards and {T}echnology)
  database wepage.
\newblock \url{ http://yann.lecun.com/exdb/mnist/}.

\bibitem{balasubramanian2002isomap}
Mukund Balasubramanian and Eric~L Schwartz.
\newblock The isomap algorithm and topological stability.
\newblock {\em Science}, 295(5552):7--7, 2002.

\bibitem{belkin2003laplacian}
Mikhail Belkin and Partha Niyogi.
\newblock Laplacian eigenmaps for dimensionality reduction and data
  representation.
\newblock {\em Neural computation}, 15(6):1373--1396, 2003.

\bibitem{belkin2007convergence}
Mikhail Belkin and Partha Niyogi.
\newblock Convergence of laplacian eigenmaps.
\newblock In {\em Advances in Neural Information Processing Systems}, pages
  129--136, 2007.

\bibitem{bermanis2016measure}
Amit Bermanis, Moshe Salhov, Guy Wolf, and Amir Averbuch.
\newblock Measure-based diffusion grid construction and high-dimensional data
  discretization.
\newblock {\em Applied and Computational Harmonic Analysis}, 40(2):207--228,
  2016.

\bibitem{berry2016variable}
Tyrus Berry and John Harlim.
\newblock Variable bandwidth diffusion kernels.
\newblock {\em Applied and Computational Harmonic Analysis}, 40(1):68--96,
  2016.

\bibitem{berry2016local}
Tyrus Berry and Timothy Sauer.
\newblock Local kernels and the geometric structure of data.
\newblock {\em Applied and Computational Harmonic Analysis}, 40(3):439--469,
  2016.

\bibitem{borg2003modern}
Ingwer Borg and Patrick Groenen.
\newblock Modern multidimensional scaling: Theory and applications.
\newblock {\em Journal of Educational Measurement}, 40(3):277--280, 2003.

\bibitem{burago2013graph}
Dmitri Burago, Sergei Ivanov, and Yaroslav Kurylev.
\newblock A graph discretization of the laplace-beltrami operator.
\newblock {\em arXiv preprint arXiv:1301.2222}, 2013.

\bibitem{calder2019improved}
Jeff Calder and Nicolas~Garcia Trillos.
\newblock Improved spectral convergence rates for graph laplacians on
  epsilon-graphs and k-nn graphs.
\newblock {\em arXiv preprint arXiv:1910.13476}, 2019.

\bibitem{cheng2020two}
Xiuyuan Cheng, Alexander Cloninger, and Ronald~R Coifman.
\newblock Two-sample statistics based on anisotropic kernels.
\newblock {\em Information and Inference: A Journal of the IMA}, 9(3):677--719,
  2020.

\bibitem{coifman2006diffusion}
Ronald~R Coifman and St{\'e}phane Lafon.
\newblock Diffusion maps.
\newblock {\em Applied and computational harmonic analysis}, 21(1):5--30, 2006.

\bibitem{coifman2005geometric}
Ronald~R Coifman, Stephane Lafon, Ann~B Lee, Mauro Maggioni, Boaz Nadler,
  Frederick Warner, and Steven~W Zucker.
\newblock Geometric diffusions as a tool for harmonic analysis and structure
  definition of data: Diffusion maps.
\newblock {\em Proceedings of the national academy of sciences},
  102(21):7426--7431, 2005.

\bibitem{crosskey2017atlas}
Miles Crosskey and Mauro Maggioni.
\newblock Atlas: a geometric approach to learning high-dimensional stochastic
  systems near manifolds.
\newblock {\em Multiscale Modeling \& Simulation}, 15(1):110--156, 2017.

\bibitem{devroye1977strong}
Luc~P Devroye and Terry~J Wagner.
\newblock The strong uniform consistency of nearest neighbor density estimates.
\newblock {\em The Annals of Statistics}, pages 536--540, 1977.

\bibitem{dunson2019diffusion}
David~B Dunson, Hau-Tieng Wu, and Nan Wu.
\newblock Diffusion based gaussian process regression via heat kernel
  reconstruction.
\newblock {\em arXiv preprint arXiv:1912.05680}, 2019.

\bibitem{eckhoff2005precise}
Michael Eckhoff et~al.
\newblock Precise asymptotics of small eigenvalues of reversible diffusions in
  the metastable regime.
\newblock {\em The Annals of Probability}, 33(1):244--299, 2005.

\bibitem{eldridge2017unperturbed}
Justin Eldridge, Mikhail Belkin, and Yusu Wang.
\newblock Unperturbed: spectral analysis beyond davis-kahan.
\newblock {\em arXiv preprint arXiv:1706.06516}, 2017.

\bibitem{gong2006neural}
Haifeng Gong, Chunhong Pan, Qing Yang, Hanqing Lu, and Songde Ma.
\newblock Neural network modeling of spectral embedding.
\newblock In {\em BMVC}, pages 227--236, 2006.

\bibitem{hall1983near}
Peter Hall.
\newblock On near neighbour estimates of a multivariate density.
\newblock {\em Journal of Multivariate Analysis}, 13(1):24--39, 1983.

\bibitem{hall1995improved}
Peter Hall, Tien~Chung Hu, and James~Stephen Marron.
\newblock Improved variable window kernel estimates of probability densities.
\newblock {\em The Annals of Statistics}, pages 1--10, 1995.

\bibitem{hein2006uniform}
Matthias Hein.
\newblock Uniform convergence of adaptive graph-based regularization.
\newblock In {\em International Conference on Computational Learning Theory},
  pages 50--64. Springer, 2006.

\bibitem{hein2005graphs}
Matthias Hein, Jean-Yves Audibert, and Ulrike Von~Luxburg.
\newblock From graphs to manifolds--weak and strong pointwise consistency of
  graph laplacians.
\newblock In {\em International Conference on Computational Learning Theory},
  pages 470--485. Springer, 2005.

\bibitem{hinton2003stochastic}
Geoffrey~E Hinton and Sam~T Roweis.
\newblock Stochastic neighbor embedding.
\newblock In {\em Advances in neural information processing systems}, pages
  857--864, 2003.

\bibitem{li2018deeper}
Qimai Li, Zhichao Han, and Xiao-Ming Wu.
\newblock Deeper insights into graph convolutional networks for semi-supervised
  learning.
\newblock {\em arXiv preprint arXiv:1801.07606}, 2018.

\bibitem{li2019label}
Qimai Li, Xiao-Ming Wu, Han Liu, Xiaotong Zhang, and Zhichao Guan.
\newblock Label efficient semi-supervised learning via graph filtering.
\newblock In {\em Proceedings of the IEEE Conference on Computer Vision and
  Pattern Recognition}, pages 9582--9591, 2019.

\bibitem{little2009multiscale}
Anna~V Little, Yoon-Mo Jung, and Mauro Maggioni.
\newblock Multiscale estimation of intrinsic dimensionality of data sets.
\newblock In {\em 2009 AAAI Fall Symposium Series}, 2009.

\bibitem{loftsgaarden1965nonparametric}
Don~O Loftsgaarden, Charles~P Quesenberry, et~al.
\newblock A nonparametric estimate of a multivariate density function.
\newblock {\em The Annals of Mathematical Statistics}, 36(3):1049--1051, 1965.

\bibitem{long2017landmark}
Andrew~W Long and Andrew~L Ferguson.
\newblock Landmark diffusion maps (l-dmaps): Accelerated manifold learning
  out-of-sample extension.
\newblock {\em Applied and Computational Harmonic Analysis}, 2017.

\bibitem{maaten2008visualizing}
Laurens van~der Maaten and Geoffrey Hinton.
\newblock Visualizing data using t-sne.
\newblock {\em Journal of machine learning research}, 9(Nov):2579--2605, 2008.

\bibitem{mack1979multivariate}
YP~Mack and Murray Rosenblatt.
\newblock Multivariate k-nearest neighbor density estimates.
\newblock {\em Journal of Multivariate Analysis}, 9(1):1--15, 1979.

\bibitem{marshall2019manifold}
Nicholas~F Marshall and Ronald~R Coifman.
\newblock Manifold learning with bi-stochastic kernels.
\newblock {\em IMA Journal of Applied Mathematics}, 84(3):455--482, 2019.

\bibitem{masuda2017random}
Naoki Masuda, Mason~A Porter, and Renaud Lambiotte.
\newblock Random walks and diffusion on networks.
\newblock {\em Physics reports}, 716:1--58, 2017.

\bibitem{matkowsky1981eigenvalues}
BJ~Matkowsky and Z~Schuss.
\newblock Eigenvalues of the fokker--planck operator and the approach to
  equilibrium for diffusions in potential fields.
\newblock {\em SIAM Journal on Applied Mathematics}, 40(2):242--254, 1981.

\bibitem{mishne2017diffusion}
Gal Mishne, Uri Shaham, Alexander Cloninger, and Israel Cohen.
\newblock Diffusion nets.
\newblock {\em Applied and Computational Harmonic Analysis}, 2017.

\bibitem{nadler2006diffusion}
Boaz Nadler, Stephane Lafon, Ioannis Kevrekidis, and Ronald~R Coifman.
\newblock Diffusion maps, spectral clustering and eigenfunctions of
  fokker-planck operators.
\newblock In {\em Advances in neural information processing systems}, pages
  955--962, 2006.

\bibitem{nadler2009semi}
Boaz Nadler, Nathan Srebro, and Xueyuan Zhou.
\newblock Semi-supervised learning with the graph laplacian: The limit of
  infinite unlabelled data.
\newblock {\em Advances in neural information processing systems},
  22:1330--1338, 2009.

\bibitem{perrault2017improved}
Dominique~C Perrault-Joncas, Marina Meila, and James McQueen.
\newblock Improved graph laplacian via geometric consistency.
\newblock In {\em Proceedings of the 31st International Conference on Neural
  Information Processing Systems}, pages 4460--4469, 2017.

\bibitem{rohrdanz2011determination}
Mary~A Rohrdanz, Wenwei Zheng, Mauro Maggioni, and Cecilia Clementi.
\newblock Determination of reaction coordinates via locally scaled diffusion
  map.
\newblock {\em The Journal of chemical physics}, 134(12):03B624, 2011.

\bibitem{scholkopf2018learning}
Bernhard Scholkopf and Alexander~J Smola.
\newblock {\em Learning with kernels: support vector machines, regularization,
  optimization, and beyond}.
\newblock Adaptive Computation and Machine Learning series, 2018.

\bibitem{shaham2018spectralnet}
Uri Shaham, Kelly Stanton, Henry Li, Boaz Nadler, Ronen Basri, and Yuval
  Kluger.
\newblock Spectralnet: Spectral clustering using deep neural networks.
\newblock {\em arXiv preprint arXiv:1801.01587}, 2018.

\bibitem{shen2020scalability}
Chao Shen and Hau-Tieng Wu.
\newblock Scalability and robustness of spectral embedding: landmark diffusion
  is all you need.
\newblock {\em arXiv preprint arXiv:2001.00801}, 2020.

\bibitem{singer2006graph}
Amit Singer.
\newblock From graph to manifold laplacian: The convergence rate.
\newblock {\em Applied and Computational Harmonic Analysis}, 21(1):128--134,
  2006.

\bibitem{singer2009detecting}
Amit Singer, Radek Erban, Ioannis~G Kevrekidis, and Ronald~R Coifman.
\newblock Detecting intrinsic slow variables in stochastic dynamical systems by
  anisotropic diffusion maps.
\newblock {\em Proceedings of the National Academy of Sciences},
  106(38):16090--16095, 2009.

\bibitem{singer2016spectral}
Amit Singer and Hau-Tieng Wu.
\newblock Spectral convergence of the connection laplacian from random samples.
\newblock {\em Information and Inference: A Journal of the IMA}, 6(1):58--123,
  2016.

\bibitem{slepcev2019analysis}
Dejan Slepcev and Matthew Thorpe.
\newblock Analysis of p-laplacian regularization in semisupervised learning.
\newblock {\em SIAM Journal on Mathematical Analysis}, 51(3):2085--2120, 2019.

\bibitem{talmon2013diffusion}
Ronen Talmon, Israel Cohen, Sharon Gannot, and Ronald~R Coifman.
\newblock Diffusion maps for signal processing: A deeper look at
  manifold-learning techniques based on kernels and graphs.
\newblock {\em IEEE signal processing magazine}, 30(4):75--86, 2013.

\bibitem{talmon2013empirical}
Ronen Talmon and Ronald~R Coifman.
\newblock Empirical intrinsic geometry for nonlinear modeling and time series
  filtering.
\newblock {\em Proceedings of the National Academy of Sciences},
  110(31):12535--12540, 2013.

\bibitem{terrell1992variable}
George~R Terrell and David~W Scott.
\newblock Variable kernel density estimation.
\newblock {\em The Annals of Statistics}, pages 1236--1265, 1992.

\bibitem{ting2011analysis}
Daniel Ting, Ling Huang, and Michael Jordan.
\newblock An analysis of the convergence of graph laplacians.
\newblock {\em arXiv preprint arXiv:1101.5435}, 2011.

\bibitem{trillos2020error}
Nicol{\'a}s~Garc{\'\i}a Trillos, Moritz Gerlach, Matthias Hein, and Dejan
  Slep{\v{c}}ev.
\newblock Error estimates for spectral convergence of the graph laplacian on
  random geometric graphs toward the laplace--beltrami operator.
\newblock {\em Foundations of Computational Mathematics}, 20(4):827--887, 2020.

\bibitem{van2009dimensionality}
Laurens Van Der~Maaten, Eric Postma, and Jaap Van~den Herik.
\newblock Dimensionality reduction: a comparative review.
\newblock {\em J Mach Learn Res}, 10(66-71):13, 2009.

\bibitem{vershynin2018high}
Roman Vershynin.
\newblock {\em High-dimensional probability: An introduction with applications
  in data science}, volume~47.
\newblock Cambridge university press, 2018.

\bibitem{von2008consistency}
Ulrike {Von Luxburg}, Mikhail Belkin, and Olivier Bousquet.
\newblock Consistency of spectral clustering.
\newblock {\em The Annals of Statistics}, pages 555--586, 2008.

\bibitem{wang2020novel}
Shen-Chih Wang, Hau-Tieng Wu, Po-Hsun Huang, Cheng-Hsi Chang, Chien-Kun Ting,
  and Yu-Ting Lin.
\newblock Novel imaging revealing inner dynamics for cardiovascular waveform
  analysis via unsupervised manifold learning.
\newblock {\em Anesthesia \& Analgesia}, 130(5):1244--1254, 2020.

\bibitem{wang2015spectral}
Xu~Wang.
\newblock Spectral convergence rate of graph laplacian.
\newblock {\em arXiv preprint arXiv:1510.08110}, 2015.

\bibitem{wormell2020spectral}
Caroline~L Wormell and Sebastian Reich.
\newblock Spectral convergence of diffusion maps: improved error bounds and an
  alternative normalisation.
\newblock {\em arXiv preprint arXiv:2006.02037}, 2020.

\bibitem{zelnik2005self}
Lihi Zelnik-Manor and Pietro Perona.
\newblock Self-tuning spectral clustering.
\newblock In {\em Advances in neural information processing systems}, pages
  1601--1608, 2005.

\end{thebibliography}

\normalsize

\appendix

\setcounter{figure}{0} \renewcommand{\thefigure}{A.\arabic{figure}}
\setcounter{table}{0} \renewcommand{\thetable}{A.\arabic{table}}
\setcounter{equation}{0} \renewcommand{\theequation}{A.\arabic{equation}}
\setcounter{remark}{0} \renewcommand{\theremark}{A.\arabic{remark}}

\section{Technical Lemmas of Differential Geometry}\label{app:diffgeo}

\subsection{Local Charting on ${\calM}$}

The following lemma is about manifold local charting, where we have metric and volume comparisons between the manifold and the 
ambient Euclidean space $\R^D$.  
\begin{lemma}[Lemmas 6 and 7 in \cite{coifman2006diffusion}]
\label{lemma:M-delta0}
Suppose ${\calM}$ is a $d$-dimensional $C^{3}$, boundaryless (thus {closed}) manifold
that is 
isometrically embedded in $\mathbb{R}^{D}$.
Then there exists some $\delta_0({\calM}) > 0$ such that 
for any $\delta < \delta_0$ and any $x \in {\calM}$,

(i) ${\calM} \cap B_{\delta}(x)$ is isomorphic to a ball in $\R^d$.

(ii) On the local chart at each $x$, let $\phi_x$ be the orthogonal projection to the tangent plane $T_x {\calM}$
embedded as an affine subspace of $\mathbb{R}^D$,
and call $u(y) := \phi_x(y)$ the tangent coordinate of $y$, then 
\begin{equation}\label{eq:local-metric-2}
0.9 \| y-x \|_{\R^D} < \| u(y)  \|_{\R^d} <  1.1 \| y- x \|_{\R^D},
\quad 
0.9 < \left| \det\left(\frac{dy}{du}\right) \right| < 1.1,
\quad \forall y  \in {\calM} \cap B_{\delta}(x).
\end{equation}

(iii)  Let $d_{\calM}$ denote the manifold geodesic distance, then 
\begin{equation}\label{eq:metric-geo-2}
 \| x-y  \|_{\R^D} \le d_{\calM}(x, y) \le 1.1 \| x-y  \|_{\R^D}, \quad \forall y\in B_{\delta}(x) \cap {\calM}.
\end{equation}
\end{lemma}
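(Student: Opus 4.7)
The plan is to exploit the uniform geometric regularity afforded by compactness plus the $C^3$ assumption on the embedding, and to translate standard tubular neighborhood / reach estimates into the explicit constants $0.9$ and $1.1$ that appear in the conclusion. First I would note that a compact $C^3$ embedded submanifold of $\mathbb{R}^D$ has positive reach $\tau_\mathcal{M} > 0$, equivalently that the second fundamental form $II$ of $\mathcal{M}$ in $\mathbb{R}^D$ is uniformly bounded, $\|II_x\| \le \kappa$, with an additional bound on the third-order derivatives of the embedding coming from $C^3$ regularity. I would then fix $\delta_0 = \delta_0(\mathcal{M})$ as the minimum of several thresholds involving $\tau_\mathcal{M}$, $\kappa$, and the $C^3$-seminorm bound, chosen so that all three conclusions simultaneously hold with the prescribed numerical constants.

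For (i), I would parametrize $\mathcal{M}\cap B_\delta(x)$ as a graph over the tangent plane $T_x\mathcal{M}$ via the orthogonal projection $\phi_x(y) = \mathrm{proj}_{T_x\mathcal{M}}(y-x)$. The reach condition makes $\phi_x$ injective on $\mathcal{M}\cap B_\delta(x)$ for $\delta < \delta_0 \ll \tau_\mathcal{M}$, and the inverse function theorem provides a smooth local inverse, identifying $\mathcal{M}\cap B_\delta(x)$ diffeomorphically with an open neighborhood of $0$ in $T_x\mathcal{M}\cong\mathbb{R}^d$; one then checks this neighborhood contains a Euclidean ball.

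For (ii), I would use the graph expansion $y(u) = x + u + \tfrac{1}{2} II_x(u,u) + R(u)$, with $II_x(u,u)\perp T_x\mathcal{M}$ and $\|R(u)\| \le C |u|^3$ from the $C^3$ bound. Squaring gives $\|y-x\|^2 = |u|^2 + \tfrac{1}{4}\|II_x(u,u)\|^2 + O(|u|^5)$, so $\|y-x\|^2/|u|^2 = 1 + O(\kappa^2 |u|^2) + O(|u|^3)$, and shrinking $\delta_0$ brings this ratio into $[0.9^2, 1.1^2]$, which yields the first half of \eqref{eq:local-metric-2}. Differentiating the graph expansion with respect to $u$ in an orthonormal basis of $T_x\mathcal{M}$ gives $dy/du = I + $ (linear in $u$) + $O(|u|^2)$, hence $|\det(dy/du)| = 1 + O(|u|) + O(|u|^2)$, which also falls in $(0.9,1.1)$ for $\delta < \delta_0$.

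For (iii), the lower bound $\|x-y\|_{\mathbb{R}^D} \le d_\mathcal{M}(x,y)$ is immediate since the Euclidean chord is shorter than any connecting curve. For the upper bound, I would lift the straight segment $t\mapsto t\,\phi_x(y)$ in $T_x\mathcal{M}$ via the chart to a curve $\gamma(t) := y(t\,\phi_x(y))$ in $\mathcal{M}$, bound its length by $\int_0^1 \|(dy/du)(t\,\phi_x(y))\|\,dt \cdot |\phi_x(y)|$, and combine with the Jacobian-type bound from (ii) together with $|\phi_x(y)| \le \|y-x\|_{\mathbb{R}^D}$ to get $d_\mathcal{M}(x,y) \le 1.1\|x-y\|_{\mathbb{R}^D}$ after further shrinking $\delta_0$ if needed. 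The main obstacle is not any single step — each is standard — but rather the bookkeeping of making one $\delta_0(\mathcal{M})$ serve all three items with the explicit $0.9/1.1$ constants; this reduces to taking the minimum of finitely many thresholds, each expressed in terms of $\tau_\mathcal{M}$, $\kappa$, and the $C^3$-bound on the embedding, all of which are finite by compactness. Since the statement is explicitly attributed to Lemmas 6 and 7 of \cite{coifman2006diffusion}, the cleanest route is to cite their argument and verify that the constants are as claimed.
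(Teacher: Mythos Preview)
Your proposal is correct and follows essentially the same route as the paper: both arguments use the tangent-plane projection $\phi_x$ as a local chart, Taylor-expand the embedding to compare $\|y-x\|^2$ with $|u|^2$ and to control the Jacobian, and then invoke compactness to pass from a pointwise threshold $\delta_x$ to a uniform $\delta_0(\mathcal{M})$. The paper's proof is terser---it simply cites Lemmas 6 and 7 of \cite{coifman2006diffusion} for the expansions $\|y-x\|^2 = |u|^2 + O(|u|^4)$ and $|\det(dy/du)| = 1 + b_x^{(v)}(u) + c_x^{(v)}(u) + O(|u|^4)$ (with $b,c$ homogeneous of degrees $2,3$), and then takes $\min_x \delta_{x,2}$ by compactness---whereas you unpack those expansions via the second fundamental form and reach; but the underlying argument is the same. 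One minor sharpening: in your Jacobian step the linear-in-$u$ part of $dy/du$ lies in the normal space, so its contribution to the Gram determinant first appears at order $|u|^2$, giving $|\det(dy/du)| = 1 + O(|u|^2)$ rather than $1 + O(|u|)$; this matches the paper's statement and of course still yields the $(0.9,1.1)$ bound.
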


\begin{proof}
At every point $x$, (i) holds when $\delta < \delta_{x,1}$ for some $\delta_{x,1}>0$,
and then the local chart can be defined
where the  normal coordinates $s$ ($\exp_x (s) = y$)
and the tangent coordinates $u$  match up to $O( \| u\|^3)$
(Lemma 6 \cite{coifman2006diffusion}),
the squared metric of $\|y - x\|_{\R^D}^2$ and $\|u\|^2$ match up to $O( \|u\|^4)$,
and the Jacobian's  match via
\begin{equation}\label{eq:local-vol-1}
\left| \det\left(\frac{dy}{du}\right) \right| = 1 + b_{x}^{(v)}(u) + c_{x}^{(v)}(u) + O(\| u \|^4),
\end{equation}
where $b_x^{(v)}$ ($c_x^{(v)}$) is a homogeneous polynomial of degree 2 (3) of the variable $u = (u_1, \cdots, u_d)$
(Lemma 7 \cite{coifman2006diffusion}).
Thus \eqref{eq:local-metric-2}\eqref{eq:metric-geo-2}
hold on ${\calM} \cap B_{\delta}(x)$ when $\delta < \delta_{x,2}$ for some $0<\delta_{x,2} \le \delta_{x,1} $.
 The $\min_{x \in {\calM}} \delta_{x,2}$ exists due to the smoothness and compactness of ${\calM}$,
 and the minimum can be used as $\delta_0( {\calM})$.
\end{proof}

\subsection{Covering Number of ${\calM}$}

Introduce the definitions:

\begin{definition}\label{def:covering-number}
Let $(X,d)$ be a metric space and $Y\subset X$. 
Let  $\epsilon>0$, then $P \subset X$ is called a $\epsilon$-net of $Y$
 if $\forall x \in Y$, $\exists x_0\in P$, s.t. $d(x,x_0)  \le \epsilon$. 
The covering number of $Y$, denoted by $\mathcal{N}(Y,d,\epsilon)$, is defined to be the smallest cardinality of an $\epsilon$-net of $Y$. 
\end{definition}

\begin{definition}\label{def:packing-number}
	Let $(X,d)$ be a metric space. Let  $\epsilon>0$, 
	then $ P \subset X$ is said to be $\epsilon$-separated if  $d(x,y) > \epsilon$ for all distinct points $x,y \in P$. The packing number of $Y\subset X$ denoted by $\mathcal{P}(Y,d,\epsilon)$ is defined to be the largest cardinality of an $\epsilon$-separated subset of $Y$.
\end{definition}

The following lemma bounds the covering number of ${\calM}$ using Euclidean balls in $\R^D$, which has been established in literature.
We reproduce under our setting for completeness.

\begin{lemma}\label{lemma:covering-2}
For any $r < \delta_0$, 
where $\delta_0$ is defined in Lemma  \ref{lemma:M-delta0}, 
${\cal N}( {\calM},  \| \cdot \|_{\R^D}, r ) \le  V({\calM}) {r^{-d}}$,
where $V(\calM)$ equals an $O_d(1)$ constant times the Riemannian volume of ${\calM}$.
\end{lemma}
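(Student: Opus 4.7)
The plan is to use the classical relationship between covering numbers and packing numbers, combined with a volume comparison argument that exploits the local-chart control established in Lemma \ref{lemma:M-delta0}.

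First I would use the standard inequality ${\cal N}(\calM, \|\cdot\|_{\R^D}, r) \le {\cal P}(\calM, \|\cdot\|_{\R^D}, r)$, which holds because any maximal $r$-separated subset $\{x_1,\ldots,x_n\} \subset \calM$ is automatically an $r$-net (if some $y \in \calM$ had Euclidean distance $> r$ to every $x_i$, it could be adjoined to the packing, contradicting maximality). So it suffices to bound the packing number.

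Next, for an $r$-separated set $\{x_1,\ldots,x_n\}$, the open balls $B_{r/2}(x_i) \subset \R^D$ are pairwise disjoint, and hence so are the sets $B_{r/2}(x_i) \cap \calM$. The key step is to produce a uniform lower bound of the form $\mathrm{Vol}(B_{r/2}(x_i) \cap \calM) \ge c_d\, r^d$ for some constant $c_d > 0$ depending only on $d$. This is where Lemma \ref{lemma:M-delta0} enters: since $r < \delta_0$, we have $r/2 < \delta_0$, so around each $x_i$ we may pass to the tangent coordinate $u = \phi_{x_i}(y)$. By \eqref{eq:local-metric-2}, the Euclidean ball $B_{r/2}(x_i) \cap \calM$ contains the preimage under $\phi_{x_i}$ of the $d$-dimensional Euclidean ball of radius $0.9 \cdot r/2$ in $T_{x_i}\calM$, and the Jacobian comparison $|\det(dy/du)| > 0.9$ converts $d$-dimensional Lebesgue volume in the chart into Riemannian volume on $\calM$. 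Combining these yields $\mathrm{Vol}(B_{r/2}(x_i) \cap \calM) \ge 0.9 \cdot \omega_d (0.9\, r/2)^d = c_d r^d$, where $\omega_d$ is the volume of the unit $d$-ball.

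Summing the disjoint lower bounds gives $n \cdot c_d\, r^d \le \mathrm{Vol}(\calM)$, i.e.\ $n \le (\mathrm{Vol}(\calM)/c_d) r^{-d}$, which is the claimed bound with $V(\calM) := \mathrm{Vol}(\calM)/c_d$, an $O_d(1)$ multiple of $\mathrm{Vol}(\calM)$. The only mildly subtle point, and the one I would double-check, is ensuring that the lower bound on $\mathrm{Vol}(B_{r/2}(x_i) \cap \calM)$ really is uniform over $x_i \in \calM$; but this is immediate from Lemma \ref{lemma:M-delta0}, whose constants ($\delta_0$, and the numerical factors $0.9, 1.1$) are chosen uniformly over $\calM$ using its smoothness and compactness. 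No genuine obstacle arises; the result follows from packing vs.\ covering and a one-line volume comparison.
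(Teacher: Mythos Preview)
Your proposal is correct and follows essentially the same argument as the paper: bound the covering number by the packing number, use the disjointness of the half-radius balls $B_{r/2}(x_i)\cap\calM$, and invoke Lemma~\ref{lemma:M-delta0}(ii) to get the uniform lower bound $\mathrm{Vol}(B_{r/2}(x_i)\cap\calM)\ge 0.9\,\omega_d(0.9\,r/2)^d$ before summing. The paper's proof is line-for-line the same, with identical constants.
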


\begin{proof}[Proof of Lemma \ref{lemma:covering-2}]
The proof uses Lemma \ref{lemma:M-delta0} and standard arguments as in Section 4.2 \cite{vershynin2018high}.
Let $d_{E}$ be the Euclidean distance in $\R^D$.

Let $d_E$ denote the metric on ${\calM}$ induced by the Euclidean metric in $\R^D$,
that is, $d_E(x,y) =\|x-y\|_{\mathbb{R}^D}$, where $x,y\in \calM$. The packing number $\mathcal{P}( {\calM}, d_E, r) $ 
always upper bounds the covering number (see  e.g. Lemma 4.2.6 and Lemma 4.2.8 in \cite{vershynin2018high}),
thus it suffices to upper bound 
$\mathcal{P}( {\calM}, d_E, r) $.

Denote by $B_{r}(x, d_E)$ the open ball on $({\calM}, d_E)$ centered at $x$,
and $B_{r, \R^m}(x)$ the open Euclidean ball of radius $r$ centered at $x$ in $\R^m$.
Without declaring $m$, $B_r(x)$ means $B_{r, \R^D} (x)$.
By definition, $B_{r}(x, d_E) = B_r(x ) \cap {\calM}$.
Suppose $r < \delta_0$ in Lemma \ref{lemma:M-delta0}, 
we consider the manifold volume $\Vol$ of these Euclidean balls,
where for $Y \subset {\calM}$,  $\Vol(Y):=\int_{\calM} {\bf 1}_Y dV$ when integrable. 
By Lemma \ref{lemma:M-delta0}(ii), on $T_{x} ({\calM})$ which is viewed as $\R^d$,
\[
B_{0.9 r,\R^d}(0) \subset \phi_{x} (  B_{r }(x, d_E)    ),
\]
and $ \left| \det ( \frac{dy}{du} ) \right| > 0.9$ on  $B_{r}(x, d_E)$, then
\begin{align*}
 Vol (  B_{r }(x, d_E) ) 
& = \int_{\phi_x( 
  B_{r }(x, d_E)
    )} \left| \det ( \frac{dy}{du} ) \right| du 
 \ge 
\int_{ \{u, \, \|u\| < 0. 9 r\}} \left| \det ( \frac{dy}{du} ) \right| du \\
& \ge 0.9 \int_{ \{u, \, \|u\| < 0. 9 r\}}  du
= 0.9 v_d ( 0. 9 r)^d,
\end{align*}
where $v_d$ is the Euclidean volume of a unit $d$-sphere.

Now let $P$ be a maximal $r$-separated subset of ${\calM}$ (under $d_{E}$) such that Card($P$)$= n = \mathcal{P}( {\calM}, d_{E}, r) $,
and $P = \{x_1, \cdots, x_n\}$.
By definition of $r$-separateness,
$B_{\frac{r}{2} }( x_i, d_E)$ are disjoint, 
thus 
\[
Vol( {\calM}) 
\ge  \sum_{i=1}^n  Vol( B_{\frac{r}{2}}( x_i, d_E) ) 
\ge n  \cdot 0.9 v_d ( 0. 9 \frac{r}{2})^d,
\]
that is, for $V( {\calM})$ which is an $O_d(1)$ constant times the Riemannian volume of ${\calM}$,
\[
n \le \frac{V( {\calM})}{r^d}.
\]
This proves that $ \mathcal{N}( {\calM}, d_{E}, r) \le \mathcal{P}( {\calM}, d_{E}, r) \le  \frac{V( {\calM})}{r^d}$.
\end{proof}

 \subsection{Fixed-bandwidth Integral Operator}

\begin{assumption}[Assumption on $k_0$ in \cite{coifman2006diffusion}]
\label{assump:h-diffusionmap}

(C1') Regularity.
$k_0$ is continuous on $[0,\infty)$,  $C^2$ on $(0, \infty)$. 

(C2') Decay condition. 
$k_0$ and up to 
{its second} derivatives are bounded on $(0,\infty)$ and have sub-exponential tail, specifically,
$\exists a, a_k >0$, s.t., $ |h^{(k)}(\xi )| \leq a_k e^{-a \xi}$ for all $\xi > 0$, $k=0, 1,2 $.
To exclude the case that $k_0 \equiv 0$, suppose $\| k_0 \|_\infty > 0$.
 \end{assumption}

\begin{lemma}[Lemma 8 in \cite{coifman2006diffusion}]
\label{lemma:h-integral-diffusionmap}
Suppose $h$ satisfies Assumption \ref{assump:h-diffusionmap}.
For any $f \in C^\infty({\calM})$, define
\begin{equation}\label{eq:def-Ifh-lemma-diffusionmap}
G_\epsilon f (x) : = \int_{\calM} h( \frac{ \| x-y \|^2}{\epsilon}) f(y) dV(y).
\end{equation}
Then there is $\epsilon_0 ({\calM}, h)>0$ such that when $ 0 < \epsilon < \epsilon_0$, 
\[
G_\epsilon [h] f= \epsilon^{\frac{d}{2}} \left( m_0[h] f +  \epsilon \frac{m_2[h]}{2} ( \omega f + \Delta_{\calM}  f) 
+ O^{ [ f^{(\le 4)} ]}(\epsilon^2) \right) ,
\]
where $\omega(x)$ is determined by local derivatives of the extrinsic manifold coordinates at $x$, 
 the residual term denoted by big-$O$ with superscript $f^{(\le 4)}$
 means that the constant 
 involves up to the 4-th derivative of $f$ on ${\calM}$.
Specifically, if the residual term is denoted as $r_{f, \epsilon}(x)$, it satisfies 
$ \sup_{x \in {\calM}} |r_{f, \epsilon}(x)| \le C(f) \epsilon^2$, where
$C(f) = c({\calM}, h) (1+ \sum_{l=0}^4 \| D^{(l)} f \|_\infty)$.
\end{lemma}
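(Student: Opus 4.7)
The plan is to follow the standard asymptotic expansion strategy for fixed-bandwidth kernel integrals on a closed submanifold of $\R^D$, using only local geometry plus the sub-exponential decay of $h$. First I would fix $x \in \calM$ and split the integral \eqref{eq:def-Ifh-lemma-diffusionmap} as $G_\epsilon f(x) = I_{\mathrm{near}}(x) + I_{\mathrm{far}}(x)$, where $I_{\mathrm{near}}$ is over $B_{\delta_\epsilon}(x) \cap \calM$ with $\delta_\epsilon = \sqrt{\epsilon (5+d/2) a^{-1} \log(\epsilon^{-1})}$ and $I_{\mathrm{far}}$ is the remainder. Assumption \ref{assump:h-diffusionmap}(C2') together with $\|f\|_\infty < \infty$ (since $\calM$ is compact) immediately yields $I_{\mathrm{far}}(x) = O^{[f]}(\epsilon^{5+d/2})$ uniformly in $x$, which is negligible compared with the $\epsilon^{2+d/2}$ remainder target. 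So for $\epsilon$ small enough that $\delta_\epsilon < \delta_0(\calM)$ (from Lemma \ref{lemma:M-delta0}), I can work entirely in one local chart.

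Next, on $B_{\delta_\epsilon}(x) \cap \calM$ I would pass to the tangent-plane coordinate $u = \phi_x(y) \in T_x\calM \simeq \R^d$. Lemma \ref{lemma:M-delta0}(ii) gives
\[
\| y - x\|_{\R^D}^2 = \|u\|^2 + Q_x(u) + R_x(u),
\]
where $Q_x$ is homogeneous of degree 4 in $u$ (the second fundamental form contribution) and $R_x(u) = O(\|u\|^5)$, while the Jacobian $|\det(dy/du)| = 1 + b_x^{(v)}(u) + c_x^{(v)}(u) + O(\|u\|^4)$ with $b_x^{(v)}, c_x^{(v)}$ homogeneous of degrees $2,3$. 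Taylor-expanding $f$ in the tangent coordinate gives $f(y) = f(x) + \nabla f(x) \cdot u + \tfrac12 u^T H_x u + P_3(u) + O(\|u\|^4)$ with $H_x$ the intrinsic Hessian (up to the standard Christoffel correction). Substituting, I would expand $h\!\left(\frac{\|u\|^2 + Q_x + R_x}{\epsilon}\right) = h(\|u\|^2/\epsilon) + h'(\|u\|^2/\epsilon)\frac{Q_x(u)}{\epsilon} + O(\cdots)$, using the decay bound on $h'$ to control remainders.

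After collecting terms and rescaling $u = \sqrt{\epsilon}\, v$, every surviving integral reduces to a moment of the form $\int_{\R^d} v^\beta h(\|v\|^2)\, dv$ (or one with $h'$, which can be integrated by parts back into $h$-moments). By symmetry, odd moments vanish, so the only contributing terms up to order $\epsilon^{2+d/2}$ are: the $m_0[h] f(x)$ term at leading order; at the next order, the intrinsic Laplacian $\tfrac{m_2[h]}{2}\Delta_\calM f(x)$ from the quadratic expansion of $f$; and a $\tfrac{m_2[h]}{2}\omega(x) f(x)$ term collecting contributions from $b_x^{(v)}$ (volume element) and from the $h'$-expansion against $Q_x$ (extrinsic curvature), where $\omega$ depends only on local extrinsic derivatives at $x$. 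The residual bound $|r_{f,\epsilon}(x)| \le c(\calM,h)(1+\sum_{l=0}^4 \|D^{(l)} f\|_\infty)\epsilon^2$ (times $\epsilon^{d/2}$) follows by tracking fourth-order Taylor remainders of $f$, fourth-order remainders of the metric/Jacobian expansion, and the sub-exponential tail in $h$, all of which are uniform in $x$ by compactness and smoothness of $\calM$.

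The main obstacle is bookkeeping rather than a genuine difficulty: one must carefully verify that the $h'$-contribution arising from $Q_x$ combines with the second-order volume coefficient $b_x^{(v)}$ into a single coordinate-invariant function $\omega(x)$ (so that the intrinsic $\Delta_\calM f$ piece is clean), and that all cross terms of odd parity vanish under the symmetry of $h(\|v\|^2)$. Once these cancellations are verified and the fourth-derivative remainders controlled uniformly on $\calM$, the stated expansion follows.
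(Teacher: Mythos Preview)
Your proposal is correct and follows essentially the same approach as the paper's proof: truncate to a ball of radius $\delta_\epsilon \sim \sqrt{\epsilon \log(1/\epsilon)}$ using the sub-exponential decay of $h$, then work in tangent-plane coordinates via Lemma~\ref{lemma:M-delta0} and expand $h$, $f$, the metric, and the Jacobian. In fact your outline is more detailed than the paper's own proof, which is largely a sketch deferring to Appendix~B of \cite{coifman2006diffusion}; the only cosmetic difference is the truncation constant (the paper uses $\alpha_0 = (d+10)/a$ rather than your $(5+d/2)/a$), which is immaterial.
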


For the sake of self-containedness and specifically quantifying the constant in the error term, we provide a proof of this lemma below. 

\vspace{5pt}
\begin{proof}[Proof of Lemma \ref{lemma:h-integral-diffusionmap}]
The original proof is in Appendix B of \cite{coifman2006diffusion}.
We made slightly more precise the truncation argument of the intergral,
as well as under the formal statement of assumptions on $k_0$ as in Assumption \ref{assump:h-diffusionmap}.

The proof uses the exponential decay of $h$ to truncate the integral of $dV(y)$ on ${\calM} \cap B_{\delta_\epsilon}(x)$,
where the Euclidean ball radius $\delta_\epsilon$ can be chosen to be  $\sqrt{ \alpha_0 \epsilon \log \frac{1}{\epsilon}}$ for some $O_d(1)$ constant $\alpha_0$.
E.g., 
let $\alpha_0 = \frac{d+10}{a}$,
where $a$ is the sub-exponential decay constant of $h$ in Assumption \ref{assump:h-diffusionmap}(C2), 
then the truncations of integrals used in the proof all incur an error of order $O(\epsilon^{10})$.
For the truncation tail bounds to hold,
the radius $\delta_\epsilon$ needs to be smaller than 
$\delta_0({\calM})$ in Lemma \ref{lemma:M-delta0}.
The requirement $\delta_\epsilon < \delta_0$
 gives rise to the condition that $\epsilon < \epsilon_0$ in the lemma.

Restricting on a local ball,
the integrals in the proof are computed via local projected coordinates on $T_x {\calM}$,
and using the volume and metric comparison lemmas, Lemma 6 and 7 in \cite{coifman2006diffusion},
as detailed in Appendix B in \cite{coifman2006diffusion}.
In particular, 
only the differentiability and sub-exponential decay of up to 2nd derivatives of $h$,
 and the isometry of the kernel ($h$ is a function of $\|x-y\|^2$)
are used, thus the lemma holds for any $h$ satisfying Assumption \ref{assump:h-diffusionmap}.
\end{proof}
%

The following lemma is the counterpart of  Lemma \ref{lemma:h-integral-diffusionmap} when $h$  is the indicator function 
(only to the ``$O(\epsilon)$'' term, $\epsilon=r^2$ here).
It can be implied by Lemma 4 in \cite{hein2005graphs} (without proof), 
 and was also given in a different setting for uniform $p$ in Lemma 7 in \cite{ting2011analysis}.
We include a proof  for completeness.

\begin{lemma}\label{lemma:G-expansion-h-indicator}
Under Assumption \ref{assump:M-p}, $h = {\bf 1}_{[0,1)}$, 
there is a constant $\delta_1 ({\calM})< \delta_0$ in Lemma \ref{lemma:M-delta0}
such that when $r < \delta_1$,
for any $x \in {\calM}$,
\[
r^{-d} \int_{\calM} h \left(  \frac{ \| x- y\|^2 }{r^2}\right)  p(y) dV(y) =  
r^{-d} \int_{\calM} {\bf 1}_{ \{ \| x- y\| < r \} }  p(y) dV(y) =  m_0[h] p(x) + O^{[p]}( r^2),
\]
and the constant in big-$O$ is uniform for all $x$.
\end{lemma}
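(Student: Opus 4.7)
The plan is to localize the integral to a geodesic neighborhood of $x$, pull back to tangent coordinates on $T_x \calM$ via the chart from Lemma~\ref{lemma:M-delta0}, and match the leading term against $m_0[h] p(x) = v_d p(x)$, where $v_d$ denotes the volume of the unit ball in $\R^d$. I would choose $\delta_1 \le \delta_0(\calM)$ so that for every $x$ the support of the integrand, which lies in $\calM \cap B_r(x)$, falls inside a single chart. Writing $y = y(u)$ with $u = \phi_x(y) \in \R^d$, Lemmas 6 and 7 of \cite{coifman2006diffusion} (the same tools already invoked in Lemma \ref{lemma:h-integral-diffusionmap}) give the chart expansions
\[
\|y - x\|^2 = \|u\|^2 + O(\|u\|^4), \qquad \bigl|\det \partial y/\partial u\bigr| = 1 + b_x^{(v)}(u) + O(\|u\|^3),
\]
where $b_x^{(v)}$ is a homogeneous polynomial of degree $2$ in $u$ whose coefficients, together with all implied constants, are bounded uniformly in $x$ by the extrinsic geometry of $\calM$. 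The target integral then reads $\int_{\Omega_r} p(y(u))\,|\det \partial y/\partial u|\,du$ with $\Omega_r := \{u \in \R^d : \|y(u) - x\| < r\}$.

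Next I would Taylor-expand $p(y(u)) = p(x) + L_p(u) + P_p(u) + O(\|u\|^3)$, with $L_p$ linear and $P_p$ quadratic (coefficients controlled by $\|p\|_{C^2(\calM)}$), and multiply against the Jacobian expansion to obtain an integrand of the form $p(x) + S_1(u) + S_2(u) + S_3(u) + O(\|u\|^4)$, where $S_j$ is homogeneous of degree $j$ with $x$-uniform coefficients. Let $U_r := \{u : \|u\| < r\} \subset \R^d$. Using $\|y(u)-x\|^2 - \|u\|^2 = O(\|u\|^4)$, the symmetric difference $\Omega_r \triangle U_r$ is contained in a shell of thickness $O(r^3)$ around $\partial U_r$ and hence has volume $O(r^{d+2})$; since the integrand is bounded there by $\|p\|_\infty (1 + O(r^2))$, this mismatch contributes $O^{[p]}(r^{d+2})$. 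On the symmetric ball $U_r$, every homogeneous polynomial of odd degree integrates to zero by $u \mapsto -u$ symmetry, so $S_1$ and $S_3$ drop out; the leading constant delivers $p(x) v_d r^d = p(x) m_0[h] r^d$, the quadratic $S_2$ integrates to $O^{[p]}(r^{d+2})$, and the $O(\|u\|^4)$ remainder to $O^{[p]}(r^{d+4})$.

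Dividing by $r^d$ produces $m_0[h] p(x) + O^{[p]}(r^2)$, with uniformity in $x$ inherited from compactness of $\calM$ (a uniform $\delta_1$, uniform curvature constants, and uniform $C^2$-control of $p$). The main thing to treat with care, beyond the routine Taylor expansion and parity argument, is the boundary error from the discontinuity of $h = {\bf 1}_{[0,1)}$: the key point is that the sharp estimate $\|y(u)-x\|^2 = \|u\|^2 + O(\|u\|^4)$ (rather than merely $O(\|u\|^2)$) forces the domain mismatch $\Omega_r \triangle U_r$ to have volume $O(r^{d+2})$, which matches the order of the Jacobian and quadratic corrections and therefore does not degrade the final bound.
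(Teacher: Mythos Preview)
Your proposal is correct and follows essentially the same route as the paper: localize to a chart, use $\|y-x\|^2 = \|u\|^2 + O(\|u\|^4)$ and the Jacobian expansion, Taylor-expand $p$, and kill odd-order terms by symmetry. The only cosmetic difference is in handling the indicator's discontinuity: the paper sandwiches the pullback domain between two Euclidean balls $B_{r(1-c_M r^2)} \subset B' \subset B_{r(1+c_M r^2)}$ and bounds the integral from above and below, whereas you bound the symmetric difference $\Omega_r \triangle U_r$ directly by a shell of thickness $O(r^3)$; both yield the same $O(r^{d+2})$ boundary error.
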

\begin{proof}[Proof of Lemma \ref{lemma:G-expansion-h-indicator}]
The proof uses the same technique of that in Lemma \ref{lemma:h-integral-diffusionmap}.
Because $r < \delta_0$ in Lemma \ref{lemma:M-delta0}, using the local chart,
we have that 
\[
I_r := \int_{\calM} {\bf 1}_{ \{ \| x- y\| < r \} }   p(y) dV(y) 
= \int_{ B'} p(y(u)) \left| \det \left(\frac{dy}{du}\right) \right| du,
\quad 
B' :=  \phi_x( B_r(x) \cap {\calM} ) \subset \R^d.
\]
By that 
\[
\| y - x\|^2 = |u|^2 + \rev{O}( |u|^4),
\]
\rev{where the constant in big-$O$ depends on local derivatives of manifold extrinsic coordinates at $x$
 and  by compactness of ${\calM}$ is uniform for all $x$},
there is $\delta_1=\delta_1({\calM})$
and constant $c_M>0$ uniform for all $x$ such that 
when $r < \delta_1$, for any $x$,
\[
B_{r^-} \subset B' \subset B_{r^+}, 
\quad r^{\pm} = r (1 \pm c_M r^2),
\quad B_r:= \{ u \in \R^d, \, |u| < r\}.
\]
We consider upper and lower bounds of  $I_r $ respectively. By that $p > 0$,
\[
I_r \le 
\int_{ B_{r^+}} p(y(u)) \left| \det \left(\frac{dy}{du}\right) \right| du =: I_+.
\]
Similarly as in the proof of Lemma \ref{lemma:h-integral-diffusionmap},
\[
p(y(u)) = p(x) + \nabla_{\calM}p(x) \cdot u + O^{[p]}( |u|^2),
\]
and by \eqref{eq:local-vol-1},
$
\left| \det\left(\frac{dy}{du}\right) \right| = 1 + O(|u|^2)$,
where the constant in big-$O$ depends on local derivatives 
\rev{of manifold extrinsic coordinates}
at $x$ and by compactness of ${\calM}$ is uniform for all $x$.
This gives that
\[
I_+ = \int_{ B_{r^+}} \left( p(x) + \nabla_{\calM}p(x) \cdot u + O^{[p]}( |u|^2) \right) ( 1 + O(|u|^2) ) du
= Vol( B_{r^+}) (p(x)  + O^{[p]}(r^2)),
\]
where the odd-order term of $u$ does not contribute to integral because $B_{r^+}$ is a $d$-sphere,
and $Vol( B_{r^+})  = v_d r^d(1 + c_M r^2)^d = m_0[h] r^d(1 + O(r^2))$.
Thus
\[
I_r \le I_+ 
= m_0[h] r^d(1 + O(r^2)) (p(x) + O^{[p]}(r^2)) 
= m_0[h] r^d (p(x) + O^{[p]}(r^2)).
\]
Similarly,
\[
I_r \ge \int_{ B_{r^-}} p(y(u)) \left| \det (\frac{dy}{du}) \right| du
=  Vol( B_{r^-}) (p(x)  + O^{[p]}(r^2))
= m_0[h] r^d (p(x) + O^{[p]}(r^2)).
\]
Putting together upper and lower bounds proves the lemma.
\end{proof}

\section{Other Lemmas and Proofs}\label{subsec:app-more-lemmas}

\subsection{Proofs of Lemma \ref{lemma:right-operator-3} and \ref{lemma:right-operator-repalce-rho}}

\begin{remark}\label{remark:lemma-right-operator}
The expansion of $G_\epsilon^{(\rho)} f $ for differentiable $\rho$ was derived in Appendix A.3 in \cite{berry2016variable},
where  duality was to analyze the ``right operator'' $G^{(\rho)}_\epsilon $ by its ``left operator''
which were defined in \cite{berry2016variable}.
That bounds the error in the weak sense but not in the strong sense.
Here we give a direct proof of a more precise bound of the error in the  point-wise strong sense, 
which is important for analyzing the  point-wise convergence of $L_N f(x)$.
\end{remark}

\begin{proof}[Proof of Lemma \ref{lemma:right-operator-3}]
For a fixed $x \in {\calM}$, define $\delta_{r_\epsilon}(x,y)$ as the following:
\[
\frac{\| x - y\|^2}{  \epsilon  {\rho}( y)  }
=
\frac{\| x - y\|^2}{  \epsilon  {\rho}( x)  } 
+ \frac{\| x - y\|^2}{  \epsilon {\rho}( x)} \left( \frac{{\rho}( x)}{ \rho( y)  }  - 1 \right)
=: \frac{\| x - y\|^2}{  \epsilon  {\rho}( x)  } 
+ \delta_{r_\epsilon}(x,y).
\]
By that $k_0$ is $C^4$ on $(0,\infty)$, Taylor expansion up to the fourth order at $   \frac{\| x - y\|^2}{  \epsilon  {\rho}( x)  }  $ gives 
\begin{align*}
 k_0 \left(  \frac{\| x - y\|^2}{  \epsilon  {\rho}( y)  } \right)
 & = k_0 \left(  \frac{\| x - y\|^2}{  \epsilon  {\rho}( x)  } \right)
 + k_0' \left(  \frac{\| x - y\|^2}{  \epsilon  {\rho}( x)  } \right)  \delta_{ r_\epsilon}(x,y) 
 + \frac{1}{2}k_0'' \left(  \frac{\| x - y\|^2}{  \epsilon  {\rho}( x)  } \right)  \delta_{ r_\epsilon}(x,y)^2 \\
 & ~~~
 +\frac{1}{6}k_0^{(3)} \left(  \frac{\| x - y\|^2}{  \epsilon  {\rho}( x)  } \right)  \delta_{ r_\epsilon}(x,y)^3
+ \frac{1}{24}k_0^{(4)}(\xi (x,y))   \delta_{ r_\epsilon}(x,y)^4
\end{align*}
where $\xi (x,y) $ is between $ \frac{\| x - y\|^2}{  \epsilon  {\rho}( y)  }$ and $\frac{\| x - y\|^2}{  \epsilon  {\rho}( x)  }$.
Thus
\begin{align*}
G_\epsilon^{(\rho)} f 
& =
\epsilon^{-d/2} 
\left\{
\int_{\calM} 
k_0 \left(  \frac{\| x - y\|^2}{  \epsilon  {\rho}( x)  } \right)  f(y)  dV(y)
 + 
\int_{\calM}  k_0' \left(  \frac{\| x - y\|^2}{  \epsilon  {\rho}( x)  } \right)  \delta_{ r_\epsilon}(x,y)  f(y)  dV(y)
 \right.
 \\
&  ~~~
\left.
+ \cdots +
 \frac{1}{24} \int_{\calM}  k_0^{(4)}(\xi (x,y))  \delta_{ r_\epsilon}(x,y)^4  f(y)  dV(y)
 \right\} \\
& := \textcircled{1} + \textcircled{2} + \textcircled{3} + \textcircled{4} + \textcircled{5}  .
\end{align*}

We first bound $|\textcircled{5}|$. Because $\rho(x) < \rho_{max} $ uniformly on ${\calM}$,
\[
\frac{\| x - y\|^2}{  \epsilon  {\rho}( y)  }, \,
\frac{\| x - y\|^2}{  \epsilon  {\rho}( x)  } \ge
\frac{\| x - y\|^2}{  \epsilon  {\rho}_{max}  }
,.
\]
Thus,
\[
|k_0^{(4)}(\xi (x,y))| \le a_4 e^{-a \xi} \le a_4 e^{- \frac{a}{ {\rho}_{max} } \frac{\| x - y\|^2}{  \epsilon  } } 
= \bar{k}_4\left( \frac{\| x - y\|^2}{  \epsilon  } \right)\,,
\]
where we define
\[
\bar{k}_4(r ) : = a_4 e^{- \frac{a}{ {\rho}_{max} } r } \ge 0.
\]
Note that $\bar{k}_4$
satisfies Assumption \ref{assump:h-diffusionmap},
and $m_0 [\bar{k}_4] $ are constant {depending on} $\rho_{max}$.
Then
\begin{align*}
24 |\textcircled{5}|
& \le 
\epsilon^{-d/2}  
\int_{\calM} | k_0^{(4)}(\xi (x,y)) | | f(y)|  \delta r_\epsilon(x,y)^4  dV(y) \\
& \le
\epsilon^{-d/2}  
\int_{\calM} \bar{k}_4\left( \frac{\| x - y\|^2}{  \epsilon  } \right) | f(y)|
\delta r_\epsilon(x,y)^4  dV(y) \\
& \le
\|f\|_\infty \epsilon^{-d/2}  
\int_{\calM} \bar{k}_4\left( \frac{\| x - y\|^2}{  \epsilon  } \right)
\left( \frac{\| x - y\|^2}{  \epsilon } ( \frac{ 1 }{ \rho( y)  }  - \frac{1}{{\rho}( x)} ) \right)^4 dV(y),
\end{align*}
where we define $\tilde{k}(r) := \bar{k}_4(r)r^4$.
By Lemma \ref{lemma:h-integral-diffusionmap},
 \[
 \epsilon^{-d/2}  
\int_{\calM} \tilde{k}\left( \frac{\| x - y\|^2}{  \epsilon  } \right) 
\left(  \frac{ 1 }{ \rho( y)  }  - \frac{1}{{\rho}( x)} \right)^4 dV(y)
= O^{[  g^{(\le 4)} ]} (\epsilon^2),
 \]
 where  we denote 
 $g(y) = \left(  \frac{ 1 }{ \rho( y)  }  - \frac{1}{{\rho}( x)} \right)^4$,
and then there is $c_5^{\rho}=c_5^{\rho}(\rho_{min}, \rho_{max})$, such that 
$ \sum_{l=0}^4 \| g^{(l)} \|_\infty \le c_5^{\rho} (1+\sum_{l=1}^4 \| {\bf D}^{(l)} \rho^{-1} \|_\infty)$.
This proves that 
\begin{equation}\label{eq:bound-5}
 |\textcircled{5}| \le \epsilon^2  \|f\|_\infty c_ 5^{\rho} \left(1 +\sum_{l=1}^4 \| {\bf D}^{(l)} \rho^{-1} \|_\infty\right).
\end{equation}

The other four terms involve fixed bandwidth $\epsilon \rho(x)$ where $x$ is fixed. 
Applying Lemma \ref{lemma:h-integral-diffusionmap} gives the following.
First,
\begin{align*}
 \textcircled{1}
&= \rho(x)^{\frac{d}{2}} G_{\epsilon \rho(x)} f(x) 
=  \rho^{\frac{d}{2}} \left( m_0 f + \epsilon \rho \frac{m_2}{2} ( \omega f + \Delta f) + O^{ [ f^{(\le 4)} ] }(  \epsilon^2)  \rho^2\right) \\
& =:  \textcircled{1}_1 + O^{ [ f^{(\le 4)} ] }(  \epsilon^2)  \rho^{\frac{d}{2}+2}.
\end{align*}
Define $k_1(r) := k_0'(r)r$ and $g_1(y) := (\frac{\rho(x)}{\rho(y)} - 1) f(y)$.
We have  $g_1(x) = 0$,  and
\begin{align*}
 \textcircled{2}
&= \epsilon^{-d/2} \int_{\calM}  k_0' \left(  \frac{\| x - y\|^2}{  \epsilon  {\rho}( x)  } \right)  
\frac{\| x - y\|^2}{  \epsilon {\rho}( x) } \left( \frac{ {\rho}( x)}{ \rho( y)  }  - 1\right) 
f(y)  dV(y) \\
& = \rho(x)^{\frac{d}{2}} G_{\epsilon \rho(x)}[k_1] (g_1)(x) \\
&= \rho^{\frac{d}{2}}  \left( m_0[k_1] g_1 + \epsilon \rho \frac{m_2[k_1]}{2}( \omega g_1 + \Delta g_1 ) 
  + O^{ [ g_1^{(\le 4)} ] }(\epsilon^2) \rho^2 \right) \\
&= \rho^{\frac{d}{2}}  \left( \epsilon \rho \frac{m_2[k_1]}{2}(  \Delta g_1 ) + O^{ [ g_1^{(\le 4)} ]}(\epsilon^2) \rho^2 \right)  \\
&=: \textcircled{2}_1 + O^{ [ g_1^{(\le 4)} ]}(\epsilon^2) \rho^{\frac{d}{2}+2}.
\end{align*}
Define
$k_2(r) := k_0''(r)r^2$ and $g_2(y) :=   (\frac{\rho(x)}{\rho(y)} - 1)^2f(y)$. 
We have that $g_2(x) = 0$ and
\begin{align*}
 \textcircled{3} 
&= \frac{1}{2} \epsilon^{-d/2} \int_{\calM}  k_0'' \left(  \frac{\| x - y\|^2}{  \epsilon  {\rho}( x)  } \right)  
\left( \frac{\| x - y\|^2}{  \epsilon {\rho}( x) } ( \frac{ {\rho}( x)}{ \rho( y)  }  - 1) \right)^2
f(y)  dV(y)  \\
& =\frac{1}{2} \rho(x)^{\frac{d}{2}} G_{\epsilon \rho(x)}[k_2] (g_2)(x) \\
&=\frac{1}{2} \rho^{\frac{d}{2}}  \left( m_0[k_2] g_2 + \epsilon \rho \frac{m_2[k_2]}{2}( \omega g_2 + \Delta g_2 ) 
 + O^{ [ g_2^{(\le 4)} ]}(\epsilon^2) \rho^2 \right) \\
&= \frac{1}{2} \rho^{\frac{d}{2}}  \left(  \epsilon \rho \frac{m_2[k_2]}{2}(  \Delta g_2 ) 
 + O^{ [ g_2^{(\le 4)} ] }(\epsilon^2) \rho^2 \right) \\
&=: \textcircled{3}_1 
 + O^{ [ g_2^{(\le 4)} ]}(\epsilon^2) \rho^{\frac{d}{2}+2}.
\end{align*}
Define
$k_3(r) = k_0^{(3)}(r)r^3$ and $g_3(y) =   (\frac{\rho(x)}{\rho(y)} - 1)^3 f(y)$.
Then, we have $g_3(x) = 0$, $\Delta g_3(x) = 0$, and 
\begin{align*}
 \textcircled{4}
&=\epsilon^{-d/2} \int_{\calM}  k_0^{(3)} \left(  \frac{\| x - y\|^2}{  \epsilon  {\rho}( x)  } \right)  
\left( \frac{\| x - y\|^2}{  \epsilon {\rho}( x) } ( \frac{ {\rho}( x)}{ \rho( y)  }  - 1) \right)^3
f(y)  dV(y)  \\
& = \rho(x)^{\frac{d}{2}} G_{\epsilon \rho(x)}[k_3] (g_3)(x) \\
&= \rho^{\frac{d}{2}}  \left( m_0[k_3] g_3 + \epsilon \rho \frac{m_2[k_3]}{2}( \omega g_3 + \Delta g_3 ) 
	+ O^{ [ g_3^{(\le 4)} ]}(\epsilon^2) \rho^2 \right)\\
&=   O^{ [ g_3^{(\le 4)} ] }(\epsilon^2) \rho^{\frac{d}{2}+2 }.
\end{align*}
Collecting the leading terms, we have
\[
\textcircled{1}_1 + \textcircled{2}_1 + \textcircled{3}_1
= \rho^{\frac{d}{2}}
 \left(  m_0 f + \epsilon \rho \frac{m_2}{2} ( \omega f + \Delta f))
+    \epsilon \rho \frac{m_2[k_1]}{2}(  \Delta g_1 )
+ \frac{1}{2}   \epsilon \rho \frac{m_2[k_2]}{2}(  \Delta g_2 ) \right).
\]
Note that 
\begin{align*}
m_2[k_1] & = \frac{1}{d} \int_{\R^d}   k_0'( |u|^2) |u|^4 du = - \frac{d+2}{2} m_2[k_0], \\
m_2[k_2] & = \frac{1}{d} \int_{\R^d}   k_0''( |u|^2) |u|^6 du = -\frac{d+4}{2} m_2[k_1] =  \frac{d+4}{2}\frac{d+2}{2} m_2[k_0],
\end{align*}
\begin{align*}
\Delta g_1 & =  \rho f \Delta \frac{1}{\rho} + 2 \rho \nabla f \cdot \nabla  \frac{1}{\rho} 
= 2f \rho^{-2} |\nabla \rho|^2 - f\rho^{-1} \Delta \rho -2 \rho^{-1} \nabla f \cdot \nabla \rho, \\
\Delta g_2 & = 2f \rho^2  |\nabla \frac{1}{\rho}  |^2 = 2f\rho^{-2} |\nabla \rho|^2,
\end{align*}
then we have
\begin{align*}
& \quad\  \frac{m_2}{2}  \Delta f 
+   \frac{m_2[k_1]}{2}  \Delta g_1 
+  \frac{1}{2}   \frac{m_2[k_2]}{2}   \Delta g_2  \\
& = \frac{m_2}{2}  
\left( \Delta f 
 - \frac{d+2}{2}  ( 2f \rho^{-2} |\nabla \rho|^2 - f\rho^{-1} \Delta \rho -2 \rho^{-1} \nabla f \cdot \nabla \rho)
+   \frac{d+4}{2}\frac{d+2}{2}  f\rho^{-2} |\nabla \rho|^2
\right) \\
& = \frac{m_2}{2}  
\left( \Delta f 
 + (\frac{d}{2} +1) (f\rho^{-1} \Delta \rho +2 \rho^{-1} \nabla f \cdot \nabla \rho)
+    \frac{d}{2}(\frac{d}{2} +1) f\rho^{-2} |\nabla \rho|^2
\right) \\
& =\frac{m_2}{2}  
\rho^{-1-d/2} \Delta( f \rho^{1+d/2}),
\end{align*}
and this proves that 
$\textcircled{1}_1 + \textcircled{2}_1 + \textcircled{3}_1$ equals the leading term in \eqref{eq:G-R-expansion-3}.

To prove the Lemma, 
it remains to specify the constants in
\begin{equation}\label{eq:resi-to-bound-1}
\rho^{\frac{d}{2}+2}
\left(
O^{[f^{(\le 4)}]}(  \epsilon^2)  +
O^{[ g_1^{(\le 4)} ]}(  \epsilon^2)   +
O^{[ g_2^{(\le 4)} ]}(  \epsilon^2)  +
O^{[ g_3^{(\le 4)} ]}(  \epsilon^2)  \right) +
|\textcircled{5}|\,.
\end{equation}
Observe the bound of $r_\epsilon^{(2)}$ in \eqref{eq:G-R-expansion-3}. 
By definition of $g_1$, $g_2$, $g_3$, there is $c_j^{\rho}(\rho_{min},\rho_{max}) > 0$, $j=1,2,3$, such that
\[
\sum_{l=0}^4 \| {\bf D}^{(l)} g_s\|_\infty 
\le c_j^{\rho} \left(1+\sum_{l=0}^4 \| D^{(l)}  f \|_\infty \right) \left(1+\sum_{l=0}^4  \| D^{(l)}  \rho^{-1}\|_\infty\right),
\quad j=1,2,3,
\]
and together with \eqref{eq:bound-5},
the constant in front of $\epsilon^2$ in \eqref{eq:resi-to-bound-1} is bounded by
\begin{align*}
& \rho_{max}^{d/2+2} \left\{
\sum_{l=0}^4 \| {\bf D}^{(l)} f\|_\infty
+ \left(\sum_{j=1}^3c_j^\rho + c_5^\rho\right) 
\left(1+\sum_{l=0}^4 \| D^{(l)}  f \|_\infty \right) \left(1+\sum_{l=0}^4  \| D^{(l)}  \rho^{-1}\|_\infty\right)
\right\} \\
=\,& c^\rho \left(1+\sum_{l=0}^4 \| D^{(l)}  f \|_\infty \right) \left(1+\sum_{l=0}^4  \| D^{(l)}  \rho^{-1}\|_\infty\right),
\end{align*}
where constant $c^\rho$  equals a finite summation of  certain powers and ratios of $\rho_{min}$ and  $\rho_{max}$. 
Thus the bound in  \eqref{eq:G-R-expansion-3} holds. 
\end{proof}
%
%

%
%
%
\begin{proof}[Proof of Lemma \ref{lemma:right-operator-repalce-rho}]
Under the condition,
\begin{equation}\label{eq:bound-tilderho-0.1}
0.9 \rho_{min} < 0.9 \rho(x) < \tilde{\rho}(x) < 1.1 \rho(x) < 1.1 \rho_{max},
 \quad \forall x\in {\calM}.
\end{equation}
By definition,
\[
G_\epsilon^{(\tilde{ \rho})} f (x) - G_\epsilon^{(\rho)} f(x)
 =   \epsilon^{-d/2} \int_{\calM} 
\left(
{  k_0 \left(  \frac{\| x - y\|^2}{  \epsilon  { \tilde{\rho}}( y)  } \right)}
- {  k_0 \left(  \frac{\| x - y\|^2}{  \epsilon  { {\rho}}( y)  } \right)} \right)
   f(y)  dV(y), 
\]
and 
\[
k_0 \left(  \frac{\| x - y \|^2}{ \epsilon  \tilde{\rho}( y ) } \right) -
k_0 \left(  \frac{\| x - y \|^2}{ \epsilon   {\rho}( y ) } \right) 
= k_0'(\xi) \frac{\| x - y \|^2}{ \epsilon  {\rho}( y ) }  \left(  \frac{{\rho}(y)  }{ \tilde{\rho}(y) } - 1\right),
\]
where $\xi$ is between $  \frac{\| x - y \|^2}{ \epsilon  \tilde{\rho}( y ) } $ and $\frac{\| x - y \|^2}{ \epsilon {\rho}( y ) } $.
Then, by \eqref{eq:bound-tilderho-0.1},
$
\xi \ge \frac{\| x - y \|^2}{ \epsilon 1.1 {\rho}( y ) }$,
and then by Assumption \ref{assump:h-selftune}(C2),
\[
|k_0'(\xi) | \le a_1 e^{-a \xi } 
\le  a_1 e^{- \frac{a}{ 1.1}  \frac{\| x - y \|^2}{ \epsilon {\rho}( y ) } }.
\]
Thus, also by that 
\[
\left|  \frac{{\rho}(y)  }{ \tilde{\rho}(y) } - 1 \right|  
\le \frac{ \varepsilon \rho(y) }{0.9 \rho(y)} = \frac{\varepsilon}{ 0.9},
\]
we have that
\begin{align*}
&\quad\ | G_\epsilon^{(\tilde{ \rho})} f (x) - G_\epsilon^{(\rho)} f(x) |
\le 
\epsilon^{-d/2} \int_{\calM} 
|k_0'(\xi)| \frac{\| x - y \|^2}{ \epsilon  {\rho}( y ) }  \left|  \frac{{\rho}(y)  }{ \tilde{\rho}(y) } - 1 \right| 
 |  f(y) |  dV(y)  \\
& \le    
\frac{\varepsilon}{ 0.9}  \| f \|_\infty
\epsilon^{-d/2} \int_{\calM} 
a_1 e^{- \frac{a}{ 1.1}  \frac{\| x - y \|^2}{ \epsilon {\rho}( y ) } }
\frac{\| x - y \|^2}{ \epsilon  {\rho}( y ) }     dV(y)
= \frac{\varepsilon}{ 0.9}  \| f \|_\infty
G_\epsilon^{(\rho)}[k_1] {\bf 1}(x),
\end{align*}
where we define
\[
k_1( r) := 
a_1  e^{- \frac{a}{1.1} r } r,
 \quad r \ge 0,
\]
and $k_1$ satisfies Assumption \ref{assump:h-selftune}.
Next, applying Lemma \ref{lemma:right-operator-3}  gives that 
\[
G_\epsilon^{(\rho)}[k_1] {\bf 1} (x)
= m_0[k_1]  {\rho}^{\frac{d}{2}}
+ O^{[ \rho]} (\epsilon).
\]
Thus, with sufficiently small $\epsilon$, uniformly for all $x$,
\[
 | G_\epsilon^{(\tilde{ \rho})} f (x) - G_\epsilon^{(\rho)} f(x) |
 \le \frac{\varepsilon}{0.9} \| f \|_\infty ( m_0[k_1] {\rho}^{\frac{d}{2}}
+ O^{[\rho]} (\epsilon) )
\le c_{\rho}'  \| f \|_\infty \varepsilon,
\]
where $c_{\rho}'$ is $O(1)$ constant depending on $k_1$ multiplied by certain powers of $\rho_{max}$ or  $\rho_{min}$.
\end{proof}

\subsection{Proofs of Lemma \ref{lemma:double-int-f2} and \ref{lemma:replace-hatrhoalpha-GR-hatrho}}

%
%
\begin{proof}[Proof of Lemma \ref{lemma:double-int-f2}]
Define 
\[
\textcircled{1}  :=
 \epsilon^{-\frac{d}{2}}
\int_{\calM} \int_{\calM}  f(x)^2 
k_0 \left(  \frac{\| x - y \|^2}{ \epsilon \hat{\rho}(x)  \hat{\rho}( y ) } \right)
\frac{p(x) p(y) }{ \hat{\rho}(x)^\alpha \hat{\rho}(y )^\alpha}  dV(x) dV(y),
\]
and 
\[
\textcircled{2}  :=
 \epsilon^{-\frac{d}{2}}
\int_{\calM} \int_{\calM}  f(x)^2 
k_0 \left(  \frac{\| x - y \|^2}{ \epsilon \bar{\rho}(x)  \hat{\rho}( y ) } \right)
\frac{p(x) p(y) }{ \bar{\rho}(x)^\alpha \hat{\rho}(y )^\alpha}  dV(x) dV(y).
\]
Then, same as in the analysis of \eqref{eq:circle2-2},  
applying Lemma \ref{lemma:right-operator-3} only to the $O(\epsilon)$ term gives that
\begin{align*}
\textcircled{2} 
& = \int  
( p \hat{\rho}^{\frac{d}{2} - \alpha})(y)
G^{(\bar{\rho})}_{\epsilon \hat{\rho}(y) }( \frac{ f^2 p } {\bar{\rho}^{\alpha}} )(y) dV(y) \\
& = \int  
( p \hat{\rho}^{\frac{d}{2} - \alpha})
\left( m_0[k_0] f^2 p \bar{\rho}^{\frac{d}{2}-\alpha} 
+  \hat{\rho}   r_{1}^{(1)} \right),
\quad
\| r_1^{(1)} \|_\infty = O^{[f,p]}(\epsilon). 
\end{align*}
By \eqref{eq:hatrho-varepsilon} and \eqref{eq:bound-hatrho-0.1},
\[
\left| \int  
p (  \hat{\rho}^{\frac{d}{2} - \alpha} -   \bar{\rho}^{\frac{d}{2} - \alpha} )
( m_0[k_0] f^2 p \bar{\rho}^{\frac{d}{2}-\alpha}  ) \right|  
\le O^{[f,p]} (\varepsilon_\rho),
\]
and then
\[
\left| \int  p \hat{\rho}^{\frac{d}{2} - \alpha +1}   r_{1}^{(1)}\right|
\le O^{[f,p]}(\epsilon)\int  p \bar{\rho}^{\frac{d}{2} - \alpha +1}  \max\{ 0.9^{\frac{d}{2} - \alpha +1}, 1.1^{\frac{d}{2} - \alpha +1}\}
= O^{[f,p]}(\epsilon),
\]
which gives that  
\[
\textcircled{2}
= 
m_0[k_0]\int  
p^2  f^2   \bar{\rho}^{d - 2\alpha} 
+ O^{[f,p]} (\epsilon, \, \varepsilon_\rho).
\]

To bound $| \textcircled{2} - \textcircled{1} |$, introduce
\[
\textcircled{3} := 
\epsilon^{-\frac{d}{2}}
\int_{\calM} \int_{\calM}  f(x)^2 
k_0 \left(  \frac{\| x - y \|^2}{ \epsilon \bar{\rho}(x)  \hat{\rho}( y ) } \right)
\frac{p(x) p(y) }{ \hat{\rho}(x)^\alpha \hat{\rho}(y )^\alpha}  dV(x) dV(y)\,.
\]
Then, 
\[
\textcircled{3} - \textcircled{2}
= \epsilon^{-\frac{d}{2}}
\int_{\calM} \int_{\calM}  f(x)^2 
k_0 \left(  \frac{\| x - y \|^2}{ \epsilon \bar{\rho}(x)  \hat{\rho}( y ) } \right)
\frac{p(x) p(y) }{  \bar{\rho}(x)^{\alpha} \hat{\rho}(y )^\alpha} 
\left( \frac{\bar{\rho}(x)^{\alpha}}{ \hat{\rho}(x)^{\alpha}} -1 \right) dV(x) dV(y).
\]
By non-negativity of $k_0$, $p$, $\bar{\rho}$, $\hat{\rho}$ and $f^2$,
and $\textcircled{2} \ge 0$ and is $O^{[f,p]}(1)$,
similar as in \eqref{eq:circle3-circle2},
we have
\begin{align*}
 |\textcircled{3} - \textcircled{2}|
&  \le O^{[p]}(\varepsilon_\rho) \textcircled{2} 
=  O^{[f,p]}(\varepsilon_\rho).
\end{align*}
This gives that 
$\textcircled{3} = 
 m_0[k_0]\int  
p^2  f^2   \bar{\rho}^{d - 2\alpha} 
+ O^{[f,p]} (\epsilon, \, \varepsilon_\rho) = O^{[f,p]}(1)$.
Meanwhile,   $ \textcircled{3} \ge 0$ by definition.
Finally, we have
\[
\textcircled{3} - \textcircled{1}
= \epsilon^{-\frac{d}{2}}
\int_{\calM} \int_{\calM}  f(x)^2 
\left( k_0 \left(  \frac{\| x - y \|^2}{ \epsilon \bar{\rho}(x)  \hat{\rho}( y ) } \right) 
- k_0 \left(  \frac{\| x - y \|^2}{ \epsilon \hat{\rho}(x)  \hat{\rho}( y ) } \right) 
\right)
\frac{p(x) p(y) }{ \hat{\rho}(x)^\alpha \hat{\rho}(y )^\alpha}  dV(x) dV(y)\,.
\]
And same as in \eqref{eq:bound-circle3-circle1-withk1}, by introducing $k_1(r)$
and using the non-negativity of $f^2$, $p$ and $\hat{\rho}$, 
one can show that
\[
|\textcircled{3} - \textcircled{1}| \le O^{[p]}(\varepsilon_\rho) \cdot (\text{$\textcircled{3}$ with $k_1$})
 = O^{[f,p]} (\varepsilon_\rho).
\]
Putting together, we have $|\textcircled{2} - \textcircled{1}| = O^{[f,p]} (\varepsilon_\rho)$,
and this proves the lemma.
\end{proof}
 \begin{proof}[Proof of Lemma \ref{lemma:replace-hatrhoalpha-GR-hatrho}]
 By definition,
 \[
 G_\epsilon^{ (\tilde{\rho})} \left( \frac{f }{ \tilde{\rho}^\alpha}  - \frac{f }{ {\rho}^\alpha} \right) \rev{(x)}
 = \epsilon^{-d/2} \int_{\calM} 
{  k_0 \left(  \frac{\| x - y\|^2}{  \epsilon  { \tilde{\rho}}( y)  } \right)}
\frac{   f(y) }{\rho(y)^\alpha} 
\left( \frac{\rho(y)^\alpha}{ \tilde{\rho}(y)^\alpha } -1 \right)  dV(y).
 \]
To proceed,
by that $\sup_{x \in {\calM}} \frac{|\tilde{\rho}(x) - \rho(x)|}{ \rho(x) } < \varepsilon < 0.1$ and \eqref{eq:bound-tilderho-0.1},
 for some $c_{\rho,1}$ 
 and $c_{\rho,2}$
 equaling certain powers of $\rho_{max}$ or $\rho_{min}$ multiplied by $\Theta^{[\alpha]}(1)$ constant,  we have
 \[
\left| \frac{\rho(x)^\alpha}{ \tilde{\rho}(x)^\alpha } -1 \right|
\le c_{\rho,1}  \varepsilon,
\quad
\frac{1}{\rho(x)^\alpha}
\le c_{\rho,2},
\quad \forall x\in {\calM}.
 \]
Then,
\begin{align*}
 \left| G_\epsilon^{ (\tilde{\rho})} \left( \frac{f}{ \tilde{\rho}^\alpha}  - \frac{f}{ {\rho}^\alpha} \right) \rev{(x)} \right|
 &\le
 c_{\rho,1}  \varepsilon  \cdot c_{\rho,2}   \|f\|_\infty 
 \left( 
  \epsilon^{-d/2} \int_{\calM}
{  k_0 \left(  \frac{\| x - y\|^2}{  \epsilon  { \tilde{\rho}}( y)  } \right)}  dV(y)
\right) \\
& =  c_{\rho,1}  \varepsilon  \cdot c_{\rho,2}  \|f\|_\infty 
G_{\epsilon}^{(\tilde{\rho})} {\bf 1}(x).
 \end{align*}
By Lemma \ref{lemma:right-operator-3} and \ref{lemma:right-operator-repalce-rho},
 \[
|G_{\epsilon}^{(\tilde{\rho})} {\bf 1}(x) -  m_0 \rho^{\frac{d}{2}}(x) |\le O^{[\rho]}(\epsilon) + c_\rho' \varepsilon,
 \]
 this proves the lemma with $c_\rho''  = \Theta^{[1]}( c_{\rho,1} c_{\rho,2} (m_0 \rho_{max}^{d/2} + 0.1 c_\rho' ) )$
 when $\epsilon$ gets sufficiently small.
 \end{proof}

\section{Point-wise Convergence to ${\calL}_{\hat{\rho}}^{(\alpha)}$}
\label{app:another-pt-limit}

In parallel to Theorems \ref{thm:limit-pointwise-rw-3} and \ref{thm:limit-pointwise-un-3},
we show the  point-wise convergence of $L_N f(x)$ to another limiting operator involving ${\calL}_{\hat{\rho}}^{(\alpha)}$,
in Theorems \ref{thm:limit-pointwise-rw} and \ref{thm:limit-pointwise-un}.
The analysis is by adopting the approach  in \cite{berry2016variable} after conditioning on a fixed $\hat{\rho}$,
yet the difficulty is to handle the a.s. differentiability of the kNN-estimated $\hat{\rho}$.

As pointed out by Section \ref{subsec:C1-inconsist}, $\hat{\rho}(x)$ at any point of differentiability equals 
$ \Theta( ( {k}/{N_y})^{-1/d} )$ which diverges to $\infty$ asymptotically.
We first derive a lemma to bound the derivatives of $\hat{\rho}(x)$ by certain inverse powers of $\hat{R}(x)$.
 The proof of Lemma \ref{lemma:knn-rx} shows that,
when $Y$ has distinct points,
 the estimated  $\hat{\rho}$ from $Y$  
 is piecewise $C^\infty$ on $\R^D$,
and it has the structure that on each of the finitely many polygon ${\bf p}$,
$\hat{\rho}(x) = (\frac{1}{m_0[ h]} \frac{k }{N_y})^{-1/d} \| x- y_{\bf p} \|$,
 for a some point $ y_{\bf p}$ outside ${\bf p}$. 
 We then can upper bound the derivatives of $\hat{R}$ as below.

\begin{lemma}\label{lemma:knn-rx-part2}
Under the condition of Lemma \ref{lemma:knn-rx},
for any $x \in \R^D \backslash E $,
\[
| {\bf D}^{(l)} \hat{R}(x) | \le (l!) \hat{R}(x)^{-l+1}, \quad l=0,1,\cdots, 4,
\]
where the $l$-th derivative ${\bf D}^{(l)} \hat{R}(x)$ is an $l$-way tensor,
 and for any  $l$-way tensor  ${\bf T}: \R^D \times \cdots \times \R^D \to \R$,
\[
| {\bf T} | = \sup_{v \in \R^D, \, \|v\|_2 \le 1} |T (v, \cdots, v)|.
\]
\end{lemma}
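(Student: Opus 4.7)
The plan is to exploit the piecewise structure established in the proof of Lemma \ref{lemma:knn-rx}: on each open polygon $\mathbf{p}\subset \R^D\setminus E$ there is a fixed point $y_{\mathbf{p}}\in Y$ with $\hat{R}(x)=\|x-y_{\mathbf{p}}\|$ for all $x\in\mathbf{p}$. So the problem reduces to bounding the derivatives of the single-point distance function $g(x):=\|x-y_0\|$ at an arbitrary $x\neq y_0$, for any fixed $y_0\in\R^D$. After translating so that $y_0=0$, the function $g(x)=\|x\|$ is positively $1$-homogeneous, hence ${\bf D}^{(l)}g$ is positively $(1-l)$-homogeneous. Writing $r:=\|x\|$, this gives
\[
|{\bf D}^{(l)}g(x)|=r^{\,1-l}\,|{\bf D}^{(l)}g(x/r)|.
\]
Because $g$ is $O(D)$-invariant, $|{\bf D}^{(l)}g(u)|$ is independent of the unit vector $u$, so it is enough to prove $|{\bf D}^{(l)}g(e_1)|\le l!$ for $l=0,1,\ldots,4$.

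For this bound, I would carry out a one-variable reduction. Fix $h\in\R^D$ with $\|h\|\le 1$ and set $\phi(t):=g(e_1+th)$, which is well-defined and smooth for small $t$. Squaring,
\[
\phi(t)^2=1+2ta+t^2 b,\qquad a:=e_1\cdot h,\quad b:=\|h\|^2,
\]
with $0\le a^2\le b\le 1$ by Cauchy--Schwarz, and ${\bf D}^{(l)}g(e_1)(h,\ldots,h)=\phi^{(l)}(0)$. Under the tensor norm defined in the lemma (sup over a single direction repeated), the claim is equivalent to $|\phi^{(l)}(0)|\le l!$ uniformly over such $h$.

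The derivatives $\phi^{(l)}(0)$ can be extracted by iterated Leibniz differentiation of the defining identity. One obtains in succession
\[
\phi\phi'=a+tb,\quad (\phi')^2+\phi\phi''=b,\quad 3\phi'\phi''+\phi\phi'''=0,\quad 3(\phi'')^2+4\phi'\phi'''+\phi\phi^{(4)}=0,
\]
so that at $t=0$,
\[
\phi(0)=1,\ \phi'(0)=a,\ \phi''(0)=b-a^2,\ \phi'''(0)=-3a(b-a^2),\ \phi^{(4)}(0)=3(b-a^2)(5a^2-b).
\]
Using $|a|\le\sqrt{b}\le 1$ and $0\le b-a^2\le 1$, together with $|5a^2-b|\le 4$ (since $b\ge a^2$), one checks directly $|\phi(0)|\le 1=0!$, $|\phi'(0)|\le 1=1!$, $|\phi''(0)|\le 1\le 2!$, $|\phi'''(0)|\le 3\le 3!$, $|\phi^{(4)}(0)|\le 12\le 4!$. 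Combining with the homogeneity scaling concludes the proof.

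The main work is just the algebraic bookkeeping for the recursion and the verification at $l=4$; there is no genuine analytic obstacle, since the homogeneity-plus-symmetry reduction turns a multivariable high-derivative estimate into a polynomial inequality in the two scalars $a,b$. The only subtlety worth flagging is that we invoke the piecewise-$C^\infty$ description of $\hat{R}$ away from $E$ from Lemma \ref{lemma:knn-rx} (so that $\hat{R}$ locally equals a single-point distance function), which is precisely what makes the reduction to $g(x)=\|x-y_0\|$ valid.
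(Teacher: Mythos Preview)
Your proof is correct and, in fact, more complete than the paper's own argument. Both start from the same reduction --- on each polygon $\hat{R}(x)=\|x-y_{\mathbf p}\|$, so it suffices to bound the derivatives of $g(x)=\|x\|$ --- but the paper then proceeds by writing out the derivative tensors explicitly (e.g.\ ${\bf D}^{(2)}r(x)=(\|x\|^2 I-xx^T)/\|x\|^3$) and bounding them by hand, while for $l=3,4$ it merely asserts that ``one can verify by definition'' the bounds $|{\bf D}^{(3)}r|<6/\|x\|^2$ and $|{\bf D}^{(4)}r|<24/\|x\|^3$. Your route is different and more systematic: you use positive $1$-homogeneity and $O(D)$-invariance to reduce to the unit sphere at $e_1$, then exploit the fact that the tensor norm in the lemma only tests repeated directions to pass to the one-variable function $\phi(t)=\|e_1+th\|$, whose derivatives you extract cleanly from the quadratic identity $\phi^2=1+2at+bt^2$. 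This yields explicit closed-form expressions for $\phi^{(l)}(0)$ in terms of $a=e_1\!\cdot\!h$ and $b=\|h\|^2$, and actually gives sharper constants than the paper records (e.g.\ you get $1$ at $l=2$ and $12$ at $l=4$, versus the paper's $2$ and $24$). The payoff of your approach is that it extends mechanically to higher $l$ via the same recursion, whereas the paper's direct tensor computation becomes unwieldy; the payoff of the paper's approach is only brevity for small $l$.
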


The claim extends to higher order derivatives $l  > 4$, 
and we only need up to the fourth derivative in the diffusion kernel analysis. 
The peicewise architecture of $\hat{R}$ 
also allows us to construct smooth uniform approximators to $\hat{\rho}$ without enlarging the derivatives.
\begin{lemma}\label{lemma:knnhatrho-smooth}
Under the condition of Lemma \ref{lemma:knn-rx},
for any $s > 0$,
$\exists \hat{\rho}_s \in C^\infty( {\calM} )$ 
s.t. $\sup_{x \in {\calM}} | \hat{\rho}_s (x)- \hat{\rho} (x)|  < s$, 
and 
\begin{equation}\label{eq:bound-Drhos-by-Drho}
\sup_{x \in {\calM}} | {D}^{(l)}_{{\calM}} \hat{\rho}_s (x) |
 \le   \sup_{x \in {\calM} \backslash E}|{D}^{(l)}_{\calM} \hat{\rho} (x)|, 
\quad
l=0,1,\cdots,4.
\end{equation}
\end{lemma}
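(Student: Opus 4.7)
The plan is to obtain $\hat\rho_s$ as a standard mollification of $\hat\rho$ in the ambient $\R^D$, restricted to $\calM$, and to transfer the almost-everywhere derivative bounds from Lemma~\ref{lemma:knn-rx-part2} through the smoothing using the explicit polygon structure of $\hat R$ provided by Lemma~\ref{lemma:knn-rx}. Fix a nonnegative, radially symmetric $\phi \in C_c^\infty(\R^D)$ with $\int\phi = 1$ and $\mathrm{supp}\,\phi \subset \overline{B_1(0)}$, set $\phi_\delta(u) := \delta^{-D}\phi(u/\delta)$, and define $\hat\rho_s(x) := \int_{\R^D}\hat\rho(x-u)\phi_\delta(u)\,du$. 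Standard mollifier theory gives $\hat\rho_s \in C^\infty(\R^D)$, so the restriction $\hat\rho_s|_\calM$ is automatically in $C^\infty(\calM)$.

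For the uniform approximation, Lemma~\ref{lemma:knn-rx} shows $\hat R$ is $1$-Lipschitz on $\R^D$, so $\hat\rho$ is Lipschitz with constant $L_\rho := (m_0[h]^{-1}k/N_y)^{-1/d}$. A direct estimate then gives $|\hat\rho_s(x) - \hat\rho(x)| \le L_\rho\delta$ for every $x$, and the choice $\delta < s/L_\rho$ delivers the first conclusion. For the derivative inequality, the case $l=0$ is immediate from $\phi_\delta \ge 0$ and $\int\phi_\delta = 1$, while $l=1$ follows from Rademacher's theorem combined with the Lipschitz property: one gets $\nabla\hat\rho_s = (\nabla\hat\rho) * \phi_\delta$ pointwise, and hence $\|\nabla\hat\rho_s\|_\infty \le \|\nabla\hat\rho\|_{L^\infty(\R^D\setminus E)}$.

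For $l \ge 2$ the strategy is to decompose the convolution as a sum over the polygons $\mathbf{p}$ partitioning $\R^D\setminus E$---on each of which $\hat R(x) = \|x - y_\mathbf{p}\|$ is smooth---and to apply $l$-fold integration by parts within each piece. The resulting boundary integrals on shared faces of $E$ come with outward normals that are opposite on the two sides, and the identity $\|x - y_\mathbf{p}\|^2 \equiv \|x - y_{\mathbf{p}'}\|^2$ on the face separating $\mathbf{p}$ and $\mathbf{p}'$ forces the face-tangential data of $\hat R$ to agree on the two sides. The face-normal components of $\nabla \hat R$ do jump, but they are reflections of each other across the tangent plane to the face, so when paired against the radially symmetric $\phi_\delta$ they combine into a symmetric, non-negative contribution. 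Collecting these cancellations expresses $D^{(l)}\hat\rho_s(x)$ as a non-negative $\phi_\delta$-weighted average of $(D^{(l)}\hat\rho)(x-u)$ for $u \in \R^D \setminus E$, from which the claimed sup bound follows at once; passing from ambient to intrinsic $\calM$-derivatives is then a routine chain-rule computation using the smoothness of the embedding.

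The hardest step will be to verify the boundary cancellation cleanly: one has to decompose each ambient $D^{(l)}$ along every face of $E$ into face-tangential and face-normal pieces, show that the tangential pieces pair up and cancel by the shared-tangential-derivative identity above, and that the remaining normal-jump pieces organize into a non-negative combination rather than an uncontrolled $\delta^{-1}$-size term. The underlying geometric fact---that $\|x - y_\mathbf{p}\|^2 - \|x - y_{\mathbf{p}'}\|^2$ is linear in $x$ and vanishes identically on the shared face---makes this tractable but does require a careful bookkeeping of signs, orientations, and the reflection symmetry of $\nabla \hat R$ across each face.
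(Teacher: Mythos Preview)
The boundary-cancellation step for $l\ge 2$ does not go through. Across a face of $E$ separating polygons $\mathbf{p}$ and $\mathbf{p}'$ the ambient gradient $\bar\nabla\hat R$ jumps by $(y_{\mathbf{p}}-y_{\mathbf{p}'})/r$, a nonzero vector normal to the face; the reflection symmetry you correctly identify is precisely the statement that the two one-sided gradients are mirror images, which forces the jump to be nonzero rather than zero. Hence the distributional Hessian of $\hat\rho$ carries a genuine singular part $c\,\nu\otimes\nu\, d\sigma_{\text{face}}$ with $c=|y_{\mathbf p}-y_{\mathbf p'}|/r\neq 0$, and convolving with $\phi_\delta$ turns it into a contribution of magnitude $\Theta(\delta^{-1})$ to $\mathbf D^{(2)}\hat\rho_s$ within distance $\delta$ of the face. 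That this extra term is rank-one and sign-definite is irrelevant to the sup-norm bound: it is large, not a $\phi_\delta$-average of the bounded pointwise values on $\R^D\setminus E$. The one-dimensional picture already settles it: in the normal coordinate $t$ one has locally $\hat R(t)=c_0+a|t|+O(t^2)$ with $a=|y_{\mathbf p}-y_{\mathbf p'}|/(2r)>0$, so $\hat R''=0$ off $\{0\}$ while $(\hat R*\phi_\delta)''\approx 2a\,\phi_\delta$ has sup-norm $\Theta(\delta^{-1})$; for higher $l$ the obstruction scales like $\delta^{-(l-1)}$. A secondary gap is the ambient-to-intrinsic passage: the chain rule writes $D^{(l)}_{\calM}\hat\rho_s(x)$ in terms of $\mathbf D^{(m)}\hat\rho_s(x)$ with embedding-dependent coefficients at $x\in\calM$, but the mollification averages over points $x-u\in\R^D$ off $\calM$, so even granting ambient bounds you cannot reassemble them into the intrinsic sup on the right-hand side without an extra $\calM$-dependent constant that the inequality as stated does not allow.

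The paper proceeds differently: it mollifies intrinsically on $\calM$ (which sidesteps the second difficulty) and \emph{asserts} that the $l$-th intrinsic derivative of the smoothed function is a smooth interpolation of the one-sided values of $D^{(l)}_{\calM}\hat\rho$, rather than carrying out an integration-by-parts computation. Your explicit attempt actually exposes why that assertion is delicate: the intrinsic gradient $\nabla_{\calM}\hat R$ also jumps across $\calM\cap E$ (by the tangential projection of $(y_{\mathbf p}-y_{\mathbf p'})/r$, generically nonzero), so a literal intrinsic convolution would face the same $\delta^{-(l-1)}$ obstruction.
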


Combined with Lemma \ref{lemma:knn-rx-part2} and \eqref{eq:hatrho-varepsilon},
we have the following:
\begin{proposition}\label{prop:smooth-hatrhos}
There is a constant $ C_p >0$ depending on $({\calM}, p)$ 
such that when $\sup_{x \in {\calM}} |\hat{\rho} - \bar{\rho}|/ \bar{\rho} < \varepsilon_\rho < 0.1$, 
for any $s>0$,
 $\exists \hat{\rho}_s \in C^\infty( {\calM})$ 
s.t. $\sup_{x \in {\calM}} | \hat{\rho}_s (x)- \hat{\rho} (x)|  < s$,
and
\begin{equation}\label{eq:upperbound-Dhatrhos}
\|   D^{(l) }_{\calM} \hat{\rho}_s \|_{\infty, {\calM}}
  \le C_p \left( \left(\frac{k}{N_y}\right)^{-l/d}\right), 
 \quad l = 0,\cdots 4.
 \end{equation}
\end{proposition}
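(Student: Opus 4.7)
\medskip

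\noindent\textbf{Proof plan for Proposition \ref{prop:smooth-hatrhos}.}
The plan is to chain three ingredients already established in the excerpt: Lemma \ref{lemma:knnhatrho-smooth} (smooth approximation on $\calM$), Lemma \ref{lemma:knn-rx-part2} (ambient derivative bounds on $\hat R$), and the relative-error hypothesis (to get a uniform lower bound on $\hat R$ and hence an upper bound on its inverse powers).

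First I would invoke Lemma \ref{lemma:knnhatrho-smooth} to produce $\hat{\rho}_s \in C^{\infty}(\calM)$ with $\sup_{\calM}|\hat{\rho}_s - \hat{\rho}| < s$ and
$\|D^{(l)}_{\calM}\hat{\rho}_s\|_{\infty,\calM} \le \sup_{x\in\calM\setminus E}|D^{(l)}_{\calM}\hat{\rho}(x)|$ for $l=0,\dots,4$. This reduces the problem to bounding $\sup_{\calM\setminus E}|D^{(l)}_{\calM}\hat{\rho}(x)|$ by $C_p(k/N_y)^{-l/d}$.

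Next I would pass from intrinsic to ambient derivatives. On each polygon interior (hence on $\R^D\setminus E$) $\hat{\rho}$ is smooth as a function on $\R^D$. Applying the chain rule to the isometric embedding $\iota:\calM\hookrightarrow\R^D$ and expanding the Levi-Civita covariant derivatives of $\hat{\rho}\circ\iota$ in terms of the ambient derivatives $\bar D^{(k)}\hat{\rho}$ contracted with tangent vectors, Christoffel symbols and the second fundamental form, one obtains
\[
\sup_{x\in\calM\setminus E} |D^{(l)}_{\calM}\hat{\rho}(x)|
~\le~ c_{\calM,l} \sum_{k=0}^{l} \sup_{x\in\calM\setminus E}|\bar D^{(k)}\hat{\rho}(x)|,
\qquad l=0,\dots,4,
\]
where the constants $c_{\calM,l}$ depend only on curvature data of the smooth compact $\calM$ and are hence absolute. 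Writing $c_k := (m_0[h]^{-1} k/N_y)^{-1/d}$, by definition $\hat{\rho} = c_k \hat R$, so $\bar D^{(l)}\hat{\rho} = c_k \bar D^{(l)}\hat R$, and Lemma \ref{lemma:knn-rx-part2} yields $|\bar D^{(l)}\hat R(x)| \le l!\,\hat R(x)^{\,1-l}$ on $\R^D\setminus E$.

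Finally I would use the hypothesis $|\hat{\rho}-\bar{\rho}|/\bar{\rho} < 0.1$ to lower-bound $\hat R$: since $\hat{\rho}(x) > 0.9\bar{\rho}(x) \ge 0.9\rho_{min}$, we get $\hat R(x) = c_k^{-1}\hat{\rho}(x) \ge 0.9\rho_{min}\,c_k^{-1}$. Substituting,
\[
|\bar D^{(l)}\hat{\rho}(x)| \le c_k\cdot l!\,(0.9\rho_{min})^{1-l}\,c_k^{\,l-1} = l!\,(0.9\rho_{min})^{1-l}\,c_k^{\,l},
\]
and $c_k^{l} = \Theta^{[1]}((k/N_y)^{-l/d})$. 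Summing over $k=0,\dots,l$ (the $k<l$ terms are of lower order in $k/N_y$) and absorbing $\rho_{min}$ and $c_{\calM,l}$ into a constant $C_p$ depending only on $(\calM,p)$ delivers \eqref{eq:upperbound-Dhatrhos}. Combined with Step 1, this proves the proposition.

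\medskip
\noindent\textbf{Main obstacle.} The delicate step is the intrinsic-to-ambient conversion. Although the chain rule argument is geometrically standard, one needs to verify uniformly that the coefficients produced by repeated differentiation along the embedding (Gauss formula terms involving the second fundamental form and its intrinsic derivatives) are bounded up to order four. This is where smoothness and compactness of $\calM$ are essential, and it justifies absorbing all such constants into an absolute factor. An additional subtlety is that $\hat{\rho}$ is only piecewise smooth across the hyperplanes $E$, but since $E$ has measure zero and Lemma \ref{lemma:knnhatrho-smooth} already passes from the piecewise $C^\infty$ data to a globally smooth approximant on $\calM$, this does not cause any loss in the derivative bounds.
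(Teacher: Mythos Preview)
Your proposal is correct and follows essentially the same route as the paper: invoke Lemma~\ref{lemma:knnhatrho-smooth} to reduce to bounding $\sup_{\calM\setminus E}|D^{(l)}_{\calM}\hat\rho|$, convert intrinsic to ambient derivatives via the chain rule with constants depending only on the smooth compact embedding, apply Lemma~\ref{lemma:knn-rx-part2}, and use the relative-error hypothesis to uniformly lower-bound $\hat R$ by a $p$-dependent constant times $(k/N_y)^{1/d}$. The only cosmetic differences are the order in which you invoke Lemma~\ref{lemma:knnhatrho-smooth} (first rather than last) and a notational clash where you reuse $k$ as a summation index; otherwise the argument matches.
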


Because we can make $s$ arbitrarily small, it is equivalent to prove the graph Laplacian convergence with $\hat{\rho}_s$,
which satisfies \eqref{eq:upperbound-Dhatrhos} by the proposition
and also \eqref{eq:hatrho-varepsilon} by the uniform approximation Theorem \ref{thm:hatrho}.
Below, we write $\hat{\rho}_s$ as $\hat{\rho}$.
We then have the following:

\begin{theorem}\label{thm:limit-pointwise-rw}
Suppose  Theorem \ref{thm:hatrho} holds and 
as $N_y \to \infty$ and $N_x \to \infty$,
\[
\epsilon = o(1), 
\quad \epsilon^{d/2+1} N_x = \Omega (\log N_x),
\quad \epsilon = o \left( (\frac{k}{N_y})^{4/d} \right)\,.
\]
Then, for any $f \in C^\infty({\calM})$,
for sufficiently large  $N_x$ and $N_y$, w.p. higher than $1- 4N_x^{-10} -2N_y^{-10}$,
\[
L^{(\alpha)}_{ rw'} f(x) 
= {\calL}^{(\alpha)}_{\hat{\rho}} f(x)
+ O^{[f,p]} \left(  \left(\frac{k_y}{N_y}\right)^{-4/d} \epsilon \right)
+O^{[1]} \left(  |\nabla f (x)| p(x)^{1/d}   
\sqrt{     \frac{\log N}{   N  \epsilon^{d/2+1}}   }  \right).
\]
\end{theorem}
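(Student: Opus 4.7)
The plan is to mirror the bias–variance decomposition used in the proof of Theorem \ref{thm:limit-pointwise-rw-3}, but this time I keep $\hat{\rho}$ throughout rather than substituting it with $\bar{\rho}$. The main technical enabler is Proposition \ref{prop:smooth-hatrhos}: for any $s>0$ I may replace $\hat{\rho}$ by a $C^\infty$ approximant $\hat{\rho}_s$ with $\|\hat{\rho}_s-\hat{\rho}\|_\infty <s$ and $\|D^{(l)}\hat{\rho}_s\|_\infty \le C_p (k/N_y)^{-l/d}$ for $l\le 4$. Since $k_0$ and $\xi\mapsto \xi^{-\alpha}$ are Lipschitz on the relevant compact interval (using the uniform lower/upper bounds on $\hat{\rho}$), plugging $\hat{\rho}_s$ instead of $\hat{\rho}$ into \eqref{eq:def-Lalapha-rw} perturbs $L^{(\alpha)}_{rw'}f(x)$ by $O(\epsilon^{-1}s)$; taking $s\downarrow 0$ therefore reduces the theorem to the case that $\hat{\rho}\in C^\infty(\calM)$ and enjoys the derivative bound above together with the relative-error bound $\sup_\calM|\hat{\rho}-\bar{\rho}|/\bar{\rho}<\varepsilon_\rho<0.1$ guaranteed by Theorem \ref{thm:hatrho}.

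For the bias, with $F_j,G_j$ as in \eqref{eq:def-Fj-Gj}, I apply Lemma \ref{lemma:right-operator-3} \emph{directly} with $\rho=\hat{\rho}$, avoiding the substitution lemmas \ref{lemma:right-operator-repalce-rho} and \ref{lemma:replace-hatrhoalpha-GR-hatrho} used in Theorem \ref{thm:limit-pointwise-rw-3}. Expanding $\mathbb{E} F(x)=\hat{\rho}(x)^{d/2}G^{(\hat{\rho})}_{\epsilon\hat{\rho}(x)}(fp/\hat{\rho}^\alpha)(x)$ and $\mathbb{E} G(x)$ analogously, the same algebraic identity that produced \eqref{eq:EF-fEG-3} now yields
\begin{equation*}
\mathbb{E} F(x)-f(x)\mathbb{E} G(x) = \epsilon\,\tfrac{m_2}{2}\,p\,\hat{\rho}^{d+2-\alpha}\,\mathcal{L}^{(\alpha)}_{\hat{\rho}}f(x) + r_\epsilon(x),
\end{equation*}
with $\mathcal{L}^{(\alpha)}_{\hat{\rho}}$ as in \eqref{eq:def-Lrho-alpha}. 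The crucial difference is the residual: Lemma \ref{lemma:right-operator-3} gives $\|r_\epsilon\|_\infty = O^{[f,p]}\bigl(\epsilon^2(1+\sum_{l\le 4}\|D^{(l)}\hat{\rho}^{-1}\|_\infty)\bigr)$, which by Proposition \ref{prop:smooth-hatrhos} is $O^{[f,p]}(\epsilon^2(k/N_y)^{-4/d})$. Combining with $\mathbb{E} G(x)=m_0 p\hat{\rho}^{d-\alpha}+O^{[p]}(\epsilon(k/N_y)^{-2/d})$ and dividing by $\epsilon\tilde m\hat{\rho}(x)^2$ produces the bias
\begin{equation*}
\tfrac{1}{\epsilon\tilde m\hat{\rho}^2}\Bigl(\tfrac{\mathbb{E} F}{\mathbb{E} G}-f\Bigr)=\mathcal{L}^{(\alpha)}_{\hat{\rho}}f+O^{[f,p]}\!\left(\epsilon\,(k/N_y)^{-4/d}\right),
\end{equation*}
which is $o(1)$ precisely under the standing hypothesis $\epsilon=o((k/N_y)^{4/d})$.

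For the variance I reuse the decoupling/Bernstein argument from Theorem \ref{thm:limit-pointwise-rw-3} essentially verbatim: the boundedness $|F_j|,|G_j|\le L=\Theta^{[f,p]}(\epsilon^{-d/2})$ and the variance formulas for $\mathbb{E} F^2$, $\mathbb{E} G^2$, $\mathbb{E}(FG)$ come from applying Lemma \ref{lemma:right-operator-3} with $\rho=\hat{\rho}$ and kernel $k_0^2$. After forming $Y_j=F_j\mathbb{E} G-G_j\mathbb{E} F$, the key algebraic cancellation (\eqref{eq:simplify-three-deltas-into-|grad f|^2}) goes through identically since it only uses that the three terms share a common function-valued prefactor, yielding $\mathbb{E} Y_j^2\le \bar\nu_Y = \Theta^{[1]}(\epsilon^{1-d/2}|\nabla f(x)|^2 p^{3-4\alpha/d}\bar{\rho}^{3d-4\alpha+2}(x))$ up to a relative $O(\epsilon\,(k/N_y)^{-4/d}) = o(1)$ correction. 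A single-point Bernstein bound then produces the variance $O^{[1]}(|\nabla f(x)|p(x)^{1/d}\sqrt{\log N_x/(N_x\epsilon^{d/2+1})})$. Union-bounding the Theorem \ref{thm:hatrho} event (prob. $\ge 1-N_y^{-10}$) with the two concentration events for $\tfrac{1}{N_x}\sum G_j$ and $\tfrac{1}{N_x}\sum Y_j$ (prob. $\ge 1-2N_x^{-10}$ each) and sending $s\downarrow 0$ finishes the proof.

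The main obstacle is accounting: one must verify that the $(k/N_y)^{-4/d}$ blow-up in $\|D^{(l)}\hat{\rho}^{-1}\|_\infty$ infiltrates only the $O(\epsilon^2)$ bias residual, not the variance leading order. This works because in the Taylor expansion inside Lemma \ref{lemma:right-operator-3} the fourth-derivative data of $\rho^{-1}$ enters at relative order $\epsilon^2$, so in the variance computation it contributes only a $O(\epsilon\,(k/N_y)^{-4/d})=o(1)$ correction to the leading coefficient of $\bar\nu_Y$. Equivalently, the blow-up of $\|\nabla\hat{\rho}\|_\infty$ is quarantined in the deterministic limiting operator $\mathcal{L}^{(\alpha)}_{\hat{\rho}}$ and in the second-order bias remainder, which is exactly the reason the limiting operator is $\mathcal{L}^{(\alpha)}_{\hat{\rho}}$ rather than $\mathcal{L}^{(\alpha)}_{\bar{\rho}}$ in this regime.
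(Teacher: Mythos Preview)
Your proposal is correct and follows essentially the same route as the paper's own proof: smooth $\hat{\rho}$ via Proposition~\ref{prop:smooth-hatrhos}, then rerun the bias--variance argument of Theorem~\ref{thm:limit-pointwise-rw-3} applying Lemma~\ref{lemma:right-operator-3} directly with $\rho=\hat{\rho}$ in place of the substitution Lemmas~\ref{lemma:right-operator-repalce-rho} and~\ref{lemma:replace-hatrhoalpha-GR-hatrho}, and track the $(k/N_y)^{-4/d}$ blow-up through the $O(\epsilon^2)$ residual. The paper's proof sketch in Appendix~\ref{app:another-pt-limit} says exactly this in one sentence; your expanded accounting of why the derivative blow-up contaminates only the bias remainder (and not the leading variance coefficient, thanks to the algebraic identity~\eqref{eq:simplify-three-deltas-into-|grad f|^2}) is the correct justification.
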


\begin{theorem}\label{thm:limit-pointwise-un}
Under the same setting as in Theorem \eqref{thm:limit-pointwise-rw}
and
in the same sense of w.h.p,
\[
L^{(\alpha)}_{ un} f(x) 
= (p \hat{\rho}^{d+2-2\alpha})(x) {\calL}_{\hat{\rho}}^{(\alpha)} f(x)
+ O^{[f,p]} \left(  \left(\frac{k_y}{N_y}\right)^{-4/d} \epsilon \right)
+O^{[1]} \left(  |\nabla f (x)| p(x)^{\frac{\alpha - 1}{d}}   
\sqrt{     \frac{\log N}{   N  \epsilon^{d/2+1}}   }  \right).
\]
\end{theorem}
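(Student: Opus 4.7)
The plan is to mirror the architecture used in the proof of Theorem \ref{thm:limit-pointwise-un-3}, but without substituting $\hat{\rho}$ by $\bar{\rho}$. Instead I would treat the smoothed kNN bandwidth $\hat{\rho}_s$ as the actual bandwidth inside Lemma \ref{lemma:right-operator-3}, and control the enlarged residual using the derivative bounds in Proposition \ref{prop:smooth-hatrhos}. First I would pass from $\hat{\rho}$ to the smooth surrogate $\hat{\rho}_s$ with $s$ arbitrarily small, so that the difference between $L^{(\alpha)}_{un}f$ computed with the two is negligible compared to the stated error. From here on, $\hat{\rho}$ denotes this surrogate, which satisfies $\|D^{(l)}\hat{\rho}\|_\infty = O((k_y/N_y)^{-l/d})$ for $l \le 4$ and $\sup_{\calM}|\hat{\rho}-\bar{\rho}|/\bar{\rho} < \varepsilon_\rho$ by Theorem \ref{thm:hatrho}.

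For the bias, I would decompose $L^{(\alpha)}_{un} f(x) = N_x^{-1}\sum_j H_j$ with $H_j = (2\epsilon^{-1}/m_2)\hat{\rho}(x)^{-\alpha}(F_j - f(x) G_j)$ as in the proof of Theorem \ref{thm:limit-pointwise-un-3}, and use the identity
\[
\E F(x) - f(x)\E G(x) = \hat{\rho}(x)^{d/2}\,G^{(\hat{\rho})}_{\epsilon\hat{\rho}(x)}\bigl((f - f(x))p/\hat{\rho}^\alpha\bigr)(x)
\]
to apply Lemma \ref{lemma:right-operator-3} directly with $\rho = \hat{\rho}$. Since the integrand vanishes at $x$, the $O(1)$ term and the $\omega$-piece collapse, leaving only the Laplacian contribution at the $O(\epsilon\hat{\rho}(x))$ level; a computation identical in form to the one giving \eqref{eq:EF-fEG-3}, but retaining $\hat{\rho}$ instead of $\bar{\rho}$, yields $\E F - f\E G = \epsilon\hat{\rho}(x)^{d+2-\alpha} p(x)(m_2/2){\calL}^{(\alpha)}_{\hat{\rho}} f(x) + r(x)$. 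Dividing by $\epsilon\hat{\rho}(x)^\alpha m_2/2$ produces the claimed leading term $(p\hat{\rho}^{d+2-2\alpha})(x){\calL}^{(\alpha)}_{\hat{\rho}} f(x)$. The variance step is a transcription of the corresponding argument in the proof of Theorem \ref{thm:limit-pointwise-un-3}, substituting $\hat{\rho}$ for $\bar{\rho}$ in the expansions of $\E F^2$, $\E G^2$, $\E(FG)$ and using the algebraic identity \eqref{eq:simplify-three-deltas-into-|grad f|^2}; the classical Bernstein inequality then yields the stated variance error.

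The main obstacle will be controlling the residual $r(x)$ to match the $O((k_y/N_y)^{-4/d}\epsilon)$ rate. The constant $c_\rho$ in Lemma \ref{lemma:right-operator-3} multiplies derivative norms of $\hat{\rho}^{-1}$ and of the integrand $g_x(y) := (f(y)-f(x))p(y)/\hat{\rho}(y)^\alpha$, each of which is $O((k_y/N_y)^{-4/d})$ under Proposition \ref{prop:smooth-hatrhos}. A naive product bookkeeping would give $(k_y/N_y)^{-8/d}\epsilon^2$ in $r(x)$, and hence $(k_y/N_y)^{-8/d}\epsilon$ after the $\epsilon^{-1}$ normalization, which is too weak. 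To recover the stated rate I would redo the key Taylor expansion of Lemma \ref{lemma:right-operator-3} specialized to the ansatz $\rho = \hat{\rho}$ with $g_x(x)=0$, exploiting the algebraic cancellations $\hat{\rho}^{d/2}\cdot\hat{\rho}^{-\alpha}$ that appear inside the combined integrand $fp/\hat{\rho}^\alpha$, so that only a single factor of $(k_y/N_y)^{-4/d}$ survives in the residual, in the spirit of the duality argument of \cite{berry2016variable}. The asymptotic condition $\epsilon = o((k/N_y)^{4/d})$ then guarantees that this bias term is $o(1)$; combined with the variance bound and letting $s\to 0$, this closes the proof.
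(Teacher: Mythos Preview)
Your approach is essentially the paper's: the paper explicitly says the proof mirrors that of Theorem~\ref{thm:limit-pointwise-un-3}, replacing the substitution step (Lemma~\ref{lemma:right-operator-repalce-rho}) by a direct application of Lemma~\ref{lemma:right-operator-3} with $\rho = \hat{\rho}_s$, and omits all details.

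Your concern about the residual rate is legitimate and something the paper glosses over, but the fix is more elementary than algebraic cancellations or the duality argument of \cite{berry2016variable}. The product form $(1+\sum_l\|D^{(l)} f\|_\infty)(1+\sum_l\|D^{(l)}\rho^{-1}\|_\infty)$ in the \emph{statement} of Lemma~\ref{lemma:right-operator-3} is indeed too crude here, but inspecting its \emph{proof} shows the $\epsilon^2$-residual is actually a sum of terms each bounded by $c(\calM,k_0)\bigl(1+\sum_{l\le 4}\|D^{(l)} g_j\|_\infty\bigr)\epsilon^2$, where $g_j(y)=(\hat\rho(x)/\hat\rho(y)-1)^j\,(fp/\hat\rho^{\alpha})(y)$ (plus the analogous $\textcircled{1}$ and $\textcircled{5}$ pieces). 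Each $g_j$ is a finite product of factors whose $l$-th derivative is $O\bigl((k_y/N_y)^{-l/d}\bigr)$ under Proposition~\ref{prop:smooth-hatrhos}; the Leibniz rule then gives $\|D^{(l)} g_j\|_\infty = O\bigl((k_y/N_y)^{-l/d}\bigr)$ as well, since the total derivative order in every summand is $l$. Hence $\sum_{l\le 4}\|D^{(l)} g_j\|_\infty = O\bigl((k_y/N_y)^{-4/d}\bigr)$ directly, and the single factor you need falls out without any new expansion.
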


In both theorems, 
the error rates can be worse than those in Theorems \ref{thm:limit-pointwise-rw-3} and \ref{thm:limit-pointwise-un-3}.
It also gives different optimal scaling when choosing $\epsilon$ and $k$ 
so as to balance the bias and variance errors there.
The reason is due to 
that the bounds of magnitudes of derivatives of $\hat{\rho}$ are scaled with inverse powers of $({k}/{N})^{1/d}$.

The proofs of Theorem \ref{thm:limit-pointwise-rw} and \ref{thm:limit-pointwise-un}
are basically the same as those of Theorems \ref{thm:limit-pointwise-rw-3} and \ref{thm:limit-pointwise-un-3}.
The difference is replacing the usage of Lemma \ref{lemma:right-operator-repalce-rho} by a vanilla application of Lemma \ref{lemma:right-operator-3}
with $\rho$ being $\hat{\rho}$ (which is $\hat{\rho}_s$), 
and details are omitted. 
\vspace{5pt}

\begin{proof}[Proof of Lemma \ref{lemma:knn-rx-part2}]
Note that for any $x \in \R^D \backslash E$,
as shown in the proof of Lemma \ref{lemma:knn-rx},
$x$ is in a polygon ${\bf p}$,
and 
$\hat{R}(x) = \| x- y_{\bf p}\|$ for some $ y_{\bf p}$ outside ${\bf p}$.
For $l=0$, the claim is identity.
For $l=1$, $\nabla r(x) = \frac{ x }{\| x\|}$ and $ |\nabla r(x) | =1  $.
When $l=2$,
\[
{\bf D}^{(2)} r(x) = \frac{ \|x\|^2 I_d - x x^T }{ \| x\|^3}\,.
\]
Thus,  for any $v \in \R^D$, $\|v\| =1$,
\[
| {\bf D}^{(2)} r(x)(v,v) | = \frac{ |  \|v\|^2 \|x\|^2 - (v^Tx)^2 | }{\| x\|^3} 
\le  \frac{ \|v\|^2 \|x\|^2}{\| x\|^3} 
= \frac{1}{\|x\|}, 
\]
{and hence} $ |{\bf D}^{(2)} r(x)| \le \frac{2}{ \| x\|}$.
When $l=3$ and 4,
one can verify by definition that 
\[
 |{\bf D}^{(3)} r(x)|  < \frac{6}{ \|x\|^2},
 \quad
  |{\bf D}^{(4)} r(x)|  < \frac{6 \times 4}{ \|x\|^3}.
\]
This proves that 
$|{\bf D}^{(l)} \hat{R}(x)|  \le \frac{ l!}{ \hat{R}(x)^{l-1} }$, for $l=1,\cdots, 4$.
\end{proof}
\vspace{5pt}
\begin{proof}[Proof of Lemma \ref{lemma:knnhatrho-smooth}]
Because $\hat{\rho} = (\frac{1}{m_0[ h]} \frac{k}{N_y})^{-1/d} \hat{R}$,
we consider the smooth approximation of $\hat{R}$ called $\hat{R}_s$,
and let $\hat{\rho}_s = (\frac{1}{m_0[ h]} \frac{k}{N_y})^{-1/d} \hat{R}_s$.
By Lemma \ref{lemma:knn-rx} and its proof,
$\hat{R}$  is continuous on $\R^D$ and $C^\infty$ on $\R^D \backslash E$ which is a finite union of (possibly unbounded) polygons. 
We consider the restriction of $\hat{R}$ on ${\calM}$,
and because ${\calM}$ is $C^\infty$,
$\hat{R}$ is $C^\infty$ on ${\calM} \backslash E$.
Under the probability assumption on $p$ in Assumption \ref{assump:M-p},
the set $E$ intersects with ${\calM}$ 
over finitely many hypersurfaces of dimensionality $(d-1)$ w.p. 1.
Then, there is a finite partition of ${\calM}$ into pieces with peicewise $C^\infty$ boundaries. 
For $s>0$, a function $\hat{R}_s \in C^\infty( {\calM})$   can be constructed 
to  uniformly approximate $\hat{R}$ on ${\calM}$ to within $s$,
and in addition, 
${ D}^{(l)}_{\calM} \hat{R}_s(x)$
 for $l=0,\cdots, 4$ is smoothly averaged from the values of 
${ D}^{(l)}_{\calM} \hat{R}(x)$ on a neighborhood of $x$.
This means that at $x \in {\calM} \cap E$,
${ D}^{(l)}_{\calM} \hat{R}_s(x)$ is smoothly interpolating between the values  
of  ${D}^{(l)}_{\calM} \hat{R}$ on each sides of the hypersurface
(at intersection of multiple hypersurfaces,
i.e. ``corners'', 
${D}^{(l)}_{\calM}\hat{R}_s(x)$ is  interpolating among the multiple values).
Then whether $x$ is near $ {\calM} \cap E$ or not,
we have that
$|{ D}^{(l)}_{\calM} \hat{R}_s(x)| \le \sup_{x \in {\calM} \backslash E}|{D}^{(l)}_{\calM} \hat{R} (x)| $.
This can be done, e.g., by convolving $\hat{R}$ on ${\calM}$ using a Gaussian kernel in $\R^d$ with small bandwidth and under the manifold metric.
The uniform approximation of $|\hat{R}_s(x) - \hat{R}(x)|$ is then guaranteed by that $\hat{R}$ is Lipschitz-1 on $\R^D$,
and thus is globally Lipschitz-1 on $ {\calM}$ with respect to the manifold geodesic metric.
This proves \eqref{eq:bound-Drhos-by-Drho}.
\end{proof}
%
%

%
%
\begin{proof}[Proof of Proposition \ref{prop:smooth-hatrhos}]
Under the good event in Theorem \ref{thm:hatrho}, \eqref{eq:hatrho-varepsilon} equivalently gives that 
\begin{equation}\label{eq:bound-hatR-R-epsrho}
\sup_{x \in {\calM}} \frac{|\hat{R}(x) - \bar{R}(x)|}{ \bar{R}(x)} < \varepsilon_\rho < 0.1.
\end{equation}
Meanwhile,
 $\bar{\rho}(x) = p(x)^{-1/d}$ and is uniformly bounded from below and above by $\rho_{min}$ and  $\rho_{max} $ which are constants depending on $p$.
We write $m_0[h]$ as $m_0$ in this proof.
\begin{equation}\label{eq:lower-bound-barR}
\bar{R}(x)  =   \left(\frac{1}{m_0} \frac{k }{ N_y} \right)^{1/d} \bar{\rho}(x) 
\in \left[ c_{p,1} \left(\frac{k }{ N_y} \right)^{1/d}, \, c_{p,2} \left(\frac{k }{ N_y} \right)^{1/d}  \right].
\end{equation}
Lemma 
\ref{lemma:knn-rx-part2}
gives that, for $l=0,1,\cdots, 4$, ${\bf D}^{(l)}$ being the derivatives in $\R^D$,
\[
| {\bf D}^{(l)} \hat{R}(x) | \le   \frac{ l! }{ \hat{R}(x)^{l-1}},
\quad
\forall x \in {\calM} \backslash E.
\] 
The manifold derivatives are determined by ambient space derivatives via 
\[
{ D}^{(l)}_{\calM} \hat{R}(x) =  \sum_{m = 0}^l A_m(x) ({\bf D}^{(m)} \hat{R}(x)   ),
\]
where $A_m(x)$ are linear transforms determined by extrinsic manifold coordinates and their derivatives at $x$, 
and are in $C^\infty( {\calM})$. 
Thus,
\[
| { D}^{(l)}_{\calM} \hat{R}(x) | \le
  \sum_{m = 0}^l | A_m(x) | | {\bf D}^{(m)} \hat{R}(x)   |
  \le c_{\calM}   \sum_{m = 0}^l   \frac{ m! }{ \hat{R}(x)^{m-1}},
\]
where $c_{\calM}$ is a constant depending on ${\calM}$.
This gives that,  $\forall x \in {\calM} \backslash E$,
\begin{align*}
| { D}^{(l)}_{\calM} \hat{\rho}(x)  |
& = \left(\frac{1}{m_0} \frac{k}{N_y}\right)^{-1/d}   | { D}^{(l)}_{\calM} \hat{R}|
\le \left(\frac{1}{m_0} \frac{k}{N_y}\right)^{-1/d}  c_{\calM}   \sum_{m = 0}^l   \frac{ m! }{ \hat{R}(x)^{m-1}} \\
& \le  c_{\calM}' \frac{c_{p,2}}{ \bar{R}(x)} \sum_{m = 0}^l    \hat{R}(x)^{-m+1}  
\quad \text{(by \eqref{eq:lower-bound-barR}, $c_{\calM}' $ depending on ${\calM}$ )} \\
& <  c_{\calM}'  c_{p,2}  1.1 \sum_{m = 0}^l    \hat{R}(x)^{-m}  
\quad \text{(by \eqref{eq:bound-hatR-R-epsrho})} \\
& \le c_{\calM}'  c_{p,2}  1.1 \sum_{m = 0}^l    ( 0.9 \bar{R}(x) )^{-m}   
\le c_{\calM}'  c_{p,2}  1.1 \sum_{m = 0}^l    \left( 0.9 c_{p,1} (\frac{k}{N_y})^{1/d} \right)^{-m},
\end{align*}
which means that 
\[
\sup_{x \in {\calM} \backslash E} | { D}^{(l)}_{\calM} \hat{\rho}(x)  |
 \le C_p  \left(\frac{k}{N_y}\right)^{-l/d},
 \]
where $C_p$ is a constant depending on $p$, for $l$ up to 4.
Finally, Lemma \ref{lemma:knnhatrho-smooth} constructs $\hat{\rho}_s$
satisfying \eqref{eq:bound-Drhos-by-Drho}, 
and then \eqref{eq:upperbound-Dhatrhos} follows.
\end{proof}

\end{document}